\newtheorem{thm}{Theorem}[subsection]
\newtheorem{lemma}[thm]{Lemma}
\newtheorem{prop}[thm]{Proposition}
\newtheorem{defn}[thm]{Definition}
\newtheorem{cor}[thm]{Corollary}
\newcommand{\uxc}{u_{x,y}}
\newcommand{\tc}{t_{y}}
\newcommand{\oxx}{\mathcal{O}_{X,x}}
\newcommand{\fc}{F_{y}}
\newcommand{\jx}{\mathcal{J}_{X}} \newcommand{\jc}{\mathcal{J}_{y}} \newcommand{\jxx}{\mathcal{J}_{x}}
\newcommand{\excone}{\mathcal{E}^{(1)}_{F_{x,y}}} \newcommand{\exctwo}{\mathcal{E}^{(2)}_{F_{x,y}}}
\newcommand{\aone}{\alpha^{(1)}}\newcommand{\atwo}{\alpha^{(2)}}
\newcommand{\phionek}{\phi^{(1)}_{k,x,y}(\uxc)}\newcommand{\phitwok}{\phi^{(2)}_{k,x,y}(\uxc)}
\newcommand{\phionej}{\phi^{(1)}_{j,x,y}(\uxc)} \newcommand{\phitwoj}{\phi^{(2)}_{j,x,y}(\uxc)}
\begin{document}
\title{Two-Dimensional Local-Global Class Field Theory in Positive Characteristic}\author{Kirsty Syder}\date{}
\maketitle
\begin{abstract}
Using the higher tame symbol and Kawada and Satake's Witt vector method, A. N. Parshin developed class field theory for higher local fields, defining reciprocity maps separately for the tamely ramified and wildly ramified cases. We extend this technique to the case of local-global fields associated to points and curves on an algebraic surface over a finite field. 
\end{abstract}

\section{Introduction}\setcounter{subsection}{1}

In the study of class field theory of algebraic curves, the local field associated to each point on the curve is used to define a ring of adeles for the curve. This ring provides the domain for a reciprocity map for the global field of functions of the curve. In this text, we extend this approach to the case of an algebraic surface over a finite field, using the reciprocity maps for higher local fields first defined by A. N. Parshin in \cite{P4}.

The study of higher local fields was initiated in the 1970s by Y. Ihara, with further work done by A. N. Parshin in the positive characteristic case and K. Kato in the general case. We recall the inductive definition: an $n$-dimensional local field $K$ is a complete discrete valuation field $F$ with ring of integers \[ \mathcal{O}_{F} : = \{ \alpha \in F : v_{F}(\alpha ) \geq 0 \} \] 
and maximal ideal 
\[ \mathfrak{m}_{F} : = \{ \alpha \in F : v_{F}(\alpha ) > 0 \} \] 
such that the residue field $\mathcal{O}_{F}/\mathfrak{m}_{F}$ is an $(n-1)$-dimensional local field.
One-dimensional local fields are the usual local fields, i.e. finite extensions of $\mathbb{Q}_{p}$ and $\mathbb{F}_{p}((t))$, for a prime $p$. 

The class field theory of higher local fields has been extensively studied, with different methods applied. Kato used cohomological methods to define the reciprocity map, see \cite[Section 5]{IHLF} for an overview or \cite{KK1}, \cite{KK2} and \cite{KK3} for full details. Fesenko provides an explicit version of the class field theory, as Neukirch did in the classical case (see \cite{N}) - see \cite[Section 10]{IHLF} for an overview or \cite{F7} and \cite{F8} for full details. We will concentrate on the work of Parshin on higher class field theory.

In his papers \cite{P4} and \cite{P5}, Parshin developed a reciprocity map for higher local fields by gluing together three separate maps for unramified, tamely ramified and wildly ramified extensions. The unramified part of the map is as usual a valuation map associated to the Frobenius element. See \cite{IHLF} section seven for a review of this theory. 

The map for tamely ramified extensions came from the higher tame symbol, which is a higher dimensional generalisation of the tame symbol, 
\[\{f,g\} = (-1)^{v(f)v(g)}\frac{f^{v(g)}}{g^{v(f)}}\]
for $f$, $g$ elements of a local field with valuation $v$. This symbol has been studied extensively, and the reciprocity laws described below proved using several different methods. In particular for a study of the tame symbol for an algebraic surface, see the work of Romo, \cite{FPR3}, \cite{FPR4}, \cite{FPR5} and \cite{FPR6}, Osipov, \cite{O}, and Osipov and Zhu, \cite{OZ}. See section 2.3 for a definition and discussion of the higher tame symbol.

The map for wildly ramified extensions is the Artin-Schreier-Witt pairing. The method of using the Witt pairing to define a reciprocity map for wildly ramified extensions of fields of positive characteristic was first developed by Kawada and Satake in their paper \cite{KS}. They proved the class field theory for local fields and function fields of positive characteristic. Parshin's method is a higher-dimensional generalisation of Kawada and Satake's method. See section 2.4 for a full definition of the Witt symbol and associated local reciprocity map.

\medskip
The structure of the paper is as follows. The first chapter, entitled `The Local Theory' will recall the situation for higher local fields. We first define higher local fields in more detail, then discuss their Milnor $K$-groups. The $K$-groups are a quotient of the $n$-fold tensor product of an $n$-dimensional local field, and are the domain of the local reciprocity map. This first section contains basic properties of all these objects. The following sections define the local Witt pairing and higher tame pairing, then finally we mention the class field theory proved by Parshin, which constructs a reciprocity map using these pairings.

Chapter three deals with the global situation. It begins by defining an adelic group associated to the algebraic surface, and its Milnor $K$-group. We then define the global Witt and higher tame pairings as a sum and product (respectively) of the local pairings, and prove these are well-defined. 

Chapters  four and five then study these pairings over two types of `semi-global' field, which will be discussed below, proving duality theorems which enable the proof of semi-global versions of Parshin's higher local class field theory.

\medskip
We now introduce the semi-global fields, and then state the main theorems of the paper. 

Our set-up is as follows: let $X$ be an algebraic surface over a finite field $k$ of size $q$, with function field $F$. Closed points of the surface will be denoted $x$, and curves on the surface by $y$.

We associate a product of higher local fields to a point $x$ lying on a curve $y$ on the surface $X$ by a series of completions and localisations of the local ring $\hat{\mathcal{O}}_{X,x}$ - see section 2.1. We can  then define a product of semi-global fields associated to a curve $y$ and a product of rings associated to a point $x$. Complete definitions can be found at \ref{fieldsdefs}, for now it is enough to think of $F_{y}$ as a complete discrete valuation field over a global field, isomorphic to $k(y)((t_{y}))$, and $F_{x}$ the ring generated by the complete local ring $\mathcal{O}_{X,x}$ and $F$, the field of functions of $X$. 

The class field theory of these objects has been studied before, primarily by Kato and Saito. See their papers \cite{KSCFT1}, \cite{KSCFT2}, and \cite{KSCFT3} for details. The approach used is similar to Kato's local class field theory. They also provide class field theory for the function field $F$ of a surface $X$. Other methods for the class field theory of arithmetic surfaces have been proposed by Wiesend and developed by Kerz and Schmidt - see \cite{MK}. Their method only considers the global case, without the use of local or local-global class field theory.

As in the classical case, most of these class field theories become very complicated when discussing the $p$-part of the reciprocity map in characteristic $p$. Kawada and Satake's Witt vector method greatly simplifies this in the one-dimensional case, and Parshin and Fesenko's methods both build on this work in the higher local case. This paper extends those methods to provide a more simple description of the $p$-part of the map in the semi-global case, while defining compatible reciprocity maps for the non-$p$-divisible parts.

We will write Gal$(F^{ab,p}/F)$ for the $p$-divisible part of the absolute abelian Galois group. We write Gal$(F^{unram}/F)$ for the part of the absolute abelian Galois group, isomorphic to $\hat{\mathbb{Z}}$, which is related to the algebraic closure of the finite field $\mathbb{F}_{q}$.

The main new results leading to the reciprocity map are duality theorems. We define certain subgroups of the $K$-groups of the adelic group of $X$ - see section 3.1 - and also define the global Witt and higher tame pairings as sums along a curve or around a point, then prove the following theorem.

\begin{thm}\label{introthm1}
Let $x \in X$ be a closed point and $y \subset X$ a curve. Then we have the isomorphisms
\[\frac{J_{y}}{\Delta(K_{2}^{top}(F_{y}))\cap J_{y}} \cong \mathrm{Hom}\left(\frac{W(F_{y})}{(\mathrm{Frob}-1)W(F_{y})}, \mathbb{Z}_{p}\right);\]
\[\frac{\mathfrak{J}_{y}}{\Delta(K_{2}^{top}(F_{y}))\cap \mathfrak{J}_{y}} \cong \mathrm{Hom}\left(\frac{F_{y}^{\times}}{(F_{y}^{\times})^{q-1}}, \mathbb{Z}/(q-1)\mathbb{Z}\right);\]
\[\frac{J_{x}}{\Delta(K_{2}^{top}(F_{x}))\cap J_{x}} \cong \mathrm{Hom}\left(\frac{W(F_{x})}{(\mathrm{Frob}-1)W(F_{x})}, \mathbb{Z}_{p}\right);\]
\[\frac{\mathfrak{J}_{x}}{\Delta(K_{2}^{top}(F_{x}))\cap \mathfrak{J}_{x}} \cong \mathrm{Hom}\left(\frac{F_{x}^{\times}}{(F_{x}^{\times})^{q-1}}, \mathbb{Z}/(q-1)\mathbb{Z}\right).\]
\end{thm}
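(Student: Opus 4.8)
The plan is to prove the first isomorphism in detail and to note that the other three follow by the same argument after purely cosmetic changes: the third interchanges the curve $y$ and the sum over closed points $x\in y$ with the point $x$ and the product over curves $y\ni x$, invoking the corresponding reciprocity law around $x$; the second and fourth replace the global Witt pairing by the global higher tame pairing, the module $W(F_y)/(\mathrm{Frob}-1)W(F_y)$ by $F_y^{\times}/(F_y^{\times})^{q-1}$ (which, since $\mu_{q-1}\subset k$, plays the analogous role via Kummer theory for the prime-to-$p$ tame part), and the coefficient group $\mathbb{Z}_p$ by $\mathbb{Z}/(q-1)\mathbb{Z}$. For the first isomorphism, the global Witt pairing of Chapter 3 --- built as the sum over $x\in y$ of Parshin's local Witt pairings of Section 2.4 --- restricts to a biadditive map $J_y\times W(F_y)\to\mathbb{Z}_p$, and the goal is to show that the induced homomorphism
\[\theta_y\colon \frac{J_y}{\Delta(K_2^{top}(F_y))\cap J_y}\longrightarrow \mathrm{Hom}\!\left(\frac{W(F_y)}{(\mathrm{Frob}-1)W(F_y)},\mathbb{Z}_p\right)\]
is an isomorphism.

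\textbf{Key steps.} First I would check $\theta_y$ is well defined. In the second variable this is immediate, since each local Witt symbol depends on the Witt vector only through its Artin--Schreier--Witt class and hence annihilates $(\mathrm{Frob}-1)W$; in the first variable it is exactly the reciprocity law proved in Chapter 3, that the sum of the local Witt symbols along $y$ vanishes on $\Delta(K_2^{top}(F_y))$. Second, I would reduce to finite level: writing $W(F_y)=\varprojlim_n W_n(F_y)$ with surjective transition maps, it suffices to prove the analogue with $W_n(F_y)/(\mathrm{Frob}-1)W_n(F_y)$ and $\mathbb{Z}/p^n\mathbb{Z}$ for every $n$, the relevant local building blocks being Parshin's pairings $K_2^{top}(F_{x,y})\times W_n(F_{x,y})\to\mathbb{Z}/p^n\mathbb{Z}$, and then pass to $\varprojlim_n$. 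Third, injectivity of $\theta_y$: if $\xi\in J_y$ pairs trivially with every Witt vector, then by non-degeneracy of the local Witt pairing (Parshin's local class field theory, Section 2.4) the component $\xi_x$ lies in the kernel of the local reciprocity map, the norm subgroup, at every $x\in y$; a Hasse-principle argument for $K_2^{top}$, using the explicit description of $J_y$ from Section 3.1 together with approximation along $y$, then upgrades this to $\xi\in\Delta(K_2^{top}(F_y))$. Fourth, surjectivity: a given homomorphism corresponds at finite level, via Artin--Schreier--Witt, to a cyclic $p$-extension of $F_y$, and one realizes the associated global character as pairing against a suitable class in $J_y$, using surjectivity of the local reciprocity maps and a global existence statement --- enough elements of $K_2^{top}(F_y)$ to prescribe the required local behaviour outside a finite set of points. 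A formal duality argument between the groups involved then closes the case.

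\textbf{Main obstacle.} I expect the third step --- the Hasse principle for $K_2^{top}$ along $y$ --- to be the crux, since it is not formal: it needs the precise definition of the subgroups $J_y$ and $\mathfrak{J}_y$ from Section 3.1, a d\'evissage along both the Witt-length filtration on $W$ and the filtration of $K_2^{top}$ by the maximal ideals of the semi-local rings along $y$, and the finiteness and compactness inputs coming from $F_y\cong k(y)((t_y))$ being a complete discrete valuation field over a global field. The global existence statement in the fourth step is the second most delicate point; once both are in hand the remaining bookkeeping is routine, the tame versions going through verbatim with Kummer theory and $\mathbb{Z}/(q-1)\mathbb{Z}$ in place of Artin--Schreier--Witt and $\mathbb{Z}_p$ (and with no inverse limit needed), and the point-centred versions needing only the substitution of the family of curves through $x$ for the family of closed points on $y$.
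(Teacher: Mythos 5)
Your overall frame (well-definedness via the reciprocity law, reduction to finite Witt length, non-degeneracy on each side, then a limit) matches the shape of the paper's argument, but the two steps you yourself flag as the crux are exactly where your sketch breaks down, and the missing idea is the actual content of the theorem. In your injectivity step you cannot conclude from the vanishing of the global pairing that each component $\xi_{x}$ lies in a local norm subgroup: the pairing in the theorem is the \emph{sum} over $x\in y$ of local Witt symbols evaluated only against \emph{diagonal} Witt vectors $h\in W(F_{y})$, so Parshin's local non-degeneracy (which needs pairing against all of $W(F_{x,y})$, place by place) gives nothing; indeed the diagonal elements of $K_{2}^{top}(F_{y})$ have non-trivial local components yet pair to zero by the reciprocity law. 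The paper's mechanism is different: using the unique expansion of elements of $J_{y}$ by coefficients $\phi^{(1)}_{k},\phi^{(2)}_{k}\in\mathbb{A}_{k(y)}$ along the $t_{y}$-adic filtration (lemma \ref{relativekgpcurves}), explicit residue computations (lemma \ref{firstcalculation}, corollary \ref{firstcalccor}) identify each graded piece of the pairing with the classical one-dimensional adelic residue pairing $\mathbb{A}_{k(y)}\times\mathbb{A}_{k(y)}(\Omega^{1}_{y})\to k_{y}$ of Weil, for which $k(y)^{\perp}=\Omega^{1}_{k(y)}$; the explicit description of diagonal elements (lemma \ref{diagonalelements}) then shows the kernel of each graded piece is exactly $\Delta(K_{2}^{top}(F_{y}))\cap J_{y,\geq k}$ modulo higher terms, and the Hom-isomorphism is obtained by \emph{constructing} the dual element $\alpha=\prod_{k}\alpha_{k}$ level by level from lemmas \ref{dualityone} and \ref{dualitytwo}. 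No Hasse principle for $K_{2}^{top}$ and no separate ``global existence theorem'' is invoked; your appeal to both leaves the decisive step unproved, since neither is available in the literature in the form you need.

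A second genuine gap is your claim that the point-centred isomorphisms follow by a purely cosmetic exchange of the roles of $x$ and $y$. For a fixed curve the proof reduces to duality over the global field $k(y)$; at a fixed point there is no ambient one-dimensional global duality to fall back on, and the paper must first prove the statement under the normal-crossings condition $\dagger$ (two branches, $k_{y}(x)=k(x)$) by analysing the bi-indexed filtration $J_{x,\geq i,j}$ (lemmas \ref{firstcalcpoints}--\ref{dualitytwopoints}), and then remove $\dagger$ by a separate argument classifying which adelic elements can be degenerate when $x$ lies on several curves and showing any such element is diagonal. You should also track, before passing to the limit, the finite-level quotients actually proved in the paper, namely by $J_{y}^{p^{m}}$ and (for the curve case) by $W_{m}(k(y))$, which your reduction to $W_{n}$ omits; and even the tame cases are not formal Kummer duality but rest on explicit generator lists for $\mathfrak{J}_{y}/(\Delta(K_{2}^{top}(F_{y}))\cap\mathfrak{J}_{y})\mathfrak{J}_{y}^{q-1}$, involving the point at infinity and norm arguments.
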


The theorems can be found at \ref{mainthmcurves}, \ref {tamecurvethm}, \ref{mainthmpoints} and \ref{tamepointthm}.

These theorems are important for two reasons. The first is to define the reciprocity map: Witt duality and Kummer theory show that we can define a map from the groups on the left to the absolute abelian Galois group. The second is to prove the reciprocity map is injective.

Using these theorems to define the reciprocity maps $\phi_{y}$, $\phi_{x}$ for a fixed curve and point respectively, we can then prove the main theorems of the class field theory:

\begin{thm}\label{introthm}
Let $X/\mathbb{F}_{q}$ be a regular projective surface, $x \in X$ a closed point and $y \subset X$ an irreducible curve. Then the continuous maps
\[\phi_{y} : {\prod}'_{x \in y}K_{2}^{top}(\mathcal{O}_{x,y}) \to \text{Gal}(F_{y}^{ab}/F_{y}) 
\]
and
\[\phi_{x} : {\prod}'_{y \ni x}K_{2}^{top}(\mathcal{O}_{x,y}) \to \text{Gal}(F_{x}^{ab}/F_{x}) 
\]
are injective with dense image and satisfy:
\begin{enumerate}
\item{$\phi_{y}$, $\phi_{x}$ depend only on $F_{y}$, $F_{x}$ - not on the choice of model of $X$;}
\item{For any finite abelian extension $L/F_{y}$, the following sequence is exact:
\[\begin{CD}@.
\frac{\prod'_{x' \in y', \pi(x')=x}K_{2}^{top}(L_{x'})}{\Delta(K_{2}^{top}(L))\cap \prod'_{x' \in y', \pi(x')=x}K_{2}^{top}(L_{x'})} @>N>>   @. \\
@. \mathcal{J}_{y}/\Delta(K_{2}^{top}(F_{y}))\cap \mathcal{J}_{y} @>\phi_{y}>> 
@. \text{Gal}(L/F_{y})@>>> 0
\end{CD}
\] and the same sequence applies for a finite extension $L/F_{x}$.}
\item{For any finite separable extension $L/F_{y}$, the following diagrams commute:
\[\begin{CD}
\mathcal{J}_{L}/\Delta(K_{2}^{top}(L)) @>\phi_{L}>> \text{Gal}(L^{ab}/L)\\
@AAA @A V AA\\
\mathcal{J}_{y}/\Delta(K_{2}^{top}(F_{y})) @>\phi_{y}>> \text{Gal}(F_{y}^{ab}/F_{y})\\
\end{CD}\]
where  $V$ is the group transfer map, and

\[\begin{CD}
\mathcal{J}_{L}/\Delta(K_{2}^{top}(L)) @>\phi_{L}>> \text{Gal}(L^{ab}/L)\\
@V N VV @VVV\\
\mathcal{J}_{y}/\Delta(K_{2}^{top}(F_{y})) @>\phi_{y}>> \text{Gal}(F_{y}^{ab}/F_{y})\\
\end{CD}\] and the same diagrams apply for a finite extension $L/F_{x}$.
}

\end{enumerate}
\end{thm}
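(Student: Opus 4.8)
I would build $\phi_y$ and $\phi_x$ out of the two pairings of Theorem~\ref{introthm1} and then check each assertion by reducing it to the corresponding fact in Parshin's local theory, summed along the curve $y$ (respectively, around the point $x$). It suffices to treat the curve case; the point case is entirely parallel. Decompose $\mathrm{Gal}(F_y^{ab}/F_y)$ as the product of its pro-$p$ part, its prime-to-$p$ tame part, and its unramified part $\widehat{\mathbb{Z}}$. On the pro-$p$ part, Artin--Schreier--Witt theory identifies $\mathrm{Gal}(F_y^{ab,p}/F_y)$ with $\mathrm{Hom}(W(F_y)/(\mathrm{Frob}-1)W(F_y),\mathbb{Q}_p/\mathbb{Z}_p)$, so the first isomorphism of Theorem~\ref{introthm1} yields a homomorphism out of $\mathcal{J}_y/(\Delta(K_2^{top}(F_y))\cap\mathcal{J}_y)$; on the tame part, Kummer theory combined with the second isomorphism does the same; the unramified component is the usual Frobenius--valuation map of \cite[\S7]{IHLF}. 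Gluing these three pieces gives $\phi_y$; it is continuous by construction, and it descends from $\prod'_{x\in y}K_2^{top}(\mathcal{O}_{x,y})$ to $\mathcal{J}_y$ because both pairings were defined on $\mathcal{J}_y$.

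Injectivity and density of the image are then precisely the nondegeneracy recorded in Theorem~\ref{introthm1}: each pairing is perfect after quotienting the left-hand side by $\Delta(K_2^{top}(F_y))$, so the induced map to the profinite Hom-group --- that is, to $\mathrm{Gal}(F_y^{ab}/F_y)$ --- is injective, and any closed subgroup containing the image is all of $\mathrm{Gal}(F_y^{ab}/F_y)$ since a continuous character killing the image vanishes on all of $W(F_y)/(\mathrm{Frob}-1)W(F_y)$, resp. $F_y^\times/(F_y^\times)^{q-1}$, again by nondegeneracy. Property~(1) --- that $\phi_y$ depends only on $F_y$, not on the model $X$ --- follows because any two regular projective models are dominated by a common one through a sequence of blow-ups, and a blow-up induces an isomorphism on the complete local rings away from the exceptional locus, hence on $F_y$ and on all of $K_2^{top}(F_y)$, $W(F_y)$ and the pairings; this is assembled from the local comparison used in \S2.1 to define $F_y$.

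For property~(3), compatibility of $\phi_L$ and $\phi_y$ with the transfer $V$ and the norm $N$ is inherited from the same compatibilities of Parshin's \emph{local} reciprocity maps in \cite{P4},~\cite{P5}: the global Witt and tame pairings were defined in Chapter~3 as a sum and a product of the local ones, the norm on $K$-groups and the transfer on Galois groups act componentwise, and the local projection formulas for the Witt and tame symbols give the commutativity of each square; one then passes to the quotients by $\Delta(K_2^{top})$ using that $\Delta$ is compatible with the norm along $y$. Combined with a limit over all finite abelian $L/F_y$, this norm-compatibility also re-proves density.

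The main obstacle is property~(2), exactness of the existence sequence, which comes down to the equality
\[
\bigl[\,\mathcal{J}_y \,:\, \bigl(N{\textstyle\prod}'_{x'}K_2^{top}(L_{x'})\bigr)\cdot\bigl(\Delta(K_2^{top}(F_y))\cap\mathcal{J}_y\bigr)\,\bigr] = [L:F_y]
\]
for every finite abelian $L/F_y$. The inclusion of the norm subgroup in $\ker\!\big(\phi_y \bmod \mathrm{Gal}(L/F_y)\big)$ is immediate from the norm-compatibility of (3); for the reverse I would d\'evissage to $L/F_y$ cyclic of prime degree $\ell$ and split into the Artin--Schreier case ($\ell=p$), the Kummer case ($\ell\neq p$, living in the tame part), and the unramified case, computing the norm index locally and then converting it globally, via Theorem~\ref{introthm1}, into the order of the relevant finite character group. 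The delicate point is controlling the intersection $\Delta(K_2^{top}(F_y))\cap\mathcal{J}_y$, i.e. a local-to-global principle for $K_2^{top}$ along the curve, and this is exactly where regularity and projectivity of $X$ enter; the remainder is a formal consequence of the local theory together with Theorem~\ref{introthm1}.
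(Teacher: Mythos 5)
The construction of $\phi_y$, $\phi_x$ by gluing the Witt, tame and unramified pieces, the deduction of injectivity and density from the duality theorems, and the derivation of property (3) from Parshin's local compatibilities plus the reciprocity laws all match the paper's proof of Theorems \ref{cftcurves} and \ref{cftpoints} (with one gloss: injectivity is not immediate from nondegeneracy at each finite level $p^m$; you also need $\cap_m(\Delta(K_2^{top}(F_y))\cap J_y)J_y^{p^m}=\Delta(K_2^{top}(F_y))\cap J_y$, which the paper obtains from $\cap_m K_2^{top}(\mathcal{O}_{x,y},\mathfrak{p}_{x,y})^{p^m}=\{1\}$). The genuine gap is property (2). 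You reduce it to the norm-index equality $[\mathcal{J}_y : N(\prod'_{x'}K_2^{top}(L_{x'}))\cdot(\Delta(K_2^{top}(F_y))\cap\mathcal{J}_y)]=[L:F_y]$ together with an unspecified ``local-to-global principle'' controlling $\Delta(K_2^{top}(F_y))\cap\mathcal{J}_y$, and you prove neither; in this adelic two-dimensional setting that second-inequality-style computation is precisely the hard content, and nothing in your outline (or in Theorem \ref{introthm1} alone) supplies it. The paper never computes a norm index: it writes the commutative diagram whose lower row $\mathrm{Gal}(L^{ab}/L)\to\mathrm{Gal}(F_y^{ab}/F_y)\to\mathrm{Gal}(L/F_y)\to 0$ is exact, and deduces exactness of the upper row formally from the density of the images of $\phi_L$ and $\phi_y$ (already established) together with the fact that the image of the norm map on topological $K$-groups is closed (\cite[Section 6]{IHLF}). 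So your plan for (2) is not a proof, and carrying it out would require substantial new input that the paper's softer argument deliberately avoids.

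Property (1) is also handled differently, and your version has a hole. The paper does not dominate two models by blow-ups; it invokes Proposition \ref{independenceofmodel}, which reconstructs $\mathcal{J}_y$ intrinsically from $F_y$ (points of $y$ as places of $k(y)$, the two exact sequences built from $\delta_X$ and $pr$, and the recovery of the groups $\excone\times\exctwo$ from the unit decomposition) and $\mathcal{J}_x$ intrinsically from $\hat{\mathcal{O}}_{X,x}$. Your blow-up argument only asserts an isomorphism of complete local rings \emph{away} from the exceptional locus, but the points lying over the centre of the blow-up are exactly where two models differ, so as stated it does not prove independence of the model; it might be repairable (the branches of $y$ at the centre match the points of the proper transform), but that would need to be argued, whereas the paper's intrinsic characterisation sidesteps the comparison of models altogether.
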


The theorems for $F_{y}$, $F_{x}$ can be found at \ref{cftcurves}
, \ref{cftpoints} respectively.

\medskip
The definition of the global and semi-global Witt pairings first appeared in the author's paper \cite{ME}, which proves the reciprocity laws for the Witt and higher tame pairings - an important result for the class field theory in this paper.

The proofs of the four duality theorems all follow the same basic pattern. Using basic theorems on the structure of the $K$-groups, and the definition of the adelic groups, we may restrict to a small set of generators for the adelic $K$-groups. Similarly, we find a ``nice" form for the right-hand side of the pairing - in the case of $F_{y}^{\times}/(F_{y}^{\times})^{q-1}$ and $F_{x}^{\times}/(F_{x}^{\times})^{q-1}$ we also find some generators. The case of the Witt vectors is more difficult - we can prove a useful general form for the entries in the Witt vector which allows us to show the non-degeneracy of the pairing.

The deduction of the main theorems of class field theory, as stated above at \ref{introthm} then follows from Witt duality and Kummer theory. The commutative diagrams follow from the local theory in \cite{P4} and the reciprocity laws from \cite{ME}.

The bulk of the work to prove these theorems is in proving that each section of the map is injective, and the exactness of the sequence in property 2. These follow from the duality theorems and properties of the $K$-groups.

It remains to mention one major obstacle to the proof of the duality theorems - the fact that we must consider singular points and curves which are not irreducible.

 For the case of a fixed curve $y$, we first look only at a smooth irreducible curve, and prove the above theorems for such an object. We then note that our adeles for a reducible curve are just the product of the adeles for the irreducible components - and the same applies for the $K$-groups and the semi-global fields we originally associated to the curve. So each group in theorem \ref{introthm1} separates into a product over the irreducible components $z \subset y$, and so every isomorphism from the theorem holds, as we have proven them for each $z$.  
 
 For a fixed point $x$, it is a little more complicated. We first prove the duality theorems \ref{introthm1} for a point satisfying condition $\dagger$:
 
``The surface $X$ has only normal crossings, so we can assume $k_{y}(x)=k(x)$ for all $y \ni x$ and $x$ has just two curves passing through it." 
 
The arguments for both the Witt and higher tame pairings follow fairly simply from combinatorial arguments and $K$-groups identities in this case. 
 
 We then generalise this case to the case where the point $x$ lies on more than two curves. This is much more difficult than the curves case above, as it means the ring $\mathcal{O}_{X,x}$ is a more complicated - i.e. non-regular - ring, rather than splitting as a product of rings we have already seen.
 
To prove the generalisation, we must look closely at the structure of the $K$-groups and their adelic groups and find generators for these groups. We can then calculate the pairings on these generators, and check using case $\dagger$ when the value is trivial. This enables us to prove non-degeneracy when quotienting by the diagonal elements, and complete the proof of the duality theorems \ref{introthm1} in the general case for a fixed point. The class field theory, theorem \ref{introthm}, follows from the duality theorems as explained above.
 
\medskip
\noindent\textbf{Acknowledgements}

\medskip
\noindent I am grateful to Alberto C\'amara, Matthew Morrow and Thomas Oliver for many useful conversations and guidance during my research. I am also very grateful to my supervisor Ivan Fesenko for suggesting the area of work and providing many comments and improvements. I am supported by an EPSRC grant at the University of Nottingham.

\section{The Local Theory}
\subsection{Two-Dimensional Local Fields and their Milnor $K$-groups}
Define an $n$-dimensional local field inductively as a complete discrete valuation field $F$ with ring of integers \[ \mathcal{O}_{F} : = \{ \alpha \in F : v_{F}(\alpha ) \geq 0 \} \] 
and maximal ideal 
\[ \mathfrak{m}_{F} : = \{ \alpha \in F : v_{F}(\alpha ) > 0 \} \] 
such that the residue field $\mathcal{O}_{F}/\mathfrak{m}_{F}$ is an $(n-1)$-dimensional local field.
One-dimensional local fields are the usual local fields, i.e. finite extensions of $\mathbb{Q}_{p}$ and $\mathbb{F}_{p}((t))$ for a prime $p$. 

We will discuss the class field theory of two-dimensional local fields, which have the following classification theorem.
\begin{thm}\label{classification}  
Let $F$ be a two-dimensional local field with valuation $v_{F}$.  Then $F$ is isomorphic to a field of one of the following types:
\begin{enumerate} 
\item{$\mathbb{F}_{q}((u))((t))$ for some prime power $q$ and $v_{F}\left( \sum a_{i}t^{i}\right) = \text{min}\{ i : a_{i} \neq 0\}$;}
\item{ $K((t))$, where $K$ is a finite extension of $\mathbb{Q}_{p}$ for some prime number $p$ and $v_{F}\left( \sum a_{i}t^{i}\right) = \text{min}\{ i : a_{i} \neq 0\}$;}
\item{ $K\{\{t\}\} : = \left\{ \sum a_{i}t^{i} : a_{i} \in K, \text{ inf}\{v_{K}(a_{i})\} > - \infty, v_{K}(a_{i}) \to 0 \text{ as } i \to - \infty\right\}$\\
\\
 where $K$ is a finite extension of $\mathbb{Q}_{p}$ for some prime $p$ and $v_{F}\left(\sum a_{i}t^{i}\right) = \text{inf}\{v_{K}(a_{i})\}$, or a finite extension of such a field.}
\end{enumerate} \end{thm}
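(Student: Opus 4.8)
The plan is to split into cases according to whether $F$ is equicharacteristic, handling the two equicharacteristic types together and then the mixed-characteristic type separately. Write $\bar F := \mathcal{O}_F/\mathfrak{m}_F$ for the residue field, which by definition is a one-dimensional local field, hence (as recalled above) either $\mathbb{F}_q((u))$ or a finite extension of some $\mathbb{Q}_p$. Suppose first that $\operatorname{char} F = \operatorname{char}\bar F$. Then $\mathcal{O}_F$ is an equicharacteristic complete discrete valuation ring, so by Cohen's structure theorem it admits a coefficient field, i.e.\ a subfield mapping isomorphically onto $\bar F$; choosing any uniformizer $t$ of $\mathcal{O}_F$ then identifies $\mathcal{O}_F$ with $\bar F[[t]]$ and $F$ with $\bar F((t))$, and $v_F$ becomes the $t$-adic valuation since $\mathfrak{m}_F = t\bar F[[t]]$. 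If $\operatorname{char}\bar F = p$ we get $\bar F \cong \mathbb{F}_q((u))$ and land in type (1); if $\operatorname{char}\bar F = 0$ then $\bar F$ is a finite extension $K$ of some $\mathbb{Q}_p$ and we land in type (2).

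It remains to treat $\operatorname{char} F = 0$ with $\operatorname{char}\bar F = p$, so that $\bar F \cong \mathbb{F}_q((u))$ with $q = p^f$. First I would build a copy of the standard field inside $F$. Since $\gcd(q-1,p)=1$, the polynomial $X^{q-1}-1$ has $q-1$ simple roots in $\bar F$, so Hensel's lemma lifts $\mu_{q-1}(\bar F) = \mathbb{F}_q^{\times}$ to $\mu_{q-1}(\mathcal{O}_F)$; the $p$-adic closure in $\mathcal{O}_F$ of the subring generated by these roots of unity is a copy of $\mathbb{Z}_q = W(\mathbb{F}_q)$, and inverting $p$ gives $\mathbb{Q}_q \subseteq F$. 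Next, lift $u \in \bar F$ to a unit $t_0 \in \mathcal{O}_F^{\times}$. Using that $\mathcal{O}_F$ is $\mathfrak{m}_F$-adically (equivalently $p$-adically) complete and separated, one checks that the formal series defining $\mathbb{Q}_q\{\{t\}\}$ converge in $F$ under $t \mapsto t_0$, producing an embedding $\mathbb{Q}_q\{\{t\}\} \hookrightarrow F$ whose image is a two-dimensional local subfield with uniformizer $p$ and residue field $\mathbb{F}_q((\bar t_0)) = \mathbb{F}_q((u)) = \bar F$.

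Finally I would show $F$ is finite over $L := \mathbb{Q}_q\{\{t_0\}\}$. The residue extension is trivial and $e := v_F(p)$ is finite (as $p \neq 0$ in $\mathcal{O}_F$), so a standard successive-approximation argument — lift residues into $\mathcal{O}_L$, peel off powers of a uniformizer $t$ of $F$, and invoke completeness — shows $1, t, \dots, t^{e-1}$ generate $\mathcal{O}_F$ as an $\mathcal{O}_L$-module, whence $[F:L] = e < \infty$. Since $L = K\{\{t\}\}$ for the finite extension $K = \mathbb{Q}_q$ of $\mathbb{Q}_p$, this places $F$ in type (3); one may further note $\mathbb{Q}_q\{\{t\}\}/\mathbb{Q}_p\{\{t\}\}$ is finite unramified of degree $f$, so $F$ is in fact finite over $\mathbb{Q}_p\{\{t\}\}$.

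The main obstacle is the mixed-characteristic case. The equicharacteristic types fall out of Cohen's theorem and the one-dimensional classification with essentially no work, but when $\operatorname{char}\bar F = p$ the residue field $\mathbb{F}_q((u))$ is imperfect, so there is no coefficient field and one must instead produce the coefficient ring, i.e.\ the embedded copy of $\mathbb{Q}_q\{\{t_0\}\}$, by hand, taking care over which completion (the $p$-adic one) is in play. The second delicate point is the module-finiteness $[F : \mathbb{Q}_q\{\{t_0\}\}] = v_F(p)$: this is the analogue, for these non-classical complete fields, of the fact that a complete discrete valuation ring is finite over a Cohen subring, and it is what makes the ``finite extension'' in type (3) genuinely necessary rather than cosmetic.
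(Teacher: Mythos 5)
Your two equicharacteristic cases are fine: Cohen's (equal-characteristic) theorem plus the one-dimensional classification is all that is needed there, and indeed the paper offers no proof of its own for this theorem, only the citation to \cite{IHLF}, Section 1, whose argument your outline otherwise parallels. The genuine gap is in the mixed-characteristic case, at the step where you claim that ``the formal series defining $\mathbb{Q}_q\{\{t\}\}$ converge in $F$ under $t\mapsto t_0$'' because $\mathcal{O}_F$ is $\mathfrak{m}_F$-adically (equivalently $p$-adically) complete. They do not: since $t_0$ is a unit of $\mathcal{O}_F$, the terms satisfy $v_F(a_i t_0^i)=e\,v_p(a_i)$ with $e=v_F(p)$, and the defining conditions on $K\{\{t\}\}$ only force $v_p(a_i)\to\infty$ as $i\to-\infty$; for $i\to+\infty$ the coefficients are merely bounded, so the terms do not tend to $0$. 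Already $\sum_{i\ge 0}t_0^{i}$ has no limit in $F$, because its partial sums differ by units and hence are not Cauchy in the (unique) valuation topology. What completeness does give you is a map from the $p$-adic completion of $\mathbb{Z}_q[t_0,t_0^{-1}]$ into $\mathcal{O}_F$, but that ring reduces modulo $p$ to $\mathbb{F}_q[u,u^{-1}]$, not $\mathbb{F}_q((u))$; it is not a discrete valuation ring with residue field $\bar F$, so the finiteness argument in your last step (which needs the residue extension of $F/L$ to be trivial) has no subfield to run against.

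What is actually required at this point is the unequal-characteristic Cohen structure theorem for complete discrete valuation rings with (possibly imperfect) residue field: $\mathcal{O}_F$ contains a Cohen subring $C$, i.e. a complete discrete valuation ring with maximal ideal $pC$ and $C/pC\cong\bar F=\mathbb{F}_q((u))$, and $\mathcal{O}_F$ is then finite and free of rank $e=v_F(p)$ over $C$ --- your successive-approximation paragraph is exactly this second half and is correct once $C$ exists. The lifting of the residue field cannot be performed term-by-term in the valuation topology; it is constructed compatibly modulo $p^{n}$, using that $\{u\}$ is a $p$-basis of $\mathbb{F}_q((u))$ (this is precisely where imperfectness enters, as you anticipate), or simply by quoting Cohen/Serre/Bourbaki. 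One then identifies $L=\operatorname{Frac}(C)$ with $\mathbb{Q}_q\{\{t\}\}$ by the uniqueness of Cohen rings up to isomorphism: both are complete, absolutely unramified, with residue field $\mathbb{F}_q((u))$. With that substitution your argument becomes the standard proof the paper cites. (Incidentally, the displayed condition ``$v_K(a_i)\to 0$ as $i\to-\infty$'' in the statement should read ``$v_K(a_i)\to\infty$''.)
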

\begin{proof} See \cite{IHLF} section 1.\end{proof}
We will only consider fields of type 1, the positive characteristic two-dimensional local fields.  In this case we say $u$ and $t$ are \emph{local parameters} for $F$.\\
Given the following data:
\begin{enumerate}   
\item{ A smooth projective algebraic surface $X$ over a finite field $k$;}
\item{A reduced irreducible curve $y \subset X$;}
\item{ A closed point $x \in y$;}
\end{enumerate}
we can associate a product of two-dimensional local fields $F_{x,y} = \prod_{z \in y(x)}F_{x,z}$ to the pair $(x,y)$, where $y(x)$ is the set of local irreducible branches of the curve $y$ at $x$.\\
For each $z \in y(x)$, let $t_{z} \in \mathcal{O}_{X,x}$ be a local equation for $z$ at $x$ and $u_{x,z} \in \mathcal{O}_{z,x}$ a local parameter at $x$.  Then
\[F_{x,z} : = k_{z}(x)((u_{x,z}))((t_{z}))\]
is a two-dimensional local field over the finite field $k_{z}(x)$, where $k_{z}(x)$ is the residue field of the local ring of the point $x$ on the curve $z$. To show this process is independent of the choices of $u_{x,z}$ and $t_{z}$, the field $F_{x,z}$ is constructed through a series of localisations and completions which are outlined below.  For full details, see \cite[section 3]{CHDLF}.\\
Let $\mathfrak{m} \subset \mathcal{O}_{X,x}$ be the maximal ideal associated to $x$ and $\mathfrak{p} \subset \mathfrak{m}$ a prime ideal associated to $z$.  Note that we may take $t_{z}$ to be any generator of $\mathfrak{p}$,  $u_{x,z}$ to be an other generator of $\mathfrak{m}$  and $\mathcal{O}_{X,x} $ a localisation of $ k(x)[u_{x,z}][t_{z}]$ such that its completion with respect to $\mathfrak{m}$ is $\hat{\mathcal{O}}_{X,x} \cong k(x)[[u_{x,z}, t_{z}]]$.\\
Take $\hat{\mathfrak{p}}$ to be any image of $\mathfrak{p}$ in $\hat{\mathcal{O}}_{X,x}$, i.e.
 \[\hat{\mathfrak{p}} \in \left\{ \mathfrak{q} \subset \hat{\mathcal{O}}_{X,x}: \mathfrak{q} \text{ is an ideal of } \hat{\mathcal{O}}_{X,x}, \ \ \mathfrak{q} \cap \oxx = \mathfrak{p}\right\}.\]
Localise with respect to $\hat{\mathfrak{p}}$ to get the ring $(\hat{\mathcal{O}}_{X,x})_{\hat{\mathfrak{p}}}$.  Completing with respect to the ideal $\hat{\mathfrak{p}}(\hat{\mathcal{O}}_{X,x})_{\hat{\mathfrak{p}}}$ produces the ring
\[ \widehat{(\hat{\mathcal{O}}_{X,x})_{\hat{\mathfrak{p}}}} \cong k_{z}(x)((u_{x,z}))[[t_{z}]].\]

\noindent Finally localising this ring with respect to a minimal prime ideal will produce the field $F_{x,z} \cong k_{z}(x)((u_{x,z}))((t_{z}))$. Then $F_{x,y}$ is the product of these two-dimensional local fields.

We define the topology on the multiplicative group of a two dimensional local field of positive characteristic as follows:

Take the product topology of the discrete topology on $k_{z}(x)^{\times} = (\mathcal{O}_{x,z}/\mathfrak{m}_{x,z})^{\times}$  and the discrete topologies on the groups generated by the local parameters $u_{x,z}$, $t_{z}$. For the remaining generating elements, the group of principal units, we use the topology induced from the topology on $F_{x,z}$, which we now describe.

Fix the local parameters $t_{z}$ and $u_{x,z}$, and a lifting from $\bar{F}_{x,z} \cong k_{z}(x)((u_{x,z}))$. The topology is usually defined inductively, starting from the discrete topology on $k_{z}(x)$ - but as this gives the usual topology on the local field $k_{z}(x)((u_{x,z}))$, we just discuss the induction step to $F_{x,z}$. 

An element $\alpha$ of $F_{x,z}$ is the limit of a sequence of elements $\alpha_{n}$ in $F_{x,z}$ if and only if given any series $\alpha_{n}=\sum_{i}\theta_{n,i}t_{y}^{i}$, we have $\alpha=\sum_{i}\theta_{i}t_{y}^{i}$, satisfying the following conditions. For every set $\{U_{i}: -\infty <i<\infty\}$ of neighbourhoods of zero in $\bar{F}_{x,z}$ and every $i_{0}$, for almost all $n$ the residue of $\theta_{n,i}-\theta_{i}$ is in $U_{i}$ for all $i<i_{0}$.
Now we may call a subset $U$ of $F_{x,z}$ open if and only if for every $\alpha \in U$ and every sequence $\alpha_{n}$ having $\alpha$ as a limit, all but finitely many $\alpha_{n}$ are in $U$. For further details of this definition, see \cite{F6}. 

\medskip
\noindent\emph{Milnor $K$-groups}

\medskip
\noindent In higher local class field theory, the Milnor $K$-groups play the role of the multiplicative group of the field in the one-dimensional case. We define these groups and prove some useful properties.

 \begin{defn} For a ring $R$, let \[I_{n} : = \left\{ \alpha_{1} \otimes \dots \otimes \alpha_{n} \in (R^{\times})^{\otimes n} : \alpha_{i} + \alpha_{j} = 1, \text{ some } 1 \leq i, j \leq n \right\}.\]
Define the $n^{th}$ Milnor $K$-group of $R$ as:
\[ K_{n}(R) : = (R^{\times})^{\otimes n}/ I_{n}.\]
\end{defn}
\noindent For a higher local field $L$, denote elements of $K_{n}(L)$ by $\{\alpha_{1}, \dots , \alpha_{n}\}$ and define the symbol map 
$ \phi : (L^{\times})^{n} \to K_{n}(L)$ by $(\alpha_{1}, \dots , \alpha_{n}) \mapsto \{\alpha_{1}, \dots , \alpha_{n}\}$.  The group law on $K_{n}(L)$ will be written multiplicatively. \\
We will also use the Milnor $K$-groups $K_{2}(\mathcal{O}_{L})$ and \[K_{2}(\mathcal{O}_{L}, \mathfrak{p}_{L}) : = \text{ker}\left( K_{2}(\mathcal{O}_{L}) \to K_{2}(\mathcal{O}_{L}/\mathfrak{p}_{L})\right).\]

\noindent For a product of fields $F_{x,y}$ at a singular point $x$, we define the group $K_{n}(F_{x,y})$ to be the product of the $K$-groups $K_{n}(F_{x,z})$ at the branches $z \in y(x)$.\\

We now mention some basic properties of these groups. For $n \geq 1$ and a discrete valuation field $L$ with residue field $\bar{L}$, there is the boundary homomorphism\[ \delta : K_{n}(L) \to K_{n-1}(\bar{L}).\]
For $n=2$ this can be explicitly calculated as \[\delta(\{\alpha, \beta\}) = (-1)^{v(\alpha)v(\beta)}\overline{\alpha^{v(\beta)}\beta^{-v(\alpha)}}.\]
See \cite{FV} chapter seven section two for details, and the next section on the higher tame symbol on an algebraic surface for calculations when $n = 3$. The boundary homomorphism enables us to investigate the relationship between the Milnor $K$-groups of a discrete valuation field and those of its residue field - in particular we will use the Bass-Tate theorem:
\begin{thm}\label{blochkato} 
Fix $n \geq 1$.  Let $F = E(X)$ and $v$ run through the discrete valuations of $F$ trivial on $E$, with $\delta_{v} : K_{n}(F_{v}) \to K_{n-1}(\bar{F_{v}})$ the boundary homomorphism for each $v$.  The sequence
\[ \begin{CD} 0 @>>> K_{n}(E) @>>> K_{n}(F) @>\oplus \delta_{V} >> \oplus_{v}K_{n-1}(\bar{F_{v}}) @>>> 0\end{CD}\]
is exact and splits. \end{thm}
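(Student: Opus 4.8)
The plan is to prove this by induction on $n$, following the classical Bass--Tate argument for the Milnor $K$-theory of a rational function field $F = E(X)$. The base case $n=1$ is the exact sequence
\[ 0 \to E^{\times} \to E(X)^{\times} \xrightarrow{\oplus v} \bigoplus_{v} \mathbb{Z} \to 0, \]
which is just the statement that a rational function on $\mathbb{P}^1_E$ with no zeros or poles is constant, together with the fact that the divisor-degree sequence splits (choose a rational point, or use the residue at infinity). For the inductive step, the key tool is a filtration of $K_n(E(X))$ by subgroups $L_d$ generated by symbols $\{f_1,\dots,f_n\}$ in which all the $f_i$ are polynomials of degree $\le d$; one shows $\bigcup_d L_d = K_n(E(X))$ and analyzes the successive quotients $L_d/L_{d-1}$ using the Euclidean algorithm in $E[X]$ to rewrite symbols, exactly as in Milnor's computation.

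First I would set up the map: for each monic irreducible $\pi \in E[X]$ (equivalently each closed point of $\mathbb{A}^1_E$, plus the point at infinity) we have the residue $\delta_v : K_n(E(X)) \to K_{n-1}(E(X)_v) = K_{n-1}(\overline{F_v})$, where $\overline{F_v}$ is the residue field $E[X]/(\pi)$ (or $E$ at infinity). The composite $K_n(E) \to K_n(E(X)) \xrightarrow{\oplus\delta_v}$ is zero since constants are units for every $v$, so we get the complex; injectivity of $K_n(E)\to K_n(E(X))$ follows because the valuation at infinity provides a specialization splitting $K_n(E(X)) \to K_n(E)$ (send $X \mapsto$ a transcendental specialization, or more precisely use the tame symbol at $\infty$ composed with a section). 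Surjectivity of $\oplus\delta_v$ and exactness in the middle are the heart of the matter, and here I would run the degree filtration: using $\{f, g\} = \{-f/g \cdot (\text{stuff}), \dots\}$-type Steinberg manipulations and division with remainder, every symbol can be pushed down in degree modulo symbols supported at lower-degree primes, and the associated graded identifies $L_d/L_{d-1}$ with $\bigoplus_{\deg \pi = d} K_{n-1}(E[X]/\pi)$ via $\delta$. The splitting then comes by patching together, over all $\pi$, sections built from the degree filtration — concretely, lifting a class in $K_{n-1}(E[X]/\pi)$ to a symbol at the generic point whose only other residues are at strictly smaller primes, and inducting down.

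The main obstacle I anticipate is the bookkeeping in the inductive step: verifying that the Euclidean-algorithm rewriting of symbols is compatible with the Steinberg relations so that $L_d/L_{d-1}$ is genuinely isomorphic (not just surjects onto, or is surjected onto by) $\bigoplus_{\deg\pi = d}K_{n-1}(E[X]/\pi)$, and that the boundary maps for distinct valuations interact correctly. This is precisely the content of Milnor's original argument, so rather than reproduce it I would cite it: the statement is Theorem~2.3 in Bass--Tate or Milnor's ``Algebraic $K$-theory and quadratic forms,'' and for the present purposes it suffices to note that $E$ being a finite field (or a function field of a curve over a finite field) poses no extra difficulty, since the argument is characteristic-free and uses only that $E[X]$ is a Euclidean domain. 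Indeed, in this paper the reference \cite{FV} already contains the needed version, so the cleanest route is to reduce to that reference and only indicate the splitting explicitly, via the residue at the place at infinity.
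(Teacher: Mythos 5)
Your proposal is correct and ends up exactly where the paper does: the paper's entire ``proof'' is the citation to \cite{FV}, 7.4.2, and your sketch is the standard Milnor/Bass--Tate degree-filtration argument lying behind that reference, with the same splitting via the place at infinity. One caution: that argument (and the classical statement) indexes the direct sum by the finite places (monic irreducible polynomials) only, reserving the place at infinity for the specialization/splitting map --- which is what your filtration computation of $L_{d}/L_{d-1}$ actually produces --- whereas if $v$ is allowed to run over \emph{all} valuations trivial on $E$, as in the statement as printed and in your base case, surjectivity already fails for $n=1$ (the image of the divisor map is the degree-zero divisors), so the point at infinity should be excluded from the sum.
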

\begin{proof} See \cite{FV}, 7.4.2. \end{proof}

Next, for $L/M$ a field extension of prime degree, we wish to define a map $N : K_{2}(L) \to K_{2}(M)$ to be the analogue of the norm map.  Following \cite[9.3]{FV}, $K_{2}(L)$ is generated by symbols $\{\alpha, \beta\}$ with $\alpha \in L$, $\beta \in M$.  So for $\gamma$ a symbol purely of this form, we can take $N(\gamma) = N(\{ \alpha, \beta\}) = \{ N_{L/M}(\alpha), \beta\}$ - where $N_{L/M}$ is the usual norm map $L \to M$ - and extend linearly.  This is independent of the choice of representative for $\gamma$. 

\begin{defn} $N: K_{2}(L) \to K_{2}(M)$ is called the norm map, or the transfer map. \end{defn}

We finally define a quotient group of the Milnor $K$-groups, which allows us to describe an \emph{injective} reciprocity map for higher local fields. For full details on the following definition, see \cite{F6}. Endow $K_{n}(F_{x,z})$ with the strongest topology such that negation and the symbol map $(F_{x,z}^{\times})^{n} \to K_{n}(F_{x,z})$ are sequentially continuous. 
\begin{defn}
Define the $n^{th}$ topological Milnor K-group, $K_{n}^{top}(F_{x,z})$, as the quotient of $K_{n}(F_{x,z})$ by the intersection of all its neighbourhoods of zero. \end{defn}
\noindent Fesenko proves that \[K_{n}^{top}(F_{x,z}) = K_{n}(F_{x,z})/\cap_{l\geq 1}lK_{n}(F_{x,z})\] in \cite{F6}.

As discussed in \cite[6]{IHLF}, the convergent sequences in the topological $K$-groups are the same as in the Milnor $K$-groups, and so a series converges in $K_{2}^{top}(F)$ if and only if its terms converge to zero.

The structure of the topological $K$-groups of a two dimensional local field can be described as follows.
\begin{thm}\label{topkgpstructure}   Let $F$ be a two-dimensional local field of positive characteristic, $u$, $t$ a system of parameters and $\alpha \in K_{2}^{top}(F)$.  Then $\alpha$ is a convergent product of symbols of the form:
\begin{enumerate}
\item{$\{u, t\}$;}
\item{$\{a, u\}$, $a \in \mathbb{F}_{q}^{\times}$;}
\item{ $\{a, t\}$, $a \in \mathbb{F}_{q}^{\times}$;}
\item{\[\prod_{j \geq N_{2}} \prod_{i \geq N_{1}(j)} \{1 + a_{i,j} u^{i} t^{j}, u\},\] $N_{2} \geq 0$, $N_{1} \geq 0$ if $N_{2} = 0$, $p \nmid j$, $a_{i,j}$ in a fixed basis of $\mathbb{F}_{q}/\mathbb{F}_{p}$.}
\item{ \[\prod_{j \geq N_{2}} \prod_{i \geq N_{1}(j)} \{ 1 + a_{i,j} u^{i} t^{j}, t \},\] $N_{2} \geq 0$, $N_{1} \geq 0 $ if $ N_{2} = 0$, $p \nmid i,j$, $a_{i,j}$ in a fixed basis of $\mathbb{F}_{q}/\mathbb{F}_{p}$.}
\end{enumerate}
In fact, these elements form a topological basis for $K_{2}^{top}(F)$.\end{thm}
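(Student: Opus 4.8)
The plan is to compute $K_{2}^{top}(F)$ by reducing, via multilinearity and the Steinberg relation, to the generators of types (1)--(5), then to control convergence with the stated criterion for convergent series, and finally to establish topological independence of the generators using the higher tame and Artin--Schreier--Witt pairings.

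First I would fix the system of parameters $u,t$ and use the topological direct product decomposition
\[F^{\times}\;\cong\;u^{\mathbb{Z}}\times t^{\mathbb{Z}}\times\mathbb{F}_{q}^{\times}\times V_{F},\]
with $V_{F}$ the group of principal units, together with the fact that every principal unit is a convergent product of factors $1+\theta u^{i}t^{j}$, $\theta\in\mathbb{F}_{q}$, the indices ranging over a region bounded below as in the statement ($j$ bounded below, and then $i$ bounded below in terms of $j$). Since the symbol map is sequentially continuous, it then suffices to analyse symbols $\{\alpha,\beta\}$ in which each entry is $u$, $t$, an element of $\mathbb{F}_{q}^{\times}$, or a factor $1+\theta u^{i}t^{j}$. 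Symbols $\{a,b\}$ with $a,b\in\mathbb{F}_{q}^{\times}$ vanish, since $K_{2}$ of a finite field is trivial; and expanding $\theta$ over a fixed $\mathbb{F}_{p}$-basis of $\mathbb{F}_{q}$, using $(1+x)^{m}\equiv 1+mx$ modulo terms of higher weight, lets me take the coefficients $a_{i,j}$ of the wild generators from that basis. What survives is $\{u,t\}$, $\{a,u\}$, $\{a,t\}$, the symbols $\{1+\theta u^{i}t^{j},u\}$ and $\{1+\theta u^{i}t^{j},t\}$, and the double principal-unit symbols $\{1+\theta u^{i}t^{j},1+\theta'u^{k}t^{l}\}$.

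Second, I would eliminate everything not on the list, following the relations of \cite{F6}. Applying the Steinberg identity to $1+\theta u^{i}t^{j}$ and $-\theta u^{i}t^{j}$ produces a relation linking $\{1+\theta u^{i}t^{j},u\}$, $\{1+\theta u^{i}t^{j},t\}$ and the ``constant'' symbols $\{1+\theta u^{i}t^{j},c\}$ with $c\in\mathbb{F}_{q}^{\times}$; these last are $(q-1)$-torsion, and a boundary-map and residue-field argument shows they are already accounted for by types (1)--(3) (and in fact vanish once $j\ge 1$). Feeding in the characteristic-$p$ identity $(1+x)^{p}=1+x^{p}$ then shows that $\{1+\theta u^{i}t^{j},u\}$ with $p\mid j$, and $\{1+\theta u^{i}t^{j},t\}$ with $p\mid i$ or $p\mid j$, each rewrite as convergent products of listed generators of strictly larger weight, so a successive-approximation argument removes them; applying the same relations to the second argument reduces every double principal-unit symbol to a combination of symbols $\{1+\theta u^{i}t^{j},u\}$ and $\{1+\theta u^{i}t^{j},t\}$ modulo higher weight. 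For convergence, I note that in each displayed product the weight $(i,j)$ tends to infinity, so the symbols tend to $0$ in $K_{2}^{top}(F)$; by the recalled criterion (a series converges in $K_{2}^{top}(F)$ precisely when its terms tend to $0$) every such product converges, and the reductions above exhibit an arbitrary $\alpha\in K_{2}^{top}(F)$ as one of them.

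Finally, for the ``topological basis'' assertion I must rule out nontrivial relations, i.e.\ show that no nonzero convergent product of the listed symbols is trivial. Here the explicit pairings do the work: the higher tame symbol of Section~2.3 detects $\{u,t\}$ and separates the finite-order symbols $\{a,u\}$ and $\{a,t\}$, while the Artin--Schreier--Witt pairing of Section~2.4, evaluated against suitably chosen Witt vectors, distinguishes the wild generators (4) and (5) by their coefficient $a_{i,j}$ and their weight $(i,j)$; non-degeneracy of these pairings forces every coefficient in a vanishing product to be zero. I expect this independence step --- together with, in the second step, the precise bookkeeping of which weights $(i,j)$ persist after the characteristic-$p$ reductions --- to be the main obstacle; the remainder is multilinear manipulation and weight estimates.
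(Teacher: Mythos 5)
The paper offers no argument for this statement---it simply cites Parshin \cite{P4}, Section 2, Proposition 1---so your proposal has to stand on its own, and it has a genuine gap at the elimination step. Your first step (reduce, via the decomposition $F^{\times}\cong u^{\mathbb{Z}}\times t^{\mathbb{Z}}\times\mathbb{F}_{q}^{\times}\times V_{F}$ and sequential continuity of the symbol map, to symbols in $u$, $t$, constants and factors $1+\theta u^{i}t^{j}$) and your convergence remark are fine, and the double principal-unit symbols are handled exactly as in the paper's appendix (\ref{Kgpcalc3}). The problem is the claim that $\{1+\theta u^{i}t^{j},t\}$ with $p\mid j$ ``rewrites as a convergent product of listed generators of strictly larger weight.'' This fails when $p\nmid i$: the identity $(1+x)^{p}=1+x^{p}$ does not apply, since $1+\theta u^{i}t^{j}$ is not a $p$-th power up to higher weight, and the Steinberg relation---which via $\{1+x,-x\}=1$ and the prime-to-$p$ divisibility of principal units gives $i\{1+x,u\}+j\{1+x,t\}=1$ (written multiplicatively)---only lets you solve for the symbol whose exponent is prime to $p$; with $p\mid j$, $p\nmid i$ it eliminates $\{1+x,u\}$ in favour of $\{1+x,t\}$, never the reverse. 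These symbols are genuinely new topological generators: for instance $\{1+\theta u t^{p},t\}$ pairs nontrivially with $cu^{-1}t^{-p}$ under the Witt pairing (a direct residue computation), whereas $\{u,t\}$, $\{a,u\}$, $\{a,t\}$, the symbols $\{1+au^{i}t^{j},u\}$ with $p\nmid j$ and $\{1+au^{i}t^{j},t\}$ with $p\nmid i$, $p\nmid j$ all pair to zero with it; since the pairing is continuous, no convergent product of the symbols you retain can equal it. So your successive-approximation step has nothing to reduce this case to, and the correct bookkeeping (which you yourself flagged as the crux) is: one wild generator for each $(\theta,i,j)$ with $p\nmid\gcd(i,j)$, say $\{1+\theta u^{i}t^{j},t\}$ when $p\nmid i$ and $\{1+\theta u^{i}t^{j},u\}$ when $p\mid i$, $p\nmid j$, with $p\mid\gcd(i,j)$ absorbed by the Frobenius identity; this is also the form in which the theorem is actually used later in the paper (see the conditions in \ref{relativekgpcurves}).

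A secondary caution concerns your independence step. Invoking ``non-degeneracy'' of the higher tame and Artin--Schreier--Witt pairings as a black box risks circularity, because in Parshin's development (and in this paper) those duality statements are established downstream of, and by means of, this structure theorem. What actually works, and what you should say, is the weaker direct statement: evaluate the pairings on explicitly chosen test data ($u^{-i}t^{-j}$-type Witt vectors for the wild generators, suitable elements of $F^{\times}$ for the tame ones) and check that a vanishing convergent product forces every exponent and every coefficient $a_{i,j}$ to vanish; this needs only the residue formulas, not the duality theorems.
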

\begin{proof} See \cite{P4}, section 2 proposition 1. \end{proof}

The boundary map $\delta,$ and the norm map $ N: K_{2}^{top}(L) \to K_{2}^{top}(F)$ when restricted to the topological $K$-groups are well-defined, which comes from the fact that $K_{2}^{top}(L) = K_{2}(L)/\cap_{l \geq 1}K_{2}(L)$ - see \cite[4.8]{F6} for details.

\subsection{Witt Vectors and Duality}
For a field $\mathbb{F}$ of positive characteristic, let $W_{m}(\mathbb{F})$ denote the Witt vectors of length $m$ with entries in $\mathbb{F}$ and \[W(\mathbb{F}) = \varprojlim W_{m}(\mathbb{F})\] the Witt ring of $\mathbb{F}$ - see \cite{JPS}. The projective limit is taken with respect to the maps $V: W_{m-1}(\mathbb{F}) \to W_{m}(\mathbb{F})$ where $V(w_{0}, \dots, w_{m-2}) = (0, w_{0}, \dots,w_{m-2})$.\\

We recall the definition of the continuous differential forms. For a pair $x \in y$, let $\mathfrak{m}_{x,y}$ be the maximal ideal of $\mathcal{O}_{x,y}$, generated by $t_{y}$ and $u_{x,y}$. Let $\phi: \hat{\mathcal{O}}_{X,x} \to \bar{F}_{x,y}$ be the quotient map for the ideal $t_{y}\hat{\mathcal{O}}_{X,x}$. Define the subgroups $P_{i}$ and $T_{j}$ in $\omega_{F_{x,y}/\mathbb{F}_{q}}$ to be generated by elements $\phi^{-1}(\mathfrak{m}_{x,y})^{i}d\hat{\mathcal{O}}_{X,x}$ and $\mathfrak{m}_{x,y}^{j}d\mathcal{O}_{x,y}$ respectively. Then define

\[\Omega^{1,cts}_{F_{x,y}/\mathbb{F}_{q}} : = \Omega^{1}_{F_{x,y}/\mathbb{F}_{q}}/\left(F_{x,y} . \cap_{i,j \geq 0} (P_{i}+T_{j})\right),\]
and \[\Omega^{2, cts}_{F_{x,y}/\mathbb{F}_{q}} : = \Omega^{1, cts}_{F_{x,y}/\mathbb{F}_{q}} \wedge \Omega^{1, cts}_{F_{x,y}/\mathbb{F}_{q}}.\]
\\

\noindent Next we recall the definition of the residue homomorphism. 
\begin{defn}
Let $F_{x,z}$ a two-dimensional local field of positive characteristic, and fix an isomorphism $F_{x,z} \cong k_{z}(x)((t_{1}))((t_{2}))$, where $k_{z}(x)$ has size $q$. Define the residue homomorphism
\[\mathrm{res}_{F_{x,z}}: \Omega^{2, cts}_{F_{x,z}/k_{z}(x)} \to \mathbb{F}_{q}\]
by $\mathrm{res}_{F_{x,z}}(\omega) = \mathrm{Tr}_{k_{z}(x)/\mathbb{F}_{q}}a_{-1,-1}$ where
\[\omega = \sum a_{a_{1},a_{2}}t_{1}^{a_{1}}t_{2}^{a_{2}}dt_{1}\wedge dt_{2}.\]
\end{defn}
The residue map is independent of the choice of local parameters $t_{1}$ and $t_{2}$, see \cite{P1} section one.\\
Now let $A$ be the fraction field of the ring of Witt vectors of $\mathbb{F}_{q}$ and $L = A((t_{1}))((t_{2}))$. This lift to characteristic zero is necessary to define the following auxiliary co-ordinates and polynomials, but notice that in the end the formulae will be `denominator free', so the reduction back down to positive characteristic is well-defined.\\
Let $x=(x_{0}, x_{1}, \dots ) \in L$, and for each $m \in \mathbb{Z}$ introduce the auxiliary co-ordinates
\[x(m) = x_{0}^{p^{m}} + px_{1}^{p^{m-1}} + \dots + p^{m}x_{m}\]
and the polynomials $P_{m}(X_{0}, X_{1}, \dots , X_{m}) \in \mathbb{Z}[p^{-1}][X_{0}][X_{1}]\dots [X_{m}]$ such that $P_{m}(x(0), x(1), \dots, x(m)) = x_{m}$.
\begin{defn}
Let $f_{1}$, $f_{2} \in F_{x,z}^{\times}$, $g \in W(F_{x,z})$ and $\bar{g} \in W(L)$ an element such that $\bar{g}$ mod $p$ $= g$. Define the Witt pairing by
\[ (f_{1}, f_{2}|g]_{x,z} = (\mathrm{Tr}_{\mathbb{F}_{q}/\mathbb{F}_{p}}w_{i})_{i \geq 0} \in W(\mathbb{F}_{p})\]
where for each $i \in \mathbb{Z}$, 
\[w_{i} = P_{i}\left(\mathrm{res}_{L}\left(\bar{g}(0)\frac{df_{1}}{f_{1}}\wedge \frac{df_{2}}{f_{2}}\right), \dots, \mathrm{res}_{L}\left(\bar{g}(i)\frac{df_{1}}{f_{1}}\wedge \frac{df_{2}}{f_{2}}\right)\right) \text{ mod } p \] where the $\bar{g}(j)$ are the auxiliary co-ordinates for the Witt vector $\bar{g}$. Then for a curve $y$ with branches $z$, define
\[( \ , \ | \ ]_{x,y} = \sum_{z \in y(x)}( \ , \ | \ ]_{x,z}.\]
\end{defn}

\begin{prop}\label{wittproperties}
The Witt pairing satisfies the following properties:
\begin{enumerate}
\item{$(f_{1}.f_{1}', f_{2}|g]_{x,y} = (f_{1},f_{2}|g]_{x,y} + (f_{1}',f_{2}|g]_{x,y}$ and $(f_{1},f_{2}.f_{2}'|g]_{x,y} = (f_{1},f_{2}|g]_{x,y} + (f_{1},f_{2}'|g]_{x,y}$;}
\item{$(f_{1},f_{2}|g+h]_{x,y} = (f_{1},f_{2}|g]_{x,y} + (f_{1},f_{2}|h]_{x,y}$;}
\item{$(f_{1}, 1-f_{1}|g]_{x,y} = 0$;}
\item{$(f_{1},f_{2}|g]_{x,y} = (w_{0}, w_{1}, \dots) \implies (f_{1},f_{2}|g^{p}]_{x,y} = (w_{0}^{p}, w_{1}^{p}, \dots);$}
\item{$(f_{1},f_{2}|g]_{x,y}$ is continuous in each argument;}
\item{$(f_{1},f_{2}| g_{0},  \dots, g_{m-1}]_{x,y} = (w_{0}, \dots, w_{m-1}) \implies (f_{1}, f_{2}|g_{0}, \dots , g_{m-2}]_{x,y} = (w_{0}, \dots, w_{m-2});$}
\item{$(f_{1}, f_{2}| 0, g_{1}, \dots , g_{m-1}]_{x,y} = (0, (f_{1}, f_{2}| g_{1}, \dots, g_{m-1}]_{x,y})$.}
\end{enumerate}\end{prop}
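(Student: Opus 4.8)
The plan is to reduce everything to the local Witt pairing $(\ ,\ |\ ]_{x,z}$ on a single two-dimensional local field $F_{x,z}$, prove each property there, and then sum over the branches $z \in y(x)$. Since $(\ ,\ |\ ]_{x,y} = \sum_{z \in y(x)} (\ ,\ |\ ]_{x,z}$ and $W(\mathbb{F}_p)$ is abelian, every bilinearity/additivity assertion is preserved under a finite sum; so it suffices to treat one $z$. For the local statement the strategy is to push each identity back through the defining formula to a corresponding identity about the residue map $\mathrm{res}_L$ applied to continuous differential $2$-forms $\bar g(j)\,\tfrac{df_1}{f_1}\wedge\tfrac{df_2}{f_2}$, using that the Witt polynomials $P_i$ and the auxiliary coordinates $x(m)$ are fixed universal polynomials.

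First I would record the elementary input: the map $f \mapsto \tfrac{df}{f}$ is a homomorphism $F_{x,z}^\times \to \Omega^{1,cts}$, i.e. $\tfrac{d(f_1 f_1')}{f_1 f_1'} = \tfrac{df_1}{f_1} + \tfrac{df_1'}{f_1'}$; $\mathrm{res}_L$ is additive; and wedging is bilinear. Property (1) then follows because $\mathrm{res}_L(\bar g(i)\,\tfrac{d(f_1 f_1')}{f_1 f_1'}\wedge\tfrac{df_2}{f_2})$ splits as a sum of two residues — but one must be slightly careful, since the $P_i$ are not linear; the correct route is to observe that the \emph{collection} of auxiliary coordinates $x(m)$ is additive in the sense of Witt addition, so that passing from the residues back through the $P_i$ reconstitutes exactly the Witt sum $(f_1,f_2|g] + (f_1',f_2|g]$. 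Property (2) is analogous, using additivity of $\bar g \mapsto \bar g(j)$ in the lift (one chooses $\overline{g+h} = \bar g + \bar h$). Property (4): raising $g$ to the $p$-th power shifts the auxiliary coordinates $g(j) \mapsto g(j+1)$-style, which by the standard Witt-vector identities turns $(w_0,w_1,\dots)$ into the Verschiebung/Frobenius-twisted vector $(w_0^p,w_1^p,\dots)$; this is a formal manipulation with $P_i$ once the shift on coordinates is pinned down. Properties (6) and (7) are truncation/Verschiebung compatibilities: (6) says the pairing commutes with the projections $W_m \to W_{m-1}$, which is immediate because the first $m-1$ components $w_0,\dots,w_{m-2}$ are computed by $P_0,\dots,P_{m-2}$, which only involve $\bar g(0),\dots,\bar g(m-2)$ and hence are unchanged by dropping $g_{m-1}$; and (7) is the matching statement for the Verschiebung $V$, using $V(0,g_1,\dots) $ pattern of the auxiliary coordinates. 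Property (5), continuity in each argument, follows from continuity of $f \mapsto \tfrac{df}{f}$ and of $\mathrm{res}_L$ for the topology on $\Omega^{2,cts}$ described above, together with the fact (already quoted) that a series converges in the relevant topology iff its terms tend to zero; one checks that a convergent sequence $f_1^{(n)} \to f_1$ forces $\tfrac{df_1^{(n)}}{f_1^{(n)}} \to \tfrac{df_1}{f_1}$ and hence each $w_i$ stabilizes.

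The genuinely substantive point is property (3), the Steinberg relation $(f_1, 1-f_1 | g]_{x,z} = 0$. Here bilinearity does not help; one must show $\mathrm{res}_L\bigl(\bar g(j)\,\tfrac{df_1}{f_1}\wedge\tfrac{d(1-f_1)}{1-f_1}\bigr) = 0$ for all $j$, and then the polynomials $P_i$ (which have no constant term) send the all-zero tuple to $0$. The vanishing of this residue is the two-dimensional residue-form of the classical fact that $\tfrac{df}{f}\wedge\tfrac{d(1-f)}{1-f}$ has trivial residue — essentially $d\log f \wedge d\log(1-f) = d\bigl(\text{something}\bigr)$ locally, combined with the residue of an exact form vanishing, after multiplying by the Witt-coordinate function $\bar g(j)$ and using that $\bar g(j)$ can be absorbed; the care needed is that $\bar g(j)$ is a Witt coordinate, not an arbitrary function, and one must check the manipulation is compatible with the lift $L = A((t_1))((t_2))$ and then descends mod $p$ (this is the ``denominator-free'' remark in the text). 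I expect this Steinberg vanishing to be the main obstacle; everything else is bookkeeping with universal Witt polynomials. If a clean direct computation is awkward, an alternative is to invoke the reciprocity laws and symbol properties established in \cite{ME} and the local results of \cite{P4}, from which (3) and indeed (1)--(2) follow formally, since the pairing factors through $K_2^{top}(F_{x,z})$.
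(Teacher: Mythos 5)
Your proposal inverts the division of labour in the paper, and in doing so leaves a gap at exactly the point the paper's proof is about. The paper does not reprove the single-field statements at all: it quotes Parshin \cite{P4}, 3.3.6, for a single two-dimensional local field, and the entire content of its proof is the passage to a singular point, where $(\ ,\ |\ ]_{x,y}=\sum_{z\in y(x)}(\ ,\ |\ ]_{x,z}$. Your reduction of that passage to ``$W(\mathbb{F}_p)$ is abelian, so every bilinearity/additivity assertion survives a finite sum'' covers (1), (2), (3) and (5), but not (4), (6), (7): those are not additivity statements, they assert a specific componentwise shape of the value, and the sum over branches is Witt-vector addition (given by the universal addition polynomials), not componentwise addition. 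To transfer (4) from each branch to the sum you need that the componentwise $p$-th power map is additive for Witt addition in characteristic $p$; to transfer (7) you need that Verschiebung is additive, equivalently that the $n$-th component of a Witt sum depends only on the components of index $\le n$ of the summands, so that a common leading zero survives the sum; (6) likewise needs compatibility of the truncation with Witt sums. These standard but necessary observations are precisely what the paper's proof consists of, and your proposal never supplies them.

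On the local side, what you set out to prove is exactly what the paper (legitimately) cites, and your sketch does not complete the two properties you identify as substantive. For (3) the argument is in fact simpler than you suggest: since $d(1-f_1)=-df_1$, the form $\bar g(j)\,\frac{df_1}{f_1}\wedge\frac{d(1-f_1)}{1-f_1}$ vanishes identically (no exactness or ``residue of an exact form'' argument is needed, only care with the lift), while for (4) the ``shift of auxiliary coordinates'' is asserted rather than verified. Your stated fallback --- invoking \cite{P4} for the local results --- is fine, but then the proposition reduces to the branch-summation step, which is the part your argument does not actually justify.
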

\begin{proof} In \cite{P4}, 3.3.6, Parshin proves this for a single higher local field. We will prove it here for the case where $x$ is a singular point of $y$ and so we must sum the pairings over each branch of $y$ at $x$. \\
Property 3 follows straight away, and properties 1 and 2 follow from the fact that trace distributes over addition.\\
Property 4 is true as
\[(f_{1},f_{2}|g^{p}]_{x,y} = \sum_{z \in y(x)} (f_{1},f_{2}|g^{p}]_{x,z} = \sum_{z \in y(x)} (w_{0,x,z}^{p},w_{1,x,z}^{p}, \dots) \]\[= \left( \sum_{z \in y(x)} w_{0,x,z}^{p}, \dots\right) = \left( \left(\sum_{z \in y(x)}w_{0,x,z}\right)^{p}, \dots \right) = (w_{0}^{p}, w_{1}^{p}, \dots)\]
where equality holds as the sum of Witt vectors is given by polynomials in their coefficients, and when taking powers of $p$ we just raise each coefficient to the power $p$.\\
Property 5 follows from the continuity of trace and addition. 7 is true because when summing Witt vectors, the $n^{th}$ term depends linearly only on the $0^{th}, \dots (n-1)^{th}$ terms of the vectors being summed: so if the $0^{th}$ term is $0$ for all $z \in y(x)$ then it will be in the sum also.\\
Finally, property 6 follows straight from \cite{P4}, and the fact that Witt vector summation depends only on lower terms as mentioned above.

\end{proof}

Properties one and three show that the Witt symbol is a symbol on $K_{2}(F_{x,y}) \times F_{x,y}^{\times}$.\\

In his extension of Kawada and Satake's local theory, Parshin proves the following proposition.

\begin{prop}\label{} For an n-dimensional local field $L$ of characteristic $p$, the symbol $( \ | \ ]_{L}$ defines a non-degenerate pairing
\[( \ | \ ]_{L} : K_{n}^{top}(L)/p^{m}K_{n}^{top}(L) \times W_{m}(L)/(\mathrm{Frob} - 1)W_{m}(L) \to W_{m}(\mathbb{F}_{q})\]
where $\mathrm{Frob}$ is the Frobenius map.\end{prop}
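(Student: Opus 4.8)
The plan is to follow Parshin's strategy: reduce both arguments of the pairing to explicit topological generators, compute the symbol on those generators, and deduce non-degeneracy from the resulting triangular shape of the pairing matrix. I describe the argument for $n=2$, writing $L\cong\mathbb{F}_q((u))((t))$; the general case is handled by the same computation with the $n$-dimensional analogue of Theorem~\ref{topkgpstructure}, or by induction on $n$.

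First I would check that the symbol descends to the stated quotients. Additivity in the first argument (Proposition~\ref{wittproperties}(1)), together with the fact that $p^m$ kills $W_m(\mathbb{F}_q)$, shows that $p^mK_n^{top}(L)$ pairs to zero, so the left slot factors through $K_n^{top}(L)/p^mK_n^{top}(L)$. For the right slot one must show $(f_1,f_2\mid(\mathrm{Frob}-1)h]_L=0$; by additivity in the Witt argument (Proposition~\ref{wittproperties}(2)) this equals $(f_1,f_2\mid\mathrm{Frob}(h)]_L-(f_1,f_2\mid h]_L$, and the vanishing is the Artin--Schreier--Witt relation for the Witt symbol, which one extracts from the residue formula using $\mathrm{res}_L\circ d=0$ and the ghost-component description of the Witt-vector Frobenius, invoking Proposition~\ref{wittproperties}(4),(6),(7) to reconcile the length-$m$ truncations. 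Continuity of the symbol in each argument (Proposition~\ref{wittproperties}(5)) then guarantees that the induced pairing on the topological quotients is well defined and that it suffices to test non-degeneracy on topological generating sets.

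Next I would fix those generating sets. By Theorem~\ref{topkgpstructure}, $K_2^{top}(L)$ is topologically generated by $\{u,t\}$, the tame symbols $\{a,u\}$ and $\{a,t\}$ ($a\in\mathbb{F}_q^\times$), and the wild symbols $\{1+au^it^j,u\}$ and $\{1+au^it^j,t\}$; modulo $p^m$ the tame symbols disappear (they have order dividing $q-1$), so $K_2^{top}(L)/p^mK_2^{top}(L)$ is topologically generated by $\{u,t\}$ and the wild symbols. On the other side I would use the Artin--Schreier--Witt description of $W_m(L)/(\mathrm{Frob}-1)W_m(L)$: every class has a representative which is a convergent sum of a Witt vector supported on negative monomials $au^{-i}t^{-j}$ --- normalised so that the exponents satisfy the same non-divisibility conditions as on the $K$-theory side --- together with one ``unramified'' Witt vector representing the image of $W_m(\mathbb{F}_q)$. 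The substantive step is to evaluate $(u,t\mid g]_L$, $(1+au^it^j,u\mid g]_L$ and $(1+au^it^j,t\mid g]_L$ on these representatives; through the polynomials $P_m$ and the auxiliary coordinates this reduces to computing residues of $2$-forms $a\,u^kt^l\,\frac{du}{u}\wedge\frac{dt}{t}$, which are nonzero precisely when $k=l=0$. One finds that $\{u,t\}$ pairs perfectly with the unramified generator and trivially with every wild Witt vector, while the wild symbol attached to $(i,j)$ pairs nontrivially only with the monomial Witt vector of matching exponents, up to terms of strictly higher order in the $(u,t)$-adic filtration. Ordering these computations by the filtration --- first by $t$-valuation, then by $u$-valuation --- exhibits the pairing as block-triangular with invertible diagonal blocks, hence non-degenerate on the generators; since the topological completion of a non-degenerate pairing is again non-degenerate, the proposition follows.

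The step I expect to be the main obstacle is exactly the matching of filtrations behind this triangularity. One must simultaneously control the $(u,t)$-adic filtration on $K_2^{top}(L)/p^mK_2^{top}(L)$, the pole-order filtration on $W_m(L)/(\mathrm{Frob}-1)W_m(L)$, and the fact that $\mathrm{Frob}-1$ does not preserve the naive filtration --- the Witt-vector Frobenius multiplies valuations by $p$ --- so that choosing clean representatives for $W_m(L)/(\mathrm{Frob}-1)W_m(L)$ and proving they span the associated graded is genuinely delicate. This is where the conditions $p\nmid i$, $p\nmid j$ in Theorem~\ref{topkgpstructure} enter: they single out the exponents for which the matching monomial Witt vectors survive modulo the image of $\mathrm{Frob}-1$. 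A secondary technical point is keeping track of convergence, so that the symbol of a convergent product against a convergent Witt sum is the convergent sum of the individual symbols --- guaranteed formally by Proposition~\ref{wittproperties}(5), but needing care when one passes to the graded pieces.
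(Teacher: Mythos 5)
The paper gives no proof of this proposition beyond the citation to \cite[3.3.7]{P4}, and your sketch is essentially a reconstruction of that cited argument: reduction to the topological basis of Theorem \ref{topkgpstructure}, normal forms for Witt vectors modulo $(\mathrm{Frob}-1)$, residue computations on matched monomials, and the filtration triangularity that the paper itself redeploys in its semi-global duality lemmas (\ref{structuremodfrob}--\ref{dualitytwo} and \ref{firstcalcpoints}--\ref{dualitytwopoints}), so in substance you are following the same route as the source the paper relies on. The one phrase you should repair is the closing appeal to ``the topological completion of a non-degenerate pairing is again non-degenerate,'' which is not a valid general principle; the correct deduction is the lowest-term argument your own setup already supplies --- a nonzero class has a unique convergent expansion in the chosen generators, and pairing it against the dual generator of its lowest step in the $(u,t)$-adic filtration is nonzero because all strictly higher terms pair to zero (the analogue of the $k+l>0$ vanishing in Lemma \ref{firstcalculation}), with the uniqueness of the expansions (the topological-basis statement in Theorem \ref{topkgpstructure} and the normalised Witt representatives) doing the real work.
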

\begin{proof} See \cite[3.3.7]{P4}. \end{proof}
Let $\mathfrak{W}(L) = \varprojlim W_{m}(L)/($Frob$ - 1)W_{m}(L)$ be the projective limit with respect to the mappings $V: (y_{0}, \dots, y_{m-1}) \mapsto (0, y_{0}, \dots, y_{m-1})$.  Then  following Kawada and Satake's argument from \cite[Chapter 2]{KS} gives the pairing
\[K_{n}^{top}(L) \times \mathfrak{W}(L) \to \mathbb{Q}/\mathbb{Z}\]
which is non-degenerate in the second argument.  The kernel with respect to the first argument is $K_{n}^{top}(L)_{tors}$, see \cite[3.3]{P4}.
\\
This section is concluded with a lemma describing some properties of the residue map.
\begin{lemma}\label{residueproperties}  Let $F_{x,z}$ be a two-dimensional local field of positive characteristic over $\mathbb{F}_{q}$, and $t_{z}$ a generator of the maximal ideal of $\mathcal{O}_{F_{x,z}}$.  The residue map $\mathrm{res}_{x,z}$ satisfies:
\begin{enumerate}
\item{ $\mathrm{res}_{x,z}(\omega) = 0 \text{ for all }\omega \in \Omega^{2, cts}_{\mathcal{O}_{F_{x,z}}/\mathbb{F}_{q}}$.}
\item{$\mathrm{res}_{x,z}\left(\frac{dx}{x} \wedge \frac{dt_{z}}{t_{z}}\right) = \mathrm{res}_{\overline{F}_{x,z}}\left( \frac{d\bar{x}}{\bar{x}}\right)$ for all $ x \in \mathcal{O}^{\times}_{F_{x,z}}$.}\end{enumerate}\end{lemma}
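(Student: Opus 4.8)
The plan is to reduce both parts to an explicit coordinate computation in $F_{x,z}$. I would fix an isomorphism $F_{x,z}\cong k_z(x)((u_{x,z}))((t_z))$ with $t_z$ the given uniformizer, so that $\mathcal{O}_{F_{x,z}}=k_z(x)((u_{x,z}))[[t_z]]$, the maximal ideal is $t_z\mathcal{O}_{F_{x,z}}$, and the residue field is $\overline{F}_{x,z}=k_z(x)((u_{x,z}))$. Since $k_z(x)$ is finite, hence perfect, $\Omega^1_{k_z(x)/\mathbb{F}_q}=0$; therefore $\Omega^{1,cts}_{F_{x,z}/\mathbb{F}_q}$ is free of rank two on $du_{x,z},dt_z$, so $\Omega^{2,cts}_{F_{x,z}/\mathbb{F}_q}=F_{x,z}\cdot(du_{x,z}\wedge dt_z)$, and — checking on monomials $c\,u_{x,z}^{j}t_z^{i}$ with $c\in k_z(x)$, $i\ge 0$ — one has $dg\in\mathcal{O}_{F_{x,z}}\,du_{x,z}+\mathcal{O}_{F_{x,z}}\,dt_z$ for every $g\in\mathcal{O}_{F_{x,z}}$. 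Consequently the image of $\Omega^{2,cts}_{\mathcal{O}_{F_{x,z}}/\mathbb{F}_q}$ in $\Omega^{2,cts}_{F_{x,z}/\mathbb{F}_q}$ lies in $\mathcal{O}_{F_{x,z}}\cdot(du_{x,z}\wedge dt_z)$.

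For part (1) I would write such an $\omega$ as $g\,du_{x,z}\wedge dt_z$ with $g\in\mathcal{O}_{F_{x,z}}$; since $g$ involves no negative power of $t_z$, its $u_{x,z}^{-1}t_z^{-1}$-coefficient vanishes, so $\mathrm{res}_{x,z}(\omega)=0$ straight from the definition of the residue. For part (2), given a unit $x\in\mathcal{O}^\times_{F_{x,z}}$, note $\tfrac{dx}{x}\in\mathcal{O}_{F_{x,z}}\,du_{x,z}+\mathcal{O}_{F_{x,z}}\,dt_z$ (as $dx$ lies there and $x^{-1}\in\mathcal{O}_{F_{x,z}}$), say $\tfrac{dx}{x}=a\,du_{x,z}+b\,dt_z$ with $a,b\in\mathcal{O}_{F_{x,z}}$. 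Reducing modulo $t_z$ — applying $\Omega^{1,cts}_{\mathcal{O}_{F_{x,z}}/\mathbb{F}_q}\to\Omega^{1,cts}_{\overline{F}_{x,z}/\mathbb{F}_q}$, which kills $dt_z$ and reduces coefficients — and using functoriality of the logarithmic derivative, one finds $\tfrac{d\bar x}{\bar x}=\bar a\,d\bar u_{x,z}$, so $\bar a$ is exactly the coefficient of $\tfrac{d\bar x}{\bar x}$. Then $\tfrac{dx}{x}\wedge\tfrac{dt_z}{t_z}=(a/t_z)\,du_{x,z}\wedge dt_z$, and the $u_{x,z}^{-1}t_z^{-1}$-coefficient of $a/t_z$ is the $u_{x,z}^{-1}t_z^{0}$-coefficient of $a$, which — since $a\in\mathcal{O}_{F_{x,z}}$ — is the $\bar u_{x,z}^{-1}$-coefficient of $\bar a$. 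Applying $\mathrm{Tr}_{k_z(x)/\mathbb{F}_q}$ then yields precisely $\mathrm{res}_{\overline{F}_{x,z}}\!\left(\tfrac{d\bar x}{\bar x}\right)$.

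The only delicate point — and the step I expect to be the main obstacle — is the interaction with the \emph{continuous} differential modules: one must know that $\mathcal{O}_{F_{x,z}}\cdot(du_{x,z}\wedge dt_z)$ is closed in $\Omega^{2,cts}_{F_{x,z}/\mathbb{F}_q}$ and that $\mathrm{res}_{x,z}$ is well defined and continuous, so that the passage from the algebraic generators $g_0\,dg_1\wedge dg_2$ to all of $\Omega^{2,cts}_{\mathcal{O}_{F_{x,z}}/\mathbb{F}_q}$ in part (1), and the convergent expansion of $x^{-1}$ for a general unit in part (2), are legitimate. These should follow from the construction of $\Omega^{*,cts}$ and of the residue map recalled above (cf.\ \cite{P1}, \cite{F6}), but deserve to be spelled out; the remaining verifications — the monomial computation of $d$ on $\mathcal{O}_{F_{x,z}}$ and the coefficient bookkeeping — are routine.
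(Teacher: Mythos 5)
Your proposal is correct, and for part (1) it is essentially the paper's argument: both proofs first show that (the separated/continuous) $\Omega^{1}$ of $\mathcal{O}_{F_{x,z}}$ is generated by the two local parameters' differentials, so that any $\omega\in\Omega^{2,cts}_{\mathcal{O}_{F_{x,z}}/\mathbb{F}_{q}}$ is $a\,du_{x,z}\wedge dt_{z}$ with $a\in\mathcal{O}_{F_{x,z}}$, and then observe that the $u_{x,z}^{-1}t_{z}^{-1}$-coefficient vanishes; your appeal to perfectness of $k_{z}(x)$ and the monomial check is the same computation the paper does via truncation modulo $t_{1}^{n+1}t_{2}^{n+1}$ and the separated quotient. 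For part (2) you take a genuinely different, and in fact tidier, route: you decompose the form additively, writing $\frac{dx}{x}=a\,du_{x,z}+b\,dt_{z}$ with $a,b\in\mathcal{O}_{F_{x,z}}$, identify $\bar a$ with the coefficient of $\frac{d\bar x}{\bar x}$ by reducing modulo $t_{z}$, and match the $u_{x,z}^{-1}t_{z}^{-1}$-coefficient of $a/t_{z}$ with the $u_{x,z}^{-1}$-coefficient of $\bar a$; the paper instead decomposes the unit $x$ multiplicatively, first treating a principal unit $x=1+at_{z}$ (where both sides vanish, using part (1)) and then a general unit written as $x=\bar x+bt_{z}$, computing the residue by expanding $(\bar x+bt_{z})^{-1}$. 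The two arguments prove the same identity at the same level of rigor; yours avoids the case split and the series expansion of the inverse, at the cost of invoking functoriality of $\frac{dx}{x}$ under reduction modulo $t_{z}$, while the paper's multiplicative splitting is the form in which the lemma is actually applied later (units are decomposed into residue representatives times principal units). The caveat you flag about well-definedness and continuity of the residue on the continuous differential modules is not handled more explicitly in the paper either, which simply cites Parshin for independence of choices, so it does not constitute a gap relative to the paper's own standard.
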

\begin{proof}

\begin{enumerate}
\item{ Fix an isomorphism $F_{x,z} \cong \mathbb{F}_{q}((t_{1}))((t_{2}))$ and let $f \in \mathcal{O}_{F_{x,z}}$. Similarly to lemma 2.8 in \cite{Mor3}, we may write $f = \sum_{i,j=0}^{n}a_{i,j}t_{1}^{i}t_{2}^{j} + gt_{1}^{n+1}t_{2}^{n+1}$ for any integer $n$ and some $g \in \mathcal{O}_{F_{x,z}}$. Applying the universal derivation $d: \mathcal{O}_{F_{x,z}}\to \Omega_{\mathcal{O}_{F_{x,z}}/\mathbb{F}_{q}}^{1}
$, we have
\[ df = \sum_{i,j=0}^{n} a_{i,j}(it_{1}^{i-1}t_{2}^{j}dt_{1}+jt_{1}^{i}t_{2}^{j-1}dt_{2})\] 
\[ \ \ \ \ \ \ \ \ \ \ \ \ \ \ \ \ \ \ \ \ \ \ \ \ \ \ \ \ \ \ + g(n+1)(t_{1}^{n}t_{2}^{n+1}dt_{1}+ t_{1}^{n+1}t_{2}^{n}dt_{2}) + t_{1}^{n+1}t_{2}^{n+1}dg.\]
Hence $df - \left(\frac{df}{dt_{1}}dt_{1} + \frac{df}{dt_{2}}dt_{2}\right) \in \cap_{n=1}^{\infty} t_{1}^{n}t_{2}^{n}\Omega_{\mathcal{O}_{F_{x,z}}/\mathbb{F}_{q}}^{1}$. So taking the separated quotient, $\Omega_{\mathcal{O}_{F_{x,z}}/\mathbb{F}_{q}}^{1}$ is generated by $dt_{1}$ and $dt_{2}$. Then $\Omega_{\mathcal{O}_{F_{x,z}}/\mathbb{F}_{q}}^{2} = \Lambda^{2}\Omega_{\mathcal{O}_{F_{x,z}}/\mathbb{F}_{q}}^{1, cts}$ is generated over $\mathcal{O}_{F_{x,z}}$ by $dt_{1}\wedge dt_{2}$, as all other types of terms in the exterior product are zero.\\
Hence we can restrict to the case $\omega = adt_{1}\wedge dt_{2}$ where $a \in \mathcal{O}_{F_{x,z}}$ and $t_{1}$ and $t_{2}$ are the local parameters of $F_{x,z}$. Decomposing $a$ as a series
\[a = \sum_{i \geq I}\sum_{j \geq 0}a_{i,j}t_{1}^{i}t_{2}^{j}\]
gives the result.}
\item{  First let $x = 1+at$, some $a \in \mathcal{O}_{K}$. Then
\[\frac{dx}{x} \wedge \frac{dt}{t} = x^{-1}da\wedge dt \in \Omega^{2, cts}_{\mathcal{O}_{K}/\mathbb{F}_{q}}\]
and so its residue is zero - but res$_{\bar{F}_{x,z}}(d\bar{x}/\bar{x}) = 0$ also, so we are done in this case. \\
The symbol $dt/t$ is additive with respect to multiplication by $t$, so we can now restrict to the case $\bar{x} \in \bar{F}^{\times}$, $x = \bar{x} + bt$ with $b \in \mathcal{O}_{K}$. Then
\[\text{res}_{K}\left(\frac{dx}{x}\wedge\frac{dt}{t}\right) = \text{res}_{K}\left( \frac{d(\bar{x}+bt)}{\bar{x}+bt}\wedge\frac{dt}{t}\right) = \text{res}_{\bar{K}}\left(\frac{d\bar{x}}{\bar{x}}\right)\]
by expanding $(\bar{x} + bt)^{-1}$.}\end{enumerate}\end{proof}

\subsection{The Higher Tame Symbol on an Algebraic Surface}
As before, let $X$ be an algebraic surface over $k$ and $x \in y \subset X$ a point on a curve contained in $X$. The higher tame symbol takes values in $k_{z}(x)$. First let $x$ be a smooth point of $y$. If $f$, $g$ and $h$ are elements of $F_{x,y}$, then the higher tame symbol is expressed as
\[ (f,g,h)_{x,y} = (-1)^{\alpha_{x,y}}\left( \frac{f^{v_{y}(g)\bar{v}_{x}(h) - v_{y}(h)\bar{v}_{x}(g)}}{g^{v_{y}(f)\bar{v}_{x}(h) - v_{y}(h)\bar{v}_{x}(f)}}h^{v_{y}(f)\bar{v}_{x}(g) - v_{y}(g)\bar{v}_{x}(f)} \right) \text{ mod } \mathfrak{m}_{x,y}\]
where:\\
\[\alpha_{x,y} = v_{y}(f)v_{y}(g)\bar{v}_{x}(h)+v_{y}(f)v_{y}(h)\bar{v}_{x}(g) + v_{y}(g)v_{y}(h)\bar{v}_{x}(f) +\]\
 \[v_{y}(f)\bar{v}_{x}(g)\bar{v}_{x}(h) + v_{y}(g)\bar{v}_{x}(f)\bar{v}_{x}(h) + v_{y}(h)\bar{v}_{x}(f)\bar{v}_{x}(g);\]
$v_{y}$ is the surjective discrete valuation induced by $y$ and $\bar{v}_{x}$ is the function
\[\bar{v}_{x}: F_{x,y}^{\times} \to \mathbb{Z}\]
defined by $\bar{v}_{x}(\beta) = v_{x,y}(p(t_{y}^{-v_{y}(\beta)}\beta))$, where $p$ is the projection map from $\mathcal{O}_{x,y}$ to $\bar{F}_{x,y}$ and $v_{x,y}$ is the discrete valuation on the local field $\bar{F}_{x,y}$. Finally, $\mathfrak{m}_{x,y}$ is the maximal ideal of $\mathcal{O}_{x,y}$.\\

Parshin introduced this symbol without the sign $(-1)^{\alpha_{x,y}}$ - this was first defined by Fesenko and Vostokov in their paper \cite{FV2}. They gave a simpler definition of the symbol using a two-dimensional discrete valuation. Let $\textbf{v}:= (\bar{v}_{x}, v_{y})= (v_{1}, v_{2})$. Then the symbol $(f_{1},f_{2},f_{3})_{x,y}$ is equal to the $(q-1)^{th}$ root of unity in $\mathbb{F}_{q}^{\times}$ which is equal to the residue of 
\[f_{1}^{b_{1}}f_{2}^{b_{2}}f_{3}^{b_{3}}(-1)^{b}\]
in $\mathbb{F}_{q}$, where \[b= \sum_{s,i<j}v_{s}(b_{i})v_{s}(b_{j})b_{i,j}^{s},\] 
$b_{j}$ is $(-1)^{j-1}$ multiplied by the determinant of the matrix $(v_{i}(f_{j}))$ with the $j^{th}$ column removed and $b_{i,j}^{s}$ is the determinant of the matrix with the $i^{th}$ and $j^{th}$ columns and $s^{th}$ row removed.\\

Notice the relation to the boundary homomorphism of $K$-theory - for $L$ an $n$-dimensional local field with first residue field $\bar{L}$, there is a map
\[ \delta : K_{i}(L) \to K_{i-1}(\bar{L}).\]
See \cite{FV}, chapter seven for details of this homomorphism. 

If $x$ is not a smooth point of the curve $y$, we can define the higher tame symbol for each local branch $z \in y(x)$ and then let $( \ ,\ ,\ )_{x,y} = \prod_{z \in y(x)} N_{k_{z}(x)/\mathbb{F}_{q}}( \ ,\ , \ )_{x,z}$.\\

In \cite{P4}, Parshin proved the following analogue of Kummer theory, related to ramified extensions of higher local fields of degrees prime to the characteristic.

\begin{prop}
Let $L$ be a local field of dimension 2 and $l$ an integer dividing $q-1$. The higher tame symbol defines a continuous and non-degenerate pairing 
\[( \ , \ , )_{F} : K_{2}^{top}(L)/lK_{2}^{top}(L) \times L^{\times}/(L^{\times})^{l} \to \mathbb{Z}/l\mathbb{Z}.\]
\end{prop}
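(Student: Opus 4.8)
The plan is to reduce the statement to the known local non-degeneracy result. Since $L$ is a $2$-dimensional local field of positive characteristic and $l \mid q-1$ with $q$ the size of the (last) residue field, we have $\mu_l \subset L$ (all $l$-th roots of unity lie in $\mathbb{F}_q^\times \subset L$), so this is the genuine ``tame Kummer'' situation. The first step is to observe that the higher tame symbol, being a symbol on $K_2(L) \times L^\times$ by its bilinearity and the Steinberg relation (visible from the Fesenko--Vostokov determinantal formula, where $f_i \mapsto 1-f_i$ kills the relevant minors), factors through $K_2^{top}(L)$ in the first argument because it is continuous and vanishes on $\cap_{m} m K_2(L)$; and it is killed by $l K_2^{top}(L)$ in the first slot and $(L^\times)^l$ in the second, since its values are $l$-th roots of unity. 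Thus the pairing in the statement is well defined, and continuity in each argument follows from continuity of the symbol together with Theorem~\ref{topkgpstructure}. The task is the non-degeneracy on both sides.

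For non-degeneracy I would use the explicit topological basis of $K_2^{top}(L)/l K_2^{top}(L)$ given by Theorem~\ref{topkgpstructure} (types 1--5 with $u,t$ a system of parameters), noting that modulo $l$, and since $l \mid q-1$ is prime to $p$, only the finitely many ``arithmetic'' generators survive: $\{u,t\}$, $\{a,u\}$, $\{a,t\}$ for $a$ running over generators of the cyclic group $\mathbb{F}_q^\times/(\mathbb{F}_q^\times)^l$; the principal-unit generators of types 4 and 5 of the form $\{1+au^it^j, *\}$ are infinitely $l$-divisible in $K_2^{top}$ (one can extract $l$-th roots of principal units since $l$ is a unit), hence die modulo $l$. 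So $K_2^{top}(L)/l K_2^{top}(L)$ is a finite group, generated by the three families above, and dually $L^\times/(L^\times)^l$ is generated by $u$, $t$, and a generator $a$ of $\mathbb{F}_q^\times/(\mathbb{F}_q^\times)^l$ — again finite. Then I would compute the pairing on this finite set of generators directly from the Fesenko--Vostokov formula: for instance $(u,t,a)_{x,y}$, $(u,t,u)_{x,y}$, etc., recovering essentially the reduction map $\mathbb{F}_q^\times \to \mathbb{F}_q^\times/(\mathbb{F}_q^\times)^l \cong \mathbb{Z}/l\mathbb{Z}$ in the appropriate slots. The resulting pairing matrix between the (finite, equal-order) generating sets is non-singular over $\mathbb{Z}/l\mathbb{Z}$, which gives non-degeneracy on both sides simultaneously. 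Alternatively, and more cleanly, one deduces it from the already-quoted Witt-vector/Kawada--Satake duality in the prime-to-$p$ part: the tame pairing is, up to the sign normalisation, the restriction to the $l$-torsion of the pairing $K_n^{top}(L) \times \mathfrak{W}(L) \to \mathbb{Q}/\mathbb{Z}$ whose kernel in the first argument is exactly $K_n^{top}(L)_{tors}$, so after killing $l$-th powers on both sides no nontrivial kernel remains.

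The main obstacle I anticipate is bookkeeping rather than conceptual: verifying that the principal-unit part of $K_2^{top}(L)$ is $l$-divisible and hence invisible modulo $l$ (this uses that $1 + \mathfrak{m}_L$ is $l$-divisible when $p \nmid l$, together with the description $K_2^{top}(L) = K_2(L)/\cap_m m K_2(L)$), and then correctly evaluating the sign $(-1)^b$ and the determinantal exponents $b_i$ in the Fesenko--Vostokov formula on the finitely many generator triples so that the pairing matrix is exhibited as invertible over $\mathbb{Z}/l\mathbb{Z}$. One must also be careful at a singular point $x$, where $( \,,\,,\, )_{x,y} = \prod_{z \in y(x)} N_{k_z(x)/\mathbb{F}_q}( \,,\,,\, )_{x,z}$: but the statement is for a single local field $L$, so this does not arise here and the reduction is to the smooth-branch formula only.
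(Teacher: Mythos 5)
Your main argument is essentially the paper's: this proposition is quoted from Parshin without proof, and when the paper later recalls his argument (before Theorem \ref{tamecurvethm}) it does exactly what you propose --- use Theorem \ref{topkgpstructure} to discard the $l$-divisible principal-unit generators and reduce both sides to the finitely many elements $\{\zeta,u\},\{\zeta,t\},\{u,t\}$ and $t,u,\zeta$, on which the pairing is visibly non-singular over $\mathbb{Z}/l\mathbb{Z}$. One caution: your ``cleaner alternative'' via the Kawada--Satake pairing $K_{n}^{top}(L)\times\mathfrak{W}(L)\to\mathbb{Q}/\mathbb{Z}$ does not work, since that pairing is built from $W_{m}(\mathbb{F}_{p})$ and takes values in the $p$-primary part of $\mathbb{Q}/\mathbb{Z}$, so it carries no information about the prime-to-$p$ quotients $K_{2}^{top}(L)/lK_{2}^{top}(L)$ with $l\mid q-1$; the generator computation is the correct route.
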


\subsection{Higher Local Class Field Theory}
This section will state the class field theory for a two-dimensional local field of characteristic $p$, using Parshin's methods in \cite{P4}.  Let $L \cong \mathbb{F}_{q}((u))((t))$ be a two-dimensional local field and $L^{ab}$ the maximal abelian extension of $L$.
\begin{thm} \label{localcft}
There exists a canonical reciprocity map \[\phi_{L}:K_{2}^{top}(L) \to \text{Gal}(L^{ab}/L)\]
such that:
\begin{enumerate}
\item{ ker$(\phi_{L})$ is trivial and im$(\phi_{L})$ is dense in Gal$(L^{ab}/L)$.}
\item{ For $M/L$ an abelian extension, the sequence
\[\begin{CD}
K_{2}^{top}(M) @>N>> K_{2}^{top}(L) @>\phi_{L}>> \text{Gal}(M/L) @>>> 1 \end{CD}\]
is exact.}
\item{ For $M/L$ a finite separable extension, there are the following commutative diagrams:
\[\begin{CD}
K_{2}^{top}(M) @>\phi_{M}>> \text{Gal}(M^{ab}/M) \\
@AAA       @AVAA\\
K_{2}^{top}(L) @>\phi_{L}>> \text{Gal}(L^{ab}/L)\\
\end{CD}\]
\[\begin{CD}
K_{2}^{top}(M) @>\phi_{M}>> \text{Gal}(M^{ab}/M)\\
@VNVV     @VVV \\
K_{2}^{top}(L) @>\phi_{L}>> \text{Gal}(L^{ab}/L)\\ \end{CD}\]
where $V$ is the group transfer map.}
\item{ The diagram
\[\begin{CD}
K_{2}^{top}(L) @>\phi_{L}>> \text{Gal}(L^{ab}/L) \\
@V\delta VV    @VVV \\
K_{1}^{top}(\bar{L}) @>\phi_{\bar{L}}>> \text{Gal}(\bar{L}^{ab}/\bar{L})\\ \end{CD}\]
is commutative.}
\end{enumerate}\end{thm}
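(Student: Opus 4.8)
The plan is to build $\phi_L$ by gluing together three maps according to the decomposition of $\mathrm{Gal}(L^{ab}/L)$ into its maximal unramified part, its tamely (prime-to-$p$) ramified part, and its wildly ($p$-)ramified part, following Parshin \cite{P4}. For the unramified part, the residue field is $\bar L \cong \mathbb{F}_q((u))$, whose maximal unramified extension has Galois group $\hat{\mathbb{Z}}$ generated by Frobenius; composing the boundary map $\delta\colon K_2^{top}(L)\to K_1^{top}(\bar L)=\bar L^{\times}$ with the one-dimensional reciprocity map of $\bar L$ produces the unramified component of $\phi_L$, so that property 4 is forced by the very construction.

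For the tamely ramified part I would use the non-degenerate higher tame pairing $K_2^{top}(L)/lK_2^{top}(L)\times L^{\times}/(L^{\times})^{l}\to\mathbb{Z}/l\mathbb{Z}$ for each $l\mid q-1$ stated above, together with Kummer theory: since $\mu_l\subset\mathbb{F}_q^{\times}\subset L$, the group $L^{\times}/(L^{\times})^{l}$ is the character group of the maximal abelian exponent-$l$ extension, and dualising the tame pairing gives a map $K_2^{top}(L)/lK_2^{top}(L)\to\mathrm{Gal}(L^{ab}/L)$. For the wildly ramified part I would likewise use the non-degenerate Witt pairing $K_2^{top}(L)/p^mK_2^{top}(L)\times W_m(L)/(\mathrm{Frob}-1)W_m(L)\to W_m(\mathbb{F}_q)$, noting that Artin--Schreier--Witt theory identifies $W_m(L)/(\mathrm{Frob}-1)W_m(L)$ with the character group (valued in $\mathbb{Z}/p^m\mathbb{Z}$) of the maximal abelian extension of exponent $p^m$; dualising again yields a map on $K_2^{top}(L)/p^mK_2^{top}(L)$. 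Passing to the projective limit in $m$ and gluing the three maps along the topological basis of Theorem \ref{topkgpstructure} defines $\phi_L$ and gives its continuity.

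Property 1 then follows from non-degeneracy: the joint left kernel over all $l\mid q-1$ and all $p^m$ is $\bigcap_{n\ge 1}nK_2^{top}(L)$, which vanishes by Fesenko's identity $K_2^{top}(L)=K_2(L)/\bigcap_{n\ge 1}nK_2(L)$ recalled above, while density of the image is immediate since every finite layer of $\mathrm{Gal}(L^{ab}/L)$ is hit surjectively by construction. Property 3 reduces, layer by layer, to compatibility of the Witt pairing and of the higher tame symbol with $N_{M/L}$ and with the transfer $V$; the Witt side is contained in Proposition \ref{wittproperties} and Parshin's \cite{P4}, the tame side in the projection formula for Milnor $K$-theory norms, and the unramified side in the norm compatibility of $\delta$, so this step is bookkeeping distributed over the three parts.

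The main obstacle is property 2, the exactness of $K_2^{top}(M)\xrightarrow{N}K_2^{top}(L)\xrightarrow{\phi_L}\mathrm{Gal}(M/L)\to 1$. Surjectivity onto $\mathrm{Gal}(M/L)$ is part of the density statement, and the inclusion $NK_2^{top}(M)\subseteq\ker\bigl(\phi_L\to\mathrm{Gal}(M/L)\bigr)$ follows from property 3; the difficult direction is the reverse inclusion, namely that the norm subgroup is exactly the annihilator under the pairing of the finite subgroup of $W_m(L)/(\mathrm{Frob}-1)W_m(L)$ (respectively of $L^{\times}/(L^{\times})^{l}$) cut out by $M/L$. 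I would reduce to the case $M/L$ cyclic of prime degree $\ell$, treat $\ell\ne p$ and $\ell=p$ separately, and in each case compute the index $[K_2^{top}(L):NK_2^{top}(M)]$ using the explicit generators of Theorem \ref{topkgpstructure} and the functoriality of the two pairings, showing it equals $[M:L]$; comparison with the inclusion already established then forces equality, and hence exactness. This uniform index computation, carried out across both the tame and the wild cases, is the technical core of the argument.
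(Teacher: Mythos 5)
Your construction is essentially the paper's (and Parshin's) own: the reciprocity map is glued from the unramified valuation--Frobenius map, the Kummer-theoretic dual of the higher tame pairing, and the Artin--Schreier--Witt dual of the Witt pairing, with injectivity and density of the image read off from the non-degeneracy statements, exactly as in the text. The only divergence is in bookkeeping: where you sketch a norm-index computation (reduction to cyclic prime degree) for the exactness in property 2 and direct compatibility checks for properties 3 and 4, the paper simply defers these points to \cite{P5} and \cite{P4}, so your plan is, if anything, more self-contained precisely at the steps the paper cites.
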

\begin{proof}
Parshin defines the map as the pasting together of three separate maps, for unramified, tamely ramified and wildly ramified extensions. We will describe this map, for full proofs of compatibility and the commutative diagrams, see \cite{P4} and \cite{P5}.

Let $\text{Frob}$ be the canonical generator of the maximal unramified extension of $L$. Define the unramified map by
\[ \phi_{L, un}(\alpha, \beta) = \text{Frob}^{v_{\bar{L}}(\delta(\alpha, \beta)_{L})}.\]
The isomorphism of our analogue of Kummer theory, and then the usual Kummer isomorphism show that
\[ K_{2}^{top}(L)/lK_{2}^{top}(L) \cong \text{Hom}(L^{\times}/(L^{\times})^{l}, \mathbb{Z}/l\mathbb{Z}) \]
\[\ \ \ \ \ \ \ \ \ \ \ \ \ \ \ \ \ \ \ \ \ \ \ \ \ \ \ \ \ \ \ \ \ \ \ \ \ \ \ \ \ \ \ \ \ \cong \text{Gal}(L^{ab}/L)/(\text{Gal}(L^{unram}/L)\text{Gal}(L^{ab,p}/L))\]
yielding the tame part of the map.

The isomorphism of Artin-Schreier theory, \cite[4.3]{N}, shows \[\text{Gal}(L^{ab,p}/L) \cong \text{Hom}(L/(\text{Frob} - 1)L, \mathbb{Q}/\mathbb{Z}).\]
Together with the non-degenerate pairing \ref{localwittduality}, this isomorphism shows that Gal$(L^{ab,p}/L)$ is dual to $\mathfrak{W}(L)$.  Then Witt duality yields the map
\[ \phi_{L}: K_{2}^{top}(L) \to \text{Gal}(L^{ab,p}/L).\]
The exactness of the sequence in $ii$ is proved in \cite{P5}, and $iii$ is proved in the same way.  For property $iv$, see \cite{P4}, section four, theorem one.
\end{proof}
\emph{Remark 1} 
The proof of Parshin's local class field theory is unchanged for an $n$-dimensional local field, where $n > 2$.

\medskip
\emph{Remark 2}
These theorems can also be proved using Fesenko's explicit class field theory, which defines the reciprocity map using similar methods to Neukirch's method for the one-dimensional case - see \cite{F7}, \cite{F8} and \cite{N}.

\section{The Global Theory}
This section will define the Witt pairing, higher tame pairing and various groups associated to an arithmetic surface $X$.  Section 3.1 will define these groups, their associated adelic objects, and discuss their structure. 3.2 and 3.3 will define the global versions of the pairings. 
\subsection{The Adeles and their $K$-groups}\label{global1} 

Let $X$ be a smooth projective algebraic surface over a finite field $k$. We define several fields and rings related to $X$.
\begin{defn}\label{fieldsdefs} \begin{enumerate}
\item{Let $F = k(X)$ be the function field of $X$.  $F$ is a function field in two variables over $k$.}
\item{ For an irreducible component $y_{i}$ of a  curve $y \subset X$, let  $\hat{\mathcal{O}}_{X,y_{i}}$ be the completion of the local ring at $y_{i}$ and $F_{y_{i}}$ its field of fractions.  $F_{y_{i}}$ has the structure of a complete discrete valuation field with residue field a function field in one variable over a finite extension of $k$ - i.e. a global field of positive characteristic. Let $F_{y} = \prod_{y_{i} \subset y} F_{y_{i}}$.}
\item{For a closed point $x \in X$, define the ring $F_{x}$ to be the ring generated by $\hat{\mathcal{O}}_{X,x}$ and $F$. This is a subring of Frac$(\hat{\mathcal{O}}_{X,x})$ where each function in the ring will have only globally defined poles.}
\item{For an irreducible component $y_{i}$ of a curve $y \subset X$, $k_{y_{i}}(x)$ is the finite field obtained by quotienting $\hat{\mathcal{O}}_{y_{i},x}$ by the ideal defined by $y_{i}$ and $x$. It is a finite extension of the residue field at $x$, $k(x)$. Let $k_{y}(x) = \prod_{y_{i} \subset y}k_{y_{i}}(x)$.}
\item{For a singular curve $y$, the local parameter $t_{y}$ is the element of the product of fields $F_{y_{i}}$ with a local parameter $t_{y_{i}}$ in each entry. The ring $\mathcal{O}_{x,y}$ is the product $\prod_{z \in y(x)}\mathcal{O}_{x,z}$.}
\end{enumerate}\end{defn}
\medskip
\noindent We have the inclusions
\[\begin{CD}
F_{x,z} @<<< F_{z} \\
@AAA    @AAA \\
F_{x} @<<< F. \\ \end{CD}\]

\noindent Define the topological $K$-groups of these global objects in the same way as for the local fields discussed in section 2.1.

Next we define the geometric adeles associated to $X$ and subspaces associated to a curve $y$ and a point $x$.  The following definitions appeared originally in \cite{F3}, where the characteristic zero and mixed characteristic cases are also considered.
\begin{defn}  For a curve $y \subset X$ and $r \in \mathbb{Z}$, define the adelic object $\mathbb{A}^{r}_{y}$ by:
 \[\mathbb{A}^{r}_{y} := \bigg\{ \left( \sum_{i \geq r} a_{i,x}t_{y}^{i}\right)_{x \in y} = \sum_{i \geq r} a_{i}t_{y}^{i} : a_{i} = (a_{i, x})_{x \in y} = (a_{i,x,z})_{x \in z, z \in y(x)} \]
 \[ \ \ \ \ \ \ \ \ \ \ \in \prod_{x \in y} \mathcal{O}_{x,y} \text{ is the lift of an adele } \bar{a}_{i} \in \mathbb{A}_{k( y)}\bigg\}.\] 
\\
Define also \[ \mathbb{A}_{y} = \cup_{r \in \mathbb{Z}} \mathbb{A}^{r}_{y} = \mathbb{A}_{y}^{1}[t_{y}^{-1}].\] \end{defn}

So we have defined a ``higher adelic object" associated to each curve on the surface $X$. Notice we use the adeles of the underlying (one-dimensional) global field associated to the curve and the two dimensional structure of the surface to limit which coefficients can occur, similarly to the classical definition of adeles. We now define the geometric adeles associated to the surface, using the above definition.

\begin{defn}\label{restricteddef} The geometric adeles associated to a surface $X$ are \[\mathbb{A}_{X} : = {\prod_{y \subset X}}'\mathbb{A}_{y}\]
where the restricted product is taken with respect to the rings $\mathcal{O}_{x,y}$ and $\mathbb{A}^{r}_{y}$, i.e. $\mathbb{A}_{X}$ is the set of $(a_{x,y})_{x \in y} = (a_{x,z})_{x \in z, z \in y(x)}$ such that :
\begin{enumerate} 
\item{ $y$ runs through curves on the surface $X$;}
\item{ $(a_{x,y})_{x \in y} \in \mathbb{A}_{y}$ for all $y \subset X$;}
\item{ for all but finitely many $y$, $a_{x, y} \in \mathcal{O}_{x,y}$ for all $x \in y$;}
\item{ $\exists r \in \mathbb{Z}$ such that $(a_{x,y})_{x \in y} \in \mathbb{A}^{r}_{y}$ for all $y \subset X$.} \end{enumerate}\end{defn}

\noindent For more properties of the geometric adeles, see Fesenko's paper \cite{F9}.

\medskip

We also define \[ \mathbb{B}_{X} : = \prod_{y \subset X} \Delta(F_{y}) \cap \mathbb{A}_{X};\]
\\ \[\mathbb{C}_{X} := \prod_{x \in X}\Delta(F_{x}) \cap \mathbb{A}_{X};\]
where $\Delta$ is the diagonal embedding of the rings $F_{y}$ and $F_{x}$. These two adelic rings provide us with adelic analogues of the semi-global rings $F_{y}$ and $F_{x}$. Similarly to the diagram of fields above, we get
\[ \begin{CD} \mathbb{A}_{X} @<<< \mathbb{B}_{X} \\
@AAA   @AAA \\
\mathbb{C}_{X}  @<<< F \end{CD} \]
where $F$ injects into the adeles via the diagonal map as in the one-dimensional case.\\
As in the local case, Milnor $K$-groups will replace the multiplicative group in higher global class field theory.  We define:
\[K_{2}(\mathbb{A}_{X}) : = (\mathbb{A}_{X}^{\times})^{\otimes 2} / < \alpha \otimes (1 - \alpha) \in (\mathbb{A}_{X}^{\times})^{\otimes 2}>.\]
Defining the topological $K$-groups as the quotient of the $K$-groups by the neighbourhood of the identity as before, we have $(f_{x,y})_{x \in y \subset X} \in K^{top}_{2}(\mathbb{A}_{X})$ if and only if:
\begin{enumerate}
\item{ $f_{x,y} \in K_{2}^{top}(F_{x,y})$ for all $x$ and $y$;}
\item{For all but finitely many $y$, $f_{x,y} \in K_{2}^{top}(\mathcal{O}_{x,y})$ for all $x \in y$;}
\item{ $\exists r \in \mathbb{Z}$ such that $(f_{x,y})_{x \in y} \in K_{2}^{top}(\mathbb{A}^{r}_{y})$ for all $y \subset X$.}
\end{enumerate}
Note that we write $K_{2}^{top}(R_{x,y}) = \prod_{y_{i} \in y(x)}K_{2}^{top}(R_{x,y_{i}})$ for any ring $R_{x,y_{i}}$ associated to a singular point $x$ on a curve $y$ with irreducible components $y_{i}$.

The following will define our analogue of the idele group and some important subgroups.
\begin{defn} Denote $K_{2}^{top}(\mathbb{A}_{X})$ by $\mathcal{J}_{X}$.  Some useful subgroups of $\mathcal{J}_{X}$ will be denoted:
\begin{enumerate}
\item{ $\mathcal{J}_{y}: = \left\{ (f_{x,y} \in \mathcal{J}_{X} : f_{x,y'} = 1 \ \forall \ y' \neq y \text{ curves on } X \right\};$}
\item{ $\mathcal{J}_{x} : = \left\{ (f_{x,y}) \in \mathcal{J}_{X} : f_{x',y} = 1 \ \forall \ x' \neq x \text{ points on } X \right\};$}
\item{ $\mathcal{J}_{1} $ is the intersection of $\mathcal{J}_{X}$ with the diagonal image of $\prod_{y \subset X}K_{2}^{top}(F_{y})$ in $\prod_{x,y}K_{2}^{top}(F_{x,y})$;}
\item{ $\mathcal{J}_{2}$ is the intersection of $\mathcal{J}_{X}$ with the diagonal image of $\prod_{x \in X}K_{2}^{top}(F_{x})$ in $\prod_{x,y}K_{2}^{top}(F_{x,y})$.}
\end{enumerate} \end{defn}
$\mathcal{J}_{1}$ and $\mathcal{J}_{2}$ are the $K$-group analogues of $\mathbb{B}_{X}$ and $\mathbb{C}_{X}$ respectively. The next proposition proves that these groups depend only on the underlying field and not on the model of $X$ (i.e. the choice of embedding into the algebraic closure $X \times_{F} F^{alg}$) - an important fact for class field theory.

\begin{prop}\label{independenceofmodel} $\mathcal{J}_{y}$ and $\mathcal{J}_{x}$ are independent of the choice of model of $X$.  \end{prop}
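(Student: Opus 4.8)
\emph{Proof strategy.} The plan is to prove the sharper statement that $\mathcal{J}_{y}$ depends only on the topological ring $F_{y}$ and that $\mathcal{J}_{x}$ depends only on $F_{x}$; independence of the model is then formal, because $F_{y}$ (resp.\ $F_{x}$) is produced from $\hat{\mathcal{O}}_{X,y}$ (resp.\ from $\hat{\mathcal{O}}_{X,x}$, equivalently from the local ring $\mathcal{O}_{X,x}\subset F$) by the chain of localisations and completions recalled above, which by \cite[section 3]{CHDLF} is insensitive to the auxiliary choices and in fact records only the discrete valuation $v_{y}$ of $F$ cut out by the divisor $y$ (resp.\ the local ring of $F$ at $x$). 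These data are attached to a place of $F$, not to the ambient surface, so a curve $y'\subset X'$ on another model with $v_{y'}=v_{y}$ has $\hat{\mathcal{O}}_{X',y'}=\hat{\mathcal{O}}_{X,y}$ and hence $F_{y'}\cong F_{y}$, and likewise $\mathcal{O}_{X',x'}=\mathcal{O}_{X,x}$ forces $F_{x'}\cong F_{x}$.

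It therefore suffices to reconstruct $\mathcal{J}_{y}$ from $F_{y}$ and $\mathcal{J}_{x}$ from $F_{x}$. For $\mathcal{J}_{y}$: by the description of $K_{2}^{top}(\mathbb{A}_{X})$, membership of a family $(f_{x,y})_{x\in y}$ in $\mathcal{J}_{y}$ is constrained only through the groups $K_{2}^{top}(F_{x,y})$, $K_{2}^{top}(\mathcal{O}_{x,y})$ and $K_{2}^{top}(\mathbb{A}^{r}_{y})$, and each of these is intrinsic to $F_{y}$: the points $x\in y$ are the places of the global residue field $k(y)=\mathcal{O}_{F_{y}}/\mathfrak{m}_{F_{y}}$, the two-dimensional local field $F_{x,y}\cong\widehat{k(y)_{x}}((t_{y}))$ is the completion of $F_{y}$ along such a place (with $\mathcal{O}_{x,y}$ its subring of elements of non-negative $v_{y}$-valuation), and $\mathbb{A}^{r}_{y}$ is the $t_{y}$-adic lift of the adele ring $\mathbb{A}_{k(y)}$. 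For $\mathcal{J}_{x}$ the argument runs one rung higher: membership of $(f_{x,y})_{y\ni x}$ in $\mathcal{J}_{x}$, viewed inside ${\prod}'_{y\ni x}K_{2}^{top}(F_{x,y})$, is cut out by conditions involving only $K_{2}^{top}(\mathcal{O}_{x,y})$ and $K_{2}^{top}(\mathbb{A}^{r}_{y})$; the curves $y$ through $x$ and their local branches $z\in y(x)$ correspond to the height-one primes of $\hat{\mathcal{O}}_{X,x}$, which $F_{x}$ determines, each $F_{x,z}\cong k_{z}(x)((u_{x,z}))((t_{z}))$ is its completion along the corresponding prime (formed exactly as above), and $\mathcal{O}_{x,z}$ and $\mathbb{A}^{r}_{y}$ are recovered as before. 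The reducible and singular cases reduce to the irreducible and smooth ones via the product decompositions $F_{y}=\prod_{y_{i}}F_{y_{i}}$, $\mathcal{O}_{x,y}=\prod_{z\in y(x)}\mathcal{O}_{x,z}$ and $K_{2}^{top}(F_{x,y})=\prod_{z}K_{2}^{top}(F_{x,z})$, under which $\mathcal{J}_{y}$ and $\mathcal{J}_{x}$ split and the reconstruction is performed factorwise.

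I expect the main obstacle to lie in these reconstruction steps --- establishing cleanly that $F_{x,y}$ is \emph{canonically} the completion of $F_{y}$ (resp.\ of $F_{x}$) along the relevant place, with no surviving dependence on $X$, and that the restricted-product data, namely the integral subrings $\mathcal{O}_{x,y}$ and the bound $r$ entering the definition of $\mathbb{A}^{r}_{y}$, is likewise intrinsic; this rests on the structure theory of two-dimensional local fields and on the construction of $F_{x,z}$ recalled above. A secondary point needing care is the combinatorial bookkeeping: matching a curve on $X$ with the curve on another model carrying the same valuation (passing, if necessary, to a common blow-up on which $v_{y}$ is divisorial), matching points through equality of their local rings in $F$, and checking that the product decompositions in the singular case are compatible with these identifications.
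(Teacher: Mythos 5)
Your overall strategy --- reconstruct $\mathcal{J}_{y}$ intrinsically from $F_{y}$ and $\mathcal{J}_{x}$ from the local data at $x$, so that independence of the model is formal --- is the same one the paper follows, but your proposal stops exactly where the actual proof has to begin. You assert that $K_{2}^{top}(F_{x,y})$, $K_{2}^{top}(\mathcal{O}_{x,y})$ and $K_{2}^{top}(\mathbb{A}^{r}_{y})$ are ``intrinsic to $F_{y}$'', with $F_{x,y}$ ``the completion of $F_{y}$ along a place'' and $\mathbb{A}^{r}_{y}$ ``the $t_{y}$-adic lift of $\mathbb{A}_{k(y)}$'', and you then flag precisely these claims as the main obstacle without resolving them. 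But that obstacle \emph{is} the proposition: the definition of $\mathbb{A}^{r}_{y}$ (hence of the restricted-product condition in $\mathcal{J}_{y}$) is given through a $t_{y}$-expansion whose coefficients are lifts of adeles of $k(y)$, so a priori it depends on the choices of $t_{y}$, $u_{x,y}$ and the liftings; saying it is ``the $t_{y}$-adic lift'' restates the definition rather than proving the independence. The paper's proof supplies exactly the missing content: it characterises $\mathcal{J}_{y}$ inside $\prod_{x\in y}K_{2}^{top}(F_{x,y})$ by two exact sequences --- the boundary map $\delta_{X}$, whose kernel is $\prod'_{x\in y}K_{2}^{top}(\mathcal{O}_{x,y})$ and which is parameter-independent by the theory of the boundary map, and the projection $pr$ onto the residue fields, whose kernel $\prod'_{x\in y}\mathcal{E}^{(1)}_{F_{x,y}}\times\mathcal{E}^{(2)}_{F_{x,y}}$ is then shown to be constructible from $F_{y_{i}}$ via the unique decomposition $\alpha_{x,y_{i}}=\prod_{k\geq1}(1+\phi_{k,x,y_{i}}(u_{x,y_{i}})t_{y_{i}}^{k})$, which pushes the adelic restriction down to the intrinsic one-dimensional adeles $\mathbb{A}_{k(y_{i})}$. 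Nothing in your proposal performs this transfer, so the argument as written assumes what is to be proved.

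Two further points need repair. First, $\mathcal{O}_{x,y}$ is not ``the subring of elements of non-negative $v_{y}$-valuation'' of $F_{x,y}$; it is the two-dimensional ring whose maximal ideal $\mathfrak{m}_{x,y}$ is generated by both $t_{y}$ and $u_{x,y}$, and this distinction matters because the restricted product in the adeles is taken with respect to these rings. Second, for the point case the paper does not reconstruct $\mathcal{J}_{x}$ from $F_{x}$ at all: it defines $\mathcal{J}_{R}$ for an arbitrary two-dimensional reduced excellent local ring $R$ by a diagram of exact sequences built from the rings $R_{\mathfrak{m},\mathfrak{p}}$ attached to height-one primes, and sets $\mathcal{J}_{x}=\mathcal{J}_{\hat{\mathcal{O}}_{X,x}}$, so the independence comes from $\hat{\mathcal{O}}_{X,x}$. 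Your version instead needs $F_{x}$ to determine the height-one primes of $\hat{\mathcal{O}}_{X,x}$ and the completions along them; that is plausible but is an extra claim you neither prove nor reduce to the literature, and it can be avoided entirely by working with $\hat{\mathcal{O}}_{X,x}$ as the paper does.
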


\begin{proof}   $\mathcal{J}_{y}$: For each component $y_{i} \subset y$, $F_{y_{i}}$ has the structure of a complete discrete valuation field over $k(y_{i})$.  By the usual theory of complete discrete valuation fields - see \cite{Mor1} -  we may fix an isomorphism $F_{y_{i}} \cong k(y_{i})((t_{y_{i}}))$. \\
The points on (the normalisation of) $y_{i}$ correspond to the valuations of $k(y_{i})$.  Hence the local fields $F_{x,y_{i}}$ are given by $k(y_{i})_{x}((t_{y_{i}}))$, which is well-defined - see \cite[Section 3]{CHDLF}.  So the product \[\prod_{x \in y} K_{2}^{top}(F_{x,y})\] is also well-defined.\\
The following exact sequences follow from the local theory. The second sequences follows from the standard facts about the local boundary maps, which each have kernel $K_{2}^{top}(\mathcal{O}_{x,y})$ and surject onto the (product of) residue fields $k(y)_{x}$. The first sequence follows from the surjection from the groups $K_{2}^{top}(\mathcal{O}_{x,y})$ to the final residue fields $\oplus_{z \in y(x)}k_{z}(x)^{\times}$, which has kernel the principal units.

 By the theory of complete discrete valuations fields and the boundary map of $K$-theory, the sequences are independent of the choices of the $t_{y_{i}}$.
\[\begin{CD} 
0 @>>> \prod_{x \in y}' \excone \times \exctwo @>>> \prod_{x \in y}' K_{2}^{top}(\mathcal{O}_{x,y})  \\
  @. @>pr>> \oplus_{x \in y}\oplus_{z \in y(x)}k_{z}(x)^{\times} @>>> 0 \end{CD}\] and
\[ \begin{CD} 0 @>>> \prod_{x \in y}' K_{2}^{top}(\mathcal{O}_{x,y}) @>>> \jc @>\delta_{X}>> \prod_{x \in y_{i}, y_{i} \in y(x)}' k(y_{i})_{x}^{\times}  @>>> 0  \end{CD}\]
where $pr$ and $\delta_{X}$ are defined as follows. The first term of the first sequence will be defined below, as the kernel of the map $pr$. Note that the restricted product and direct sums appearing in the final terms of the second sequence corresponds to the usual one-dimensional adelic products, i.e. all but finitely many terms are in $\mathcal{O}_{k(y_{i})_{x}}$.  

The boundary homomorphism $\delta : K_{2}^{top}(F_{x,y_{i}}) \to K_{1}^{top}(\bar{F}_{x,y_{i}})$ induces \[\delta_{X}: \mathcal{J}_{X} \to \bigoplus_{y_{i} \subset y \subset X} \mathbb{A}_{k(y_{i})}^{\times}\]
where the range is because of the definition of $\mathfrak{J}_{X}$.
The projection map $K_{2}^{top}(\mathcal{O}_{x,y}) \to K_{2}^{top}(\bar{F}_{x,y})$ induces the surjective map \[pr: {\prod_{x \in y}}' K_{2}^{top}(\mathcal{O}_{x,y}) \to \bigoplus_{x \in y}\oplus_{y_{i} \in y(x)} k_{y_{i}}(x)^{\times}.\] 
The kernel of $pr$ is given by \[ {\prod_{x \in y}}' \excone \times \exctwo\]
where $\mathcal{E}^{(j)}_{x,y} = \prod_{y_{i} \in y(x)}\mathcal{E}_{x,y_{i}}^{(j)}$,  and the $\mathcal{E}_{x,y_{i}}^{(j)}$ for $j=1,2$ are respectively generated by the elements of types \emph{iv} and \emph{v} in \ref{topkgpstructure}. \\
These exact sequences and maps characterise $\jc$ in $\prod_{x \in y}K_{2}^{top}(F_{x,y})$.  \\
As we know the independence of the $k(y_{i})_{x}$ and $k(x)$ from the choice of model (this is a consequence of basic valuation theory, see \cite{CHDLF}), we just need to show the independence of $\prod_{x \in y}' \excone \times \exctwo$.  This will enable the independent characterisation of $\prod_{x \in y}' K_{2}^{top}(\mathcal{O}_{x,y})$ in the first sequence, and hence that of $\jc$ in the second sequence.  \\
$ \prod_{x \in y}' \excone \times \exctwo$ is generated as a group by elements $\{1 + \phi_{k}(u_{x,y_{i}})t_{y_{i}}^{k}, \beta\}$, where $k \geq 1$, $\phi_{k}(u_{x,y_{i}}) \in k(x)((u_{x,y_{i}}))$ and $\beta$ is one of the local parameters $u_{x,y_{i}}$ and $t_{y_{i}}$.  Hence if $\alpha = (\alpha_{x,y_{i}})_{x,y_{i}}$ such that $\{\alpha_{x,y_{i}},\beta\} \in  \prod_{x \in y}' \excone \times \exctwo$, there is a decomposition
\[\alpha_{x,y_{i}} = \prod_{k \geq 1} (1 + \phi_{k,x,y_{i}}(u_{x,y_{i}})t_{y_{i}}^{k}),\]
which enables us to construct $\mathcal{E}_{x,y_{i}}^{(1)} \times \mathcal{E}_{x,y_{i}}^{(2)}$ from $F_{y_{i}}$.  This proves $\mathcal{J}_{y}$ is well-defined as a topological group by $F_{y}$.\\
$\jxx$: Let $R$ be a two-dimensional reduced excellent local ring of characteristic $p$, $\mathfrak{m}$ its maximal ideal and $\mathfrak{p} \subset \mathfrak{m}$ a prime ideal of height one. A ring is excellent if it satisfies some technical conditions, see \cite[remark 4.11]{CHDLF} for a simple discussion of these rings, or \cite[Section 7]{EGA} for a full definition.  \\
Define $\mathcal{J}_{R} \subset \prod_{\mathfrak{p}}K_{2}^{top}(R_{\mathfrak{m}, \mathfrak{p}})$, where $R_{\mathfrak{m}, \mathfrak{p}}$  is constructed by a series of localisations and completions as in section 2.1, by the commutative diagram with exact rows:
\[\minCDarrowwidth20pt \begin{CD}
0 @>>> \prod_{\mathfrak{p}}K_{2}^{top}(R_{\mathfrak{m}, \mathfrak{p}}) @>>> \mathcal{J}_{R} @>\delta>> \oplus_{\mathfrak{p}}K_{1}^{top}(\bar{K}_{\mathfrak{m}, \mathfrak{p}}) @>>> 0 \\
@. @VVV @VVV @VVV @. \\
0 @>>> \prod_{p}K_{2}^{top}(R_{\mathfrak{m}, \mathfrak{p}}) @>>> \prod_{\mathfrak{p}}K_{2}^{top}(K_{\mathfrak{m}, \mathfrak{p}}) @>\delta>> \prod_{\mathfrak{p}}K_{1}^{top}(\bar{K}_{\mathfrak{m}, \mathfrak{p}}) @>>> 0\end{CD}\]
where $K_{\mathfrak{m}, \mathfrak{p}} =$ Frac$(R_{\mathfrak{m}, \mathfrak{p}})$ and the vertical arrows are injective.\\
 Now for a pair $(X,x)$, a two-dimensional scheme over a finite field and a point $x \in X$, let $R = \hat{\mathcal{O}}_{X,x}$ and define $\jxx = \mathcal{J}_{R}$.  Then we have defined $\jxx$ depending only on $\hat{\mathcal{O}}_{X,x}$.
\end{proof}

Hence $\jxx$ depends only on the completion of $\mathcal{O}_{X,x}$, and $\jc$ only on the product of fields $F_{y}$ - so for a complete discrete valuation field $L$ with global residue field, it makes sense to write $\mathcal{J}_{L}$ for the topological $K$-group of its adeles. \\

\emph{Structure of $\jx$}\\

We now look at the structure of this group, providing us with some useful decompositions and a topology.

As above, we have the boundary homomorphism \[\delta_{X}: \jx \to \bigoplus_{y \subset X} \mathbb{A}_{k(y)}^{\times}.\]
Since in each local factor, $\delta(\{ \alpha, \beta\}) = (-1)^{v(\alpha)v(\beta)}\overline{\alpha^{v(\beta)}\beta^{-v(\alpha)}}$ (see \cite{FV} 9.2.3), we have
\[ker(\delta_{X}) = \prod_{y \subset X}{\prod_{x \in y}}' K_{2}^{top}(\mathcal{O}_{x,y}) = \jx \cap \prod_{y \subset X}\prod_{x \in y}K_{2}^{top}(\mathcal{O}_{x,y}).\]
So the map $pr$ is a map with domain the kernel of $\delta_{X}$.\\
Locally, the structure of $ker(pr)$ is clear by the structure theorem for the topological $K$-groups of higher local fields, but globally we must define the restricted product \[{\prod}'_{x \in y, y \subset X} \excone \times \exctwo \subset \prod_{x \in y, y \subset X}\{\mathcal{O}_{x,y}, \tc\} \times \{\mathcal{O}_{x,y}, \uxc\}.\]
As above,  if $\alpha = (\alpha_{x,y})_{x,y}$ such that $\{\alpha_{x,y},\beta\} \in  \prod_{x \in y}' \excone \times \exctwo$, there is a decomposition
\[\alpha_{x,y} = \prod_{z \in y(x)}\prod_{k \geq 1} (1 + \phi_{k,x,z}(u_{x,z})t_{z}^{k}).\]
We have $\{\alpha,\beta\} \in \prod_{x \in y}' \excone \times \exctwo$ if and only if for all $k \geq 1$ and all $ y \subset X$, $(\phi_{k,x,z})_{x \in z, z \in y(x)} \in \prod_{z \in y(x)}\mathbb{A}_{k(z)}$. The decomposition is unique, see \cite[Section 2, Proposition 3]{P4}.
\\

\emph{Topology} \\

To define the topology of $\mathcal{J}_{X}$, we follow Fesenko's definition from \cite[Section 2]{F3}. Fesenko shows that our group $\mathcal{J}_{X}$ is isomorphic to the group which is defined as followed. Let $V_{X}$ be the image of the $K$-group symbol map on the subgroup of the adeles $\mathbb{A}_{X}$ where for each pair $(x,y)$, the entry $a_{x,y}$ is in $\mathcal{O}_{x,y}$ and its image in the residue field is in the ring of integers $\mathcal{O}_{k(y)_{x}}$. Then $\mathcal{J}_{X}$ is isomorphic to:
\[V_{X} + \oplus_{x \in y \subset X}K_{0}(k_{y}(x)).\]
See \cite[Section 2]{F3} for more details. 

We give $V_{X}$ the product topology from the subgroup of the adeles, and then $J_{X}$ the sequential saturation of the topology induced by the product of this and the discrete topology on the $K_{0}$ terms.

\subsection{The Global Witt Pairing}

In this section we will define the global Witt pairing as a sum of the traces of local Witt pairings, prove that it is a well-defined sum and check some basic properties.

\begin{defn}
For each positive $m \in \mathbb{Z}$, define the global Witt pairing
\[( \ \ | \ ]_{X} : \jx \times W_{m}(\mathbb{A}_{X})\to W_{m}(\mathbb{F}_{q})\]
by \\
$(\{(f_{x,y})_{x \in y}, (g_{x,y})_{x \in y}\}| (h_{x,y})_{x \in y}]_{X}$\[ \ \ \ \ \ \ \ \ \ \ \ \ \ \ \ \ \ \ \ \ \ \  \ \ \ \ \ \ \ \ \ \ \ \  \mapsto \sum_{y \subset X}\sum_{x \in y, z_{i} \in y(x)}\mathrm{Tr}_{W_{m}(k_{z_{i}}(x))/W_{m}(\mathbb{F}_{p})}(f_{x,z_{i}}, g_{x,z_{i}} | h_{x,z_{i}}]_{F_{x,z_{i}}}.\] \end{defn}

\noindent We now check that this sum converges.
\begin{lemma}\label{wittwelldefined}
Let $( \ \ | \ ]_{F_{x,y}}$ be the Witt pairing associated to the product of two-dimensional local fields $F_{x,y}$, and let $m \in \mathbb{Z}$.  Then the map
\[\mathbb{A}_{X}^{\times} \times \mathbb{A}_{X}^{\times} \times W_{m}(\mathbb{A}_{X}) \to W_{m}(\mathbb{F}_{p})\]
$((f_{x,y})_{x \in y}, (g_{x,y})_{x \in y}, (h_{x,y})_{x \in y})$\[ \ \ \ \ \ \ \ \ \ \ \ \ \ \ \ \ \ \ \ \ \ \  \ \ \ \ \ \ \ \ \ \ \ \ \ \ \ \ \mapsto \sum_{y \subset X}\sum_{x\in y, z_{i} \in y(x)}\mathrm{Tr}_{W_{m}(k_{z_{i}}(x))/W_{m}(\mathbb{F}_{p})}(f_{x,y}, g_{x,y} | h_{x,y}]_{F_{x,y}}\]
is well-defined, i.e. there are only finitely many non-zero terms appearing in the sum.\end{lemma}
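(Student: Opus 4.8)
The plan is to show that for an adelic element $((f_{x,y}), (g_{x,y}), (h_{x,y})) \in \mathbb{A}_X^\times \times \mathbb{A}_X^\times \times W_m(\mathbb{A}_X)$, all but finitely many local terms $(f_{x,y}, g_{x,y} \mid h_{x,y}]_{F_{x,y}}$ vanish. The argument splits into two stages: first reduce to finitely many curves $y$, then for each such curve reduce to finitely many points $x \in y$. For the first stage, I would invoke the definition of $\mathbb{A}_X$ (Definition \ref{restricteddef}): for all but finitely many curves $y$, the entry $a_{x,y}$ lies in $\mathcal{O}_{x,y}$ for every $x \in y$; applying this to $f$, $g$, and to $h$ (using the analogous restricted-product structure of $W_m(\mathbb{A}_X)$, so that $h_{x,y} \in W_m(\mathcal{O}_{x,y})$ for almost all $y$), we are reduced to curves $y$ where all three entries are integral at every point. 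On such a curve, I claim the local Witt pairing is identically zero. This should follow from the residue-theoretic definition of the Witt symbol together with Lemma \ref{residueproperties}(1): when $f_1, f_2 \in \mathcal{O}_{F_{x,z}}^\times$ and $\bar g \in W(\mathcal{O})$, the form $\bar g(j) \frac{df_1}{f_1} \wedge \frac{df_2}{f_2}$ is a continuous differential form regular at $x$, hence has zero residue, so every $w_i = 0$. Here I should be slightly careful: $f_1, f_2$ integral does not immediately mean they are units, but for the adelic condition we may pass to the boundary map — the entries of an adele that are "integral at $x$" in the $K$-group sense lie in $K_2^{top}(\mathcal{O}_{x,y})$, and by bilinearity (Proposition \ref{wittproperties}(1)) and the structure theorem for $K_2^{top}(\mathcal{O}_{x,y})$ it suffices to treat symbols $\{1 + a u^i t^j, \beta\}$ and $\{a, b\}$ with $a, b$ units, all of which give integral differential forms and hence zero residue.

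For the second stage, fix one of the finitely many remaining curves $y$. Here the element lies in $\mathbb{A}_y^r$ for some $r$, so on each irreducible component $y_i$ the coefficients $a_i = (a_{i,x})_{x \in y_i}$ form a genuine one-dimensional adele of $k(y_i)$, meaning $a_{i,x} \in \mathcal{O}_{k(y_i)_x}$ for all but finitely many $x$. Combining this for the finitely many relevant coefficient indices of $f$, $g$, $h$ (the negative-power part is finitely supported by the $\mathbb{A}_y^r$ condition, and for the nonnegative part integrality holds cofinitely), we conclude that at all but finitely many $x \in y$, the triple $(f_{x,y}, g_{x,y}, h_{x,y})$ consists of entries integral at $x$ whose residues are integral in $\bar F_{x,y}$. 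Then Lemma \ref{residueproperties}(1) again (the version for forms regular on the whole two-dimensional local ring) forces the local Witt symbol to vanish, completing the proof that only finitely many terms survive.

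The main obstacle I anticipate is making precise the reduction from "the adelic entry is integral" to "the local Witt symbol vanishes" uniformly — i.e. checking that Lemma \ref{residueproperties}(1) genuinely applies after one has decomposed a general element of $K_2^{top}(\mathcal{O}_{x,y})$ (or of the relevant adelic subgroup) into the standard generators of Theorem \ref{topkgpstructure}, and that the Witt vector argument $h_{x,y}$ being in $W_m(\mathcal{O}_{x,y})$ indeed produces a regular differential form $\bar g(j)\frac{df_1}{f_1}\wedge\frac{df_2}{f_2}$ with no pole, so that its residue is zero by part (1) of the lemma. A secondary technical point is handling the finitely supported negative-power ("pole") part coming from the $\mathbb{A}_y^r$ condition and from the globally-defined poles in $F_x$; but since that part is finitely supported by definition it only contributes finitely many terms and can be discarded. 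Everything else is bookkeeping with the restricted-product definitions and the bilinearity in Proposition \ref{wittproperties}.
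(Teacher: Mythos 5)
Your stage (a) -- discarding all but finitely many curves by combining the restricted-product condition for $f$, $f^{-1}$, $g$, $g^{-1}$ and the finitely many Witt components of $h$, and then checking that the local symbol vanishes when $f_{x,y},g_{x,y}\in\mathcal{O}_{x,y}^{\times}$ and $h_{x,y}\in W_{m}(\mathcal{O}_{x,y})$ -- is sound and close in spirit to the integral part of the paper's argument. The genuine gap is in stage (b). On one of the finitely many remaining curves $y$, membership in $\mathbb{A}_{y}^{r}$ only bounds the order of the $t_{y}$-pole (indices $i\geq r$); it does not make the negative-index coefficients finitely supported in $x$. Each coefficient $a_{i}$ with $r\leq i<0$ is a lift of an adele of $k(y)$, hence integral in $\mathcal{O}_{k(y)_{x}}$ at almost all $x$ but typically nonzero at infinitely many $x$; consequently $h_{x,y}$ (and likewise $f_{x,y}$, $g_{x,y}$ for $f,g\in\mathbb{A}_{X}^{\times}$ -- consider, for instance, $f_{y}=1+a\,t_{y}^{-1}$ with $a$ a lift of an idele of $k(y)$) fails to lie in $\mathcal{O}_{x,y}$, resp.\ $\mathcal{O}_{x,y}^{\times}$, at infinitely many points of $y$. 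So your assertion that at all but finitely many $x\in y$ the triple is integral at $x$ is false, Lemma \ref{residueproperties}(1) does not apply there, and this non-integral case is exactly where the finiteness is not formal; your phrase ``the negative-power part is finitely supported'' conflates boundedness of the pole order with finite support in $x$.

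The paper closes this case by a different mechanism: it works with $\{f,g\}$ in $K_{2}^{top}(\mathbb{A}_{X})$, splits it via a section $\Gamma$ of the boundary map $\delta_{X}$ as a product of symbols $\{\gamma,t_{y}\}$ (with $\gamma$ a lift of an idele of $k(y)$) and an element of ${\prod}'K_{2}^{top}(\mathcal{O}_{x,y})$, treats the integral factor using Lemma \ref{residueproperties}(1) together with the integrality of $h$ at almost all places, and treats the factor $\{\gamma,t_{y}\}$ using Lemma \ref{residueproperties}(2), which converts the two-dimensional symbol into the one-dimensional residue $\mathrm{res}_{\bar{F}_{x,y}}(\bar{h}\,d\bar{\gamma}/\bar{\gamma})$ on the curve $y$; finiteness is then the classical fact that an adelic differential on a curve has almost all local residues zero. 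General Witt length is then reached by induction on $m$ using the truncation properties of Proposition \ref{wittproperties}. To repair your argument you would need a substitute for this reduction to the boundary/one-dimensional situation on the finitely many bad curves; pointwise integrality of the entries is simply not available there.
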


\begin{proof}
We induct on the length of the Witt vectors.\\
\emph{m = 1:}  Firstly note that the pairing is symbolic as in the local case (see proposition \ref{residueproperties} property $iii$) so in fact we consider a pairing
\[\jx \times \mathbb{A}_{X} \to \mathbb{F}_{p}.\]
From the discussion of the structure of $\mathcal{J}_{X}$ above, if we let $\Gamma$ be the image of a section of $\delta_{X}$ \[\sigma: \mathcal{J}_{X} \to \bigoplus_{y \subset X}\prod_{x \in y, z_{i} \in y(x)} k(z_{i})_{x}^{\times},\]
then $\mathcal{J}_{X}$ can be decomposed as
\[\mathcal{J}_{X} \cong \Gamma \times \prod_{y \subset X}\prod_{x \in y, z_{i} \in y(x)}K_{2}^{top}(\mathcal{O}_{x,z_{i}}).\]
Note here that when we write $K_{2}^{top}(\mathcal{O}_{x,y_{i}})$, we mean the topological quotient of the tensor product $\mathcal{O}_{x,y_{i}}^{\times} \otimes \mathcal{O}_{x,y_{i}}^{\times}$ by $I_{2}\cap( \mathcal{O}_{x,y_{i}}^{\times} \otimes \mathcal{O}_{x,y_{i}}^{\times})$.\\
From the additive property of the local Witt pairing, we can evaluate on $\Gamma$ and $ \prod_{y \subset X}\prod_{x \in y, z_{i} \in y(x)}K_{2}^{top}(\mathcal{O}_{x,z_{i}})$ separately.\\
\\
For $ \prod_{y \subset X}\prod_{x \in y, z_{i} \in y(x)}K_{2}^{top}(\mathcal{O}_{x,z_{i}})$,  as the last term in the pairing $h = (h_{x,z_{i}})$ satisfies $h_{x,z_{i}} \in \mathcal{O}_{x,z_{i}}$ for all but finitely many $(x,z_{i})$ we may apply property $i$ in lemma \ref{residueproperties}.  Hence the pairing takes only finitely many non-zero values here.\\
Let $\Gamma$ be generated by the section
\[\bar{\gamma} \mapsto \{\gamma, \tc\}\]
where $\gamma$ is the lift of $\bar{\gamma}$ induced by $F_{x,y} \cong k(y)_{x}((\tc))$ - this does depend on the choice of $\tc$, but we will see this does not affect the proof. 
By lemma \ref{residueproperties} $ii$, we have \[(\{\gamma, t_{z_{i}}\}| h_{x,z_{i}}]_{F_{x,z_{i}}} = \mathrm{res}_{\overline{F_{x,z_{i}}}}\left( \bar{h}_{x,z_{i}}\frac{d\bar{\gamma}}{\bar{\gamma}}\right)\]
so we have reduced to the one-dimensional case.  It is well-known from the study of differential forms on curves that there are only finitely many non-zero values here, so the base case is complete.\\
\emph{Induction:}  Suppose $(\{f_{x,y}, g_{x,y}\} | h_{x,y}]_{F_{x,y}} = 0$ for all but finitely many $(x,y)$, where $h = (h_{x,y}) \in W_{m-1}(\mathbb{A}_{X})$.
By $viii$ in proposition \ref{wittproperties},  \[(\{f_{x,y}, g_{x,y}\} | (h_{0}, \dots, h_{m-1})_{x,y}]_{F_{x,y}} = (w_{o}, \dots, w_{m-1})\]
implies \[ (\{f_{x,y}, g_{x,y}\} | (h_{0}, \dots, h_{m-2})_{x,y}]_{F_{x,y}} = (w_{0}, \dots, w_{m-2}).\]
Suppose there exists $h \in W_{m}(\mathbb{A}_{X})$ such that the pairing $(\{f_{x,y}, g_{x,y}\} | h_{x,y}]_{F_{x,y}}$ takes infinitely many non-zero values for some $f, g \in \mathbb{A}_{X}$.  Any pair $(x,y)$ such that 
\[(\{f_{x,y}, g_{x,y}\} | (h_{0},\dots,h_{m-1})_{x,y}]_{F_{x,y}} = (w_{0}, \dots, w_{m-1}) \neq 0\]
has
\[(\{f_{x,y}, g_{x,y}\} | (h_{0},\dots,h_{m-2})_{x,y}]_{F_{x,y}} = (w_{0},\dots,w_{m-2})\]
so $w_{m-1}$ must be the only non-zero term for all but finitely many such values of the pairing.\\
By definition of the Witt pairing, it can be seen that this implies $h_{x,y} = (0,0, \dots, 0, h_{m-1})_{x,y}$ for almost all of the pairs $(x,y)$ giving non-zero values.  But by induction on relation $vii$ in proposition \ref{wittproperties}, we can reduce to the case $m=1$, which gives a contradiction.
\end{proof}

The lemma above shows this pairing is well-defined.  By \cite{P4}, 3.3.1, the components of each local pairing are polynomials in the components of $f_{x,z}, g_{x,z}$ and $h_{x,z}$, proving continuity of the local pairing.  Since there are only finitely many non-zero terms this extends to continuity of the global pairing.

\begin{prop}\label{reciprocitylaw} Reciprocity Law\\
For a fixed curve $y \subset X$, $m \in \mathbb{Z}$, $f,g \in F_{y}^{\times}$ and $h \in W_{m}(F_{y})$,
\[\sum_{x \in y, y_{i} \in y(x)} \mathrm{Tr}_{W_{m}(k_{y_{i}}(x))/W_{m}(\mathbb{F}_{p})}(\{f_{x,y}, g_{x,y}\}|h_{x,y}]_{F_{x,y}} = 0.\] 
For a fixed point $x \in X$, $m \in \mathbb{Z}$, $f,g \in F_{x}$ and $h \in W_{m}(F_{x})$
\[\sum_{y \ni x} \mathrm{Tr}_{W_{m}(k_{y}(x))/W_{m}(\mathbb{F}_{p})}(\{f_{x,y}, g_{x,y}\} | h_{x,y}]_{F_{x,y}} = 0.\]

\end{prop}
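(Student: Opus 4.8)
The plan is to deduce both identities from reciprocity laws for residues of continuous differential $2$-forms, using the fact that the Witt pairing is assembled out of such residues. Recall that, after lifting $f_{x,z},g_{x,z},h_{x,z}$ to the characteristic-zero field $L=A((t_1))((t_2))$, the Witt vector $(\{f_{x,z},g_{x,z}\}\,|\,h_{x,z}]_{F_{x,z}}$ is by definition obtained by applying the universal polynomials $P_i$ to the residues $r_j^{(x,z)}:=\mathrm{res}_{L}\!\bigl(\bar h(j)\,\tfrac{d\tilde f}{\tilde f}\wedge\tfrac{d\tilde g}{\tilde g}\bigr)$, $j\ge0$. For a fixed global datum $h\in W_m(F_y)$ (resp.\ $W_m(F_x)$) the auxiliary coordinates $\bar h(j)$ of its lift can be chosen once and for all, and since $L$ has characteristic zero the Witt vector with ghost coordinates $(r_0,r_1,\dots)$ lives in a $p$-torsion-free ring, where the ghost map is injective; traces of Witt vectors are compatible with ghost coordinates and preserve integrality, so (using that the $P_i$ are denominator-free) $\sum\mathrm{Tr}(\{f,g\}\,|\,h]$ vanishes in $W_m(\mathbb{F}_p)$ as soon as, for every $j\ge0$,
\[
\sum_{x\in y,\ y_i\in y(x)}\mathrm{Tr}\,r_j^{(x,y_i)}=0
\qquad\text{and}\qquad
\sum_{y\ni x}\ \sum_{z\in y(x)}\mathrm{Tr}\,r_j^{(x,z)}=0 .
\]
Lemma \ref{wittwelldefined} guarantees only finitely many terms are nonzero, so these sums make sense.

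For the fixed-curve identity I would treat one irreducible component $y_i\subset y$ at a time. Choosing $F_{y_i}\cong k(y_i)((t_{y_i}))$ and splitting the $2$-form $\bar h(j)\tfrac{d\tilde f}{\tilde f}\wedge\tfrac{d\tilde g}{\tilde g}$ into the part proportional to $\tfrac{dt_{y_i}}{t_{y_i}}$ and the rest, Lemma \ref{residueproperties}(1) kills the contribution of the second part at all but finitely many $x$, while Lemma \ref{residueproperties}(2) identifies the residue of the first part at $x$ with the one-dimensional residue at the place $x$ of a rational $1$-form with coefficients in $W_m$ over the constant field $k(y_i)$. Since $X$ is projective, $y_i$ (and its normalisation) is complete, so the sum of these one-dimensional residues over all $x$ is zero: this is the residue reciprocity law on a complete curve over $\mathbb{F}_q$, equivalently the global Artin-Schreier-Witt reciprocity underlying Kawada and Satake's function-field class field theory \cite{KS}. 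Singular points of $y_i$ cause no difficulty, since they contribute the sum of the residues over their branches, which is exactly what appears on the normalisation; summing over the components $y_i\subset y$ completes this case.

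For the fixed-point identity, the hypotheses $f,g\in F_x$, $h\in W_m(F_x)$ force $\bar h(j)\tfrac{d\tilde f}{\tilde f}\wedge\tfrac{d\tilde g}{\tilde g}$ to be rational on a neighbourhood of $x$ with polar locus supported on the finitely many globally defined curves through $x$, so $r_j^{(x,z)}=0$ for every branch of every other curve. The required vanishing is then Parshin's reciprocity law for two-dimensional residues around a point of a surface (see \cite{P1}, \cite{P4}, and \cite{O}, \cite{OZ} in the surface setting), transported through the lift to $L$. Smoothness of $X$ makes $\hat{\mathcal{O}}_{X,x}\cong k(x)[[u,t]]$ regular, so the only new ingredient beyond the unibranch statement is the bookkeeping of the analytic branches $z\in y(x)$, which is absorbed into the trace $\sum_{z\in y(x)}\mathrm{Tr}_{k_z(x)/k_y(x)}$ by which $(\,\cdot\,|\,\cdot\,]_{x,y}$ was defined.

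I expect the first step to be the real obstacle. Because the polynomials $P_i$ carry $p$ in their denominators, the residues $r_j^{(x,z)}$ cannot be reduced modulo $p$ before being combined, so the two single-residue identities above have to hold \emph{exactly} in characteristic zero rather than merely modulo $p$; this forces one to choose the lifts $\tilde f,\tilde g,\bar h$ coherently with a global characteristic-zero model of the curve $y$ (resp.\ a formal lift of the local neighbourhood of $x$) and to verify that the classical residue theorem and Parshin's two-dimensional reciprocity law survive that lifting. I would isolate this coherence as a preliminary lemma; granted it, the fixed-curve case reduces to the residue theorem and the fixed-point case to Parshin's law, with no further hard input.
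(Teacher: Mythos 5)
The paper does not actually prove this proposition: its ``proof'' is the single line ``See \cite{ME}'', so the argument you are reconstructing lives in the author's separate reciprocity-law paper. Your overall strategy --- unwind the Witt pairing into residues of lifted $2$-forms, prove the two ghost-coordinate residue identities, and descend through the ghost map --- is the natural one and almost certainly the route taken in \cite{ME}, so in spirit you are on the intended track.

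However, as a proof your proposal has a genuine gap, and it is exactly the step you yourself label ``the real obstacle''. The residues $r_j^{(x,z)}=\mathrm{res}_L\bigl(\bar h(j)\,\tfrac{d\tilde f}{\tilde f}\wedge\tfrac{d\tilde g}{\tilde g}\bigr)$ are computed from lifts $\tilde f,\tilde g,\bar h$ chosen independently at each place into the purely local field $L=A((t_1))((t_2))$; there is no global characteristic-zero object on which the classical residue theorem (fixed-curve case) or Parshin's residue law around a point (fixed-point case) can be invoked, so the identities $\sum_x \mathrm{Tr}\, r_j^{(x,z)}=0$ and $\sum_{y\ni x}\mathrm{Tr}\, r_j^{(x,z)}=0$ do not follow from the cited theorems as stated --- they must be proved for a coherent system of lifts, and proved exactly (or at least modulo $p^{m}$; modulo $p$ is not enough once $m>1$ because the $P_i$ carry $p$-denominators, and the length-$m$ pairing is only determined by the ghost coordinates up to $p^{m}$). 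Constructing such coherent lifts (a characteristic-zero model of $k(y)((t_y))$ along the whole curve, respectively a formal lift over $\hat{\mathcal{O}}_{X,x}$) and verifying the residue reciprocity there is the substantive content of \cite{ME}, not a routine preliminary lemma; deferring it, and in the fixed-point case simply invoking ``Parshin's reciprocity law transported through the lift'', leaves the heart of the statement unproved --- indeed for Witt vectors of length $m>1$ over the ring $F_x$ that invocation is very close to assuming the $p$-part of the proposition itself. The surrounding bookkeeping in your sketch (finiteness from Lemma \ref{wittwelldefined}, additivity of traces, handling branches via the normalisation, the globally-defined-poles condition at $x$) is fine.
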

\begin{proof}
See \cite{ME}.
\end{proof}

\begin{cor}\label{pairing} For each $ m \in \mathbb{Z}$ there is a continuous pairing
\[ ( \ | \ ]_{X}: \frac{\mathcal{J}_{X}}{\mathcal{J}_{1}+\mathcal{J}_{2}} \times W_{m}(F)/(Frob - 1)W_{m}(F) \to \mathbb{Z}/p^{m}\mathbb{Z}.\]

\end{cor}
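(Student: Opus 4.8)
The plan is to obtain the pairing by restricting and then descending the global Witt pairing $( \ | \ ]_X$, which was shown well-defined in \ref{wittwelldefined}. Since $F = k(X)$ embeds diagonally into $\mathbb{A}_X$, it induces an embedding $W_m(F) \hookrightarrow W_m(\mathbb{A}_X)$, so restricting the second argument gives a map $\mathcal{J}_X \times W_m(F) \to W_m(\mathbb{F}_p)$; using $W_m(\mathbb{F}_p) \cong \mathbb{Z}/p^m\mathbb{Z}$ puts the target in the form of the statement. By \ref{wittwelldefined} only finitely many local terms are ever nonzero, and the continuity remark following it shows the restricted map is continuous. It then remains to verify three vanishing statements so that it factors through the indicated quotients: $(\ \cdot\ \mid h]_X$ kills $\mathcal{J}_1$ and $\mathcal{J}_2$ whenever $h \in W_m(F)$, and $(\xi \mid \ \cdot\ ]_X$ kills $(\mathrm{Frob}-1)W_m(F)$ for every $\xi \in \mathcal{J}_X$.

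For $\mathcal{J}_1$: fixing $h \in W_m(F)$, the homomorphism $(\ \cdot\ \mid h]_X \colon \mathcal{J}_X \to \mathbb{Z}/p^m\mathbb{Z}$ is continuous into a discrete group, so its kernel is closed; hence it suffices to show it vanishes on a dense subgroup of $\mathcal{J}_1$. Such a subgroup is generated by the images in $\mathcal{J}_X$ of single symbols $\{f,g\}$, $f,g \in F_y^\times$, with $y$ running over curves of $X$ — indeed every element of $\mathcal{J}_1$ is a convergent product of such images, since each factor $\alpha_y \in K_2^{top}(F_y)$ of the defining family is a convergent product of symbols. On such a generator the pairing with $h$ equals $\sum_{x \in y}\sum_{z \in y(x)} \mathrm{Tr}_{W_m(k_z(x))/W_m(\mathbb{F}_p)}(\{f,g\}_{x,z} \mid h_{x,z}]_{F_{x,z}}$, which vanishes by the curve reciprocity law \ref{reciprocitylaw}; here $h$ is admissible data for that law because $F \subseteq F_y$. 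The argument for $\mathcal{J}_2$ is identical, using that $\mathcal{J}_2$ is densely generated by images of symbols $\{f,g\}$, $f,g \in F_x^\times$, and the point reciprocity law \ref{reciprocitylaw} (via $F \subseteq F_x$). Since $\mathcal{J}_1+\mathcal{J}_2$ is the subgroup generated by the two, bilinearity and continuity yield vanishing of $(\ \cdot\ \mid h]_X$ on all of it.

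For the second variable: the Witt-vector Frobenius $(x_i)_i \mapsto (x_i^p)_i$ is functorial, hence compatible with the diagonal maps $W_m(F) \to W_m(F_{x,y})$, so for $g \in W_m(F)$ the element $(\mathrm{Frob}-1)g$ has local components $(\mathrm{Frob}-1)(g_{x,y})$ lying in $(\mathrm{Frob}-1)W_m(F_{x,y})$. By \cite[3.3.7]{P4} the local Witt pairing is trivial on $(\mathrm{Frob}-1)W_m(F_{x,z})$ in its last argument, so every local term in the sum defining $(\xi \mid (\mathrm{Frob}-1)g]_X$ vanishes, and therefore so does the total. Combining the three vanishing statements, the pairing descends to $\mathcal{J}_X/(\mathcal{J}_1+\mathcal{J}_2) \times W_m(F)/(\mathrm{Frob}-1)W_m(F) \to \mathbb{Z}/p^m\mathbb{Z}$; its continuity follows from continuity of the global pairing and the fact that the projections onto these quotients are open, giving $W_m(F)$ the subspace topology from $W_m(\mathbb{A}_X)$.

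The only genuine difficulty is the reduction in the $\mathcal{J}_1$ and $\mathcal{J}_2$ steps: elements of these groups are convergent, not finite, products of symbols, so invoking \ref{reciprocitylaw} requires interchanging this product expansion with the a priori infinite sum over points or curves. This is justified precisely by \ref{wittwelldefined} (the global double sum is finitely supported) together with continuity of the pairing in each argument — which is why those facts are established before this corollary. One must also keep careful track that the set $y(x)$ of local branches and the traces $\mathrm{Tr}_{W_m(k_z(x))/W_m(\mathbb{F}_p)}$ appearing here match those in \ref{reciprocitylaw}; beyond that, the argument is formal.
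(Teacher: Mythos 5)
Your proposal is correct and follows essentially the same route the paper intends: the corollary is deduced directly from the well-definedness lemma \ref{wittwelldefined}, the continuity remark following it, the two reciprocity laws of \ref{reciprocitylaw} applied with $h$ coming from the diagonal $F\subseteq F_{y}$, $F\subseteq F_{x}$, and the fact that the local pairing already factors through $W_{m}/(\mathrm{Frob}-1)W_{m}$ by \cite[3.3.7]{P4}. The only cosmetic point is that your worry about convergent products is largely unnecessary: each component of an element of $\mathcal{J}_{1}$ or $\mathcal{J}_{2}$ is a \emph{finite} product of symbols since Milnor $K_{2}$ is generated by symbols, so only the rearrangement of the finitely supported sum over pairs $(x,y)$ needs the finiteness from \ref{wittwelldefined}, exactly as you note.
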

Notice the relation to one-dimensional class field theory - the quotient here is an analogue of the quotient of the idele group by the global elements to obtain the idele class group. The higher tame symbol described below will also take values on this group, as a similar reciprocity law is proved in the paper above.\\ 

In the following sections, we aim to prove that the Witt pairing is non-degenerate on certain subgroups and quotients of the groups on which it is defined, along with similar results for the higher tame pairing. \\

\subsection{The Global Higher Tame Pairing}

This section begins with a definition of the global higher tame pairing then proceeds in a manner similar to the previous section on the Witt pairing - we check the pairing is well-defined and prove basic properties. 

\begin{defn}
For a surface $X$ over a finite field  $k$, $\{f,g\} = (\{f_{x,y},g_{x,y}\})_{x,y}$ $ \in \mathcal{J}_{X}$ and $h= (h_{x,y})_{x,y} \in \mathbb{A}_{X}$, define the global higher tame pairing by 

\[(\{f,g\}, h)_{X} = \prod_{y \subset X}\prod_{x \in y, z_{i} \in y(x)} N_{k_{z_{i}}(x)/k}(\{f_{x,z_{i}},g_{x,z_{i}}\}, h_{x,z_{i}})_{x,z_{i}}\]
where for each pair $(x,z_{i})$, the symbol $( \ , \ )_{x,z_{i}}: K_{2}^{top}(F_{x,z_{i}}) \times F_{x,z_{i}} \to k$ is the local higher tame symbol. 
\end{defn}
\begin{lemma}\label{tamewlldefined}
The global higher tame pairing is well-defined, i.e. for fixed $(\{f_{x,y},g_{x,y}\})_{x,y} \in \mathcal{J}_{X}$, $h \in \mathbb{A}_{X}$, as $(x,z)$ range over all points on all branches of the curves $y$ on $X$, the value of $(\{f_{x,y}, g_{x,y}\}, h_{x,y})_{x,z}$ is not equal to one for only finitely many pairs $(x,y)$.
\end{lemma}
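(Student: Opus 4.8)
The plan is to reduce the finiteness statement to the analogous one-dimensional fact about differential forms on curves, exactly as in the proof of Lemma \ref{wittwelldefined}, using the structure of $\mathcal{J}_X$ and the known properties of the local tame symbol from Section 2.3. First I would use the fact that the higher tame symbol is a symbol (bilinear and satisfying the Steinberg relation) together with the decomposition $\mathcal{J}_X \cong \Gamma \times \prod_{y \subset X}\prod_{x \in y, z_i \in y(x)} K_2^{top}(\mathcal{O}_{x,z_i})$ arising from a section of $\delta_X$. By bilinearity it suffices to check finiteness separately on the ``integral'' factor $\prod K_2^{top}(\mathcal{O}_{x,z_i})$ and on the image $\Gamma$ of the section.

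On the integral factor, I would invoke the analogue of Lemma \ref{residueproperties}(i) for the tame symbol: by Lemma \ref{residueproperties} part 1 of Lemma \ref{topkgpstructure} (boundary map) the tame symbol $(\{f_{x,z},g_{x,z}\},h_{x,z})_{x,z}$ depends only on the valuations of $f,g,h$ and the residues, so when $f_{x,z},g_{x,z} \in K_2^{top}(\mathcal{O}_{x,z})$ and $h_{x,z} \in \mathcal{O}_{x,z}^\times$ the symbol is trivial; since by definition of $\mathbb{A}_X$ (condition 3 of \ref{restricteddef}) and of $\mathcal{J}_X$ we have $h_{x,z}$ a unit and $f_{x,z},g_{x,z}$ in the integral $K$-group for all but finitely many $(x,z)$, only finitely many factors can be nontrivial. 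For the $\Gamma$-factor I would take the section generated by $\bar\gamma \mapsto \{\gamma, t_{z}\}$ as in Lemma \ref{wittwelldefined}; then using the explicit formula for $\delta$ and the relation between the higher tame symbol and the boundary map noted after the Fesenko--Vostokov definition, the symbol $(\{\gamma, t_{z}\}, h_{x,z})_{x,z}$ reduces to a one-dimensional tame symbol $(\bar\gamma, \bar h_{x,z})_{\bar F_{x,z}}$ on the residue curve $k(z)$. The claim then follows from the classical fact that, for fixed rational functions on a complete curve, the tame symbol is trivial at all but finitely many closed points (equivalently, the Weil reciprocity sum has finite support).

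The main obstacle is handling the singular points: at a singular $x$ one must sum (via the norm $N_{k_{z_i}(x)/k}$) over the branches $z_i \in y(x)$, and one must be careful that the passage from $F_{x,y}$ to each $F_{x,z_i}$ and the one-dimensional reduction are compatible with the adelic constraints defining $\mathbb{A}_X$ and $\mathcal{J}_X$. I would deal with this by noting that for all but finitely many curves $y$ the point $x$ is smooth on $y$ and the entry lies in $\mathcal{O}_{x,y}$ (so contributes trivially), reducing to finitely many curves, each with finitely many points and finitely many branches per point; combined with the one-dimensional finiteness on each residue curve $k(z_i)$, the total set of nontrivial pairs $(x,y)$ is finite. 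A minor point to check, exactly as flagged in Lemma \ref{wittwelldefined}, is that although the section $\bar\gamma \mapsto \{\gamma, t_z\}$ depends on the choice of local parameter $t_z$, changing it alters the symbol only by terms already controlled by the integral-factor argument, so it does not affect finiteness.
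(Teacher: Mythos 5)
Your overall strategy (splitting $\mathcal{J}_X$ via a section $\Gamma$ of $\delta_X$ times the integral part, as in Lemma \ref{wittwelldefined}, and reducing to the residue curve) could be made to work, but as written the finiteness bookkeeping has a genuine gap. The adelic conditions you cite (condition 3 of \ref{restricteddef}, and condition 2 in the definition of $K_2^{top}(\mathbb{A}_X)$) are statements \emph{curve by curve}: for all but finitely many curves $y$ the entries are integral (resp.\ lie in $K_2^{top}(\mathcal{O}_{x,y})$) at every $x \in y$. They do not give that $h_{x,z}$ is a unit of $\mathcal{O}_{x,z}$ for all but finitely many pairs $(x,z)$: already the diagonal element $h=t_{y}$ lies in $\mathbb{A}_X^{\times}$ and fails to be a unit at every one of the infinitely many closed points of $y$, and on the finitely many exceptional curves $h$ (or the $K$-component) may be non-integral at every point. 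Your fallback, ``reducing to finitely many curves, each with finitely many points,'' is false -- a curve over a finite field has infinitely many closed points -- so the proposal never actually establishes finiteness along these exceptional curves, which is precisely the substantive content of the lemma. Note also that integrality of $h$ alone does not rescue the integral-factor step: with $f=u_{x,z}$, $g$ a nontrivial constant in $k_{z}(x)^{\times}$ and $h=t_{z}$ (all integral, $f,g$ units), the symbol equals the residue $g^{\pm 1}\neq 1$; and your residue-curve reduction for the $\Gamma$-factor, $(\{\gamma,t_{z}\},h)\mapsto(\bar{\gamma},\bar{h})$, presupposes $v_{y}(h)=0$, so it too does not cover the exceptional curves.

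What is missing is the per-curve (and per-point) analysis that the paper carries out: decompose each local entry as a constant in $k_{z}(x)$ times $u_{x,z}^{i}t_{z}^{j}$ times a principal unit, use the adelic constraints (via the Laurent expansions in $t_{y}$ with coefficients that are adeles of $k(y)$, hence units at almost all $x$) to force $i=j=0$ at all but finitely many places along a fixed curve and through a fixed point, and then use that the higher tame symbol is trivial whenever one entry is a principal unit or at least two entries are constants. Your Weil-reciprocity-style finiteness on the residue curve is the right sort of input and could be extended to close the gap, but it must be applied to the finitely many exceptional curves and to the integral factor paired with non-unit $h$, not only to $\Gamma$ paired with $t_{y}$-units.
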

\begin{proof}
First fix $x \in X$. For each $z \ni x$, we may decompose our elements $f_{x,z}$, $g_{x,z}$, $h_{x,z}$ as products $\alpha_{x,z}u_{x,z}^{i}t_{z}^{j}\varepsilon_{x,z}$, with $j=0$ for all but finitely many $z$, $\alpha_{x,z} \in k_{z}(x)$ and $\varepsilon_{x,z}$ a principal unit of $\mathcal{O}_{F_{x,z}}$.

 If we fix a $j \in \mathbb{Z}$, then there are only finitely many expansions of a fixed element with the exponent of $t_{z}$ being $j$ and the exponent of $u_{x,z}$ being non-zero. So the number of $z$ with $i$ and $j$ not equal to zero is certainly finite. So for all but finitely many $z$, $f_{x,z},g_{x,z}$ and $h_{x,z}$ will all be in the group $k_{z}(x)^{\times} \times \mathcal{U}_{x,z}$.\\
Basic properties of the higher tame symbol show it is trivial if any entry is in the group of principal units $\mathcal{U}_{x,z}$, and a simple calculation shows it is also trivial if more than one of the entries is in $k_{z}(x)^{\times}$, which shows there are only finitely many values not equal to one for a fixed point.

Now we fix a curve $y$, and proceed in exactly the same way to the case for a fixed point. Putting these two cases together, the proof is complete.
\end{proof}

\begin{prop}\label{tamereciprocity}
For a fixed curve $y \subset X$, $f$, $g$, $h \in F_{y}^{\times}$, the product
\[\prod_{x \in y, z_{i} \in y(x)}N_{k_{z_{i}}(x)/k}(\{f,g\},h)_{x,z_{i}} = 1.\]
For a fixed point $x \in X$, $f$, $g$ and $h \in F_{x}$, the product
\[\prod_{y \ni x}N_{k_{y}(x)/k}(\{f,g\}, h)_{x,y} = 1.\]
\end{prop}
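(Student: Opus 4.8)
The plan is to reduce each of the two identities to a reciprocity law that is already known, using the fact --- visible from the Fesenko--Vostokov description recalled above --- that for $f,g,h\in F_{x,z}^\times$ the higher tame symbol is the composite of the two boundary homomorphisms attached to the two-dimensional discrete valuation $\mathbf v=(\bar v_x,v_y)$, namely $(\{f,g\},h)_{x,z}=\bigl(\delta_{\bar v_x}\circ\delta_{v_y}\bigr)(\{f,g,h\})$, the sign $(-1)^{\alpha_{x,y}}$ being precisely the one forced by requiring each $\delta$ to be a homomorphism. Here $\delta_{v_y}\colon K_3(F_{x,z})\to K_2(\bar F_{x,z})$ is the residue in the $t_z$-direction and $\delta_{\bar v_x}\colon K_2(\bar F_{x,z})\to k_z(x)^\times$ is the tame symbol of the one-dimensional local field $\bar F_{x,z}=k_z(x)((u_{x,z}))$. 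Once this identification is in place the two statements become statements about iterated residues of $K$-symbols, which I would handle separately.

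\emph{Curve case.} Fix an irreducible curve $y$ (the reducible case follows by multiplying over components) and $f,g,h\in F_y^\times\subset k(y)((t_y))^\times$. I would first compute the single element $\beta:=\delta_{v_y}(\{f,g,h\})\in K_2(k(y))$. For $x\in y$ and $z\in y(x)$, the embedding $F_y\hookrightarrow F_{x,z}$ respects valuations with ramification index one, $\bar F_{x,z}$ is the completion of $k(y)$ at the place $P_{x,z}$ of the normalisation $\tilde y$ corresponding to $z$, and functoriality of the boundary map gives $(\{f,g\},h)_{x,z}=\delta_{\bar v_x}(\beta)=\tau_{P_{x,z}}(\beta)$, where $\tau_{P_{x,z}}\colon K_2(k(y))\to k_z(x)^\times$ is the tame symbol of $\tilde y$ at $P_{x,z}$. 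Since running $(x,z)$ over $y$ is the same as running $P$ over all closed points of the complete curve $\tilde y$, and $N_{k_z(x)/k}=N_{k(x)/k}\circ N_{k_z(x)/k(x)}$, the product in question equals $\prod_{P\in\tilde y}N_{k(P)/k}\tau_P(\beta)$, which is trivial by the Weil reciprocity law for $K_2$ of the function field of the complete curve $\tilde y/k$.

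\emph{Point case.} Fix $x\in X$ and $f,g,h\in F_x$. Now both components of $\mathbf v$ vary with $y$, so there is no single residue to extract and one needs a genuinely two-dimensional statement: Parshin's reciprocity law around a point of a surface, $\prod_{y\ni x}(\{f,g\},h)_{x,y}=1$, which is the tame analogue of the residue theorem $\sum_{y\ni x}\mathrm{res}_{x,y}(\omega)=0$ for a rational $2$-form. I would reduce it to the case where $\mathcal O_{X,x}$ is regular with normal crossings (condition $\dagger$), where it follows from the structure theorem \ref{topkgpstructure} together with the determinant formula of Fesenko--Vostokov --- the latter turning the sign bookkeeping into a linear-algebra identity for the valuation matrices. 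Since $f,g,h\in F_x$ have only globally defined poles, only finitely many $y\ni x$ give a nontrivial factor (this is the local content of Lemma \ref{tamewlldefined}), so the product is finite; applying $N_{k_y(x)/k}$ factor by factor, and using compatibility of the higher tame symbol with the branch norms built into its very definition, yields $\prod_{y\ni x}N_{k_y(x)/k}(\{f,g\},h)_{x,y}=1$.

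\emph{Main obstacle.} The genuine difficulty, as the introduction flags, is that $\mathcal O_{X,x}$ need not be regular and the curves $y$ need not be smooth, so $y(x)$ can have several branches. This makes the point case hard: one must (a) make all the signs $(-1)^{\alpha_{x,y}}$ line up uniformly across branches with the signs coming from the iterated boundary maps, and (b) deduce the general point reciprocity from the normal-crossings case, which I would do by passing to a resolution $\pi\colon\tilde X\to X$, noting each branch $z\in y(x)$ corresponds to a point of $\tilde X$ with $F_{x,z}$ unchanged, proving the identity upstairs, and transferring it downstairs via the norm compatibility $(\ ,\ ,\ )_{x,y}=\prod_{z\in y(x)}N_{k_z(x)/\mathbb F_q}(\ ,\ ,\ )_{x,z}$. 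The curve case has the same branch issue but it is mild, since passing to the normalisation $\tilde y$ reduces it immediately to Weil reciprocity on a smooth complete curve.
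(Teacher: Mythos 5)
The paper does not actually prove this proposition: its ``proof'' is the single line ``See \cite{ME}'', i.e.\ both reciprocity laws are imported from the author's earlier paper, so there is no internal argument to compare yours against. Judged on its own terms, your curve case is sound and is the standard route one would expect \cite{ME} to follow: with the Fesenko--Vostokov sign the symbol is the iterated boundary map, the embedding $F_{y}\hookrightarrow F_{x,z}$ has ramification index one in the $t_{y}$-direction (each branch occurs with multiplicity one since $y$ is reduced), $\bar F_{x,z}$ is the completion of $k(y)$ at the place $P_{x,z}$ of the normalisation, and running over all pairs $(x,z)$ exhausts the places of $k(y)$ because $y$ is complete; Weil reciprocity for $K_{2}$ of $k(y)$ then finishes it. These compatibilities are exactly the ones that need checking, and you have identified them correctly.

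The point case, however, has a genuine gap. The statement for $f,g,h\in F_{x}$ \emph{is} (an extension to the ring $F_{x}$ of) Parshin's reciprocity law around a point, so invoking ``Parshin's reciprocity law around a point of a surface'' as the key input is close to assuming what is to be proved; the content of the proposition is precisely to establish that law for elements of $F_{x}$, with branches and norms, for arbitrary configurations of curves through $x$. Your proposed reduction also does not go through as described: after passing to a resolution (or an embedded resolution of the curves through $x$), the strict transforms of the curves $y\ni x$ meet the exceptional fibre at \emph{several different} points, and new curves (the exceptional components) appear, so the product around $x$ downstairs is not the product around any single point upstairs. The descent requires combining the point reciprocities at all points of $\pi^{-1}(x)$ with the \emph{curve} reciprocity along each complete exceptional component (the pullbacks of elements of $F_{x}$ do lie in $F_{E}$ for exceptional $E$, which is what makes this cancellation available), and this combination is the heart of the argument; the ``norm compatibility'' you cite only handles the passage from a branch $z$ to the singular curve $y$ and does not perform that cancellation. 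Alternatively one can argue directly over $\hat{\mathcal{O}}_{X,x}$ (which is regular, since $X$ is smooth) by reducing, via the structure of $F_{x}^{\times}$ as in Lemma \ref{kgpstructurepoints}, to symbols in constants, principal units and the prime elements $t_{y}$, where the value of the symbol is governed by intersection multiplicities at $x$; either way, a substantive step is missing from your sketch as written.
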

\begin{proof}
See \cite{ME}.\end{proof}

\begin{cor}
The higher tame symbol defines a pairing
\[ ( \ \ , \ )_{X} : \frac{\mathcal{J}_{X}}{\mathcal{J}_{1} + \mathcal{J}_{2}} \times \mathbb{A}_{X}^{\times} \to k^{\times}.\]
\end{cor}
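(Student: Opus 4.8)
The plan is to deduce the corollary from the two preceding results. Lemma \ref{tamewlldefined} already shows that for fixed $(\{f,g\})_{x,y} \in \mathcal{J}_{X}$ and $h \in \mathbb{A}_{X}^{\times}$ only finitely many of the local factors $N_{k_{z_{i}}(x)/k}(\{f_{x,z_{i}},g_{x,z_{i}}\},h_{x,z_{i}})_{x,z_{i}}$ differ from $1$, so the product $(\{f,g\},h)_{X}$ is a well-defined element of $k^{\times}$; moreover, since each local higher tame symbol is bi-multiplicative and factors through $K_{2}^{top}(F_{x,z_{i}})$, and the norm maps $N_{k_{z_{i}}(x)/k}$ are multiplicative, the assignment $\{f,g\} \mapsto (\{f,g\},h)_{X}$ is a homomorphism $\mathcal{J}_{X} \to k^{\times}$ for each fixed $h$. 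So the only thing to prove is that this homomorphism is trivial on the subgroups $\mathcal{J}_{1}$ and $\mathcal{J}_{2}$, and this is precisely the content of Proposition \ref{tamereciprocity} once the global pairing is reorganised appropriately. The argument runs exactly parallel to the proof of Corollary \ref{pairing} for the Witt pairing.

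Concretely, I would argue as follows. An element of $\mathcal{J}_{1}$ is by definition the diagonal image of a family $(\{F_{y},G_{y}\})_{y \subset X}$ with $\{F_{y},G_{y}\} \in K_{2}^{top}(F_{y})$, so for such an element, and using the inclusion $F \hookrightarrow F_{y}$ of the function field into each semi-global field together with the completions $F_{y} \hookrightarrow F_{x,z}$, the value $(\{f,g\},h)_{X}$ factors as a product over curves $y$ of the ``around a curve'' expressions $\prod_{x \in y,\, z_{i} \in y(x)} N_{k_{z_{i}}(x)/k}(\{F_{y},G_{y}\},h_{x,z_{i}})_{x,z_{i}}$, each of which vanishes by the first assertion of Proposition \ref{tamereciprocity}. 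Symmetrically, an element of $\mathcal{J}_{2}$ is the diagonal image of $(\{F_{x},G_{x}\})_{x \in X}$ with $\{F_{x},G_{x}\} \in K_{2}^{top}(F_{x})$, the global pairing reorganises as a product over points $x$ of the ``around a point'' expressions $\prod_{y \ni x} N_{k_{y}(x)/k}(\{F_{x},G_{x}\},h_{x,y})_{x,y}$, and each of these is trivial by the second assertion of Proposition \ref{tamereciprocity}. Hence $(\{f,g\},h)_{X}$ depends only on the class of $\{f,g\}$ in $\mathcal{J}_{X}/(\mathcal{J}_{1}+\mathcal{J}_{2})$, which is the claim; if one also wants continuity it suffices, by the finiteness of Lemma \ref{tamewlldefined} and the continuity of the local symbols, to check it on the topological generators of Theorem \ref{topkgpstructure} taken fibrewise.

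I expect the main obstacle to be the passage ``the global pairing reorganises appropriately'' in the two cases above --- that is, verifying that for $\{f,g\} \in \mathcal{J}_{1}$ (resp. $\mathcal{J}_{2}$) the around-a-curve (resp. around-a-point) products are genuinely of the form covered by Proposition \ref{tamereciprocity}, even though the third argument $h$ is an adele rather than an element of a single $F_{y}$ (resp. $F_{x}$). The way I would handle this is to decompose each local component as $h_{x,z} = \alpha_{x,z} u_{x,z}^{a}t_{z}^{b}\varepsilon_{x,z}$ exactly as in the proof of Lemma \ref{tamewlldefined}, use that the higher tame symbol kills the principal units $\varepsilon_{x,z}$ and that each remaining one-parameter contribution is constant along the curve (resp. the point) in the relevant valuation, so that the surviving product is exactly an instance of the semi-global reciprocity law (and, after taking residues, of one-dimensional tame reciprocity on the normalisation of $y$). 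Once this reduction is carried out, everything else is formal bookkeeping with the norm maps and the restricted products.
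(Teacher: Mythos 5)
Your overall strategy --- finiteness and multiplicativity from Lemma \ref{tamewlldefined}, then descent through $\mathcal{J}_{1}+\mathcal{J}_{2}$ via Proposition \ref{tamereciprocity} --- is exactly the paper's (implicit) argument, since the corollary is stated there with no further proof. The genuine gap is precisely the step you flag as ``the main obstacle'': Proposition \ref{tamereciprocity} has all three entries diagonal ($f,g,h\in F_{y}^{\times}$, resp.\ $F_{x}$), and your proposed repair --- decomposing $h_{x,z}=\alpha_{x,z}u_{x,z}^{a}t_{z}^{b}\varepsilon_{x,z}$ and claiming the surviving contributions are ``constant along the curve'', hence an instance of the semi-global reciprocity law --- cannot be carried out, because the statement you are trying to prove at that level of generality is false. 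Take $\alpha\in\mathcal{J}_{1}$ to be the diagonal image along a single curve $y_{0}$ of $\{\zeta,t_{y_{0}}\}\in K_{2}^{top}(F_{y_{0}})$ (with $\zeta$ a lift of a generator of the constant field), trivial at every other curve, and take $h\in\mathbb{A}_{X}^{\times}$ equal to $u_{x_{0},y_{0}}$ at one point $x_{0}\in y_{0}$ and to $1$ at every other place. All local factors but one are trivial and
\[(\alpha,h)_{X}=N_{k_{y_{0}}(x_{0})/k}\bigl((\zeta,t_{y_{0}},u_{x_{0},y_{0}})_{x_{0},y_{0}}\bigr)=N_{k_{y_{0}}(x_{0})/k}(\zeta),\]
which is nontrivial in general (for instance it equals $\zeta\neq 1$ whenever $k_{y_{0}}(x_{0})=k$). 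So the pairing is not trivial on $(\mathcal{J}_{1}+\mathcal{J}_{2})\times\mathbb{A}_{X}^{\times}$, and no rearrangement of the product can reduce an adelic third argument to the diagonal reciprocity law: this is the same phenomenon as in dimension one, where $\prod_{v}(f,h_{v})_{v}$ for an idele $h$ is the (generally nontrivial) value of the reciprocity map, not $1$.

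What is true, and what is actually used later (theorems \ref{tamecurvethm} and \ref{tamepointthm}), is descent in the first argument when the second argument is itself diagonal, i.e.\ $h$ a global element (or an element of $F_{y}^{\times}$, $F_{x}^{\times}$ in the semi-global settings); compare Corollary \ref{pairing}, whose second slot is $W_{m}(F)$ and not $W_{m}(\mathbb{A}_{X})$. For such $h$ your first paragraph is already a complete proof, identical to the paper's: group the factors along each curve for $\mathcal{J}_{1}$, around each point for $\mathcal{J}_{2}$, and apply Proposition \ref{tamereciprocity} verbatim. So the correct fix is to restrict the second argument (or weaken the assertion about the quotient by $\mathcal{J}_{1}+\mathcal{J}_{2}$), not to attempt the reduction sketched in your final paragraph.
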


In the following sections, we will use Kummer theory to get duality theorems which will enable us to define the tamely ramified part of the reciprocity map for $X$.

We will now proceed by splitting into the two semi-global cases of a fixed curve and a fixed point, proving the duality of the Witt pairing for $F_{y}$ and $F_{x}$.

\section{Complete Discrete Valuation Fields over Global Fields}
\setcounter{subsection}{1}\subsection*{}\setcounter{thm}{0}

In this section we fix a curve $y \subset X$, and hence a product of fields $F_{y} \cong \prod_{y_{i} \subset y}k(y_{i})((t_{y_{i}}))$ - each one a complete discrete valuation field over a global field $k(y_{i})$. We will denote the finite constant field of $k(y_{i})$ by $k_{y_{i}}$, and let $k_{y}=\prod_{y_{i} \subset y}k_{y_{i}}$.

We begin with a definition of a subgroup of the adeles for the curve $y$. This will be the subgroup on which the Witt pairing is non-trivial.
\begin{defn}Define $J_{y}: = \prod'_{x \in y}K_{2}^{top}(\mathcal{O}_{x,y}, \mathfrak{m}_{x,y})$. Recall the definition of the restricted product is from \ref{restricteddef}.\end{defn}
The first theorem we aim to prove is the following Witt duality theorem for a non-singular curve. The Frobenius element Frob is the canonical generator of the Galois group of the maximal unramified extension of $F_{y}$. It acts on each term of the Witt vectors.
\begin{thm}\label{mainthmcurves}
For a fixed non-singular curve $y \subset X$ and $m \in \mathbb{N}$, the pairing
\[J_{y}/(K_{2}^{top}(F_{y})\cap J_{y})J_{y}^{p^{m}} \times W_{m}(F_{y})/(\mathrm{Frob} -1)W_{m}(F_{y})+W_{m}(k(y)) \to \mathbb{Z}/p^{m}\mathbb{Z}\]
is continuous and non-degenerate, and the induced homomorphism 
from $J_{y}/(K_{2}^{top}(F_{y})\cap J_{y})J_{y}^{p^{m}}$
to \[\mathrm{Hom}(W_{m}(F_{y})/(\mathrm{Frob} - 1)W_{m}(F_{y})+W_{m}(k(y)), \mathbb{Z}/p^{m}\mathbb{Z})\] is a topological isomorphism.\end{thm}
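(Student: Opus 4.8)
The plan is to reduce the global duality statement to the local Witt duality of Parshin (Proposition \ref{localwittduality}, applied to each two-dimensional local field $F_{x,y}$) together with the Bass--Tate / boundary-map description of $\mathcal{J}_y$ and the adelic structure of $W_m(F_y)$. First I would unwind the left-hand side: by the structure theory of $\jc$ recalled in the proof of Proposition \ref{independenceofmodel} (the two exact sequences and the description of $\ker(pr)$ via the groups $\excone,\exctwo$), the group $J_y = \prod'_{x\in y}K_2^{top}(\mathcal{O}_{x,y},\mathfrak{m}_{x,y})$ is a restricted product of the principal-unit parts of the local $K$-groups, generated topologically by symbols $\{1+a_{i,j}u_{x,y}^i t_y^j,u_{x,y}\}$ and $\{1+a_{i,j}u_{x,y}^i t_y^j,t_y\}$ (types \emph{iv} and \emph{v} of Theorem \ref{topkgpstructure}), with the adelic constraint that the coefficient families $(\phi_{k,x,y})_x$ are adeles of $k(y)$. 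Dually, I would use Artin--Schreier--Witt theory to write $W_m(F_y)/(\mathrm{Frob}-1)W_m(F_y)+W_m(k(y))$ in a normalised form; the role of the extra $W_m(k(y))$ summand is precisely to kill the unramified part, matching the fact that $J_y$ contains no $\{u,t\}$ or $\{a,t\}$ terms.

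The core is a local-to-global argument. For fixed $x$, Parshin's local pairing gives a perfect duality
\[
K_2^{top}(F_{x,y})/p^m K_2^{top}(F_{x,y}) \times W_m(F_{x,y})/(\mathrm{Frob}-1)W_m(F_{x,y}) \to W_m(\mathbb{F}_q),
\]
and restricting the first factor to $K_2^{top}(\mathcal{O}_{x,y},\mathfrak{m}_{x,y})$ cuts out, by annihilator duality, exactly the quotient of $W_m(F_{x,y})$ by the image of $W_m(\mathcal{O}_{x,y})$ (the ``integral" Witt vectors together with the unramified part) — this is where Lemma \ref{residueproperties} enters, giving $\mathrm{res}=0$ on integral forms. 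Summing over $x\in y$ and taking the (trace-compatible) restricted product, the well-definedness Lemma \ref{wittwelldefined} guarantees the pairing lands in $\mathbb{Z}/p^m\mathbb{Z}$, and Corollary \ref{pairing} / the reciprocity law of Proposition \ref{reciprocitylaw} shows the diagonal $K_2^{top}(F_y)$ pairs trivially, so the pairing descends to the stated quotients. Non-degeneracy in the first argument then follows: if $\alpha\in J_y$ pairs to zero with all of $W_m(F_y)$, then locally at each $x$ it pairs to zero with all of $W_m(F_{x,y})$ modulo the integral part, so by local duality $\alpha_x\in K_2^{top}(F_{x,y})^{p^m}$ for every $x$; an adelic approximation argument (using that almost all components are already trivial) upgrades this to $\alpha\in (K_2^{top}(F_y)\cap J_y)J_y^{p^m}$. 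Non-degeneracy in the second argument is the Bass--Tate-type input: an element of $W_m(F_y)$ orthogonal to all of $J_y$ is, component-wise, in $(\mathrm{Frob}-1)W_m(F_{x,y})+W_m(\mathcal{O}_{x,y})$, and the only global such elements are those in $(\mathrm{Frob}-1)W_m(F_y)+W_m(k(y))$ — here Theorem \ref{blochkato} controls the difference between the function field $k(y)$ and its completions. Finally, continuity of the induced map $J_y/(\dots)\to \mathrm{Hom}(\dots,\mathbb{Z}/p^m\mathbb{Z})$ comes from the continuity of each local pairing (polynomial formulas, Proposition \ref{wittproperties}(v)) plus the finiteness from Lemma \ref{wittwelldefined}, and the fact that it is a bijection with the correct topologies — i.e.\ a topological isomorphism — follows by checking it is open, using the explicit topological basis of Theorem \ref{topkgpstructure}.

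The main obstacle, I expect, will be the non-degeneracy in the \emph{second} argument: passing from ``component-wise in $(\mathrm{Frob}-1)W_m(F_{x,y}) + W_m(\mathcal{O}_{x,y})$ for all $x$" to ``globally in $(\mathrm{Frob}-1)W_m(F_y) + W_m(k(y))$". This is a genuine global statement about Artin--Schreier--Witt classes on the curve $y$ — essentially that a Witt vector over the global field $k(y)((t_y))$ which is everywhere-locally a sum of an integral vector and a $(\mathrm{Frob}-1)$-coboundary is globally of that form — and it requires the Bass--Tate exact sequence of Theorem \ref{blochkato} (or an equivalent reciprocity-type input from \cite{ME}) rather than a purely formal manipulation. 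A secondary technical point is matching the $W_m(k(y))$ summand on the right precisely with the absence of unramified symbols in $J_y$ on the left, and making the reduction to smooth irreducible $y$ cleanly so that the restricted products genuinely decompose; I would handle the latter by fixing an isomorphism $F_y \cong k(y)((t_y))$ as in Proposition \ref{independenceofmodel} and working on the normalisation of $y$ throughout.
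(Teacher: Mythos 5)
Your reduction to Parshin's local duality breaks down at the non-degeneracy in the \emph{first} argument, and this is not a repairable detail but the heart of the theorem. The right-hand side of the pairing consists only of diagonal Witt vectors $h \in W_{m}(F_{y})$, so knowing that $(\alpha\,|\,h]_{y} = \sum_{x\in y}\mathrm{Tr}\,(\alpha_{x}\,|\,h]_{x,y} = 0$ for all such $h$ gives you no way to isolate a single place: you cannot conclude that each component $\alpha_{x}$ annihilates $W_{m}(F_{x,y})$ modulo integral vectors. Indeed the conclusion you draw, $\alpha_{x}\in K_{2}^{top}(F_{x,y})^{p^{m}}$ for every $x$, is false: any element of $\Delta(K_{2}^{top}(F_{y}))\cap J_{y}$ (for instance $\{1+at_{y},t_{y}\}$ embedded diagonally, $a\in k(y)$) pairs to zero with every global $h$ by the reciprocity law \ref{reciprocitylaw}, while its local components are not $p^{m}$-th powers and pair non-trivially with suitable local Witt vectors. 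The whole content of the theorem is that the kernel is exactly $(\Delta(K_{2}^{top}(F_{y}))\cap J_{y})J_{y}^{p^{m}}$, and your ``adelic approximation argument'' is precisely the missing proof. The paper proceeds differently: it expands $\alpha$ into graded pieces via the unique decomposition of Lemma \ref{relativekgpcurves}, computes the pairing on each piece by residues (Lemma \ref{firstcalculation}, Corollary \ref{firstcalccor}), describes the diagonal elements explicitly in these coordinates (Lemma \ref{diagonalelements}), and then identifies the kernel of each graded pairing with the diagonal by Weil's adelic residue duality on the curve, $\mathbb{A}_{k(y)}\times\mathbb{A}_{k(y)}(\Omega^{1}_{y})\to k_{y}$ with $k(y)^{\perp}=\Omega^{1}_{k(y)}$ (Lemmas \ref{dualityone}, \ref{dualitytwo}). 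That one-dimensional duality is the genuine global input, not Bass--Tate: Theorem \ref{blochkato} is a statement about Milnor $K$-groups of $E(X)$ and says nothing about Artin--Schreier--Witt classes, so it also cannot supply the local-to-global step you yourself flag as the obstacle in the second argument; the paper avoids that step altogether by first putting $f\in F_{y}/((\mathrm{Frob}-1)F_{y}+k(y))$ into the normal form of Lemma \ref{structuremodfrob} and then applying the graded dualities to the lowest-order coefficient.

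A second gap: you assert that the induced map to $\mathrm{Hom}(\,\cdot\,,\mathbb{Z}/p^{m}\mathbb{Z})$ is a bijection, but non-degeneracy gives at most injectivity; surjectivity needs a construction. In the paper the graded lemmas do double duty here: given a homomorphism $\mu$, one builds $\alpha=\prod_{k\geq 1}\alpha_{k}$ inductively, choosing $\alpha_{k}$ at level $k$ by the isomorphisms of Lemmas \ref{dualityone} and \ref{dualitytwo} so as to realise $\mu_{k}$ after correcting for the contributions of the lower levels, and the passage from $m=1$ to general $m$ is a separate induction using properties (iv), (vi) and (vii) of Proposition \ref{wittproperties} together with the shift $V$. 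Your sketch omits both steps, so even granting the (incorrect) localisation argument it would not yield the stated topological isomorphism.
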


We will proceed by induction on $m$. To prove the theorem for $m = 1$, we need a series of technical lemmas. \\

We first discuss why the pairing is taken on this group. The quotient by the diagonal elements $K_{2}^{top}(F_{y})$ is because of the reciprocity law \ref{reciprocitylaw}, and the quotient by $J^{p^{m}}_{y}$ is because of \ref{wittproperties}, properties four and seven. From Parshin's calculations in \cite[3.2.5]{P4} we see that for each field $F_{x,y}$, elements of the $K$-group containing principal units and elements of the finite field $k_{y}(x)$ are the only elements where the Witt pairing can take non-zero values. We quotient by the constant elements as these are the ones related to unramified extensions, i.e. $p^{th}$-powers of the Frobenius element. The following lemma on the structure of the $K$-groups will complete this discussion.

\begin{lemma}\label{kgpstructurecurves}
Fix non-singular $y \subset X$.  Then $K_{2}^{top}(F_{y})$ is generated by symbols of the form:
\begin{enumerate}
\item{ $\{a,\tc\}$ with $a \in k(y)^{\times}$;}
\item{ $\{a,b\}$ with $ a,b \in k(y)^{\times}$;}
\item{ $\{1+a\tc^{k}, \tc\}$ with $ a \in k(y)^{\times}$, $k \geq 1$;}
\item{$\{1+a\tc^{k}, b\}$ with $a, b \in k(y)^{\times}$, $k \geq 1$.}\end{enumerate}
\end{lemma}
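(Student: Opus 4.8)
The statement describes a set of generators for $K_2^{top}(F_y)$ where $F_y \cong \prod_{y_i \subset y} k(y_i)((t_{y_i}))$, so it suffices to work one irreducible component $y_i$ at a time; write $K = k(y_i)((t))$ with $t = t_{y_i}$, a complete discrete valuation field whose residue field is the global field $k(y_i)$. The plan is to use the Bass--Tate type exact sequence (Theorem \ref{blochkato}) together with the standard decomposition of $K_2^{top}$ of a complete discrete valuation field into the part coming from the residue field and the part detected by the boundary map $\delta : K_2^{top}(K) \to K_1^{top}(k(y_i)) = k(y_i)^\times$. Concretely, the splitting $K_2^{top}(K) \cong K_2^{top}(\mathcal{O}_K) \oplus k(y_i)^\times$ (via $a \mapsto \{a, t\}$) reduces the problem to: (i) identifying generators of $k(y_i)^\times$, which after lifting to $\mathcal{O}_K^\times$ gives the symbols $\{a, t\}$ of type 1; and (ii) finding generators of $K_2^{top}(\mathcal{O}_K)$.

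First I would treat $K_2^{top}(\mathcal{O}_K)$. Using the further boundary/residue map $K_2^{top}(\mathcal{O}_K) \to K_2^{top}(k(y_i)) \oplus (\text{units part})$ — really the filtration of $\mathcal{O}_K^\times$ by $1 + t^k \mathcal{O}_K$ — one splits $K_2^{top}(\mathcal{O}_K)$ into $K_2^{top}(k(y_i))$ and the part built from principal units $1 + t^k\mathcal{O}_K$. For the residue-field piece $K_2^{top}(k(y_i))$: since $k(y_i)$ is a global field of positive characteristic (a function field of a curve over a finite field), $K_2^{top}(k(y_i))$ — i.e. $K_2(k(y_i))$ modulo divisible elements — is generated by symbols $\{a,b\}$ with $a, b$ in $k(y_i)^\times$; this is where I would invoke Theorem \ref{blochkato} with $E$ the constant field and $F = k(y_i)$, giving an exact sequence expressing $K_2(k(y_i))$ in terms of $K_2$ of the finite field (which is trivial) and residues in $K_1$ of residue fields, hence that all of $K_2$ is generated by such symbols — giving the symbols of type 2. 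For the principal-units part, the continuity of the symbol map and the identity $\{1 + \alpha t^k, 1 + \beta t^l\}$ being expressible via symbols of the form $\{1 + \gamma t^m, t\}$ and $\{1+\gamma t^m, c\}$ with $c$ a constant (using bilinearity, the Steinberg relation, and convergence of the relevant series — cf. the structure theorem \ref{topkgpstructure} for the two-dimensional local fields $F_{x,y}$) reduces everything to symbols $\{1 + a t^k, t\}$ and $\{1 + a t^k, b\}$ with $a, b \in k(y_i)^\times$, $k \geq 1$ — the symbols of types 3 and 4. Throughout, I would repeatedly use that a convergent product may be rewritten and that $k(y_i)^\times$ generates the relevant coefficient groups.

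The main obstacle I expect is the principal-units part: showing that every element of the closure of the subgroup generated by $\{1 + f t^k, u\}$ for $f \in \mathcal{O}_K$ (and $u$ a unit or $t$) can in fact be written using only $f \in k(y_i)^\times$ rather than general $f \in \mathcal{O}_K$. This requires expanding $f = \sum_{j \geq 0} c_j t^j$ with $c_j \in k(y_i)$ and using bilinearity plus continuity to split $\{1 + (\sum_j c_j t^j) t^k, u\}$ into a convergent product of $\{1 + c_j t^{k+j}, u\}$ modulo higher-order corrections coming from the non-additivity of $\log(1+x)$; one must check the error terms converge to zero in the topology on $K_2^{top}$, which is exactly where the sequential-continuity definition of the topology and the results of Fesenko (\cite{F6}) are needed. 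A secondary subtlety is that the $c_j$ need not themselves lie in $k(y_i)^\times$ but only in $k(y_i)$ — one handles $c_j = 0$ trivially and $c_j \neq 0$ by the stated form, and additivity in the residue field argument handles sums. Once the generating claim is established for each component $y_i$, the result for $F_y$ follows since $K_2^{top}(F_y) = \prod_{y_i \subset y} K_2^{top}(F_{y_i})$ by definition.
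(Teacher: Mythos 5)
Your plan is essentially the paper's own argument: the paper proves this lemma by repeating the proof of the structure theorem \ref{topkgpstructure} with the residue field $k(y)$ playing the role of the constants, i.e.\ the decomposition $F_{y_i}^{\times} = \langle t_{y_i}\rangle \times k(y_i)^{\times} \times (1+t_{y_i}\mathcal{O}_{F_{y_i}})$, bilinearity and Steinberg-type identities for symbols of principal units, and convergence of products in $K_2^{top}$, exactly as you outline. (Your appeal to Theorem \ref{blochkato} for the residue-field piece is superfluous --- and not literally applicable unless $k(y_i)$ is a rational function field --- but harmless, since symbols $\{a,b\}$ with $a,b \in k(y_i)^{\times}$ generate the image of $K_2(k(y_i))$ by definition.)
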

The proof of this lemma is exactly the same as for a two-dimensional local field, see \ref{topkgpstructure}. Notice again we are choosing a smooth irreducible curve $y$  - the discussion of the singular case follows at the end of the section.

For fixed $y \subset X$, $x \in y$, let $\mathcal{E}^{(1)}_{x,y}$ be the group generated by the symbols with entries as in proposition \ref{topkgpstructure} part four in the first position, and $\mathcal{E}^{(2)}_{x,y}$ the group generated with symbols in part five in the first position, and local parameters in the second.  Using proposition \ref{topkgpstructure} we can now write
\[{\prod}'_{y \subset X}{\prod}'_{x \in y} K_{2}^{top}(\mathcal{O}_{x,y}, \mathfrak{m}_{x,y}) = {\prod}'_{y \subset X}{\prod}'_{x \in y} \mathcal{E}^{(1)}_{x,y} \times \mathcal{E}^{(2)}_{x,y}\]
  and using the lemma above, we know the two groups we quotient by are generated by symbols
  \[\{1+a\tc^{k}, \tc\} \text{ with } a \in k(y)^{\times},  \ k \geq 1; \ \ \  \{1+a\tc^{k}, b\} \text{ with }a, b \in k(y)^{\times}, \ k \geq 1.\]
  
\noindent We examine the structure of this group further.

\begin{lemma}\label{relativekgpcurves}
Let $\alpha \in \prod'_{x \in y} \mathcal{E}^{(1)}_{x,y} \times \mathcal{E}^{(2)}_{x,y}$.  Then $\alpha$ can be decomposed as $\alpha^{(1)}\alpha^{(2)}$, where:
\[ \aone_{x,y} = \{\varepsilon_{x,y}^{(1)}, \tc\}, \text{     with   } \varepsilon^{(1)}_{x,y} \in \mathcal{E}^{(1)}_{x,y}\]
and
\[ \atwo_{x,y} = \{\varepsilon_{x,y}^{(2)}, \uxc\}, \text{      with    } \varepsilon^{(2)}_{x,y} \in \mathcal{E}^{(2)}_{x,y}\]
are unique expansions for each $x \in y$.
The unique decomposition of the $\varepsilon_{x,y}^{(i)}$ can be rewritten as:
\[\varepsilon_{x,y}^{(1)} = \prod_{j \geq 1}(1 + \phi_{j,x,y}^{(1)}(\uxc)\tc^{j})\]
\[\varepsilon_{x,y}^{(2)} = \prod_{j \geq 1}(1 + \phi_{j,x,y}^{(2)}(\uxc)\tc^{j})\]
where $\phi_{j,x,y}^{(i)}(\uxc) \in k(y)_{x}$ satisfy:
\begin{enumerate}
\item{ $(\phi_{j,x,y}^{(i)}(\uxc))_{x \in y} \in \mathbb{A}_{k(y)}$ for $i = 1,2 $ and for all $j$;}
\item{ If $k$ is such that $\phi^{(1)}_{j,x,y}(\uxc) = 0$ for all $ j < k$, then $\phi^{(1)}_{k,x,y}(\uxc)$ contains no powers $\uxc^{i}$ with $p | i$. }
\item{ If $k$ is such that $p | k$ and $\phi^{(2)}_{j,x,y}(\uxc) = 0$ for all $j < k$, then $\phi^{(2)}_{k,x,y}(\uxc) = 0$.}
\item{  For all $k$ and for all $x \in y$, $\phi^{(2)}_{k,x,y}(\uxc) = \psi_{k,x,y}(\uxc^{p})$ for some series $\psi_{k,x,y} \in k(y)[[X]]$.}
\end{enumerate}
\end{lemma}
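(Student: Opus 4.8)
The statement is essentially a bookkeeping result: it unpacks the structure theorem \ref{topkgpstructure} for each local factor $K_2^{top}(F_{x,y})$ into a normalised product expansion, then imposes the adelic restriction coming from the definition of $\mathbb{A}_y$ and the global $K$-group. The plan is to work one position $(x,y)$ at a time, using \ref{topkgpstructure}, and then glue with the adelic condition. First I would recall that by \ref{topkgpstructure}, parts \emph{iv} and \emph{v}, every element of $\mathcal{E}^{(1)}_{x,y}\times\mathcal{E}^{(2)}_{x,y}$ is a convergent product of symbols $\{1+a_{i,j}u_{x,y}^i t_y^j, u_{x,y}\}$ and $\{1+a_{i,j}u_{x,y}^i t_y^j, t_y\}$, with the $a_{i,j}$ ranging over a fixed $\mathbb{F}_p$-basis of $\mathbb{F}_q$ and subject to the stated constraints on the exponents; moreover these symbols form a topological basis, so the expansion is unique. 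This immediately gives the splitting $\alpha = \alpha^{(1)}\alpha^{(2)}$ according to whether the second entry of the symbol is $t_y$ or $u_{x,y}$, with uniqueness inherited from the basis property.

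Next I would repackage the product of symbols $\{1+a_{i,j}u_{x,y}^i t_y^j, \beta\}$ (for $\beta = t_y$ or $u_{x,y}$) into the form $\{\varepsilon^{(1)}_{x,y}, t_y\}$ and $\{\varepsilon^{(2)}_{x,y}, u_{x,y}\}$. The key identity is the multiplicativity $\{a,\beta\}+\{b,\beta\} = \{ab,\beta\}$ in $K_2^{top}$, which lets one collect all symbols sharing the same second entry into a single symbol whose first entry is the (convergent) product $\varepsilon^{(i)}_{x,y}=\prod_{j\geq 1}(1+\phi^{(i)}_{j,x,y}(u_{x,y})t_y^j)$, where $\phi^{(i)}_{j,x,y}(u_{x,y}) = \sum_i a_{i,j}u_{x,y}^i \in k(y)_x$. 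Convergence of this product in $F_{x,y}$, hence of the symbol in $K_2^{top}(F_{x,y})$, follows from the topology description in Section 2.1: the exponents $j$ of $t_y$ tend to infinity. Uniqueness of the $\phi^{(i)}_{j,x,y}$ is again just the uniqueness of the topological basis in \ref{topkgpstructure} translated through this collection step.

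Then I would read off properties (2), (3), (4) directly from the exponent constraints in \ref{topkgpstructure}. Property (2) is the condition ``$p\nmid j$'' attached to type \emph{iv} together with the minimality convention $N_1(j)\geq 0$: if $\phi^{(1)}_{j,x,y}=0$ for all $j<k$ then $k$ is the lowest occurring $t_y$-degree, and at the lowest degree the structure theorem forbids $u_{x,y}$-exponents divisible by $p$ (for type iv, $p\nmid i$ is imposed at the leading term — I would double-check which of the two indices the ``$p\nmid i$'' in part \emph{v} versus part \emph{iv} controls, and phrase (2) to match part iv's role for $\mathcal{E}^{(1)}$). Property (4) is the condition ``$p\nmid i,j$'' in type \emph{v}: writing $\phi^{(2)}_{k,x,y}$ in terms of a series in $u_{x,y}^p$ is exactly the assertion that no $u_{x,y}$-power with $p\nmid i$ survives once one is allowed to absorb such terms into other basis elements via the Artin–Schreier-type relations $\{1+a u^i t^j, u\} = \{1+a^{1/?}\cdots\}$ — here I would invoke Parshin's normalisation in \cite[Section 2]{P4} rather than re-deriving it. Property (3) is the $p\nmid j$ constraint for the $\mathcal{E}^{(2)}$ part. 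Finally, property (1), the adelic condition $(\phi^{(i)}_{j,x,y})_{x\in y}\in\mathbb{A}_{k(y)}$, comes from unwinding the definition of $\prod'_{x\in y}\mathcal{E}^{(1)}_{x,y}\times\mathcal{E}^{(2)}_{x,y}$ as a restricted product with respect to the $K_2^{top}(\mathcal{O}_{x,y})$: for all but finitely many $x$ the element is integral, forcing the coefficients $\phi^{(i)}_{j,x,y}$ to lie in $\mathcal{O}_{k(y)_x}$ for all but finitely many $x$, which is precisely membership in the adele ring $\mathbb{A}_{k(y)}$ — this is the same observation already used in the proof of \ref{independenceofmodel}.

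\textbf{Main obstacle.} The routine part is the collection of symbols and the bookkeeping of exponents; the genuinely delicate point is property (4) (and the precise form of (2)), because it is not a formal consequence of multiplicativity but rests on the specific choice of topological basis in \ref{topkgpstructure} — i.e. on which relations of the form $\{1+au^pt^j,\cdot\}$ are used to eliminate $p$-divisible exponents. I expect to handle this by citing Parshin's \cite[Section 2, Proposition 3]{P4} for the uniqueness and normal form of the expansion, rather than reproving it, and then noting that the displayed constraints are literally the ones appearing there, transported through the $\varepsilon^{(i)}_{x,y}$ substitution. A secondary (but easy) point to be careful about is that the $\phi^{(i)}_{j,x,y}$ take values in the possibly-larger residue field $k(y)_x$ rather than $k(y)$ itself, so the adelic condition must be phrased with the local fields $k(y)_x$ as in Definition \ref{fieldsdefs}.
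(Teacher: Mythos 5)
Your proposal is correct and takes essentially the same route as the paper: the splitting $\alpha=\alpha^{(1)}\alpha^{(2)}$ and its uniqueness are drawn from Parshin's topological basis (theorem \ref{topkgpstructure} and \cite{P4}), properties 2--4 are read off from the index constraints built into that normalisation of $\mathcal{E}^{(1)}_{x,y}$ and $\mathcal{E}^{(2)}_{x,y}$, and property 1 comes from the adelic restricted-product condition, exactly as in the paper's (much terser) proof. The point you flag as delicate (property 4) is treated in the paper the same way you propose, by appealing to Parshin's normal form rather than re-deriving it.
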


\begin{proof}
By the structure theorem for $K_{2}^{top}(\fc)$, the decomposition $\alpha = \aone\atwo$ is clear.  The uniqueness follows from \cite{P4}, corollary to proposition 4, section one.\\
Property 1 follows from the induced (by our definition of the adeles) restricted product of the groups $\mathcal{E}^{(1)}_{x,y}$, $ \mathcal{E}^{(2)}_{x,y}$.\\
Suppose $k$ is such that $\phionej = 0$ for all $j < k$.  Since the product in $\mathcal{E}^{(1)}_{x,y}$ is taken over the indices not divisible by $p$, the only powers $\uxc^{i}$ with $p | i$ must come from sums terms in  $\phionej$ for $j < k$ - but these are all zero.  So property 2 is proved.\\
Suppose $k$ is such that $p | k$ and $\phitwoj = 0$ for all $j < k$.  The product in $\mathcal{E}^{(2)}_{x,y}$ is taken over the indices with $p$ not dividing the index of $\tc$, so 3 is proved in the same way as 2 above.\\
Finally, for any $k$ and $x \in y$, the product in $\mathcal{E}^{(2)}_{x,y}$ is taken so that the second index is divisible by $p$, so property 4 is clear.
\end{proof}

We now look at the expansion given above for elements of $K_{2}^{top}(F_{y})$. This will give us a general form for elements of the diagonal group, enabling us to prove that elements of the kernel of the Witt pairing are exactly the diagonal elements.

\begin{lemma}\label{diagonalelements}
Let $\{1 + a\tc^{k}, \tc\}$, $\{1 + h\tc^{l}, b\} \in K_{2}^{top}(F_{y})$ for some $k, l > 0, \ a,b,h \in k_{y}(x)$ and $\alpha_{1}, \alpha_{2}$ their respective images in $J_{y}$.  Then for $\alpha_{1}$:
\[
\phionej_{1} = \begin{cases} 0 & \text{if } j < k \\
a \text{ mod }k(y)_{x}^{p} & \text{if } j = k \end{cases}\]
\[ \phitwoj_{1} =  0 \ \ \ \  j\leq k .\]
For $\alpha_{2}$, let $\eta = (\eta_{x})_{x \in y} \in \mathbb{A}_{k(y)}$ be defined by:
\[h\uxc b^{-1}\frac{db}{d\uxc} + \uxc\frac{d\eta_{x}}{d\uxc} \in k(y)^{p}_{x}.\]
Then:
\[\phionej_{2} = \begin{cases} 0 & \text{ if } j < l\\
l\eta_{x} & \text{ if } j = l \end{cases}\]
\[\phitwoj_{2} = \begin{cases} 0 & \text{ if } j < l \\
h\uxc b^{-1}\frac{db}{d\uxc} + \uxc\frac{d\eta_{x}}{d\uxc} & \text{ if } j= l. \end{cases}\]
\end{lemma}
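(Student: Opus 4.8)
The plan is to compute both images directly by pushing the generating symbols through the identifications made in Lemmas \ref{relativekgpcurves} and \ref{diagonalelements}'s setup, i.e. the unique expansion of an element of $\prod'_{x\in y}\mathcal{E}^{(1)}_{x,y}\times\mathcal{E}^{(2)}_{x,y}$ as $\alpha^{(1)}\alpha^{(2)}$ with the $\phi^{(i)}_{j,x,y}(\uxc)$ as coefficients. The key point is that the image in $J_{y}$ of a diagonal symbol is obtained by first rewriting the symbol in the ``standard form'' of Theorem \ref{topkgpstructure} using only the relations in $K_{2}^{top}$, and then reading off the coefficients; so the whole computation is a manipulation of Steinberg-type and bilinearity relations, modulo the identifications that kill $p$-th powers in the first slot (type \emph{iv}, indices $p\nmid j$) and powers $\uxc^{i}$ with $p\mid i$ combined with the $\mathcal{E}^{(2)}$-normalisation (type \emph{v}, indices $p\nmid i,j$, but here the second slot is a local parameter so the relevant reduction is the one forcing $p\mid i$ into $\mathcal{E}^{(2)}$-form).

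First I would treat $\alpha_{1}$, the image of $\{1+a\tc^{k},\tc\}$. Since the second entry is already the local parameter $\tc$, this symbol sits in $\mathcal{E}^{(1)}_{x,y}$ directly, so $\alpha^{(2)}_{1}=1$ — giving $\phi^{(2)}_{j,x,y}(\alpha_{1})=0$ for all $j$, and in particular for $j\le k$. For the $\mathcal{E}^{(1)}$-part, one must put $1+a\tc^{k}$ into the canonical shape $\prod(1+\phi_{j}\uxc^{i}\tc^{j})$ with $p\nmid j$: if $p\nmid k$ there is nothing to do and $\phi^{(1)}_{k}=a$; if $p\mid k$ one uses the identity $\{1+a\tc^{k},\tc\}$ rewritten via $\{1+a\tc^{k},\tc\} \equiv \{1+a'\tc^{k'},\tc\}$ by absorbing the $p$-part — but the cleanest route is to observe that modulo $p$-th powers in the first slot (which is exactly the reduction defining type \emph{iv} generators) we have $1+a\tc^{k}\equiv (1+a^{1/p}\tc^{k/p})^{p}\cdot(\text{lower-order correction})$, so the leading coefficient becomes $a\bmod k(y)_{x}^{p}$ and all coefficients with index $<k$ vanish by construction. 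This yields exactly the stated formulas for $\alpha_{1}$. I expect this case to be essentially bookkeeping.

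The case of $\alpha_{2}$, the image of $\{1+h\tc^{l},b\}$ with $b\in k_{y}(x)$, is the substantive one and will be the main obstacle. Here the second entry $b$ is a constant, not a parameter, so the symbol is not yet in either $\mathcal{E}^{(1)}$- or $\mathcal{E}^{(2)}$-form, and one must use the Steinberg relation to trade $\{1+h\tc^{l},b\}$ against symbols whose second slot is $\tc$ or $\uxc$. The standard device is differential-form / residue bookkeeping: modulo $pK_{2}^{top}$ and the relation $\{u,v\}+\{v,u\}=0$, the class of $\{1+f,b\}$ is governed by $\mathrm{dlog}$, i.e. by the form $\frac{d(1+f)}{1+f}\wedge\frac{db}{b}$, and one rewrites $b^{-1}db$ in terms of $\uxc^{-1}d\uxc$ and exact forms — this is precisely where the correction term $\eta=(\eta_{x})$ enters, defined by the condition $h\uxc b^{-1}\frac{db}{d\uxc}+\uxc\frac{d\eta_{x}}{d\uxc}\in k(y)_{x}^{p}$, which is exactly the condition that the $\uxc$-expansion can be brought into the $\mathcal{E}^{(2)}$-normal form (no $p\nmid i$ obstruction) at the cost of shifting mass into the $\{\cdot,\tc\}$-part with coefficient $l\eta_{x}$ (the $l$ coming from differentiating $\tc^{l}$). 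Concretely: $\{1+h\tc^{l},b\}\equiv \{1+h\tc^{l}\uxc^{?},\uxc\}\cdot\{1+(l\eta_{x})\tc^{l},\tc\}\cdot(\text{higher order})$, and reading off coefficients gives $\phi^{(2)}_{l}(\alpha_{2})=h\uxc b^{-1}\frac{db}{d\uxc}+\uxc\frac{d\eta_{x}}{d\uxc}$ and $\phi^{(1)}_{l}(\alpha_{2})=l\eta_{x}$, with everything of index $<l$ vanishing because $1+h\tc^{l}$ has no lower-order terms. I would verify that $\eta$ indeed defines an adele (using that $b$ and $h$ are adelic and $k(y)_{x}^{p}$-reduction is compatible with the restricted product, cf. Lemma \ref{relativekgpcurves}(1)), and that the choice of $\eta$ is well-defined modulo the ambiguity killed by passing to $J_{y}/(K_{2}^{top}(F_{y})\cap J_{y})J_{y}^{p^{m}}$ later. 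The delicate step throughout is keeping track of the interaction between the $p$-th power reductions in the two slots and the sign conventions in Theorem \ref{topkgpstructure}; once that is pinned down the formulas fall out.
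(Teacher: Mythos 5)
Your outline follows the same route as the paper (rewrite each diagonal symbol into the canonical form of Lemma \ref{relativekgpcurves} by manipulations inside $K_{2}^{top}$, with $\eta$ absorbing the normalisation of the $\{\cdot\,,u_{x,y}\}$-part), but the two steps carrying the actual content are missing or misstated. For $\alpha_{1}$ you claim that when $p\nmid k$ ``there is nothing to do and $\phi^{(1)}_{k}=a$''; this conflates divisibility of the index $k$ with the $p$-part of the coefficient $a$. The reduction modulo $k(y)_{x}^{p}$ is needed for every $k$: writing $a=\phi^{(1)}_{k,x,y}(u_{x,y})+\delta_{x}^{p}$, the normalisation in Lemma \ref{relativekgpcurves} forces the summand $\delta_{x}^{p}$ to be discarded, and one must show the discarded symbol $\{1+\delta_{x}^{p}t_{y}^{k},t_{y}\}$ lies in $J_{y}^{p}J_{y,\geq k+1}$. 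When $p\nmid k$ the unit $1+\delta_{x}^{p}t_{y}^{k}$ is not a $p$-th power, so your device $1+at_{y}^{k}\equiv(1+a^{1/p}t_{y}^{k/p})^{p}\cdot(\text{correction})$ does not apply; the paper instead uses the Steinberg relation $\{1+\delta_{x}^{p}t_{y}^{k},-\delta_{x}^{p}t_{y}^{k}\}=1$ to obtain $\{1+\delta_{x}^{p}t_{y}^{k},t_{y}\}\equiv\{1+\delta_{x}^{p}t_{y}^{k},\delta_{x}\}^{p}$ modulo $J_{y,\geq k+1}$ (Lemma \ref{Kgpcalc1}). Without this, the case $j=k$ of $\alpha_{1}$ is unproved.

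For $\alpha_{2}$, your congruence $\{1+ht_{y}^{l},b\}\equiv\{1+ht_{y}^{l}u^{?},u_{x,y}\}\{1+l\eta_{x}t_{y}^{l},t_{y}\}\cdot(\text{higher order})$ is exactly the statement to be proved, and the $d\log$ heuristic does not establish it: the coefficients $\phi^{(i)}_{l,x,y}$ are defined by the unique canonical decomposition in $K_{2}^{top}$, not by the image of the symbol under $d\log$, so one cannot ``read off'' coefficients from the wedge form without first proving that the form determines the decomposition -- which is essentially the non-degeneracy being built towards in Lemma \ref{dualityone}, so this route risks circularity. What is needed, and what the paper supplies, are the explicit identities $\{1+f_{i}u_{x,y}^{i}t_{y}^{l},1+g_{j}u_{x,y}^{j}\}\equiv\bigl\{1+f_{i}u_{x,y}^{i}\tfrac{jg_{j}u_{x,y}^{j}}{1+g_{j}u_{x,y}^{j}}t_{y}^{l},u_{x,y}\bigr\}$ and, for $p\nmid i$, $\{1-ivu_{x,y}^{i}t_{y}^{l},u_{x,y}\}\equiv\{1+lvu_{x,y}^{i}t_{y}^{l},t_{y}\}$, both modulo $K_{2}^{top}(\mathcal{O}_{x,y},\mathfrak{m}_{x,y}^{l+1})$ (Lemmas \ref{Kgpcalc3} and \ref{Kgpcalc2}); summing the first over the expansions $h=\sum_{i}f_{i}u_{x,y}^{i}$ and $b=\prod_{j}(1+g_{j}u_{x,y}^{j})$ is what produces the term $hu_{x,y}b^{-1}\frac{db}{du_{x,y}}$, and the second converts the non-$p$-power part into $\{1+l\eta_{x}t_{y}^{l},t_{y}\}$, giving $\phi^{(1)}_{l,x,y}=l\eta_{x}$. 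Your structural reading of where $\eta$ and the factor $l$ come from is correct, but until these identities are actually derived the proposal asserts the lemma rather than proving it.
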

\begin{proof}
First consider $\alpha_{1}$.  For $j < k$, the claim is clear.  Let $\delta = (\delta_{x})_{x \in y} \in \mathbb{A}_{k(y)}$ be defined by
\[a = \phionek + \delta_{x}(\uxc)^{p}\]
with $\delta_{x} \in k(y)_{x}$.  Such a delta exists  by the expansion of $\aone \in \excone \times \exctwo$.  \\
For any $j \in \mathbb{Z}$, define $J_{y, \geq j}$ as
\[ \left\{ \alpha \in {\prod}'_{x \in y}K_{2}^{top}(\mathcal{O}_{x,y}, \mathfrak{m}_{x,y}): (\phi^{(1)}_{i,x,y})_{x \in y} = 0 \text{ and } (\phi^{(2)}_{i,x,y})_{x \in y} = 0  \ \forall \ i < j\right\}.\]
It is enough to show that
\[ \{1 + \delta^{p}_{x}\tc^{k}, \tc\} = \{1 + a\tc^{k}, \tc\}\{ 1 - \phionek \tc^{k}, \tc\} \in J_{y}^{p}J_{y, \geq k+1}\]
as then we have the correct value modulo $p^{th}$-powers, and the remaining terms affect only $\phionej$ with $j > k$.\\
If $p | k$, then $\{1 + \delta_{x}^{p}\tc^{k}, \tc\} \in J_{y}^{p}$, so assume $p \nmid k$.  We have the identity:
\[\{1 + \delta_{x}^{p}\tc^{k}, -\delta_{x}^{p}\tc^{k}\} = 1\]
by definition of the $K$-groups.  Hence
\[\{1 + \delta_{x}^{p}\tc^{k}, \tc\} \equiv  \{1 + \delta_{x}{^p}\tc^{k}, \delta_{x}\}^{p} \text{ mod } J_{y, \geq k + 1}.\]
See \ref{Kgpcalc1} in the appendix for the details of this calculation.\\
So now consider $\alpha_{2}$.
Let $f_{i}, g_{j} \in k(x)$.  We have:
\[\{1 + f_{i}\uxc^{i}\tc^{l}, 1+g_{j}\uxc^{j}\} \equiv \left\{1 + f_{i}\uxc^{i}\frac{jg_{j}\uxc^{j}}{1 + g_{j}\uxc^{j}}\tc^{l}, \uxc\right\} \text{ mod } K_{2}^{top}(\mathcal{O}_{x,y}, \mathfrak{m}_{x,y}^{l+1}),\]
see appendix, \ref{Kgpcalc3}.\\
Let $h = \sum_{i}f_{i}\uxc^{i}$ and $b = \prod_{j}(1+g_{j}\uxc^{j})$, so that
\[\frac{db}{d\uxc} = \left( \sum_{j}\frac{jg_{j}\uxc^{j-1}}{1 + g_{j}\uxc^{j}}\right)b.\]
Hence
\[\{1 + h\tc^{l}, b\} \equiv \left\{ 1 + \uxc b^{-1}\frac{db}{d\uxc}\tc^{l}, \uxc\right\} \text{ mod } K_{2}^{top}(\mathcal{O}_{x,y}, \mathfrak{m}_{x,y}^{l+1})\]
and so
\[\{1 + h\tc^{l}, b\} \equiv \left\{1 + \phi^{(2)}_{l,x,y}(\uxc)\tc^{l}, \uxc\right\}\left\{1 - \uxc\frac{d\eta_{x}}{d\uxc}\tc^{l},  \uxc\right\}\] modulo $K_{2}^{top}(\mathcal{O}_{x,y}, \mathfrak{m}_{x,y}^{l+1})$, if we let $\phi^{(2)}_{l,x,y}$ be as required.\\

\noindent We have the relation
\[\{1 - iv_{i}\uxc^{i}\tc^{l}, \uxc\} \equiv \{ 1+ lv_{i}\uxc^{i}\tc^{l}, \tc\} \text{ mod } K_{2}^{top}(\mathcal{O}_{x,y}, \mathfrak{m}_{x,y}^{l+1})\]
for $p \nmid i$, $v_{i} \in k(x)$.  See \ref{Kgpcalc2} for details. So if we let $\eta_{x} = \sum_{p \nmid i} m_{i}\uxc^{i}$, we get
\[\left\{ 1 - \uxc \frac{d\eta_{x}}{d\uxc}\tc^{l}, \uxc\right\} \equiv \left\{ 1 + l\eta_{x}\tc^{l}, \tc\right\} \text{ mod } K_{2}^{top}(\mathcal{O}_{x,y}, \mathfrak{m}_{x,y}^{l+1}).\]
Combining the two calculations, we see
\[\{ 1 + h\tc^{l}, b\} \equiv \{1 + \phi^{(2)}_{l,x,y}(\uxc)\tc^{l}, \uxc\} + \{1 + l\eta_{x}\tc^{l}, \tc\} \text{ mod } K_{2}^{top}(\mathcal{O}_{x,y}, \mathfrak{m}_{x,y}^{l+1})\]
so we may let $\phi^{(1)}_{l,x,y}(\uxc) = l\eta_{x}$ as required.  Note that we need only to prove the lemma $\text{ mod } K_{2}^{top}(\mathcal{O}_{x,y}, \mathfrak{m}_{x,y}^{l+1})$ as higher terms will affect $\phi^{(i)}_{j,x,y}$ with $j > l$.
\end{proof}

\emph{Remark} The uniqueness of the decomposition means that if we show an element of $J_{y}$ can be written in the above form, then it is in the diagonal image of $K_{2}^{top}(F_{y})$.\\

\medskip
The next lemma will provide a simple form for the elements of $F_{y}/(Frob - 1)F_{y}.k( y)$, enabling us to prove non-degeneracy on the right-hand side of the Witt pairing.
\begin{lemma}\label{structuremodfrob}
Let $f \in \fc/((\mathrm{Frob} - 1)\fc +k(y))$. Then $f$ has a unique representation as a finite sum
\[f= \sum_{k < 0}f_{k}\tc^{k}\]
with $f_{k} \in k(y)$ and if $p | k$, $f_{k} \in \mathcal{R}_{p}$, a fixed set of representatives for $k(y)/k(y)^{p}$.\end{lemma}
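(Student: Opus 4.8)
The plan is to identify $\mathrm{Frob}-1$ on $\fc = W_1(\fc)$ with the Artin--Schreier operator $\wp\colon a \mapsto a^p-a$ (the Witt--Frobenius acts on each coordinate by $p$-th power, so on a length-one vector it is $a\mapsto a^p$), and then to establish the two halves of the statement separately: \emph{existence} of a normal form, by an inductive depth-lowering procedure, and \emph{uniqueness}, by a valuation argument exploiting that $\mathcal{R}_p$ is a transversal for $k(y)/k(y)^p$.

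First I would record the reduction to finitely many strictly negative exponents. Fixing an isomorphism $\fc\cong k(y)((\tc))$, any $w$ with $v_{\fc}(w)\ge 1$ lies in $\wp(\fc)$: the series $W:=\sum_{i\ge 0}w^{p^i}$ converges in the complete field $\fc$ because $v_{\fc}(w^{p^i})=p^i v_{\fc}(w)\to\infty$, and $\wp(-W)=W-W^p=w$. Since $k(y)\subseteq\fc$ is annihilated in the quotient, every $f$ is congruent modulo $\wp(\fc)+k(y)$ to a finite sum $\sum_{-N\le k\le -1}a_k\tc^k$ with $a_k\in k(y)$.

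For \emph{existence} I would induct on the depth $N=-v_{\fc}(f)$, writing $f=a\tc^{-N}+f'$ with $v_{\fc}(f')\ge -N+1$. If $p\nmid N$, leave $a$ untouched and apply the inductive hypothesis to $f'$. If $N=pM$, write $a=r+b^p$ with $r\in\mathcal{R}_p$ and $b\in k(y)$; by additivity of the $p$-th power map, $b^p\tc^{-pM}=(b\tc^{-M})^p=\wp(b\tc^{-M})+b\tc^{-M}$, so $f\equiv r\tc^{-pM}+(b\tc^{-M}+f')$ modulo $\wp(\fc)$. Since $p\ge 2$ forces $M\le N-1$, the term $b\tc^{-M}+f'$ has depth at most $N-1$ and the inductive hypothesis completes the step, the coefficient at the $p$-divisible exponent $-pM$ now being the representative $r$.

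For \emph{uniqueness}, suppose two normal forms $\omega_1,\omega_2$ are congruent and set $\eta=\omega_1-\omega_2=\sum_{k<0}e_k\tc^k=\wp(h)+c$ with $h\in\fc$, $c\in k(y)$. If $v_{\fc}(h)\ge 0$ then $\eta\in\mathcal{O}_{\fc}$, forcing $\eta=0$. If $v_{\fc}(h)=-n<0$, then $\wp(h)=\sum_j h_j^p\tc^{pj}-h$ has valuation $-pn$ with leading coefficient $h_{-n}^p\in k(y)^p$, so the lowest exponent occurring in $\eta$ is $-pn$, which is divisible by $p$, and $e_{-pn}=h_{-n}^p\in k(y)^p$. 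But $e_{-pn}$ is the difference of the $\tc^{-pn}$-coefficients of $\omega_1$ and $\omega_2$, both lying in $\mathcal{R}_p$; two elements of a fixed transversal that are congruent modulo $k(y)^p$ must coincide, so $e_{-pn}=0$, contradicting that $-pn$ is the lowest exponent of $\eta$. Hence $\eta=0$ and $\omega_1=\omega_2$. The delicate point, and the one I expect to be the crux, is this last argument: the valuation bookkeeping for $\wp(h)$ relies essentially on the additivity of $x\mapsto x^p$ in characteristic $p$, and the conclusion hinges on the fact that the bottom coefficient of any nonzero element of $\wp(\fc)+k(y)$ of negative valuation is a genuine $p$-th power sitting at a $p$-divisible exponent --- exactly what pins down the $\mathcal{R}_p$-coefficients. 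Existence is comparatively routine once the identity $(b\tc^{-M})^p\equiv b\tc^{-M}\bmod\wp(\fc)$ is in hand.
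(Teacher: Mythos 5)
Your proof is correct and takes essentially the same route as the paper: existence via the Artin--Schreier geometric series $\sum_{i\ge 0}(-f)^{p^{i}}$ to dispose of non-negative exponents, together with the relation $b^{p}t_{y}^{-pM}\equiv b\,t_{y}^{-M}$ modulo $(\mathrm{Frob}-1)F_{y}$ to push coefficients at $p$-divisible exponents into $\mathcal{R}_{p}$, and uniqueness by coefficient analysis of $h^{p}-h$. The only cosmetic difference is the uniqueness bookkeeping: you read off the bottom coefficient $h_{-n}^{p}$ at the valuation $-pn$ of $\wp(h)$ and invoke transversality of $\mathcal{R}_{p}$, whereas the paper fixes the least discrepant exponent $k$ and tracks $h_{p^{i}k}=h_{k}^{p^{i}}$ until these vanish; both arguments rest on the same additivity of the $p$-th power map.
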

\begin{proof}
Decompose $f$ as $f = \sum_{k \geq v_{y}(f)}f_{k}\tc^{k}$.  If $v_{y}(f) > 0$, consider the convergent (for $v_{y}(f) > 0$) sum:
\[f' = (-f) + (-f)^{p} + (-f)^{p^{2}} + \dots \]
with $f = f'^{p}-f' \in (Frob - 1)\fc$.  So modulo $(Frob - 1)\fc$ we need only consider the terms with $k < 0$.\\
Suppose $k < 0$ is the least such with $p | k$ and $f_{k} \not\in \mathcal{R}_{p}$.  Then \[f_{k} = f''_{k} + g^{p}\] some $f''_{k} \in \mathcal{R}_{p}$, $g \in k(y)$.  Replace $f_{k}$  by $f''_{k}$ and $f_{k/p}$ by $f_{k/p} + g$ to get another representation of $f$, and continue this process until the representation is as required.\\
\emph{Uniqueness}:  Suppose $\sum_{k \leq 0}f_{k}\tc^{k}$ and $\sum_{k \leq 0}f'_{k}\tc^{k}$ represent $y$ in the required form.  Then:
\[\sum_{k \leq 0}(f_{k} - f'_{k})\tc^{k} = \left( \sum_{k}h_{k}\tc^{k}\right)^{p} -  \sum_{k}h_{k}\tc^{k} = h^{p} - h \ \ \ \ \ \ (\star)\]
some $h \in \fc$. \\
Then $f_{0} - f'_{0} = h_{0}^{p} - h_{0} \in (Frob - 1)\fc+ k(y)$, which implies $f_{0} = f'_{0}$.\\
Let $k < 0$ be the least $k$ with $f_{k} \neq f'_{k}$.  Then for $i > 0$, 
\[h_{p^{i}k} = h_{p^{i}k} + f_{p^{i}k} - f'_{p^{i}k}.\]
But equating coefficients in $(\star)$ gives:
\[h_{p^{i}k} + f_{p^{i}p} - f'_{p^{i}k} = h_{p^{i-1}k}^{p}\]
and hence $h_{p^{i}} = h_{k}^{p^{i}}$ by induction.  \\
But for large enough $i$, we have $h_{p^{i}k} = 0$, so $h_{k} = 0$ and we must have
\[ f_{k} - f'_{k} = \begin{cases} 0 & p \nmid k \\
(-h_{k}/p)^{p} & p | k\end{cases} \equiv 0\]
contradicting our choice of $k$.\end{proof}

We can now calculate the value of the Witt pairing on elements of $J_{y}$ and $F_{y}/($Frob$ - 1)F_{y}.k(y)$ in these useful forms.

\begin{lemma}\label{firstcalculation}
Fix some $k \geq 1$, $l \leq -1$.  Let $\aone_{k} \in J_{y}$ be an element of the form described in \ref{relativekgpcurves} such that $\phionej = 0$ for all $ j \neq k$ and $\phitwoj = 0$ for all $j$.  Let $\atwo_{k} \in J_{y}$ be an element of the form described in \ref{relativekgpcurves} such that  $\phionej = 0 $ for all $j$ and $\phitwoj = 0$ for all $j \neq k$. Let $f_{l} \in k_{y}$. Then:
\[ (\aone_{k}|f_{l}\tc^{l}]_{y} = \begin{cases} 0 & k + l > 0 \\
\sum_{x \in y}\mathrm{Tr}_{k(x)/k)}(\mathrm{res}_{x}(f_{l}d\phionek)) & k + l = 0 \end{cases}\]
and 
\[(\atwo_{k}|f_{l}\tc^{l}]_{y} = \begin{cases} 0 & k + l > 0 \\
-\sum_{x \in y}\mathrm{Tr}_{k(x)/k}\left(\mathrm{res}_{x}\left(f_{l}k\phitwok\frac{d\uxc}{\uxc}\right)\right) & k + l = 0, \ p \nmid k \\
0 & k+ l = 0,  \ p | k.\end{cases}\]
\end{lemma}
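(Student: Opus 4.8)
The plan is to reduce the computation of the global pairing to a sum of local pairings and then apply the explicit local formula of Lemma~\ref{residueproperties}(ii) together with the definition of the Witt symbol. Recall that by definition $(\alpha|h]_{y} = \sum_{x\in y,\,z\in y(x)}\mathrm{Tr}_{k_{z}(x)/k}(\alpha_{x,z}|h_{x,z}]_{F_{x,z}}$, and since $y$ is non-singular each $y(x)$ is a single branch, so the sum runs just over $x\in y$. Because $\aone_{k}$ is a product of symbols $\{1+\phionek\tc^{k},\tc\}$ and $\atwo_{k}$ a product of symbols $\{1+\phitwok\tc^{k},\uxc\}$ (with all other $\phi$-components vanishing), and because the Witt pairing is additive in the first argument by Proposition~\ref{wittproperties}(i), it suffices to evaluate the pairing on a single symbol of each type. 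First I would expand $\dfrac{d(1+\phi\tc^{k})}{1+\phi\tc^{k}}$ as a power series in $\tc$; the leading term is $\phi\,\tc^{k}\big(\tfrac{d\phi}{\phi}+k\tfrac{d\tc}{\tc}\big)$ plus higher-order terms in $\tc$, i.e. terms of $\tc$-valuation $>k$.

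For the first case, $k+l>0$: here one feeds $f_{l}\tc^{l}$ (with $l\ge 1-k$, so $\tc$-valuation $\ge 1-k$) into $\mathrm{res}_{F_{x,z}}\big(f_{l}\tc^{l}\tfrac{df_{1}}{f_{1}}\wedge\tfrac{du}{u}\big)$ where $f_{1}=1+\phi\tc^{k}$, using that $P_{0}$ is the identity so $w_{0}$ is exactly this residue and all $w_{i}$ for $i\ge1$ vanish modulo $p$ (or are controlled by $w_{0}$); the $\tc$-degree of every term in the $2$-form is then $\ge l+k\ge1>0$, so by Lemma~\ref{residueproperties}(i) the residue of this holomorphic $2$-form is zero, giving $0$. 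For the case $k+l=0$, the only contributing term is the one of $\tc$-degree $0$: for $\aone_{k}$ this is $\mathrm{res}_{x}\!\big(f_{l}\,d\phionek/1\big)$ after the $dt_{z}/t_{z}$ is paired off via Lemma~\ref{residueproperties}(ii)-type reduction — more precisely, $\tfrac{d(1+\phi\tc^{k})}{1+\phi\tc^{k}}\wedge\tfrac{du}{u}$ contributes, at $\tc$-degree $0$ after multiplying by $f_{l}\tc^{-k}$, exactly $f_{l}\,d\phi\wedge\tfrac{du}{u}\cdot(\text{coefficient bookkeeping})$, whose residue in the two-dimensional sense equals $\mathrm{res}_{\bar F_{x,z}}(f_{l}\,d\phi)$; summing the traces over $x$ yields the stated formula. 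For $\atwo_{k}$ the symbol is $\{1+\phi\tc^{k},\uxc\}$, so the relevant $2$-form is $f_{l}\tc^{-k}\tfrac{d(1+\phi\tc^{k})}{1+\phi\tc^{k}}\wedge\tfrac{d\uxc}{\uxc}$; the $\tc$-degree-$0$ part is $f_{l}\big(\tfrac{d\phi}{\phi}+k\tfrac{d\tc}{\tc}\big)\wedge\tfrac{d\uxc}{\uxc}\cdot\phi$ — but $\tfrac{d\phi}{\phi}\wedge\tfrac{d\uxc}{\uxc}$ contributes a total derivative with zero residue, while the $k\,\tfrac{d\tc}{\tc}$ piece gives $-\mathrm{res}_{x}\!\big(f_{l}k\phi\,\tfrac{d\uxc}{\uxc}\big)$ (the sign from reordering $\tfrac{d\tc}{\tc}\wedge\tfrac{d\uxc}{\uxc}$), which is the claimed answer when $p\nmid k$; when $p\mid k$ this term is $0$ in characteristic $p$, and one must further check (using $P_{m}$ and the structure of the Witt polynomials, cf. Proposition~\ref{wittproperties}(iv),(vii)) that no higher Witt components rescue a nonzero value, giving $0$.

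The main obstacle I expect is the bookkeeping in passing from the characteristic-zero lift $L=A((t_{1}))((t_{2}))$ back to characteristic $p$: one must verify that although the auxiliary coordinates $x(m)$ and polynomials $P_{m}$ involve denominators, the residues $\mathrm{res}_{L}(\bar g(i)\,df_{1}/f_{1}\wedge df_{2}/f_{2})$ are $p$-integral in the relevant way and that the $w_{i}$ for $i\ge1$ contribute nothing new here — this is exactly where the $p\mid k$ versus $p\nmid k$ dichotomy enters, and it is the step requiring care rather than the leading-order residue computation, which is routine given Lemma~\ref{residueproperties}. A secondary (minor) point is checking that the degree-$>k$ tail of the logarithmic derivative genuinely only affects $\phi^{(i)}_{j}$ with $j>k$, hence does not interfere with the $k+l=0$ term; this follows from the valuation estimate above and the definition of the $\tc$-adic filtration used throughout Section~4.
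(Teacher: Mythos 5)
Your proposal follows the paper's proof essentially verbatim in strategy: reduce to one local symbol per point by additivity, take the logarithmic derivative, sort terms by $\tc$-degree, and finish with Lemma \ref{residueproperties}. The $\atwo_{k}$ computation and the vanishing for $k+l>0$ are correct (and your appeal to part (i) of Lemma \ref{residueproperties} there is legitimate, since $\mathcal{O}_{F_{x,z}}\cong k_{z}(x)((\uxc))[[\tc]]$ allows poles in $\uxc$, so the relevant $2$-form really is integral in the required sense once $k+l-1\geq 0$). The one point to repair is the $\aone_{k}$ case at $k+l=0$: the second entry of that symbol is $\tc$, so the form to take residues of is $f_{l}\tc^{l}\,\frac{d(1+\phionek\tc^{k})}{1+\phionek\tc^{k}}\wedge\frac{d\tc}{\tc}$, not $\wedge\frac{d\uxc}{\uxc}$ as you wrote (the same slip occurs in your $k+l>0$ paragraph, where it happens to be harmless). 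Taken literally, your degree-zero term $f_{l}\,d\phionek\wedge\frac{d\uxc}{\uxc}$ is identically zero, since $d\phionek$ is proportional to $d\uxc$, so it cannot have residue $\mathrm{res}_{x}(f_{l}\,d\phionek)$; with the correct factor $\frac{d\tc}{\tc}$ the piece $k\phionek\tc^{k-1}d\tc$ dies in the wedge, the degree-zero part is $f_{l}\,d\phionek\wedge\frac{d\tc}{\tc}$, and part (ii) of Lemma \ref{residueproperties} gives exactly the stated value — which is precisely the paper's computation. Finally, the caution in your last paragraph about the auxiliary polynomials $P_{m}$, denominators, and ``higher Witt components'' is not needed for this lemma: the statement concerns only the length-one component of the pairing (the single residue $w_{0}$), and the $p\mid k$ case is nothing more than $k\equiv 0$ in characteristic $p$ killing the coefficient.
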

\begin{proof}
For each $x \in y$, we have:
\[ (\aone_{x,y}|f_{l}\tc^{l}]_{y} = \mathrm{res}_{x,y}\left( f_{l}\tc^{l}\frac{d(\phionek \tc^{k})}{1 + \phionek\tc^{k}}\wedge \frac{d\tc}{\tc}\right)\]
which is equal to
\[\mathrm{res}_{x,y}\left( f_{l}\tc^{k+l}\frac{d\phionek}{1 +\phionek\tc^{k}}\wedge \frac{d\tc}{\tc}\right) = 
\begin{cases} 0 & k + l > 0 \\
\mathrm{res}_{x}(f_{l}d\phionek) & k + l = 0\end{cases}\]
by expanding $(1 + \phionek \tc^{k})^{-1}$ and using property 3.4.1, \emph{ii}.  Summation over $x \in y$ gives the first part of the lemma.\\
Similarly, we have
\[(\atwo_{x,y}|f_{l}\tc^{l}]_{y} = \mathrm{res}_{x,y}\left( f_{l}\tc^{l}\frac{d\phitwok\tc^{k}}{1+\phitwok\tc^{k}}\wedge \frac{d\uxc}{\uxc}\right)\]
which is equal to
\[\mathrm{res}_{x,y}\left(f_{l}k\tc^{l+k}\frac{\phitwok}{1+\phitwok\tc^{k}}\frac{d\tc}{\tc}\wedge\frac{d\uxc}{\uxc}\right) \]\[ \ \ \ \ \ \ \ \ \ \ \ \ \ \ \ \ \ \ \ \ \ \ \ \ \ \ \ \ \ \ \ = 
\begin{cases} 0 & k + l > 0\\
-\mathrm{res}_{x}\left( f_{l}k\phitwok\frac{d\uxc}{\uxc}\right) & k + l = 0.
\end{cases}\] exactly as for $\alpha^{(1)}$.
As before summing over $x \in y$ gives the lemma.\end{proof}

Denote the set of elements with both $\phi_{j,x,y}^{(1)}(u_{x,y}) = 0$ and $\phi^{(2)}_{j,x,y}(u_{x,y}) = 0$ for all $j < k$ by $J_{y,\geq k}$. combining these two lemmas gives:

\begin{cor}\label{firstcalccor}
Fix $k \geq 1$ and let $\alpha_{\geq k} \in J_{y,\geq k}$.  We may decompose this element as $\alpha_{\geq k} = \alpha_{\geq k + 1}\alpha_{k}$, where $\alpha_{k} = \aone_{k}\atwo_{k}$ as in the lemma above.  Let $l \leq -1$.  Then
$ (\alpha_{\geq k}|f_{l}\tc^{l}]_{y}$ is given by \[  
\begin{cases} 0 & k + l > 0\\
\sum_{x \in y}\mathrm{Tr}_{k(x)/k}\left(\mathrm{res}_{x}\left(f_{l}\left(d\phionek - k\phitwok\frac{d\uxc}{\uxc}\right)\right)\right) & k + l = 0.
\end{cases}\]\end{cor}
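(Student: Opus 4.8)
The plan is to reduce the corollary to Lemma~\ref{firstcalculation} by combining it with the additivity of the Witt pairing in its $K$-group argument (Proposition~\ref{wittproperties}, part~1) and the continuity statement (Proposition~\ref{wittproperties}, part~5). Using the decomposition $\alpha_{\geq k}=\alpha_{\geq k+1}\alpha_{k}$ from the statement together with $\alpha_k=\aone_k\atwo_k$, additivity immediately gives $(\alpha_{\geq k}|f_l\tc^l]_y=(\alpha_{\geq k+1}|f_l\tc^l]_y+(\aone_k|f_l\tc^l]_y+(\atwo_k|f_l\tc^l]_y$, so the whole computation splits into a ``higher-order'' contribution and the two explicit terms that Lemma~\ref{firstcalculation} evaluates.

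First I would dispose of the case $k+l>0$. Writing $\alpha_{\geq k}$ as the convergent product of its graded pieces $\alpha_j=\aone_j\atwo_j$, $j\geq k$ (this is just the expansion of Lemma~\ref{relativekgpcurves}), every such $j$ has $j+l>0$, so Lemma~\ref{firstcalculation} gives $(\aone_j|f_l\tc^l]_y=(\atwo_j|f_l\tc^l]_y=0$; distributing the pairing over the partial products and using continuity in the first argument then forces $(\alpha_{\geq k}|f_l\tc^l]_y=0$. For the case $k+l=0$, the factor $\alpha_{\geq k+1}\in J_{y,\geq k+1}$ has all graded indices $\geq k+1$, hence pairs trivially with $f_l\tc^l$ by the case just treated (now with $(k+1)+l=1>0$). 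This leaves $(\aone_k|f_l\tc^l]_y+(\atwo_k|f_l\tc^l]_y$, which Lemma~\ref{firstcalculation} evaluates as $\sum_{x\in y}\mathrm{Tr}_{k(x)/k}(\mathrm{res}_x(f_l\,d\phionek))$ and, when $p\nmid k$, as $-\sum_{x\in y}\mathrm{Tr}_{k(x)/k}(\mathrm{res}_x(f_l k\phitwok\tfrac{d\uxc}{\uxc}))$. When $p\mid k$, Lemma~\ref{relativekgpcurves} part~3 forces $\phitwok=0$ for every $x\in y$ (we are inside $J_{y,\geq k}$, so all lower $\phi^{(2)}$ vanish), so the latter expression is still $0$ and agrees with the value $0$ provided by Lemma~\ref{firstcalculation} in this case; hence in every case $(\atwo_k|f_l\tc^l]_y=-\sum_{x\in y}\mathrm{Tr}_{k(x)/k}(\mathrm{res}_x(f_l k\phitwok\tfrac{d\uxc}{\uxc}))$, and adding the two contributions gives the stated formula.

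The only point that needs genuine care, rather than routine bookkeeping, is the passage to the limit: one must justify that the Witt pairing may be distributed over the infinite (convergent) product defining $\alpha_{\geq k}$ in $J_y$, so that term-by-term evaluation on the graded pieces is legitimate. This follows by applying the continuity statement (Proposition~\ref{wittproperties}, part~5) to the sequence of partial products, which converges to $\alpha_{\geq k}$ in the topology on $J_y$, together with $(1|f_l\tc^l]_y=0$ from bi-additivity. Everything else is a direct combination of Lemmas~\ref{relativekgpcurves} and~\ref{firstcalculation}, so I expect no substantive obstacle beyond this convergence argument.
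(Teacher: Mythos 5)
Your argument is correct and is essentially the paper's: the corollary is stated there without a separate proof, being obtained exactly by combining Lemma \ref{relativekgpcurves} and Lemma \ref{firstcalculation} with bi-additivity of the pairing (Proposition \ref{wittproperties}), which is what you do. Your extra care about distributing the pairing over the convergent product via continuity, and the observation that $\phi^{(2)}_{k,x,y}=0$ (or $k=0$ in characteristic $p$) when $p\mid k$, just makes explicit what the paper leaves implicit.
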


We now move on to studying the case $k + l =0$, treating the two cases $p \nmid k$ and $ p | k$ separately for now. Note that we have not mentioned the case $k+l<0$ yet, as this will not be needed in the proof of the main theorem. The following lemmas prove non-degeneracy of the Witt pairing on the subspaces $J_{y,\geq k}$ modulo the higher powers and the diagonal elements.

\begin{lemma}\label{dualityone}
Fix $k \geq 1$ with $ p \nmid k$.  Then the map
\[ ( \ | \ ]_{y}: J_{y,\geq k}/(\Delta(K_{2}^{top}(\fc)) \cap J_{y,\geq k})J_{y,\geq k + 1} \times t_{y}^{-k}k(y) \to k_{y}\]
is a non-degenerate pairing of $k_{y}$-vector spaces.  The induced homomorphism 
\[ J_{y,\geq k}/(\Delta(K_{2}^{top}(\fc)) \cap J_{y,\geq k})J_{y,\geq k+1} \to \mathrm{Hom}(k(y), k_{y})\]
is an isomorphism.\end{lemma}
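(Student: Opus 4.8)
The plan is to strip the $K$-theory off the left-hand side, turning the pairing into the classical adelic residue pairing on the curve $y$, and then quote Serre duality. \emph{Step 1 (an explicit formula, and descent).} I would first feed Corollary~\ref{firstcalccor} into the statement: for $\alpha_{\geq k}\in J_{y,\geq k}$ with leading data $\phi^{(1)}_{k,x,y},\phi^{(2)}_{k,x,y}$ and $f\in k(y)$,
\[(\alpha_{\geq k}\,|\,f t_y^{-k}]_y=\sum_{x\in y}\mathrm{Tr}_{k(x)/k_y}\mathrm{res}_x\!\left(f\left(d\phi^{(1)}_{k,x,y}-k\phi^{(2)}_{k,x,y}\frac{du_{x,y}}{u_{x,y}}\right)\right).\]
Only the order-$k$ coefficients occur, so the pairing already factors through $J_{y,\geq k}/J_{y,\geq k+1}$; finiteness of the sum is exactly the reciprocity law~\ref{reciprocitylaw} together with the residue theorem on $y$.

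\emph{Step 2 (linearisation).} Using Lemma~\ref{relativekgpcurves} and the hypothesis $p\nmid k$, I would show that
\[\alpha_{\geq k}\ \longmapsto\ \omega(\alpha):=\left(d\phi^{(1)}_{k,x,y}-k\phi^{(2)}_{k,x,y}\frac{du_{x,y}}{u_{x,y}}\right)_{x\in y}\]
is an isomorphism of $J_{y,\geq k}/J_{y,\geq k+1}$ onto the adeles $\mathbb{A}_{\Omega^1_{k(y)/k_y}}$ of the canonical sheaf of $y$. This is a purely local bijection check: since $\phi^{(1)}_{k,x,y}$ carries no $p$-divisible powers of $u_{x,y}$, the forms $d\phi^{(1)}_{k,x,y}$ realise, with arbitrary coefficient (the exponents being invertible mod $p$), precisely the monomials $u^{j}du$ with $j\not\equiv-1\pmod{p}$, whereas $\phi^{(2)}_{k,x,y}$ being a series in $u_{x,y}^{p}$, the forms $\phi^{(2)}_{k,x,y}\,du_{x,y}/u_{x,y}$ realise, again with arbitrary coefficient since $-k$ is invertible, exactly the complementary monomials; these index sets partition the integers, and property~1 of~\ref{relativekgpcurves} matches the restricted-product and integrality conditions on both sides.

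\emph{Step 3 (the diagonal).} By Lemma~\ref{kgpstructurecurves}, the only generators of $K_2^{top}(F_y)$ that survive modulo $J_{y,\geq k+1}$ are $\{1+a t_y^k,t_y\}$ and $\{1+h t_y^k,b\}$; Lemma~\ref{diagonalelements} computes their images under $\omega$ to be $da$ and $-kh\,db/b$ respectively (the auxiliary $\eta$-terms cancelling against the exact term). As $h$ runs over $k(y)$ with $db/b\neq0$ fixed, the forms $-kh\,db/b$ exhaust the one-dimensional $k(y)$-vector space $\Omega^1_{k(y)/k_y}$, and conversely the Remark after~\ref{diagonalelements} identifies any class of $J_{y,\geq k}$ with global differential data as a diagonal class. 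Hence $\omega$ descends to an isomorphism
\[J_{y,\geq k}/(\Delta(K_2^{top}(F_y))\cap J_{y,\geq k})J_{y,\geq k+1}\ \xrightarrow{\ \sim\ }\ \mathbb{A}_{\Omega^1_{k(y)/k_y}}/\Omega^1_{k(y)/k_y}.\]

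\emph{Step 4 (Serre duality), and the main obstacle.} Under Steps~2--3 the pairing becomes the residue pairing $\mathbb{A}_{\Omega^1_{k(y)/k_y}}/\Omega^1_{k(y)/k_y}\times k(y)\to k_y$, $(\eta,f)\mapsto\sum_x\mathrm{Tr}_{k(x)/k_y}\mathrm{res}_x(f\eta_x)$; non-degeneracy and the isomorphism onto $\mathrm{Hom}(k(y),k_y)$ of continuous homomorphisms are then classical residue/Serre duality for the smooth projective curve $y$ --- $\Omega^1_{k(y)/k_y}$ lies in the left kernel by the residue theorem, Riemann--Roch upgrades this to equality and to non-degeneracy in $f$, and the linearisation of Step~2 is a homeomorphism, giving the topological statement. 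I expect the real work to be concentrated in Step~3, and within it the inclusion ``every diagonal class lying in $J_{y,\geq k}$ has global differential data'': the other inclusion is the short telescoping computation, but this one requires controlling arbitrary products of $K_2^{top}(F_y)$-generators whose lower-order ($j<k$) contributions must cancel before the product enters $J_{y,\geq k}$, which is precisely where the uniqueness of Parshin's decomposition (Lemma~\ref{relativekgpcurves} and \cite{P4}) and the exact coefficient formulas of Lemma~\ref{diagonalelements} are needed. Steps~1, 2 and~4 are bookkeeping plus a citation of classical curve duality.
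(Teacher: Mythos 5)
Your proposal is correct and follows essentially the same route as the paper's proof: Corollary \ref{firstcalccor} turns the pairing into the map $(\phi^{(1)}_{k,x,y},\phi^{(2)}_{k,x,y})\mapsto d\phi^{(1)}_{k,x,y}-k\phi^{(2)}_{k,x,y}\,du_{x,y}/u_{x,y}$ into adelic differentials on $y$, classical residue duality (with $k(y)^{\perp}=\Omega^{1}_{k(y)}$) gives non-degeneracy and the Hom-isomorphism, and the kernel is identified with the diagonal exactly via the coefficient formulas of Lemma \ref{diagonalelements} together with uniqueness of the decomposition in Lemma \ref{relativekgpcurves}. The only cosmetic difference is the step you single out as the real work --- that every diagonal class in $J_{y,\geq k}$ has global differential image --- which the paper gets for free from the reciprocity law \ref{reciprocitylaw}, which already places $\Delta(K_{2}^{top}(\fc))$ in the left kernel of the pairing.
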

\begin{proof}
Let $\alpha_{\geq k} \in J_{y,\geq k}$. By lemma \ref{relativekgpcurves}, $\alpha_{\geq k}$ is uniquely determined modulo $J_{y,\geq k+1}$ by $\phionek$ and $\phitwok \in \mathbb{A}_{k(y)}$.\\
Further, $\phionek$ contains no $p$-powers and $\phitwok$ contains only $p$-powers, so the pairing becomes a pairing on the groups:
\[ \left(\mathbb{A}_{k(y)}/\mathbb{A}_{k(y)}^{p} \oplus \mathbb{A}_{k(y)}\right) \times k( y) \to k_{y}\]
which maps $(\phionek, \phitwok, f_{-k})$ to \[\sum_{x \in y} \text{Tr}_{k(x)/k_{y}}\left( \mathrm{res}_{x}\left( f_{-k}\left( d\phionek - k\phitwok \frac{d\uxc}{\uxc}\right)\right)\right)\]
by corollary \ref{firstcalccor}.

This reduces us to the classical one-dimensional case. By \cite{We}, chapter IV 2.3, the pairing
\[\mathbb{A}_{k(y)} \times \mathbb{A}_{k(y)}(\Omega^{1}_{y}) \to k_{y}\]
mapping $(f_{x}, \omega_{x})$ to $\sum_{x \in y}\text{Tr}_{k_{y}(x)/k_{y}}(\mathrm{res}_{x}(f_{x}\omega_{x}))$ is a continuous non-degenerate pairing of vector spaces such that $k(y)^{\perp} = \Omega_{k(y)}^{1}$, where $\mathbb{A}_{k(y)}(\Omega^{1}_{y})$ is defined to be
\[\left\{ (\omega_{x})_{x \in y} \in \prod_{x \in y} \Omega^{1}_{k(y)_{x}/k_{y}} : v_{x}(\omega_{x}) \geq 0 \text{ for all but finitely many } x \in y \right\}.\]
This reduces us to showing the map
\[ \mathbb{A}_{k(y)}/\mathbb{A}_{k(y)}^{p} \oplus \mathbb{A}_{k(y)}^{p} \to \mathbb{A}_{k(y)}(\Omega_{y}^{1})/\Omega^{1}_{k(y)}\]
sending $(\phionek, \phitwok)$ to  $d\phionek - k\phitwok \frac{d\uxc}{\uxc}$ is a surjection, with kernel the diagonal elements as characterised in lemma \ref{diagonalelements}.\\
Let $\omega \in \Omega_{k(y)_{x}}^{1} \subset \mathbb{A}_{k(y)}(\Omega_{y}^{1})$.  Then $\omega$ decomposes as $P(\uxc)d\uxc$ for some  $P(\uxc) \in k_{y}(x)((\uxc))$ as $\Omega_{k(y)_{x}}^{1}$ is a $k_{y}(x)$-module generated by $du_{x,y}$.
It is clear this decomposition can be rewritten in the required form for suitable $\phionek$, $\phitwok$, so the map is surjective.\\
For the kernel, let $\phionek$, $\phitwok \in \mathbb{A}_{k(y )}$ and suppose $\omega = (\omega_{x})_{x \in y} \in \Omega_{k(y)}^{1}$ is such that
\[\omega_{x} = d\phionek - k\phitwok\frac{d\uxc}{\uxc}\]
for each $x \in y$.\\
As $\Omega_{k(y)}^{1}$ is a rank one $k(y)$-module, we may choose $a, b \in k(y)$ such that
\[ \omega = -ka\frac{db}{b}.\]
As in lemma \ref{diagonalelements}, define $\eta(a,b) \in \mathbb{A}_{k(y)}$ uniquely by 
\[ a\uxc b^{-1}\frac{db}{d\uxc} + \uxc \frac{\eta_{x}(a,b)}{d\uxc} \in k(y)^{p}_{x}\] for each $x \in y$.
Then for each point $x$:
\[d(k\eta(a,b)) - k\left( a\uxc b^{-1}\frac{db}{d\uxc} + \uxc \frac{\eta_{x}(a,b)}{d\uxc}\right) \frac{d\uxc}{\uxc}\]
\[ \ \ \ \ \ \ \ \ \ \ \ \ \ \ \ \ \ \ \ \ \ \ \ \ \  = d(k\eta(a,b)) - kab^{-1}db - kd\eta(a,b) = \omega.\]
But then by the uniqueness of the decomposition of $\omega$, we have
\[ \phionek = k\eta_{x}\]
and \[ \phitwok = a\uxc b^{-1}\frac{db}{d\uxc} + \uxc \frac{d\eta_{x}}{d\uxc}\]
 for each $x \in y$, as required.\\
 The surjection Hom$_{cont}(\mathbb{A}_{k(y)}, k_{y}) \to$ Hom$(k(y), k_{y})$ combined with the induced map $\mathbb{A}_{k(y)}(\Omega^{1}_{y}) \to $ Hom$(\mathbb{A}_{k(y)}, k_{y})$ from the pairing above proves the required isomorphism:
 \[ J_{\geq k}/(\Delta(K_{2}^{top}(\fc))\cap J_{\geq k})J_{\geq k +1} \to \mathbb{A}_{k(y)}/\mathbb{A}_{k(y)}^{p} \oplus \mathbb{A}_{k(y)}^{p}\]
 \[\ \ \ \ \ \ \ \ \ \ \ \ \ \ \ \ \ \ \ \ \ \ \ \ \ \ \ \ \ \ \ \ \ \ \ \ \ \ \ \ \ \ \ \ \ \ \ \ \ \ \ \ \ \ \ \ \ \ \ \ \ \  \to \mathbb{A}_{k(y)}(\Omega_{y}^{1}) \to \text{Hom}(k(y), k_{y}).\]
\end{proof}

The following lemma is similar to the above, but considers the case $p|k$.

\begin{lemma}\label{dualitytwo}
Fix $k \geq 1$ with $ p | k$.  Then the pairing
\[J_{y,\geq k}/(\Delta(K_{2}^{top}(\fc)) \cap J_{y,\geq k})J_{\geq k+1} \times \mathcal{R}_{p}(k(y)) \to k_{y}\]
mapping $(\alpha_{k}, f_{-k})$ to $(\alpha_{k}|f_{-k}\tc^{-k}]_{y}$ is non-degenerate.  The induced homomorphism
\[\frac{J_{y, \geq k}}{(\Delta(K_{2}^{top}(\fc) \cap J_{y, \geq k})J_{y, \geq k+1}} \to \mathrm{Hom}\left(\frac{k(y)}{k(y)^{p}}, k_{y}\right)\]
is an isomorphism. \end{lemma}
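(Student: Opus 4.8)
The plan is to follow the proof of Lemma \ref{dualityone}, exploiting the simplifications available when $p\mid k$, and to reduce everything to a one-dimensional duality statement for the function field $k(y)$. By properties 2 and 3 of Lemma \ref{relativekgpcurves}, an element $\alpha_{\geq k}\in J_{y,\geq k}$ is determined modulo $J_{y,\geq k+1}$ by the single adele $(\phi^{(1)}_{k,x,y})_{x\in y}$: property 3 forces $\phi^{(2)}_{k,x,y}=0$ because $p\mid k$, and property 2 says $\phi^{(1)}_{k,x,y}$ contains no power $u_{x,y}^{i}$ with $p\mid i$. As the residue fields $k_y(x)$ are perfect, this means $J_{y,\geq k}/J_{y,\geq k+1}\cong\mathbb{A}_{k(y)}/\mathbb{A}_{k(y)}^{p}$, the class of $\alpha_{\geq k}$ being recorded by its canonical ($p$-coprime in $u_{x,y}$) representative $\phi^{(1)}_k$.

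Next I would put the pairing into these coordinates. By Lemma \ref{firstcalculation} (equivalently Corollary \ref{firstcalccor}) with $l=-k$, the $\alpha^{(2)}$-term vanishes identically when $p\mid k$, so the pairing on $J_{y,\geq k}/J_{y,\geq k+1}\times\mathcal{R}_p(k(y))$ becomes
\[(\phi^{(1)}_k,\ f_{-k})\ \longmapsto\ \sum_{x\in y}\mathrm{Tr}_{k(x)/k}\bigl(\mathrm{res}_x(f_{-k}\,d\phi^{(1)}_k)\bigr).\]
This expression depends on $\phi^{(1)}_k$ only modulo $\mathbb{A}_{k(y)}^{p}$ (since $d$ kills $p$-th powers) and on $f_{-k}$ only modulo $k(y)^{p}$: if $f_{-k}=g^{p}$ then each $\mathrm{res}_x(g^{p}d\phi^{(1)}_k)$ vanishes, because the residue picks out the $u_{x,y}^{-1}du_{x,y}$-term, which would force $\phi^{(1)}_k$ to contain a power $u_{x,y}^{pj}$, contrary to the previous paragraph; this is exactly why the right-hand side of the lemma is $\mathcal{R}_p(k(y))\cong k(y)/k(y)^{p}$. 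Finally, Lemma \ref{diagonalelements} and the remark following it identify the image of $(\Delta(K_2^{top}(F_y))\cap J_{y,\geq k})J_{y,\geq k+1}$ in $\mathbb{A}_{k(y)}/\mathbb{A}_{k(y)}^{p}$ with $(\Delta(k(y))+\mathbb{A}_{k(y)}^{p})/\mathbb{A}_{k(y)}^{p}$: the generators $\{1+at_y^{k},t_y\}$ contribute $\phi^{(1)}_k=a$, while the generators $\{1+ht_y^{l},b\}$ with $p\mid l$ contribute $\phi^{(1)}_l=l\eta_x=0$. Thus the lemma is equivalent to the assertion that the residue pairing
\[\frac{\mathbb{A}_{k(y)}}{\Delta(k(y))+\mathbb{A}_{k(y)}^{p}}\times\frac{k(y)}{k(y)^{p}}\ \longrightarrow\ k_y,\qquad (\phi,f)\mapsto\sum_{x\in y}\mathrm{Tr}_{k(x)/k}(\mathrm{res}_x(f\,d\phi)),\]
is non-degenerate and induces an isomorphism onto $\mathrm{Hom}(k(y)/k(y)^{p},k_y)$.

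I expect this one-dimensional duality to be the main obstacle, and it is where the case $p\mid k$ genuinely departs from Lemma \ref{dualityone}: the map $\phi\mapsto d\phi$ does not surject onto $\mathbb{A}_{k(y)}(\Omega^1_y)$ but only onto the locally exact adelic differentials, so the Weil residue duality used in \ref{dualityone} does not apply verbatim, and the naive substitution $k\equiv 0$ in that argument would falsely suggest a trivial kernel. I would resolve this with the Cartier operator $C$: composing $\phi\mapsto d\phi$ with the Weil isomorphism $\mathbb{A}_{k(y)}(\Omega^1_y)/\Omega^1_{k(y)}\cong\mathrm{Hom}(k(y),k_y)$ (\cite{We}, IV.2.3, as in \ref{dualityone}), and using that $\mathrm{im}(d)=\ker(C)$ on adeles, $\ker(C)=dk(y)$ on $k(y)$, $C$ commutes with completion, $C$ is surjective on differentials, and $\mathrm{res}_x(C\eta)=\mathrm{res}_x(\eta)^{1/p}$, one checks two things. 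First, a global rational differential that is locally exact at every point is exact (apply $C$ locally and use that $C$ of it is a global differential vanishing everywhere), so the induced map $\mathbb{A}_{k(y)}/\mathbb{A}_{k(y)}^{p}\to\mathrm{Hom}(k(y),k_y)$ has kernel exactly $\Delta(k(y))\bmod\mathbb{A}_{k(y)}^{p}$. Second, via the residue identity a functional $f\mapsto\sum_x\mathrm{Tr}_{k(x)/k}(\mathrm{res}_x(f\omega_x))$ kills $k(y)^{p}$ iff $C\omega\in\Delta(\Omega^1_{k(y)})$, which by surjectivity of $C$ is precisely the condition that $\omega$ be cohomologous to a locally exact differential, so the image of the induced map is exactly $\mathrm{Hom}(k(y)/k(y)^{p},k_y)$. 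Together these give $\mathbb{A}_{k(y)}/(\Delta(k(y))+\mathbb{A}_{k(y)}^{p})\cong\mathrm{Hom}(k(y)/k(y)^{p},k_y)$, hence both the non-degeneracy and the isomorphism asserted in the lemma; continuity is inherited from the topology on $\mathbb{A}_{k(y)}$ as in the one-dimensional theory.
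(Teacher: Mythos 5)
Your argument is correct, and its skeleton coincides with the paper's: the paper likewise uses Lemma \ref{relativekgpcurves} to identify $J_{y,\geq k}/J_{y,\geq k+1}$ with $\mathbb{A}_{k(y)}/\mathbb{A}_{k(y)}^{p}$ via $\phi^{(1)}_{k}$ (with $\phi^{(2)}_{k}=0$ because $p\mid k$), and then reduces, exactly as you do, to showing that $(\phi,f)\mapsto\sum_{x}\mathrm{Tr}_{k(x)/k_{y}}(\mathrm{res}_{x}(f\,d\phi))$ gives an isomorphism $\mathbb{A}_{k(y)}/(\mathbb{A}_{k(y)}^{p}+k(y))\to\mathrm{Hom}(k(y)/k(y)^{p},k_{y})$. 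Where you diverge is in the one-dimensional endgame. The paper proves non-degeneracy in the $k(y)/k(y)^{p}$ variable directly from Weil's residue duality, handles the other variable by an integration-by-parts step ($\mathrm{res}_{x}(r_{x}ds)=-\mathrm{res}_{x}(s\,dr_{x})$) followed by a commutative diagram of exact sequences $0\to k(y)/k(y)^{p}\to\Omega^{1}_{k(y)}\to\Omega^{1}_{k(y)}\to 0$ and its adelic analogue (whose unnamed horizontal maps are precisely the Cartier sequences you write down), and gets surjectivity from a chain of restriction surjections starting from the self-duality $\mathbb{A}_{k(y)}\cong\mathrm{Hom}_{cont}(\mathbb{A}_{k(y)}(\Omega^{1}_{y}),k_{y})$. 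You instead make the Cartier operator explicit at both ends: the kernel via ``a global differential that is locally exact is exact'', and the image via $\mathrm{res}_{x}(C\eta)=\mathrm{res}_{x}(\eta)^{1/p}$ together with surjectivity of $C$ on $\Omega^{1}_{k(y)}$, recovering non-degeneracy in the second variable as a corollary of surjectivity rather than proving it separately. The trade-off: your route verifies concretely that the specific map $\phi\mapsto(f\mapsto\sum_{x}\mathrm{res}_{x}(f\,d\phi))$ hits every functional on $k(y)/k(y)^{p}$, a compatibility the paper's chain of abstract surjections leaves implicit, at the price of invoking standard Cartier-operator facts by name; the paper's version is shorter and stays closer to the bare Weil pairing, but its exact rows encode the same Cartier content you use.
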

\begin{proof}
Let $\alpha_{\geq k} \in J_{y, \geq k}$ be uniquely determined up to $J_{y, \geq k +1}$ by $\phionek$ and $\phitwok$.  From lemma \ref{relativekgpcurves}, we know $\phitwok = 0$ and $\phionek$ contains no $p$-powers.  Hence there is an isomorphism
\[J_{y, \geq k}/J_{y, \geq k + 1} \to \mathbb{A}_{k(y)}/\mathbb{A}_{k(y)}^{p}\]
mapping $\alpha_{\geq k}$ to $(\phionek \text{ mod } k(y)_{x}^{p})_{x \in y}$.\\
Lemmas \ref{structuremodfrob} and \ref{firstcalculation}  show it is enough to prove that the pairing
\[\frac{\mathbb{A}_{k(y)}}{(\mathbb{A}_{k(y)}^{p} + k(y))} \times \frac{k(y)}{k(y)^{p}} \to k_{y}\]
sending $((\phionek)_{x \in y}, f_{-k})$ to $ \sum_{x \in y}$Tr$_{k(x)/k_{y}}(\mathrm{res}_{x}(f_{-k}d\phionek))$ is non-degenerate and induces an isomorphism
\[ \frac{\mathbb{A}_{k(y)}}{(\mathbb{A}_{k(y)}^{p} + k(y))} \to \text{Hom}\left(\frac{k(y)}{k(y)^{p}}, k_{y}\right).\]
Fix $s \in k(y)$ and suppose $\sum_{x \in y}$Tr$_{k(x)/k_{y}}(\mathrm{res}_{x}(r_{x}ds)) = 0 $ for all $(r_{x}) \in \mathbb{A}_{k(y)}$.  Letting $\omega = ds \in \Omega^{1}_{y}$, from the non-degenerate pairing in the lemma above we see $\omega = 0$, i.e. $s \in k(y)^{p}$.\\
Fix $(r_{x})_{x \in y} \in \mathbb{A}_{k(y)}$ and suppose $\sum_{x \in y}$Tr$_{k(x)/k_{y}}(\mathrm{res}_{x}(r_{x}ds)) = 0 $ for all $ s \in k(y)$.  Then:
\[\sum_{x \in y}\text{Tr}_{k(x)/k_{y}}(\mathrm{res}_{x}(r_{x}ds)) = \sum_{x \in y}\text{Tr}_{k(x)/k_{y}}(\mathrm{res}_{x}(d(r_{x}s) - sdr_{x}))\]
\[ \ \ \ \ \ \ \ \ \ \ \ \ \ \ \ \ \ \ \ \ \ \ \ \ \ \ \ \ \ \ \ \ \ \ \ \ \ \ \ \ \ \ \ \ \ \ \ \ \ \ \ \ \ \ \ \ \ \ \ \ \ \  = -\sum_{x \in y}\text{Tr}_{k(x)/k_{y}}(res_{x}(sdr_{x})).\]
As $k(y)^{\perp} = \Omega_{k(y)}^{1}$ with respect to the pairing in the lemma above, we have $(dr_{x}) \in \Omega^{1}_{k(y)}$.  Then the  commutative diagram with exact rows:
\[ \begin{CD}
0 @>>> k(y)/k(y)^{p} @>d>> \Omega^{1}_{k(y)} @>>> \Omega^{1}_{k(y)} @>>> 0 \\
@.  @VVV @VVV @VVV @. \\
0 @>>> \mathbb{A}_{k(y)}/\mathbb{A}_{k(y)}^{p} @>d>> \mathbb{A}_{k(y)}(\Omega^{1}_{y}) @>>> \mathbb{A}_{k(y)}(\Omega^{1}_{y}) @>>> 0 \end{CD}\]
implies $(r_{x}) \in k(y) + \mathbb{A}_{k(y)}^{p}$ as required.\\
Finally the continuous injections of $k_{y}$ vector spaces 
\[k(y)/k(y)^{p} \hookrightarrow \Omega^{1}_{k(y)} \hookrightarrow \mathbb{A}_{k(y)}(\Omega^{1}_{y})\]
induce
\[\mathbb{A}_{k(y)} \to \text{Hom}_{cont}(\mathbb{A}_{k(y)}(\Omega^{1}_{y}), k_{y}) \twoheadrightarrow \text{Hom}(\Omega^{1}_{k(y)}, k_{y}) \twoheadrightarrow \text{Hom}(k(y)/k(y)^{p}, k_{y})\]
where the first map is an isomorphism.
\end{proof}

\bigskip
\noindent We can now use these final two lemmas to prove theorem \ref{mainthmcurves}.
\begin{proof}
\emph{m = 1:}\\
Firstly we prove the pairing is non-degenerate in the second argument.  Let $ f = \sum_{k < 0} f_{k}\tc^{k}$ be a representative of $\fc/((Frob - 1)\fc+ k(y))$ and assume $(\alpha|f]_{y} = 0$ for all $\alpha \in J_{y}$.  Assuming $f \neq 0$, let $l$ be the least index with $f_{l} \neq 0$, and let $\alpha_{-l} \in J_{y, -l}$.  Then
\[ 0 = (\alpha_{-l}|f]_{y} = (\alpha_{-l} | f_{l}\tc^{l}]_{y}\] and lemmas \ref{dualityone} and \ref{dualitytwo} show $f_{l} = 0$, a contradiction.\\
We now prove the map to the homomorphism group is an isomorphism, which will also prove non-degeneracy in the first argument.  Let
\[ \mu : \fc/((\mathrm{Frob} - 1)\fc+ k( y)) \to \mathbb{Z}/p\mathbb{Z}\] be a homomorphism.
By lemma \ref{structuremodfrob}, $\mu$ can be described by a family of continuous maps
\[\mu_{k} : k(y) \to \mathbb{Z}/p\mathbb{Z}\] sending $f_{k}$ to $\mu(f_{k}\tc^{k})$ for each $k < 0$.\\
We will inductively construct an $\alpha \in J_{y}/J_{y}^{p}$ such that 
\[(\alpha | f]_{y} = \mu(f)\]
for all $ f \in \fc/(Frob - 1)\fc .k_{y}$, and such that $\alpha$ is unique up to $\Delta(K_{2}^{top}(\fc))\cap J_{y}$.\\
For any $\alpha \in J_{y}/J_{y}^{p}$, let $\alpha_{\geq 1} \in J_{y, \geq 1}$ be the element defined by $\phionek$ and $\phitwok$ for $k \geq 1$ in the expansion of $\alpha$, and $\alpha_{1}$ the element defined by $\phi^{(1)}_{1,x,y}(u_{x,y})$ and $\phi^{(2)}_{1,x,y}(u_{x,y})$.  Inductively, define
\[\alpha_{\geq j} = \alpha_{j}\alpha_{\geq j + 1}.\]
By corollary \ref{firstcalccor}, 
\[ (\alpha | f_{-k}\tc^{-k}]_{y} = \sum_{1 \leq j \leq k} (\alpha_{j} | f_{-k}\tc^{-k}]_{y}\]
so  $(\alpha | f]_{y} = \mu(f)$ for all $f \in \fc/((Frob - 1)\fc +k_{y})$ if and only if \[(\alpha_{k} | f_{-k}\tc^{-k}]_{y} = \mu_{k}(f_{-k}) - \sum_{1 \leq j \leq k}(\alpha_{j} | f_{-k}\tc^{-k}]_{y}\]
for all $k \geq 1$.\\
By lemma \ref{dualityone} and \ref{dualitytwo}, there exists such an $\alpha_{k}$, uniquely defined up to $(J_{y, \geq k } \cap \Delta(K_{2}^{top}(\fc)))J_{y, \geq k+1}$ for each $k$.  Letting $\alpha = \prod_{k \geq 1}\alpha_{k}$, we obtain the required element.\\

\noindent \emph{Induction}\\

\noindent Suppose we have
\[ \frac{J_{y}}{(\Delta(K_{2}^{top}(\fc))\cap J_{y})J_{y}^{p^{m}}} \cong \text{Hom}\left( \frac{W_{m}(\fc)}{(\mathrm{Frob} - 1)W_{m}(\fc)+W_{m}(k( y))}, \mathbb{Z}/p^{m}\mathbb{Z}\right)\] for some $m \in \mathbb{Z}$.\\
Let $\mu \in\text{Hom}\left( W_{m+1}(\fc)/((\mathrm{Frob} - 1)W_{m+1}(\fc)+W_{m+1}(k( y))), \mathbb{Z}/p^{m+1}\mathbb{Z}\right)$.  Then if
\[ \mu' : W_{m}(\fc)/((\mathrm{Frob} - 1)W_{m}(\fc)+W_{m}(k( y))) \to \mathbb{Z}/p^{m}\mathbb{Z}\]
is the map sending $(f_{0}, \dots, f_{m-1})$ to $V(\mu(f_{0}, \dots, f_{m-1}, 0)$, where $V$ is the usual map in Witt theory $(x_{0}, x_{1} \dots) \mapsto (0, x_{0}, x_{1},\dots)$, then $\mu'$ is a homomorphism.  \\
So we can associate $\alpha \in J_{y}/(\Delta(K_{2}^{top}(\fc))\cap J_{y})J_{y}^{p^{m}}$ to $\mu'$, i.e.:
\[ (\alpha | f_{0}, \dots f_{m-1}]_{y} = V(\mu (f_{0}, \dots, f_{m-1}, 0).\]
Now, for $(0, \dots, 0, f_{m}) \in W_{m+1}(\fc)/((\mathrm{Frob} - 1)W_{m+1}(\fc)+W_{m+1}(k(y )))$, we have 
\[ (\alpha | 0, \dots, 0, f_{m}]_{y} = (0, \dots, 0, (\alpha | f_{m}]_{y}) \in W_{m+1}(\mathbb{F}_{p}).\]
If we consider the Witt vector $(0, \dots, 0, f_{m}) \in \frac{W_{m}(\fc)}{(\mathrm{Frob} - 1)W_{m}(\fc)+W_{m}(k( y))}$, then we see
\[(\alpha | 0, \dots, f_{m}]_{y} = (0, \dots, 0, (\alpha | f_{m}]_{y})\] in $W_{m}(k_{y})$.
But also
\[ (\alpha | 0, \dots, 0, f_{m}]_{y} = V(\mu(0, \dots, 0, f_{m}, 0)) = \mu(0, \dots, 0, f_{m})\]
(in $W_{m+1}(F_{y})$), as $V$ commutes with any homomorphism of Witt vectors.
This gives: 
\[ (\alpha | f_{0}, \dots, f_{m-1}, f_{m}]_{y} = (\alpha | f_{0}, \dots, f_{m-1}, 0]_{y} + (\alpha | 0, \dots, 0, f_{m}]_{y} 
\]
\[= \mu(y_{0}, \dots, f_{m-1}, 0) + \mu(0, \dots, 0, f_{m}) = \mu(f_{0}, \dots, f_{m-1}, f_{m})\]
as required.  To see that $\alpha$ is unique up to $J^{p^{m+1}}$, use proposition \ref{wittproperties} $iv$.\end{proof}

For a singular curve $y \subset X$, we see that the above theorem holds for each irreducible component $y_{i} \subset y$, as the fields $F_{x,y}$ and $F_{y}$ depend only on the normalisation of $y$. So the theorem is also true for the products $J_{y}$,  $F_{y}$ and $k_{y}$ in this case.

\bigskip

We next prove a similar duality theorem for the higher tame symbol, enabling us to define the tamely ramified part of the reciprocity map. Our ultimate aim is the following theorem.

\begin{thm}\label{tamecurvethm}
Fix a non-singular curve $y \subset X$ and define $\mathfrak{J}_{y}$ to be the ring generated by the subgroup of the $K$-groups \[{\prod}'_{x \in y} \{k_{y}(x), u_{x,y}\} \times \{k_{y}(x), t_{y}\} \times \{u_{x,y}, t_{y}\},\] i.e. the elements of $\mathcal{J}_{y}$ with either one entry in the constant field and one entry a local parameter for either  $y$ or some $x\in y$, or both entries the local parameters for $y$ and some $x \in y$. The restricted product is because $\mathfrak{J}_{y}$ is a subgroup of  $\mathcal{J}_{y}$. Then the global higher tame pairing on 
\[\mathfrak{J}_{y}/(\Delta(K_{2}^{top}(F_{y}))\cap \mathfrak{J}_{y})\mathfrak{J}_{y}^{q-1} \times F_{y}^{\times}/(F_{y}^{\times})^{q-1} \to \mathbb{F}_{q}^{\times}\]

\noindent is continuous and non-degenerate. 
\end{thm}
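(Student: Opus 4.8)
The plan is to run the same argument as for Theorem~\ref{mainthmcurves}, with the additive (Witt/residue) duality on the curve $y$ replaced by its multiplicative (tame-symbol/Kummer) counterpart, and with the bonus that here there is no inductive filtration to climb: the higher tame symbol only sees the two-dimensional valuations and the leading coefficients, and principal units lie in its kernel (this is exactly the input of Lemma~\ref{tamewlldefined}), so the pairing descends in one step to the ``leading-data'' quotient. First I would put both sides into normal form. On the right, fix $F_y\cong\prod_{y_i}k(y_i)((t_{y_i}))$; since $q-1$ is prime to $p$ the principal units $1+t_y\mathcal{O}$ form a $\mathbb{Z}_p$-module and are uniquely $(q-1)$-divisible, so $F_y^\times/(F_y^\times)^{q-1}$ is, with respect to the chosen uniformizer, $\big(t_y^{\mathbb{Z}}/(q-1)\big)\times\big(k(y)^\times/(k(y)^\times)^{q-1}\big)$: an element $h$ is represented modulo $(q-1)$-th powers by $v_y(h)\bmod(q-1)$ together with its leading coefficient $\bar h:=\overline{t_y^{-v_y(h)}h}\in k(y)^\times$ (again only modulo $(q-1)$-th powers). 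On the left, the structure theorem~\ref{topkgpstructure} and its curve version~\ref{kgpstructurecurves}, used as in Lemma~\ref{relativekgpcurves}, show that $\mathfrak{J}_y$ is generated as a restricted product by the three families $\{a,u_{x,y}\}$, $\{a,t_y\}$ (with $a\in k_y(x)^\times$) and $\{u_{x,y},t_y\}$, so a general element of $\mathfrak{J}_y$ is encoded by an idele-like datum in $\mathbb{A}_{k(y)}^\times$ (the leading coefficients of the entries) together with a divisor on $y$ (the $\{u_{x,y},t_y\}$-exponents).

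Next I would evaluate the higher tame pairing on these generators with the Fesenko--Vostokov determinantal formula for the two-dimensional valuation $\mathbf{v}=(\bar v_x,v_y)$, together with Lemma~\ref{residueproperties}. A direct computation gives, up to the sign $(-1)^b$, that $(\{a,u_{x,y}\},h)_{x,y}=a^{v_y(h)}$, that $(\{a,t_y\},h)_{x,y}=a^{-\bar v_x(h)}$ (and $\bar v_x(h)=\operatorname{ord}_x(\bar h)$), and that $(\{u_{x,y},t_y\},h)_{x,y}$ is the leading coefficient of $\bar h$ at the point $x\in y$ --- i.e. the classical tame symbol of $u_{x,y}$ against $\bar h$ on the curve $y$. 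Taking the product over $x\in y$ with the residue-field norms $N_{k_y(x)/\mathbb{F}_q}$ then splits $(\xi,h)_y$ into two pieces: the self-pairing $\mathbb{Z}/(q-1)\times\mathbb{Z}/(q-1)\to\mathbb{F}_q^\times$ coupling $v_y(h)$ with the $\{a,u_{x,y}\}$-datum, and the classical (residue-normed) tame-symbol pairing on $y$ coupling $\bar h$ with the remaining datum. Both are perfect at the exponent $q-1$: the first because $\mathbb{F}_q^\times$ is cyclic of order $q-1$; the second by Kummer theory and global class field theory for the function field $k(y)$ at exponent $q-1$ (which, being prime to $p$, makes the local Hilbert symbols tame symbols) --- this is the multiplicative analogue of the adelic residue duality of \cite{We} used in Lemma~\ref{dualityone}. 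Non-degeneracy in $h$, and surjectivity of the induced map onto $\mathrm{Hom}(F_y^\times/(F_y^\times)^{q-1},\mathbb{F}_q^\times)$, then follow by the same ``detect the leading index / lift a homomorphism adelically'' argument as in Lemmas~\ref{dualityone}--\ref{dualitytwo} and the $m=1$ step of Theorem~\ref{mainthmcurves}.

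The point requiring the most care is the identification of the left kernel with $(\Delta(K_2^{top}(F_y))\cap\mathfrak{J}_y)\mathfrak{J}_y^{q-1}$. The inclusion $\supseteq$ is immediate: $\mathfrak{J}_y^{q-1}$ is killed because the target has exponent $q-1$, and $\Delta(K_2^{top}(F_y))\cap\mathfrak{J}_y$ is killed by the tame reciprocity law~\ref{tamereciprocity}. For the reverse inclusion one argues as in the Remark after Lemma~\ref{diagonalelements}: one writes out the image of $\Delta(K_2^{top}(F_y))$ in $\mathfrak{J}_y$ explicitly --- its leading-coefficient datum is a principal idele of $k(y)$ and its total $\{u_{x,y},t_y\}$-exponent is a principal divisor --- and checks that this is precisely the orthogonal complement produced by the classical duality on $y$. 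Continuity is then routine: Lemma~\ref{tamewlldefined} gives finiteness of the product, each local higher tame symbol is continuous by \cite{P4}, and the topology is transported through the reduction to $\mathbb{A}_{k(y)}$ exactly as in Theorem~\ref{mainthmcurves}, so the induced map lands in the continuous homomorphisms. Finally the singular-curve case is deduced, just as for Witt duality, from the fact that $F_y$, the $F_{x,y}$ and $\mathfrak{J}_y$ all split as products over the irreducible (equivalently, normalized) components of $y$. I expect the genuine difficulty to be entirely in the bookkeeping of this last step --- tracking the signs $(-1)^b$ uniformly across the three families of generators and matching the two-dimensional diagonal $\Delta(K_2^{top}(F_y))\cap\mathfrak{J}_y$ with the one-dimensional ``principal idele plus principal divisor'' locus on $y$ --- rather than in any new conceptual ingredient, since the decisive inputs (the structure theorem, the explicit tame symbol, and classical duality on a curve) are all in hand.
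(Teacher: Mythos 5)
Your proposal is correct in substance, but it takes a genuinely different route from the paper's proof. The paper argues combinatorially: it first shows (using the presentation $k(y)\cong\mathbb{F}_q(u)$, with the primes $u_{x,y}$ and a distinguished point at infinity) that $F_y^{\times}/(F_y^{\times})^{q-1}$ is generated by $\zeta$, $t_y$ and the $u_{x,y}$, then computes explicit normal forms for $\Delta(K_2^{top}(F_y))\cap\mathfrak{J}_y$ and for the quotient $\mathfrak{J}_y/(\Delta(K_2^{top}(F_y))\cap\mathfrak{J}_y)\mathfrak{J}_y^{q-1}$, and finally matches each generator of the right-hand side with a single generator of the left-hand side on which the pairing is non-trivial (e.g.\ $\zeta$ against $\{u_{x,y},t_y\}$ placed at infinity, $t_y$ against $\{\zeta,u_{x,y}\}$ at infinity, $u_{x,y}$ against $\{\zeta,t_y\}$ at $x$), deliberately using only ``linear algebraic'' methods. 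You instead split the element $h$ into $(v_y(h)\bmod(q-1),\bar h)$, observe via the Fesenko--Vostokov formula that the $\{a,u_{x,y}\}$-components see only $v_y(h)$ while the $\{a,t_y\}$- and $\{u_{x,y},t_y\}$-components assemble into an idele of $k(y)$ paired with $\bar h$ through the local $(q-1)$-Hilbert (tame) symbols, and then quote the classical exponent-$(q-1)$ Kummer/class-field duality for $k(y)$ — the exact multiplicative analogue of the appeal to Weil's residue duality in lemma \ref{dualityone}. What your route buys is uniformity and robustness: it works verbatim for an arbitrary smooth curve $y$, whereas the paper's generator lemmas lean on global functions $u_{x,y}$ that are simultaneously local parameters at each point and on the ``point at infinity'' bookkeeping, i.e.\ on a rational-function-field presentation of $k(y)$; it also makes the tame and Witt cases structurally parallel. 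What it costs is that the heavy lifting is displaced into two places you correctly flag but do not carry out: the identification of the image of $\Delta(K_2^{top}(F_y))\cap\mathfrak{J}_y$ under your encoding with exactly ``principal idele plus principal divisor'' data (note that diagonal symbols $\{b,c\}$ with both entries in $k(y)^{\times}$ contribute to the $\{a,u_{x,y}\}$-channel as well, and the signs $(-1)^{b}$ couple the two channels through $v_y(h)\bmod 2$), and the on-the-nose statement that the kernel of the global Hilbert-symbol pairing on the idele side is $k(y)^{\times}(\mathbb{A}_{k(y)}^{\times})^{q-1}$ (no closure discrepancy at exponent $q-1$), which is standard but is a strictly heavier input than anything the paper's proof invokes. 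Also note the theorem as stated only asserts continuity and non-degeneracy, so the surjectivity onto the Hom-group you mention is not needed here.
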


By the reciprocity law \ref{tamereciprocity}, we know that the intersection with $K_{2}^{top}(F_{y})$ is contained in the kernel of the left hand side of the pairing. Proceeding in a similar way to the method used for the Witt pairing, we will look at the structure of the group on the left hand side and prove non-degeneracy by a combinatorial argument. It may be noted that the higher tame pairing requires only linear algebraic methods to understand, and so the argument will be much more simple than in the case of the Witt pairing, as the $p$-part of the reciprocity map is harder to understand.\\

We briefly recall Parshin's argument from \cite[3.1.3]{P4}, that is, the proof of duality for a single higher local field. In our language, we fix a point $x$ and just discuss the case of a two-dimensional local field. Fix a $(q-1)^{th}$ root of unity $\zeta \in F_{x,y}$ so that the left hand side of the pairing is generated by the elements
\[\{\zeta, u_{x,y}\}, \ \{\zeta, t_{y}\} \ \text{and} \ \{u_{x,z}, t_{y}\}.\]
The higher tame pairing takes non-zero values when the above elements are paired with
\[ t_{y}, u_{x,y} \ \text{and} \ \zeta\] respectively. But these three elements generate the group $F_{x,y}^{\times}/(F_{x,y}^{\times})^{q-1}$, which in the local case is the right hand side of the pairing, as required. 

\begin{lemma}
Fix a $(q-1)^{th}$ root of unity $ \zeta \in F_{y}$. Then the group $F_{y}^{\times}/(F_{y}^{\times})^{q-1}$ is generated by the elements
\[ \zeta, \ t_{y}, \ \text{and for each } x \in y, \ u_{x,y}.\]
\end{lemma}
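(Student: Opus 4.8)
The plan is to peel $F_y^\times$ down to the multiplicative group of the residue field $k(y)$, where the statement becomes a divisor-theoretic fact about a global function field. Since $F_y=\prod_{y_i\subset y}F_{y_i}$ and likewise $F_y^\times/(F_y^\times)^{q-1}=\prod_i F_{y_i}^\times/(F_{y_i}^\times)^{q-1}$, and $t_y$, $u_{x,y}$, $\zeta$ are tuples of the corresponding objects in each factor, I may assume $y$ is irreducible, so $F_y\cong k(y)((t_y))$ for a global function field $k(y)$ with constant field $k_y$.

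First I would exploit that $q-1$ is prime to the residue characteristic $p$. Every element of $F_y^\times$ factors uniquely as $t_y^n\cdot a\cdot v$ with $n\in\mathbb{Z}$, $a\in k(y)^\times$ and $v$ a principal unit of $1+t_y k(y)[[t_y]]$. Solving $w^{q-1}=v$ coefficient by coefficient — legitimate because $q-1$ is invertible in $k(y)$ — shows that the principal units are already $(q-1)$-th powers, hence lie in $(F_y^\times)^{q-1}$; the same computation shows the principal units have no nontrivial prime-to-$p$ torsion, which forces $(F_y^\times)^{q-1}\cap k(y)^\times=(k(y)^\times)^{q-1}$. Therefore $F_y^\times/(F_y^\times)^{q-1}$ is generated by the class of $t_y$ together with the image of $k(y)^\times$, and that image is canonically $k(y)^\times/(k(y)^\times)^{q-1}$. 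It remains to show this last group is generated by $\zeta$ and the uniformizers $\{u_{x,y}\}_{x\in y}$.

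Next I would use the divisor map: it identifies $k(y)^\times/k_y^\times$ with the group $P(y)$ of principal divisors, a subgroup of the free abelian group $\bigoplus_x\mathbb{Z}[x]$ and hence itself free, so that $1\to k_y^\times\to k(y)^\times\to P(y)\to 1$ splits. The constant part $k_y^\times/(k_y^\times)^{q-1}$ is cyclic of order $q-1$ and is generated by the class of the $(q-1)$-th root of unity $\zeta$. So the whole statement reduces to the assertion that the divisors $\mathrm{div}(u_{x,y})$, $x\in y$, generate $P(y)$ modulo $(q-1)P(y)$: granting this, any $a\in k(y)^\times$ can be cancelled against a finite product of the $u_{x,y}$ up to a $(q-1)$-th power and a constant, and the constant is absorbed into a power of $\zeta$.

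For this final assertion I would work inside finitely generated lattices of divisors: given $a$, enlarge the support of $\mathrm{div}(a)$ (and the supports of the $\mathrm{div}(u_{x,y})$ that appear) to a finite set $T$ chosen large enough that divisors supported on $T$ surject onto $\mathrm{Pic}(y)$ and that $\gcd_{x\in T}\deg x=1$; then, because $\mathrm{Pic}^0(y)$ is finite, the principal divisors supported on $T$ form a finite-index subgroup of the degree-zero divisors supported on $T$, and a rank count together with weak approximation lets me express $\mathrm{div}(a)$ through the $\mathrm{div}(u_{x,y})$ up to $(q-1)$-divisible corrections. This global step is the main obstacle: unlike the purely local, approximation-free reductions of the first two paragraphs, here one must contend with the zeros and poles that every uniformizer $u_{x,y}$ unavoidably has elsewhere on the complete curve $y$, and check that the index appearing in the lattice comparison is harmless modulo $q-1$ — this is precisely where the finiteness of the divisor class group is needed.
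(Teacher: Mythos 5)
Your first two reductions are correct, and in fact more careful than the paper's own gloss: splitting over irreducible components, removing the principal units by a Hensel argument (legitimate since $q-1$ is prime to $p$), and checking $(F_{y}^{\times})^{q-1}\cap k(y)^{\times}=(k(y)^{\times})^{q-1}$ correctly reduce the lemma to the assertion that $\zeta$ and the classes of the $u_{x,y}$ generate $k(y)^{\times}/(k(y)^{\times})^{q-1}$, equivalently that the divisors $\mathrm{div}(u_{x,y})$ generate the principal divisors $P(y)$ modulo $(q-1)P(y)$.

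The gap is exactly the step you flag as the main obstacle: that assertion cannot be deduced from finiteness of $\mathrm{Pic}^{0}(y)$, rank counts and weak approximation, because it depends on the actual divisors of the chosen local parameters, and for arbitrary legitimate choices of the $u_{x,y}$ it is simply false --- already for a rational curve, where the class group is trivial. Take $k(y)=\mathbb{F}_{3}(u)$, $q=3$: choose $u_{x,y}=u(u-1)$ at both $u=0$ and $u=1$ (a valid local parameter at either point), the monic irreducible $\pi$ at every other finite place, and $u(u-1)/(u^{3}-u+1)$ at infinity. The homomorphism $f\mapsto v_{0}(f)+v_{1}(f)\bmod 2$ kills $\mathbb{F}_{3}^{\times}$, $t_{y}$, every chosen $u_{x,y}$ and all squares, but not $u$; so these elements do not generate, and no ``index harmless modulo $q-1$'' argument can exist in this generality. (A related technical symptom: the supports of the $\mathrm{div}(u_{x,y})$ are not contained in any finite set $T$ you fix in advance, so the finitely generated lattice you propose to work in is not stable under your generators.) What makes the lemma work in the paper is an implicit hypothesis used throughout that section: the residue field is treated as the rational function field $\mathbb{F}_{q}(u)$ and $u_{x,y}$ is taken to be the prime of $\mathbb{F}_{q}(u)$ attached to $x$ (so $\mathrm{div}(u_{x,y})$ is supported at $x$ and the point at infinity); then $k(y)^{\times}$ is the free group on these primes times $\mathbb{F}_{q}^{\times}$ by unique factorization, which is the paper's one-line argument, and your global step is not needed. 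To repair your proof you must import that hypothesis on the $u_{x,y}$ (or allow yourself to modify the parameters), not strengthen the class-group machinery.
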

\begin{proof}
It is well known that the first residue field, isomorphic to a one-dimensional global field $\mathbb{F}_{q}(u)$, has multiplicative group generated by $\zeta$ and the primes of the field. These primes are in bijective correspondence with the points $x \in y$ and can be represented by the equations $u_{x,y} \in F_{y}$. 

Then under the isomorphism $F_{y} \cong \mathbb{F}_{q}(u)((t_{y}))$, it is clear from the theory of complete fields that we need only to include $t_{y}$ to generate the multiplicative group $F_{y}^{\times}$. All of these elements have order $q-1$ in the quotient group, so they also generate $F_{y}^{\times}/(F_{y}^{\times})^{q-1}$. 
\end{proof}

Now we examine the structure of the elements of the groups $\mathfrak{J}_{y}$ and $\mathfrak{J}_{y}/$ $(\Delta(K_{2}^{top}(F_{y})) \cap \mathfrak{J}_{y})\mathfrak{J}_{y}^{q-1}$ which will give non-zero values when paired with elements of the form in the above lemma.\\
The non-degeneracy on the right hand side of the pairing with $\mathfrak{J}_{y}$ is easy to see. Pairing:
\begin{enumerate} \item{the root of unity $\zeta$ with an element of $\mathfrak{J}_{y}$ with $\{u_{x,y},t_{y}\}$ in the $x$-position and trivial everywhere else;} 
\item{the local parameter $t_{y}$ with an element with $\{\zeta, u_{x,y}\}$ at the $x$-position and trivial everywhere else;}
 \item{any parameter $u_{x,y}$ with the element with $\{\zeta, t_{y}\}$ at the $x$-position and trivial everywhere else}\end{enumerate}
  all yield non-zero values. We must check that these remain non-zero when we quotient by the diagonal elements, and prove non-degeneracy on the left hand side.
  
In the following lemma, and throughout the rest of the section, we will refer to the ``point at infinity". By this, we mean the following: let $k(y)$ be isomorphic to the field $\mathbb{F}_{q}(u)$. Then the element $1/u$ must be considered as a prime element, and taken into account when we prove reciprocity laws. We will refer to this point of the curve $y$ as the point at infinity. See \cite{We} for more details of this definition from the classical theory.  
  
\begin{lemma}
Let $\alpha \in \Delta (K_{2}^{top}(F_{y}))\cap \mathfrak{J}_{y}$. Then $\alpha$ is a product of elements of the form:
\begin{enumerate}
\item{$\{u_{x,y}, t_{y}\}$ in the $x$-position and the position corresponding to the point at infinity, ones everywhere else;}
\item{$\{\zeta, u_{x,y}\}$ in the $x$-position and the position corresponding to the point at infinity, ones everywhere else;}
\item{$\{\zeta, t_{y}\}$ in every position. }
\end{enumerate}
\end{lemma}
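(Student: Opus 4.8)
The plan is to describe $\Delta(K_{2}^{top}(F_{y}))\cap\mathfrak{J}_{y}$ by first pinning down which $\gamma\in K_{2}^{top}(F_{y})$ can have tame image at \emph{every} point, and then decomposing those $\gamma$ in terms of explicit global generators whose diagonal images can be computed point-by-point using divisors on the normalisation $\bar y$.

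\emph{Step 1: reduce to the tame part of $K_{2}^{top}(F_{y})$.} By Lemma \ref{kgpstructurecurves}, $K_{2}^{top}(F_{y})$ is generated by the ``tame'' symbols $\{a,t_{y}\}$ and $\{a,b\}$ with $a,b\in k(y)^{\times}$ together with the ``wild'' symbols $\{1+at_{y}^{k},t_{y}\}$ and $\{1+at_{y}^{k},b\}$. Using the uniqueness of the decomposition (the corollary to Proposition~4 of \cite[\S 1]{P4}, cf.\ Lemma \ref{relativekgpcurves}) together with the fact that for each $x\in y$ the local image of a wild symbol lies in $\mathcal{E}^{(1)}_{x,y}\times\mathcal{E}^{(2)}_{x,y}$, one sees that if $\Delta(\gamma)\in\mathfrak{J}_{y}$ then $\gamma$ has trivial wild part: the wild coefficients $(\phi_{j,x,y})_{x\in y}$ of $\Delta(\gamma)$ are adeles of $k(y)$, and they vanish modulo the relevant powers exactly when they vanish in $k(y)$. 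Hence $\gamma$ lies in the subgroup generated by $\{a,t_{y}\}$ and $\{a,b\}$, $a,b\in k(y)^{\times}$.

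\emph{Step 2: global generators and local computation.} Write $k(y)\cong\mathbb{F}_{q}(u)$ and fix a primitive $(q-1)$-th root of unity $\zeta$. Then $k(y)^{\times}$ is generated by $\zeta$ and the monic irreducible polynomials $p_{x}(u)$, one for each finite closed point $x$ of $\bar y$, and we may take $u_{x,y}=p_{x}(u)$ as a local parameter at $x$; the divisor of $p_{x}(u)$ on $\bar y$ is $[x]-\deg(p_{x})[\infty]$, so $p_{x}(u)$ is a uniformiser at $x$, has order $-\deg(p_{x})$ at the point at infinity, and is a unit elsewhere. By bilinearity it suffices to compute $\Delta$ of $\{\zeta,t_{y}\}$, $\{\zeta,p_{x}(u)\}$, $\{p_{x}(u),t_{y}\}$ and $\{p_{x}(u),p_{x'}(u)\}$ ($x\neq x'$). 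Doing so point-by-point and using (i) that $K_{2}$ of a finite field vanishes, so constant-times-constant symbols die, and (ii) that the subgroup of $K_{2}^{top}(F_{x,y})$ generated by symbols with an entry a principal unit is pro-$p$, hence uniquely $(q-1)$-divisible so its symbols vanish modulo $(q-1)$-th powers, one finds that $\Delta(\{\zeta,t_{y}\})$ equals $\{\zeta,t_{y}\}$ in every position (form~3); that $\Delta(\{\zeta,p_{x}(u)\})$ is, up to these vanishing corrections, carried by $x$ and by infinity, equal there to $\{\zeta,u_{x,y}\}$ and a power of $\{\zeta,u_{\infty,y}\}$ (form~2); and likewise that $\Delta(\{p_{x}(u),t_{y}\})$ and $\Delta(\{p_{x}(u),p_{x'}(u)\})$ are carried by two points, giving form~1, with the tame reciprocity law of Proposition \ref{tamereciprocity} used to account for the multiplicity at infinity. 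Collecting the factors, and discarding the corrections which vanish once we know the product lies in $\mathfrak{J}_{y}$, yields the claimed product decomposition.

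\emph{Main obstacle.} The genuinely delicate point is not the individual divisor computations but the bookkeeping at the point at infinity and at the auxiliary finite points $x'\neq x$: a priori $\Delta(\{p_{x}(u),t_{y}\})$ also carries a nontrivial unit symbol $\{\overline{p_{x}(u)}|_{x'},t_{y}\}$ at each $x'\neq x$ and a principal-unit symbol at infinity, so it does not lie in $\mathfrak{J}_{y}$ by itself. It is only the hypothesis that the \emph{entire} product $\alpha$ lies in $\mathfrak{J}_{y}$ — combined with the adelic structure of $\delta_{X}$ (so that the residues of the various factors assemble into the principal adele $\Delta(p_{x}(u))\in\mathbb{A}_{k(y)}^{\times}$) and the tame reciprocity law — that forces these unwanted components either to cancel among the factors or to vanish modulo $(q-1)$-th powers. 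Making this simultaneous cancellation precise is where the real work lies, and it is essentially a Weil-reciprocity argument on $\bar y$ dressed in the language of the adelic $K$-group.
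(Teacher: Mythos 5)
Your proposal takes essentially the same route as the paper's own proof: use Lemma \ref{kgpstructurecurves} to discard the principal-unit generators and reduce to $\{a,t_{y}\}$, $\{a,b\}$ with $a,b\in k(y)^{\times}$, factor $k(y)^{\times}$ into the root of unity $\zeta$ and primes represented by the local parameters $u_{x,y}$, and then analyse the diagonal image place-by-place via the divisor of $u_{x,y}$, with the constant-constant symbols killed by $K_{2}^{top}(k_{y})=0$ and the residual principal-unit components at the points $x'\neq x$ removed upon intersecting with $\mathfrak{J}_{y}$. The only quibbles are minor: $\Delta(\{p_{x}(u),p_{x'}(u)\})$ contributes elements of form 2 rather than form 1 (no $t_{y}$ occurs in it), and the paper disposes of the unwanted components by the direct-sum structure of the local topological $K$-groups and the definition of $\mathfrak{J}_{y}$ rather than by an appeal to tame reciprocity, but the substance of the two arguments is the same.
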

  
\begin{proof}
Lemma \ref{kgpstructurecurves} gives us four types of elements that appear in $K_{2}^{top}(F_{y})$, but the elements of types 3 and 4 contain principal units and hence are not in the group when intersected with $\mathfrak{J}_{y}$. So we are left with the diagonal embeddings of elements of types 1 and 2, i.e. $\{a, t_{y}\}$ and $\{a,b\}$ where $a$ and $b$ are lifts of elements in $k(y)^{\times}$. 

We can decompose the elements of $k(y)^{\times}\cong k_{y}(u)^{\times}$ as products of elements of $k_{y}^{\times}$ and primes which may be represented as parameters $u_{x,y}$. Then we can restrict to elements of type $\{\zeta, t_{y}\}$ and $\{u_{x,y}, t_{y}\}$ from the first type of element, and $\{\zeta, u_{x,y}\}$ from the second type - we know that $K_{2}^{top}(k_{y}) = 0$ and $\{u_{x,y}, u_{x,y}\} = \{-1, u_{x,y}\}$, so these are the only elements of the second type.

So we now investigate the diagonal elements of each of these types of elements. The elements of $\mathfrak{J}_{y}$ with $\{\zeta, t_{y}\}$ at every place cannot be reduced into a more simple form, so this is the third type of element in the list above.

The elements of type $\{\zeta, u_{x,y}\}$ will take this form at the $x$-position and the point at infinity, but at other positions the parameter $u_{x,y}$ can be decomposed as a product of principal units and elements of $k_{y}$, as it is not a prime at these points. But these elements are all either trivial in the topological $K$-group or not in the intersection with $\mathfrak{J}_{y}$, so we are left with an element of type 2.

The elements of type $\{u_{x,y}, t_{y}\}$ will take this form at the $x$-position and the point at infinity. At the other positions, $u_{x,y}$ is not a prime and so can be decomposed as a product of principal units and elements of $k_{y}$.  We can renormalise the other local parameters $u_{x',y}$, where $x \neq x'$, so that $u_{x,y}$ decomposes just as a principal unit in each place. Then when we take the intersection with $\mathfrak{J}_{y}$, these elements are all trivial and only the element of type one remains.
\end{proof}

We use this simple form of elements in the diagonal embedding of $K_{2}^{top}(F_{y})$ to study the elements in the quotient.

\begin{lemma}
Let $\alpha \in \mathfrak{J}_{y}/(\Delta(K_{2}^{top}(F_{y}))\cap \mathfrak{J}_{y})\mathfrak{J}_{y}^{q-1}$. Then $\alpha$ can be written as a finite product of elements of the form:
\begin{enumerate}
\item{$\{u_{x,z},t_{y}\}$ in the position corresponding to the point at infinity, for some $x \in y$, and trivial in every other position;}

\item{$\{a, u_{x,y}\}$ in the position corresponding to the point at infinity, for some $x \in y$ and $a \in k_{y}$, and trivial in every other position;}

\item{An element $\{a_{x},t_{y}\}$ in any $x$-position, where $a_{x} \in k_{y}$.}

\end{enumerate}
\end{lemma}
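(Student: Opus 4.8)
The plan is to take an arbitrary element of $\mathfrak{J}_y$ and reduce it, modulo the subgroup $\mathfrak{J}_y^{q-1}$ of $(q-1)$-th powers and the diagonal subgroup $\Delta(K_2^{top}(F_y))\cap\mathfrak{J}_y$ described in the previous lemma, to a finite product of the three asserted shapes, working one closed point of $y$ at a time.

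First I would normalise. By definition $\mathfrak{J}_y$ is generated locally by symbols of the shapes $\{a,u_{x,y}\}$, $\{a,t_y\}$ and $\{u_{x,y},t_y\}$ with $a$ a constant of the residue field $k_y(x)$, and since $K_2$ of a finite field vanishes every symbol $\{a,b\}$ with both entries residue constants is trivial; bilinearity then lets me collect, at each place $x$, one symbol of each shape. Using the adelic (restricted-product) structure of $\mathcal{J}_X$ together with strong approximation on the global field $k(y)\cong\mathbb{F}_q(u)$ and the diagonal relations coming from global functions, I would further reduce to an element with finite support, say on a finite set $S$ of closed points. Now the first of the diagonal elements listed in the previous lemma — the one carrying $\{u_{x,y},t_y\}$ at $x$ and at the point at infinity — shows that a $\{u_{x,y},t_y\}$-symbol at a finite place $x$ is congruent, modulo the diagonal, to a type-(1) element supported at infinity; the second does the same for $\{\zeta,u_{x,y}\}$, giving a type-(2) element; and the third lets a $\{\zeta,t_y\}$-contribution be moved onto a single place, giving a type-(3) element. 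So all local symbols whose coefficient already lies in $k_y^\times=\langle\zeta\rangle$ are disposed of.

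What is left is to treat symbols $\{a_x,u_{x,y}\}$ and $\{a_x,t_y\}$ at a finite place $x$ whose residue field $k_y(x)$ strictly contains $k_y$. For such an $x$ of degree $d=[k_y(x):k_y]>1$ I would lift $a_x$ to a global function $\tilde a_x\in k(y)^\times$ reducing to $a_x$ at $x$ and of degree $<d$, and use the diagonal symbol built from $\tilde a_x$ and the prime of $x$ (respectively from $\tilde a_x$ and $t_y$): its $x$-component is the symbol under consideration, its components at the places where both entries are units vanish again because $K_2$ of a finite field is zero, and its remaining components — at the zeros and poles of $\tilde a_x$ and at the point at infinity — are once more symbols of the three shapes, but now the ones at finite places are supported at points whose degrees sum to $<d$, while the ones at infinity already lie in normal form, the point at infinity having residue field $k_y$. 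An induction on $\sum_{x\in S}\deg x$, with the point at infinity playing the role of a sink for the type-(1) and type-(2) contributions, then forces the whole element into the stated normal form; a degree-one place needs no reduction, since there the coefficients already lie in $k_y$, which anchors the induction.

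The bilinear symbol identities, the handling of the exceptional place where the chosen coordinate of $\mathbb{F}_q(u)$ vanishes, and the tracking of Steinberg-relation signs are routine. The real difficulty, and the step I expect to absorb most of the work, is the last one: choosing the global lifts $\tilde a_x$ and organising the induction so that every correction term has strictly smaller degree-support and so that residue-field coefficients are genuinely pushed down into $k_y$. This is exactly where the concrete structure of $k(y)\cong\mathbb{F}_q(u)$ enters — the abundance of degree-one places, strong approximation, and surjectivity of evaluation of global functions onto residue fields — and where one must check that all the diagonal elements invoked really do lie in $\mathfrak{J}_y$, i.e.\ that the principal-unit parts appearing at auxiliary places can be discarded, which is guaranteed by the previous lemma's normal form for $\Delta(K_2^{top}(F_y))\cap\mathfrak{J}_y$.
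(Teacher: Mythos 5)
Your handling of the easy cases agrees with the paper's: for $\{u_{x,y},t_{y}\}$, for $\{a,u_{x,y}\}$ with $a\in k_{y}$, and for $\{a,t_{y}\}$ with $a\in k_{y}$, one multiplies by the allowed diagonal elements of the previous lemma raised to the power $q-2$ and pushes the surplus to the point at infinity. The gap is in the one genuinely hard case, a coefficient $a_{x}\in k_{y}(x)\setminus k_{y}$ at a place of degree $d>1$. There you multiply by the diagonal symbol $\Delta(\{\tilde a_{x},\pi_{x}\})$ built from a global lift $\tilde a_{x}\in k(y)^{\times}$. But such a diagonal element does not lie in $\mathfrak{J}_{y}$: already at the place $x$ its component is $\{a_{x},u_{x,y}\}\{\varepsilon,u_{x,y}\}$ with $\varepsilon$ a principal unit, and at every auxiliary place where $\tilde a_{x}$ and $\pi_{x}$ are non-constant units it has further principal-unit symbol components. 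The quotient in the lemma is only by $(\Delta(K_{2}^{top}(F_{y}))\cap\mathfrak{J}_{y})\mathfrak{J}_{y}^{q-1}$, and the preceding lemma shows precisely that $\Delta(K_{2}^{top}(F_{y}))\cap\mathfrak{J}_{y}$ is generated by the three special shapes, i.e.\ diagonal elements carrying principal-unit parts are \emph{excluded} from the intersection rather than having those parts ``discardable''; your closing claim that the previous lemma guarantees the unit parts can be dropped has this backwards. Consequently multiplying by $\Delta(\{\tilde a_{x},\pi_{x}\})$ neither keeps you inside $\mathfrak{J}_{y}$ nor changes the class only by an element of the subgroup being factored out; at best it yields a congruence in a larger quotient such as $\mathcal{J}_{y}/\Delta(K_{2}^{top}(F_{y}))\mathcal{J}_{y}^{q-1}$, which does not imply the stated lemma since the natural map from the lemma's quotient to that group need not be injective. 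So the induction on $\sum\deg x$ never gets started at places of degree greater than one, which is exactly the case the lemma is about.

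The paper needs no global lifts and no induction here: for $b\in k_{y}(x)\setminus k_{y}$ it uses that the Frobenius conjugates satisfy $b^{q^{i}}=b\cdot b^{q^{i}-1}$ with $(q-1)\mid q^{i}-1$, so conjugates of $b$ differ from $b$ by $(q-1)$-th powers of constants; this lets one replace $b$, modulo $\mathfrak{J}_{y}^{q-1}$, by its norm $N_{k_{y}(x)/k_{y}}(b)\in k_{y}$ inside the symbol, after which your $b\in k_{y}$ argument applies verbatim. If you wish to salvage your route, you would have to either prove the statement first in the quotient by all diagonal elements and then justify descent (a different and unproved assertion), or replace your global-lift step by a purely local argument of the above kind that stays inside $\mathfrak{J}_{y}$ and uses only $(q-1)$-th powers and the three allowed diagonal types.
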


\begin{proof}
From our definition of $\mathfrak{J}_{y}$ in \ref{tamecurvethm}, we know any element of $\mathfrak{J}_{y}$ is a product of elements $\{u_{x,y}, t_{y}\}$, $\{a, u_{x,y}\}$ and $\{a, t_{y}\}$ where $x$ runs through the closed points of  $y$. We prove that when quotienting by the diagonal elements, these generators take the above form. 

Firstly, for an element with $\{u_{x,y}, t_{y}\}$ in the $x$-position, we multiply by the element of $\Delta(K_{2}^{top}(F_{y}))$ with $\{u_{x,z}^{q-2},t_{y}\}$ in the $x$-position and the point at infinity to obtain an element which is a product of those of type one in the lemma.

For an element with $\{b, u_{x,y}\}$ in the $x$-position, where $b \in k_{y}(x)$, we must do slightly more work. If $b \in k_{y}$, similarly to the above we can just multiply by the element with $\{b, u_{x,y}^{q-2}\}$ in the $x$-position and the point at infinity to obtain an element which is a product of elements of type two. 
But if $b \in k_{y}(x)\setminus k_{y}$ this will not work.

We know that the higher tame symbol will take the value $N_{k_{y}(x)/k_{y}}(b)$ when the element $\{b, u_{x,y}\}$ is paired with $t_{y}$ - and that this is true for all conjugates of $b$ in the extension $k_{y}(x)/k_{y}$. So if we can show that $b$ is equivalent to its norm in the quotient, then we may replace $b$ with $N_{k_{y}(x)/k_{y}}(b) \in k_{y}$ and proceed as above.

Let $k_{y}$ have size $q$ and $k_{y}(x)$ have size $q^{n}$. Then the Galois group of the extension $k_{y}(x)/k_{y}$ is generated by the map $\alpha \mapsto \alpha^{q}$. So our element $b$ differs from each of its conjugates $b^{q}, b^{q^{2}}, \dots$ by a $(q-1)^{th}$-power. Hence it differs from its norm $N_{k_{y}(x)/k_{y}}(b) = bb^{q}b^{q^{2}}\dots$, a product of $n$ terms, by a $(q-1)^{th}$-power also. So these elements are all products of elements of the second type.

For an element made up of symbols $\{b_{x}, t_{y}\}$ in each $x$-position, $b_{x} \in k_{y}(x)$, we may use the above method to show $b_{x}$ is equivalent to $a_{x}= N_{k_{y}(x)/k_{y}}(b_{x}) \in k_{y}$. Then an element containing entries only of this type is an element of the third type, as required.

The product is finite because of the intersection with the adelic group.
\end{proof}

We can now complete the proof of theorem \ref{tamecurvethm}. Following Parshin's local approach detailed above, we provide each of the generators of $F_{y}^{\times}/(F_{y}^{\times})^{q-1}$ with exactly one of the generators of $\mathfrak{J}_{y}/(\Delta(K_{2}^{top}(F_{y}))\cap \mathfrak{J}_{y})\mathfrak{J}_{y}^{q-1}$ with which it has a non-zero value when the higher tame pairing is applied. 

Let $\zeta$ be a primitive $(q-1)^{th}$ root of unity in $F_{y}$, i.e. a lift of a generator of $k_{y}$. Then as above, $F_{y}$ is generated by $\zeta$, $t_{y}$ and a local parameter $u_{x,y}$ for each $x \in y$. 

We pair $\zeta$ with the element of $\mathfrak{J}_{y}/(\Delta(K_{2}^{top}(F_{y}))\cap \mathfrak{J}_{y})\mathfrak{J}_{y}^{q-1}$ with $\{u_{x,y},t_{y}\}$ at the position corresponding to the point at infinity for some $x \in y$, and trivial everywhere else. This is independent of the choice of $x \in y$: the value of the tame pairing depends on the valuation of $u_{x,y}$, which here is related to the degree of the polynomial $u_{x,y}$ when written as a polynomial in a fixed variable $u$. So the non-trivial case is when $u_{x,y}$ has degree greater than one, which coincides with the case $k_{y}(x) \neq k_{y}$.

Let $v_{x,y}$ be the linear factor of $u_{x,y}$ corresponding to the valuation on $F_{x,y}$. Now, as discussed in the above proof, in the $x$-position the element $u_{x,y} = N_{k_{y}(x)/k_{y}}(v_{x,y})$ differs from the factor $v_{x,y}$ by a $(q-1)^{th}$-power. So again, modulo the power of $(q-1)$, the pairings have the same value in this position, and hence also when shifted to the point at infinity.

We pair the parameter $t_{y}$ with the element of $\mathfrak{J}_{y}/(\Delta(K_{2}^{top}(F_{y}))\cap \mathfrak{J}_{y})\mathfrak{J}_{y}^{q-1}$ with $\{\zeta, u_{x,y}\}$ at the point at infinity for some $x \in y$ and trivial everywhere else. As above, this is independent of our choice of $x \in y$.

Finally, for each $x \in y$, we pair the parameter $u_{x,y}$ with the element of $\mathfrak{J}_{y}/(\Delta(K_{2}^{top}(F_{y}))\cap \mathfrak{J}_{y})\mathfrak{J}_{y}^{q-1}$ with $\{\zeta, t_{y}\}$ in the $x$-position and trivial everywhere else.

For a singular curve $y \subset X$, we may use the above construction for each irreducible component $y_{i}$ of $y$ which will induce a non-degenerate pairing on the groups $F_{y}^{\times}$ and $\mathfrak{J}_{y}/(\Delta(K_{2}^{top}(F_{y}))\cap\mathfrak{J}_{y})\mathfrak{J}_{y}^{q-1}$, both direct sums over $y_{i} \subset y$, where the case for $y_{i}$ again follows as the fields involved depend only on the normalisation of the curves.

\bigskip
We will now construct the reciprocity map for a product of complete discrete valuation fields over a global field, associated to a curve on an arithmetic surface.  Our method uses only basic Galois theory and decomposition groups.  Let $L/F_{y}$ be a finite Galois extension with Galois group $G$ - then $L$ will also be a product of complete discrete valuation fields over a global field. The extension of residue fields $\bar{L}/k(y)$ determines a finite morphism of curves $\pi: y' \to y$, where $y' = y \times_{F} L$ and  $k(y') = \bar{L}$. For each point $x \in y$, we have the decomposition
\[L \otimes_{F_{y}} F_{x,y} = \oplus_{x' \in y', \pi(x') = x}L_{x', y'}\]
where the $L_{x',y'}$ are products of two-dimensional local fields. Each term in the product is a finite extensions of $F_{x,z}$, where $z$ is a branch of $y$ passing through $x$.   \\

\noindent For each $x' \in y'$ with $\pi(x')=x$, define the decomposition group
\[G_{x'}=\{g \in G: g(x')=x'\}.\]
For another $x''$ such that $\pi(x'')=x$, the groups $G_{x'}$ and $G_{x''}$ are conjugate in $G$. We have $G_{x'} \cong$ Gal$(L_{x',y'}/F_{x,y})$.\\
Now let $L/F_{y}$ be an abelian extension. Then the homomorphism 
\[\text{Gal}(L_{x',y'}/F_{x,y}) \cong G_{x'} \to G = \text{Gal}(L/F_{y})\]
is independent of the choice of $x'$. \\
So the product of the higher tame and Witt symbols
\[{\prod}'_{x \in y}K_{2}^{top}(F_{x,y}) \to \text{Gal}(F_{y}^{ab}/F_{y})\] is well-defined. 

In addition, we must define the unramified part of the reciprocity map. The unramified closure of the field $F$ is the field generated by $F$ and $\bar{\mathbb{F}}_{q}$, and its Galois group is canonically isomorphic to $\hat{\mathbb{Z}}$, generated by the Frobenius automorphism of $\bar{\mathbb{F}}_{q}$, Frob.

\begin{defn}\label{unrammap}
Let $\delta: K_{2}^{top}(F_{x,y}) \to K_{1}^{top}(\bar{F}_{x,y})$ be the boundary homomorphism of $K$-theory. We define the map
\[Un_{x,y}: K_{2}^{top}(F_{x,y}) \to \hat{\mathbb{Z}}\]
by 
\[\{\alpha,\beta\} \mapsto \mathrm{Frob}^{v_{\bar{F}_{x,y}}(\delta(\{\alpha,\beta\})},\]
where $v_{\bar{F}_{x,y}}$ is the valuation map of the local field $\bar{F}_{x,y}$.
\end{defn}

\noindent We define $Un_{y}$ to be the product of the $Un_{x,y}$. Note that this product is well-defined on the adelic group ${\prod}'_{x \in y}K_{2}^{top}(F_{x,y})$, as for all but finitely many $x \in y$, the component $\{\alpha_{x,y},\beta_{x,y}\}$ is in $K_{2}^{top}(\mathcal{O}_{x,y})$ and the value of $\delta(\{\alpha_{x,y},\beta_{x,y}\})$ is $1$. 

The unramified part of the map also obeys the reciprocity law: it follows straight from the reciprocity law for $k(y)$. So we may define the product of all three maps
\[{\prod}'_{x \in y}K_{2}^{top}(F_{x,y}) \to \text{Gal}(F_{y}^{ab}/F_{y}).\]

\noindent Define
\[\psi_{y}: {\prod}'_{x \in y} K_{2}^{top}(F_{x,y}) \to \text{Gal}(L/F_{y})
\]
as the product of the $\phi_{x,y}(L)$.
\begin{lemma}\label{ctshomocurves}
Let $L/F_{y}$ be a finite abelian extension. Then for almost all $x \in y$, we have $\phi_{x, y}(L)=1$, and hence $\phi_{y}$ is a continuous homomorphism. \end{lemma}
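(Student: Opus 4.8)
The plan is to show that for all but finitely many $x\in y$ the local map $\phi_{x,y}(L)\colon K_2^{top}(F_{x,y})\to\mathrm{Gal}(L/F_y)$ is trivial on the integral subgroup $K_2^{top}(\mathcal{O}_{x,y})$; granting this, the product $\psi_y=\prod_x\phi_{x,y}(L)$ over the restricted product ${\prod}'_{x\in y}K_2^{top}(F_{x,y})$ is a finite product at every element, hence a well-defined homomorphism, and — since $\mathrm{Gal}(L/F_y)$ is finite and discrete — it is continuous once its kernel is seen to be open; passing to the inverse limit over all finite abelian $L$ then gives that $\phi_y$ is a continuous homomorphism. Recall from the construction in Theorem~\ref{localcft} that $\phi_{x,y}(L)$ is the composite of the local reciprocity map $\phi_{F_{x,y}}$ with the projection $\mathrm{Gal}(F_{x,y}^{ab}/F_{x,y})\twoheadrightarrow\mathrm{Gal}(L_{x',y'}/F_{x,y})=G_{x'}\hookrightarrow\mathrm{Gal}(L/F_y)$, and that $\phi_{F_{x,y}}$ is glued from an unramified part $Un_{x,y}$ (Definition~\ref{unrammap}), a tame part built from the higher tame symbol, and a wild part built from the Witt pairing. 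So it suffices to check that each of these three contributions to $\phi_{x,y}(L)$ vanishes on $K_2^{top}(\mathcal{O}_{x,y})$ — the first two for every $x$, the wild one for all but finitely many $x$.

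First I would dispose of the unramified and tame parts uniformly in $x$. If $\{\alpha,\beta\}\in K_2^{top}(\mathcal{O}_{x,y})$ then $\alpha,\beta$ are units of $\mathcal{O}_{x,y}$, so their images in $\mathcal{O}_{F_{x,y}}$ are $v_y$-units; hence $\delta(\{\alpha,\beta\})=1$ and $Un_{x,y}(\{\alpha,\beta\})=\mathrm{Frob}^0=1$. For the tame part, $\mathcal{O}_{x,y}^\times=k_y(x)^\times\cdot(1+\mathfrak{m}_{x,y})$, and since $K_2$ of the finite field $k_y(x)$ vanishes, $K_2^{top}(\mathcal{O}_{x,y})$ is topologically generated by symbols having at least one entry a principal unit; the higher tame symbol $(\,\cdot\,,\,\cdot\,,\,\cdot\,)_{x,y}$ is trivial whenever one argument is a principal unit, so the homomorphism $\gamma\mapsto(\{\alpha,\beta\},\gamma)_{x,y}$ is zero and the tame contribution of $\phi_{x,y}(L)$ is trivial on $K_2^{top}(\mathcal{O}_{x,y})$.

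The heart of the argument is the wild part, and this is where the finite exceptional set appears. The $p$-part of $L/F_y$ is finite abelian, so by Artin--Schreier--Witt theory it is cut out by finitely many Witt vectors $h^{(1)},\dots,h^{(r)}\in W_m(F_y)$, with $m$ chosen so that $p^m$ annihilates the $p$-part of $\mathrm{Gal}(L/F_y)$. Using Lemma~\ref{structuremodfrob} I would replace each $h^{(i)}$ by its representative $\sum_{k<0}h^{(i)}_k t_y^k$ (a finite sum, $h^{(i)}_k\in W_m(k(y))$) modulo $(\mathrm{Frob}-1)W_m(F_y)+W_m(k(y))$, which changes neither the extension nor the value of the symbol. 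Each $h^{(i)}_k$ has finitely many poles on the curve $y$; let $S\subset y$ be the finite union of all these pole sets. For $x\notin S$ the components of every $h^{(i)}_k$ lie in the local ring at $x$, so a characteristic-zero lift of the auxiliary coordinates $h^{(i)}(j)$ may be taken with no negative powers of $u_{x,y}$. On the other hand, for $f,g\in\mathcal{O}_{x,y}^\times$ the form $\tfrac{df}{f}\wedge\tfrac{dg}{g}$ has integral coefficients: by the computation in the proof of Lemma~\ref{residueproperties}, part~1, $\Omega^1_{\mathcal{O}_{x,y}/\mathbb{F}_q}$ is generated over $\mathcal{O}_{x,y}$ by $du_{x,y}$ and $dt_y$, so $\tfrac{df}{f}\wedge\tfrac{dg}{g}=\omega_0\,du_{x,y}\wedge dt_y$ with $\omega_0\in\mathcal{O}_{x,y}$, again with no negative powers of $u_{x,y}$. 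Hence $h^{(i)}(j)\,\tfrac{df}{f}\wedge\tfrac{dg}{g}$ carries no $u_{x,y}^{-1}t_y^{-1}\,du_{x,y}\wedge dt_y$ term, its residue is zero, and $(\{f,g\}\mid h^{(i)}]_{x,y}=0$ for every $i$ and every $\{f,g\}\in K_2^{top}(\mathcal{O}_{x,y})$. Thus the wild contribution of $\phi_{x,y}(L)$ vanishes on $K_2^{top}(\mathcal{O}_{x,y})$ for all $x\notin S$.

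Combining the three steps gives $\phi_{x,y}(L)|_{K_2^{top}(\mathcal{O}_{x,y})}=1$ for every $x\notin S$, which is the assertion of the lemma; consequently $\psi_y$ is a well-defined homomorphism on ${\prod}'_{x\in y}K_2^{top}(F_{x,y})$. For continuity, each $\phi_{x,y}(L)$ has open kernel by Theorem~\ref{localcft} (the kernel contains the open norm subgroup $N_{L_{x',y'}/F_{x,y}}K_2^{top}(L_{x',y'})$), so $\ker\psi_y$ contains the open subgroup $\big(\prod_{x\in S}\ker\phi_{x,y}(L)\big)\times\prod_{x\notin S}K_2^{top}(\mathcal{O}_{x,y})$ of the restricted product; taking the limit over finite abelian $L$ yields continuity of $\phi_y$. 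The main obstacle is the wild part: one must (a) choose the right representatives of the defining Witt vectors — precisely what Lemma~\ref{structuremodfrob} together with the finiteness of poles on a curve provides — and (b) carry out the residue-integrality check carefully, verifying that outside the finite bad set both the lifted auxiliary coordinates $h^{(i)}(j)$ and the differential $\tfrac{df}{f}\wedge\tfrac{dg}{g}$ are integral in the $u_{x,y}$-direction so that the coefficient computing the residue must vanish, all while keeping track, as always with the Witt pairing, that the final denominator-free expression is unaffected by passing back down from characteristic zero.
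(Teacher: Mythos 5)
Your unramified and tame computations are in the spirit of what is needed, but the central wild-part step contains a genuine error, and it sits exactly where the paper has to work hardest. You claim that for $f,g\in\mathcal{O}_{x,y}^{\times}$ the form $\tfrac{df}{f}\wedge\tfrac{dg}{g}$ is integral ``in the $u_{x,y}$-direction'', citing Lemma \ref{residueproperties}(1). But the ring that lemma (and the paper's use of $K_{2}^{top}(\mathcal{O}_{x,y})$ as the kernel of the boundary map $\delta$) actually works with is $\mathcal{O}_{F_{x,y}}\cong k_{y}(x)((u_{x,y}))[[t_{y}]]$: its elements have unbounded negative powers of $u_{x,y}$ (in the proof of Lemma \ref{residueproperties}(1) the expansion is $\sum_{i\geq I,\,j\geq 0}a_{i,j}t_{1}^{i}t_{2}^{j}$ with $I$ possibly negative), and the residue vanishes there only because the $t$-exponent is nonnegative --- which is precisely what your representative $h=\sum_{k<0}h_{k}t_{y}^{k}$ destroys. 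Concretely, $\{u_{x,y},1+t_{y}\}$ lies in $\ker\delta=K_{2}^{top}(\mathcal{O}_{x,y})$, and for an Artin--Schreier datum $\alpha=c\,t_{y}^{-1}$ with $c\in k(y)$ one computes $(\{u_{x,y},1+t_{y}\}\mid c\,t_{y}^{-1}]_{x,y}=\mathrm{Tr}_{k_{y}(x)/\mathbb{F}_{p}}(c(x))$ up to sign, which is nonzero at infinitely many $x$ outside the pole set of $c$ for suitable nonconstant $c$. So the wild contribution does not die on $K_{2}^{top}(\mathcal{O}_{x,y})$ off a finite set, and your exceptional set $S$ cannot be taken finite. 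The same ambiguity already infects your tame step: under this reading $\mathcal{O}_{x,y}^{\times}$ is not $k_{y}(x)^{\times}(1+\mathfrak{m}_{x,y})$ (it contains $u_{x,y}$), and $\{\zeta,u_{x,y}\}$ pairs nontrivially with $\delta=t_{y}$ at every $x$. If you instead read $\mathcal{O}_{x,y}$ as a two-dimensional ring in which $u_{x,y}$ is not a unit, your three triviality statements become true, but then the second half of your argument collapses, as follows.

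The bridge ``trivial on $K_{2}^{top}(\mathcal{O}_{x,y})$ for almost all $x$ $\Rightarrow$ finite product on every element of ${\prod}'_{x\in y}K_{2}^{top}(F_{x,y})$'' does not match this paper's restricted product, which is cut out by the coefficient-wise adelic conditions ($\mathbb{A}_{y}^{r}$, $\mathbb{A}_{k(y)}$), not by integrality of components at almost all $x$: for instance the diagonal symbols $\{c,t_{y}\}$ and the idele-lift symbols $\{\gamma,t_{y}\}$ (the $\Gamma$-part in the proof of Lemma \ref{wittwelldefined}) lie in the domain and are non-integral at every $x\in y$, and for them the finiteness of nonzero local values is the one-dimensional statement that a fixed meromorphic differential on $y$ has only finitely many nonzero residues --- an argument your proposal never invokes. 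This is why the paper's own proof takes a different route: it reduces a finite abelian $L/F_{y}$ to the three cyclic types (Artin--Schreier, Kummer, unramified) and then quotes the already-proved global well-definedness Lemmas \ref{wittwelldefined} and \ref{tamewlldefined} (plus the remark after Definition \ref{unrammap}), which exploit exactly this adelic structure. To repair your argument you would have to replace the pointwise-integrality claim for the wild part by the analysis in Lemma \ref{wittwelldefined} (splitting off the $\Gamma$-part and reducing to residues of differentials on the curve), at which point you are essentially reproducing the paper's proof.
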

\begin{proof}
By \cite{P4} section four, it is sufficient to prove the lemma in the three cases $L =F_{y}(\gamma)$, $L/F_{y}$ an Artin-Schreier extension with $\gamma^{p} - \gamma = \alpha$ for some $\alpha \in F_{y}$, $L = F_{y}(\beta)$ is a Kummer extension where $\beta^{l} = \delta$ for some $l|q-1$ and $\delta \in F_{y}$, and $L/F_{y}$ is unramified.

This is sufficient as the abelian closure, $F_{y}^{ab}/F_{y}$ is generated by the maximal unramified extension, the maximal ramified and prime to $p$ extension, and the maximal $p$-extension. These three types of extension are disjoint, except for the unramified $p$-extension, where the maps are compatible.

For the first case, the local residue symbol is described by the relation
\[\phi_{x,y}(w_{x, y})(z) = (w_{x,y}|\alpha]_{x,y}(z)
\]
 for $w_{x,y} \in K_{2}^{top}(F_{x,y})$ and we know this is zero for almost all $x \in y$ from lemma \ref{wittwelldefined} above.

For the Kummer extension, the local residue symbol is described by the relation
\[\phi_{x,y}(w_{x,y})(z) = (w_{x,y}, \delta)_{x,y}\]
and similarly we know this is trivial for almost all $x \in y$ by lemma \ref{tamewlldefined}.
The comment below definition \ref{unrammap} proves the lemma in the unramified case.

The continuity of the reciprocity map follows, as the preimage of any open subgroup of Gal$(F_{y}^{ab}/F_{y})$ has only finitely many non-zero elements of $J_{y}$. But from the definition of the topology, this is exactly what is required in the direct sum and product topology.
\end{proof}

From \cite{P4} section four, we see that these maps are compatible for different abelian extensions $L/F_{y}$, so we have a continuous homomorphism 
\[\phi_{y} : {\prod}'_{x \in y}K_{2}^{top}(F_{x,y}) \to \text{Gal}(F_{y}^{ab}/F_{y}).
\]

We now prove the main theorem of this section. Recall that the restricted product of the groups $K_{2}^{top}(\mathcal{O}_{x,y})$ is given by the intersection of the product with the adelic group $\mathbb{A}_{X}$.
\begin{thm}\label{cftcurves}
Let $X/\mathbb{F}_{q}$ be a regular projective surface and $y \subset X$ an irreducible curve. Then the continuous map
\[\phi_{y} : {\prod}'_{x \in y}K_{2}^{top}(\mathcal{O}_{x,y}) \to \text{Gal}(F_{y}^{ab}/F_{y}) 
\]
is injective with dense image and satisfies:
\begin{enumerate}
\item{$\phi_{y}$ depends only on $F_{y}$, not on the choice of model of $X$;}
\item{For any finite abelian extension $L/F_{y}$, the following sequence is exact:
\[\begin{CD}@.
\frac{\prod'_{x' \in y', \pi(x')=x}K_{2}^{top}(L_{x'})}{\Delta(K_{2}^{top}(L))\cap \prod'_{x' \in y', \pi(x')=x}K_{2}^{top}(L_{x'})} @>N>>   @. \\
@. \mathcal{J}_{y}/\Delta(K_{2}^{top}(F_{y}))\cap \mathcal{J}_{y} @>\phi_{y}>> 
@. \text{Gal}(L/F_{y})@>>> 0.
\end{CD}
\]}
\item{For any finite separable extension $L/F_{y}$, the following diagrams commute:
\[\begin{CD}
\mathcal{J}_{L}/\Delta(K_{2}^{top}(L)) @>\phi_{L}>> \text{Gal}(L^{ab}/L)\\
@AAA @A V AA\\
\mathcal{J}_{y}/\Delta(K_{2}^{top}(F_{y})) @>\phi_{y}>> \text{Gal}(F_{y}^{ab}/F_{y})\\
\end{CD}\]
where  $V$ is the group transfer map, and

\[\begin{CD}
\mathcal{J}_{L}/\Delta(K_{2}^{top}(L)) @>\phi_{L}>> \text{Gal}(L^{ab}/L)\\
@V N VV @VVV\\
\mathcal{J}_{y}/\Delta(K_{2}^{top}(F_{y})) @>\phi_{y}>> \text{Gal}(F_{y}^{ab}/F_{y}).\\
\end{CD}\]
}

\end{enumerate}
\end{thm}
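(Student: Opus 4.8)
The plan is to assemble the statement from the duality theorems \ref{mainthmcurves} and \ref{tamecurvethm} together with the local class field theory of Parshin (Theorem \ref{localcft}) and the reciprocity laws \ref{reciprocitylaw} and \ref{tamereciprocity}, exactly as sketched in the introduction. First I would observe that the map $\phi_y$ has already been constructed above as the product of three pieces: the unramified map $Un_y$ (Definition \ref{unrammap}), the tame map coming from the higher tame pairing, and the wild map coming from the Witt pairing. By Lemma \ref{ctshomocurves} this is a continuous homomorphism on $\prod'_{x\in y}K_2^{top}(F_{x,y})$, and the reciprocity laws guarantee it kills the diagonal $\Delta(K_2^{top}(F_y))$, so it descends to $\mathcal{J}_y/\Delta(K_2^{top}(F_y))\cap\mathcal{J}_y$. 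Restricting to the subgroup $\prod'_{x\in y}K_2^{top}(\mathcal{O}_{x,y})$ is harmless for the surjectivity-type statements because, by the structure discussion in Section \ref{global1}, $\mathcal{J}_y$ is generated modulo the diagonal by elements with entries in $K_2^{top}(\mathcal{O}_{x,y})$ (the valuation/unramified directions being absorbed by reciprocity and the $K_0$-part).

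The core of the argument is injectivity. I would decompose $\mathrm{Gal}(F_y^{ab}/F_y)$ into its unramified part (isomorphic to $\hat{\mathbb{Z}}$), its tame prime-to-$p$ part, and its wild $p$-part, which are disjoint apart from the unramified $p$-extension where the maps are compatible (this disjointness is exactly what is invoked in the proof of Lemma \ref{ctshomocurves}). Correspondingly $\mathcal{J}_y$ modulo the diagonal decomposes, and on each piece injectivity is precisely a duality statement: on the wild part it is Theorem \ref{mainthmcurves}, which says the Witt pairing induces an isomorphism
\[
\frac{J_y}{(K_2^{top}(F_y)\cap J_y)J_y^{p^m}}\;\xrightarrow{\ \sim\ }\;\mathrm{Hom}\!\left(\frac{W_m(F_y)}{(\mathrm{Frob}-1)W_m(F_y)+W_m(k(y))},\,\mathbb{Z}/p^m\mathbb{Z}\right),
\]
and, passing to the limit over $m$ and using Artin--Schreier--Witt theory $\mathrm{Gal}(F_y^{ab,p}/F_y)\cong\mathrm{Hom}(W(F_y)/(\mathrm{Frob}-1)W(F_y),\mathbb{Q}_p/\mathbb{Z}_p)$, this identifies the wild part of $\phi_y$ as an injection with dense image; on the tame part it is Theorem \ref{tamecurvethm} combined with Kummer theory $\mathrm{Gal}(F_y^{tame}/F_y\cdot F_y^{unram})\cong\mathrm{Hom}(F_y^\times/(F_y^\times)^{q-1},\mathbb{Z}/(q-1)\mathbb{Z})$; and on the unramified part it follows from the fact that $\delta_X$ followed by the valuation recovers the degree map on the one-dimensional residue field, whose class field theory is classical. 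Density of the image is immediate from the three surjectivity (onto the Hom-groups) statements, since a continuous homomorphism to a profinite group with dense image and the stated exactness gives all finite quotients.

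For the exactness in (2), for a finite abelian $L/F_y$ I would use the decomposition $L\otimes_{F_y}F_{x,y}=\oplus_{\pi(x')=x}L_{x',y'}$ recorded above, the compatibility $G_{x'}\cong\mathrm{Gal}(L_{x',y'}/F_{x,y})\hookrightarrow\mathrm{Gal}(L/F_y)$, and the local norm-exactness of Parshin (Theorem \ref{localcft}(ii)) at each place, then glue along $y$: the kernel of $\phi_y$ into $\mathrm{Gal}(L/F_y)$ is generated by the local norms $N K_2^{top}(L_{x'})$ modulo the diagonal, which is the content of the displayed sequence; surjectivity onto $\mathrm{Gal}(L/F_y)$ comes from the density just established together with finiteness of the quotient. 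The commutative diagrams in (3) for the transfer $V$ and the norm $N$ reduce place-by-place to the corresponding local diagrams in Theorem \ref{localcft}(iii), since $\phi_y$, $\phi_L$ are defined as products of local reciprocity maps and the transfer/norm are compatible with the decomposition of $\mathcal{J}_y$, $\mathcal{J}_L$ over points of $y$, $y'$; one then sums. Finally, independence of the model in (1) is Proposition \ref{independenceofmodel}: $\mathcal{J}_y$ and $\Delta(K_2^{top}(F_y))$ depend only on $F_y$, and all three constituent pairings are defined intrinsically from $F_y$ and its completions.

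I expect the main obstacle to be the bookkeeping in (2)--(3): namely, verifying that the \emph{global} norm map on the adelic $K$-groups is compatible with the \emph{product} of local norm maps under the decomposition $L\otimes_{F_y}F_{x,y}=\oplus_{x'}L_{x',y'}$, uniformly over all $x\in y$ including the finitely many ramified and singular points, so that the local exact sequences of Theorem \ref{localcft}(ii) really do splice into the single global exact sequence displayed in the theorem. The delicate point is controlling the behaviour at the finitely many exceptional places (ramified points, and points where $k_y(x)\neq k_y$) while the cofinitely many unramified places contribute trivially; this is where the restricted-product structure of $\mathcal{J}_y$ and the well-definedness lemmas \ref{wittwelldefined} and \ref{tamewlldefined} do the real work, and where care is needed to ensure the norm and transfer diagrams commute on the nose rather than merely up to the diagonal.
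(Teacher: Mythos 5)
Your overall strategy --- building $\phi_y$ from the unramified, tame and wild pieces, deducing injectivity and density from the duality theorems \ref{mainthmcurves} and \ref{tamecurvethm} via Artin--Schreier--Witt and Kummer theory, handling the unramified part through the boundary map and the classical class field theory of the residue field $k(y)$, getting (1) from Proposition \ref{independenceofmodel} and (3) by reduction to Parshin's local diagrams plus the reciprocity laws --- is essentially the paper's proof. One smaller omission: in the wild part you ``pass to the limit over $m$'' and declare the result injective, but an isomorphism at each finite level $J_y/(\Delta(K_2^{top}(F_y))\cap J_y)J_y^{p^m}\cong\cdots$ only yields injectivity of the map from $J_y/(\Delta(K_2^{top}(F_y))\cap J_y)$ once one checks $\bigcap_m(\Delta(K_2^{top}(F_y))\cap J_y)J_y^{p^m}=\Delta(K_2^{top}(F_y))\cap J_y$; the paper proves this from Parshin's local fact $\bigcap_m K_2^{top}(\mathcal{O}_{x,y},\mathfrak{p}_{x,y})^{p^m}=\{1\}$ applied to the adelic product, and this step needs to appear.

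The genuine gap is in (2). ``Gluing the local exact sequences along $y$'' does not work as stated: the global map into $\mathrm{Gal}(L/F_y)$ is a product of local reciprocity maps, so an adelic class can lie in its kernel through cancellation between different places without any local component lying in the corresponding local norm group; the global kernel is not spliced together from the local kernels, and it is exactly the quotient by the diagonal that makes the statement true. The paper's argument at this point is different: it places the claimed sequence over the exact row $\mathrm{Gal}(L^{ab}/L)\to\mathrm{Gal}(F_y^{ab}/F_y)\to\mathrm{Gal}(L/F_y)\to 0$, and uses the already-established density of the images of $\phi_L$ and $\phi_y$, the norm-compatibility diagram from (3), the injectivity of $\phi_y$, and the fact that the image of the norm map $N$ is closed, to conclude that any class killed in $\mathrm{Gal}(L/F_y)$ lies in the image of $N$ modulo the diagonal. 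Your closing paragraph locates the difficulty in norm compatibility at the finitely many bad places, but that is not the real obstruction; the missing idea is this topological diagram chase (dense images plus closed norm image), which is where the duality theorems get used a second time.
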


\begin{proof}
By propositions \ref{reciprocitylaw} and \ref{tamereciprocity}, we know $\phi_{y}(K_{2}^{top}(F_{y}))$ is trivial in the absolute abelian Galois group. We again separate into the three cases of an Artin-Schreier extension, a Kummer extension, and an unramified extension.

For the unramified extension, the commutative diagram
\[\begin{CD}
\mathcal{J}_{y}/\Delta(K_{2}^{top}(F_{y})) @>\delta>> {\prod}'_{x \in y}k(y)_{x}^{\times}/k(y)^{\times} @>>> 0\\
@V\phi_{y}VV @V\phi_{k(y)}VV @.\\
\text{Gal}(F_{y}^{ab}/F_{y}) @>>> \text{Gal}(k(y)^{ab}/k(y)) @>>> 0 
\end{CD}
\]
and the fact that the right vertical map is injective with dense image show the left map is injective and has dense image.

Artin-Schreier-Witt duality and theorem \ref{mainthmcurves} induce the isomorphism
\[ J_{y}/(\Delta(K_{2}^{top}(F_{y}))\cap J_{y})J_{y}^{p^{m}} \to G^{wr}/(G^{wr})^{p^{m}}
\]
and passing to the projective limit gives the decomposition
\[J_{y}/(\Delta(K_{2}^{top}(F_{y}))\cap J_{y}) \to \varprojlim J_{y}/(\Delta(K_{2}^{top}(F_{y}))\cap J_{y})J_{y}^{p^{m}} \cong G^{wr}
\]
and hence the wildly ramified part of $\phi_{y}$ has dense image.\\
To show $\phi_{y}$ is injective, we must show
\[\cap_{m}(\Delta(K_{2}^{top}(F_{y}))\cap J_{y})J_{y}^{p^{m}} = \Delta(K_{2}^{top}(F_{y}))\cap J_{y}.
\] 
Now, for each $x \in y$ we have $\cap_{m}K_{2}^{top}(\mathcal{O}_{x, y}, \mathfrak{p}_{x, y})^{p^{m}} = \{ 1 \}$ - see \cite[Section 2, Lemma 3]{P4} - and hence this is true for the adelic product also. Hence the above equality holds, and so the wildly ramified part of the map in the projective limit is injective.

We now study the tamely ramified part of the reciprocity map. Kummer duality and theorem \ref{tamecurvethm} induce the isomorphism
\[\mathfrak{J}_{y}/(\Delta(K_{2}^{top}(F_{y}))\cap\mathfrak{J}_{y})\mathfrak{J}_{y}^{q-1} \to G^{tr}\]
showing that this part of the map is injective with dense image also.

Finally, noting that $({\prod}'_{x \in y}K_{2}^{top}(\mathcal{O}_{x,y}))/K_{2}^{top}(F_{y})\cap{\prod}'_{x \in y}K_{2}^{top}(\mathcal{O}_{x,y}) = J_{y} \times \mathfrak{J}_{y}$ and that the Galois group the Witt and Kummer dualities generate is isomorphic to Gal$(F_{y}^{ab}/F_{y})/\text{Gal}(F_{y}^{unram}/F_{y})$, we see that the whole reciprocity map is injective and has dense image.

For the remaining properties, 1 follows from theorem \ref{independenceofmodel}. For 2, consider the commutative diagram with exact lower row
\[
\begin{CD}
\frac{{\prod}'K_{2}^{top}(L_{x'})}{\Delta(K_{2}^{top}(L))\cap {\prod}'K_{2}^{top}(L_{x'})} @>N>> \frac{{\prod}'_{x \in y}K_{2}^{top}(F_{x,y})} {\Delta(K_{2}^{top}(F_{y}))\cap {\prod}'_{x \in y}K_{2}^{top}(F_{x,y})} @>\phi_{y}>> \text{Gal}(L/F_{y}) @>>> 0 \\
@V\phi_{L}VV  @V\phi_{y}VV  @||| @. \\
\text{Gal}(L^{ab}/L) @>>> \text{Gal}(F_{y}^{ab}/F_{y}) @>>> \text{Gal}(L/F_{y}) @>>> 0 . \end{CD}
\]
The exactness of the upper row follows from the fact that the image of the norm map $N$ is closed \cite[section 6]{IHLF} and that the images of the first two vertical maps are dense. 

The commutative diagrams follow straight from the corresponding local properties - see \cite{P4} - without the factorisation by the diagonal elements, and then the reciprocity laws from \ref{reciprocitylaw} prove them with the factorisation.
\end{proof}

\section{Arithmetic Two-Dimensional Local Rings}
\setcounter{subsection}{1}\subsection*{}\setcounter{thm}{0}
We will now fix a point $x \in X$ and study a ring of the type $F_{x}$ described in definition \ref{fieldsdefs} part 3. As in the preceding section, we will first study the Witt pairing for the wildly ramified part of the class field theory, then the higher tame pairing for the tamely ramified part.
\begin{defn}
Define $J_{x}:= \prod_{y \ni x} K_{2}^{top}(\mathcal{O}_{x,y}, \mathfrak{m}_{x,y})$.
\end{defn}

\noindent This group will be related to the Witt symbol, and is the analogue of $J_{y}$ in the preceding section. Note that the product is finite, so need not be a restricted product.

\medskip
 \noindent Firstly we will consider the case where our surface $X$ satisfies condition $\dagger$:
 
 \medskip
 $X$ has only strictly normal crossings, i.e. all intersections are transversal and $k_{y}(x)=k(x)$ for all $y \ni x$. 
 
 \medskip
\noindent We let the point $x$ lie on two curves, defined by parameters $u$ and $t$, so that the two dimensional local fields associated to $x$ are:
\[F_{u,t} : = k(x)((u))((t)) \text{  and  } F_{t,u}:=k(x)((t))((u)).\]
We aim to prove the following theorem:
\begin{thm}\label{mainthmpoints}
Fix a point $x \in X$. Then the pairing
\[\frac{J_{x}}{(\Delta(K_{2}^{top}(F_{x}))\cap J_{x})J_{x}^{p^{m}}} \times \frac{W_{m}(F_{x})}{(\mathrm{Frob} - 1)W_{m}(F_{x})} \to \mathbb{Z}/p^{m}\mathbb{Z}\]
is continuous and non-degenerate for each $m \in \mathbb{N}$. The induced homomorphism from $J_{x}/(\Delta(K_{2}^{top}(F_{x}))\cap J_{x})J_{x}^{p^{m}}$ to 
\[\mathrm{Hom}\left(\frac{W_{m}(F_{x})}{(\mathrm{Frob} - 1)W_{m}(F_{x})}, \mathbb{Z}/p^{m}\mathbb{Z}\right)\]
is a topological isomorphism. \end{thm}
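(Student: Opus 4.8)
The plan is to mirror, step for step, the proof of Theorem~\ref{mainthmcurves}, with the one-dimensional global residue field $k(y)$ replaced throughout by the two-dimensional ring $F_x$. As there, I would induct on $m$: the passage from $m$ to $m+1$ is formally identical to the curve case --- one builds the required element out of its $m$-truncation using the Witt shift $V$ and property~\emph{iv} of Proposition~\ref{wittproperties} --- so the entire content sits in the base case $m=1$. For $m=1$ the Witt pairing is symbolic (Proposition~\ref{wittproperties},~\emph{iii}), hence factors through a pairing $J_x/((\Delta(K_2^{top}(F_x))\cap J_x)J_x^{p})\times F_x/(\mathrm{Frob}-1)F_x\to\mathbb{Z}/p\mathbb{Z}$, and it is this that I would analyse.

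First I would pin down the two sides. Applying Theorem~\ref{topkgpstructure} to each of the two local fields $F_{u,t}=k(x)((u))((t))$ and $F_{t,u}=k(x)((t))((u))$ attached to $x$, the group $J_x$ is the product of the two relative groups $K_2^{top}(\mathcal{O}_{x,y},\mathfrak{m}_{x,y})$ over the two curves $y\ni x$, each a copy of $\mathcal{E}^{(1)}\times\mathcal{E}^{(2)}$; so every element of $J_x$ has a unique convergent expansion into symbols $\{1+\phi\,u^it^j,u\}$ and $\{1+\psi\,u^it^j,t\}$ with the standard congruence restrictions on $(i,j)$ mod $p$ --- the exact analogue of Lemma~\ref{relativekgpcurves}, now with coefficients $\phi,\psi$ in $k(x)$ rather than in an adele ring. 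On the other side, the crucial input is a normal form for $F_x/(\mathrm{Frob}-1)F_x$: every class should admit a representative that is a \emph{finite} sum $\sum a_{i,j}u^it^j$ supported on monomials with $i<0$ or $j<0$ (together with a contribution from the constant field $k(x)$ controlled by the Artin--Schreier theory of $k(x)$), with $a_{i,j}$ taken from a fixed set of representatives of $k(x)/k(x)^p$ when $p\mid i$ and $p\mid j$. This is the two-variable analogue of Lemma~\ref{structuremodfrob}: the part of a series lying in the ``convergence region'' of $F_x$ is killed by $\mathrm{Frob}-1$ via the expansion $\alpha\mapsto-(\alpha+\alpha^p+\alpha^{p^2}+\cdots)$, and the $p$-th power ambiguity in the $p$-divisible exponents is removed coordinate by coordinate; the extra point to verify is that each such expansion stays inside $F_x$, i.e. introduces only globally defined poles, which is immediate from Definition~\ref{fieldsdefs}.

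Next I would compute the pairing on these generators and read off non-degeneracy. By Lemma~\ref{residueproperties} and the residue formula for $(\ |\ ]_{x,y}$, pairing $\{1+\phi\,u^it^j,t\}$ against $u^at^b$ extracts, up to a trace to $\mathbb{F}_p$, the $u^{-1}t^{-1}$-coefficient of $u^at^b\cdot d(\phi u^it^j)\wedge\frac{dt}{t}$, and symmetrically for the $u$-symbols; summing over the two branches $F_{u,t}$ and $F_{t,u}$ gives the global value. This turns non-degeneracy into the linear-algebra statement that each normal-form monomial detects exactly one family of generators, handled by the same combinatorial bookkeeping as in Lemmas~\ref{firstcalculation}, \ref{dualityone} and \ref{dualitytwo} --- only now the index set is a region in $\mathbb{Z}^2$ rather than the adeles of a curve, and condition~$\dagger$ (two transversal curves, $k_y(x)=k(x)$) keeps that region simple. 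In parallel I would identify $\Delta(K_2^{top}(F_x))\cap J_x$ by the method of Lemma~\ref{diagonalelements}: using explicit generators for $K_2^{top}(F_x)$ together with the $K$-group identities collected in the appendix, one writes the normal-form coefficients of a diagonal element explicitly, and then the reciprocity law~\ref{reciprocitylaw} shows these lie in the kernel of the (now known to be non-degenerate) left argument, while uniqueness of the decomposition gives the converse inclusion. Taking the projective limit over $m$ completes the non-degeneracy statement; the topological-isomorphism assertion follows since, after Parshin's expansion of the local pairings \cite{P4}, everything is polynomial in finitely many coefficients, so the induced maps and their inverses are continuous.

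The main obstacle is precisely this base-case bookkeeping carried out without the classical residue-duality black box. In the curve case Lemmas~\ref{dualityone}--\ref{dualitytwo} reduced non-degeneracy to the one-dimensional adelic pairing $\mathbb{A}_{k(y)}\times\mathbb{A}_{k(y)}(\Omega^1_y)\to k_y$ of \cite{We} and to its known annihilator $k(y)^\perp=\Omega^1_{k(y)}$; here no such input is quoted, so both the normal form for $F_x/(\mathrm{Frob}-1)F_x$ and the exact shape of $\Delta(K_2^{top}(F_x))\cap J_x$ must be obtained by a direct elimination argument inside $k(x)[[u,t]][u^{-1},t^{-1}]$, with constant care that every manipulation respects the globally-defined-poles constraint cutting out $F_x$. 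Condition~$\dagger$ is exactly what keeps this argument combinatorial and short; relaxing it --- so that $\hat{\mathcal{O}}_{X,x}$ is no longer regular, or more than two branches pass through $x$ --- is what forces the considerably longer treatment of the general point case outlined in the introduction.
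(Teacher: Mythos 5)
Your outline reproduces, essentially step for step, the paper's own treatment of the case in which the point satisfies condition $\dagger$: the structure of $J_x$ as a product of the two relative groups for $F_{u,t}$ and $F_{t,u}$ (Lemmas \ref{kgpstructurepoints}, \ref{relativekgppoints}), the normal form for $F_x/(\mathrm{Frob}-1)F_x$ modelled on Lemma \ref{structuremodfrob}, the residue computations and index-by-index duality (Lemmas \ref{firstcalcpoints}--\ref{dualitytwopoints}), and the induction on $m$ via the Witt shift exactly as in Theorem \ref{mainthmcurves}. For that case your plan is sound, and you are right that no analogue of the Weil residue-duality input is needed: everything is finite-dimensional linear algebra over $k(x)$.

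The genuine gap is that the theorem is stated for an arbitrary closed point $x\in X$, and your proposal stops at $\dagger$: in your last paragraph you explicitly defer the non-regular / many-branch situation to ``the considerably longer treatment outlined in the introduction'' without supplying it, whereas that treatment is a substantial half of the paper's proof of this very theorem. When more than two curves pass through $x$ (or the intersections are not transversal, or $k_y(x)\neq k(x)$), $J_x=\prod_{y\ni x}K_2^{top}(\mathcal{O}_{x,y},\mathfrak{m}_{x,y})$ runs over all height-one primes of $\hat{\mathcal{O}}_{X,x}$, each with its own local equation, and $J_x$ is no longer a product of two groups sharing a common pair of parameters; so your identification of generators and of $\Delta(K_2^{top}(F_x))\cap J_x$ does not apply as stated. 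The paper closes this by a separate degeneracy analysis: for a general element $\bigl(\{1+\beta_k\alpha_k^{i_k}\gamma_k^{j_k},\alpha_k\}\bigr)_k$ it pairs against monomials $\alpha_k^{-i_k}\gamma_k^{-j_k}\in F_x$ (using the Witt shift, property 7 of Proposition \ref{wittproperties}, when the relevant exponent is divisible by $p$) to kill any entry whose parameter pair occurs only once, and then invokes the $\dagger$-case computation to show that matching entries on the two fields attached to a common parameter pair force the element to be diagonal; it also reproves the right-hand normal form in this generality, namely that every class in $W_m(F_x)/(\mathrm{Frob}-1)W_m(F_x)$ is represented by sums $\beta\prod_k\alpha_k^{i_k}$ with some $i_k<0$, rather than the two-variable Laurent normal form you describe, which presupposes $\hat{\mathcal{O}}_{X,x}\cong k(x)[[u,t]]$ with exactly two distinguished branches. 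Without an argument of this kind your proof establishes the theorem only for points of strictly normal crossing type, not for the statement as given.
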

We will prove this theorem in case $\dagger$ and then prove we can always reduce to this case.\\
We begin with some lemmas on the structure of the $K$-groups similar to the lemmas in the preceding section.

\begin{lemma}\label{kgpstructurepoints}
Fix $x \in X$, and let $u,t$ generate the maximal ideal of $\hat{\mathcal{O}}_{X,x}$. Let $y$, $y'$ run through the local irreducible curves in a neighbourhood of $x$ such that Spec$(\mathcal{O}_{X,x}) \cap y$ (resp. $y'$) determines a curve in a neighbourhood of $x$ with equation $t_{y}$ (resp. $t_{y'}$).  Then $K_{2}^{top}(F_{x})$ is generated by symbols of the form:
\begin{enumerate}
\item{ $\{t_{y}, t_{y'}\}$}
\item{ $\{a, t_{y}\}$ with $ a \in k(x)^{\times}$}
\item{$ \{1+ a u^{i}t^{j}, t_{y}\}$ with $ a \in k(x)^{\times}$, $ i, j \geq 1$.}
\end{enumerate}\end{lemma}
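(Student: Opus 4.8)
The plan is to deduce this from the structure theorem for two-dimensional local fields, Theorem~\ref{topkgpstructure}, applied to the two fields attached to $x$ under condition $\dagger$, namely $F_{u,t}=k(x)((u))((t))$ and $F_{t,u}=k(x)((t))((u))$, together with the diagonal ring embedding $F_x\hookrightarrow F_{u,t}\times F_{t,u}$. Condition $\dagger$ is what makes the argument clean: it forces $\hat{\mathcal{O}}_{X,x}\cong k(x)[[u,t]]$ to be regular, it ensures that $x$ lies on exactly the two curves cut out by $u$ and $t$ so that these are (up to units) the only uniformisers that occur, and the equality $k_y(x)=k(x)$ means the constant terms appearing in the residue fields are genuine elements of $k(x)^{\times}$ rather than of a product of proper extensions.

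The first step is to pin down $F_x^{\times}$. Since $F_x$ is the localisation of $\hat{\mathcal{O}}_{X,x}$ obtained by adjoining $F$, a function in $F_x$ has poles and zeros supported, near $x$, only on the curves $V(u)$ and $V(t)$; hence every $f\in F_x^{\times}$ can be written $f=u^{i}t^{j}\varepsilon$ with $i,j\in\mathbb{Z}$ and $\varepsilon\in\hat{\mathcal{O}}_{X,x}^{\times}$. Using $\hat{\mathcal{O}}_{X,x}^{\times}=k(x)^{\times}\times(1+\mathfrak{m})$ and the fact that every principal unit is a convergent product of elements $1+au^{i}t^{j}$ with $a\in k(x)$ and $i+j\geq 1$, it follows that $K_2^{top}(F_x)$ is topologically generated by symbols built from $u$, $t$, elements of $k(x)^{\times}$, and the $1+au^{i}t^{j}$ with $i+j\geq 1$. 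Regularity of $\hat{\mathcal{O}}_{X,x}$ is exactly what allows one to peel off the $u^{i}t^{j}$ factor and stay inside $\hat{\mathcal{O}}_{X,x}^{\times}$.

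The second step is to expand an arbitrary symbol bilinearly and whittle the list of surviving generators down to the three stated families. The symbols $\{a,b\}$ with $a,b\in k(x)^{\times}$ vanish, being images of $K_2$ of the finite field $k(x)$; the symbols $\{u,t\}$, $\{a,u\}$, $\{a,t\}$ are types~1 and~2; and $\{1+au^{i}t^{j},u\}$, $\{1+au^{i}t^{j},t\}$ with $i,j\geq1$ are type~3. The remaining possibilities are symbols $\{1+au^{i}t^{j},1+bu^{k}t^{l}\}$ between two principal units, and symbols in which a principal-unit entry is a pure power of a single parameter; the former are reduced to the uniformisers by the same $K_2$-identities used in the proof of Theorem~\ref{topkgpstructure}, while the latter are handled using the Steinberg relation $\{1+au^{i},-au^{i}\}=1$ together with the explicit description of $K_2^{top}$ of the two factors $F_{u,t}$ and $F_{t,u}$ furnished by Theorem~\ref{topkgpstructure}, comparing images under the two diagonal components to see that no contribution survives outside the three families.

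The step I expect to be the main obstacle is precisely this reduction of the ``degenerate'' symbols (a principal-unit entry that is a pure power of $u$ or of $t$): one must carry out the relevant $K_2$-manipulations compatibly in both $F_{u,t}$ and $F_{t,u}$ and verify that the conclusions are consistent with the diagonal image of $F_x$ --- in effect, that a class in $K_2^{top}(F_x)$ is detected by its two components. A secondary point to get right is the decomposition of $F_x^{\times}$ in the first step, where regularity of $\hat{\mathcal{O}}_{X,x}$ and the ``only two curves through $x$'' part of $\dagger$ are both used; these are also the features that fail in general, which is why the non-$\dagger$ case will be treated later by reduction to the present one.
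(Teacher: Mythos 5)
There is a genuine gap, and it is in your first step. You assert that under condition $\dagger$ every $f\in F_{x}^{\times}$ can be written as $u^{i}t^{j}\varepsilon$ with $\varepsilon\in\hat{\mathcal{O}}_{X,x}^{\times}$, i.e. that zeros and poles near $x$ are supported only on the two coordinate curves $V(u)$ and $V(t)$. This is false, and it also misreads the statement: $F_{x}$ is the ring generated by $\hat{\mathcal{O}}_{X,x}$ and the whole function field $F$, so $F^{\times}\subset F_{x}^{\times}$, and a rational function can vanish along any irreducible curve through $x$ (for instance $u+t$, a local equation of a third curve through $x$, is irreducible in $k(x)[[u,t]]$, is a non-unit, and is not of the form $u^{i}t^{j}\varepsilon$, yet it is invertible in $F_{x}$). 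Condition $\dagger$ constrains a distinguished finite configuration of curves in the adelic constructions, not the divisors of arbitrary elements of $F_{x}$; indeed the lemma lets $y$, $y'$ run over \emph{all} local irreducible curves through $x$, which is exactly why the generator list contains $\{t_{y},t_{y'}\}$, $\{a,t_{y}\}$ and $\{1+au^{i}t^{j},t_{y}\}$ for arbitrary $y$, $y'$ and not merely for $u$ and $t$. With your decomposition the lemma would collapse to a restatement of Theorem~\ref{topkgpstructure} for the two fields $F_{u,t}$, $F_{t,u}$, and its later uses (e.g. Lemma~\ref{unramreci}, and the generation of $F_{x}^{\times}/(F_{x}^{\times})^{q-1}$ by $\zeta$ together with \emph{all} the $t_{y}$) would not follow.

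The paper's proof takes as its key step the decomposition
\[F_{x}^{\times}=\mathcal{E}_{x}\times k(x)^{\times}\times\bigoplus_{y}\langle t_{y}\rangle,\]
where $\mathcal{E}_{x}$ is the group of principal units $1+bu^{i}t^{j}$ of $\hat{\mathcal{O}}_{X,x}$ and the sum runs over all local irreducible curves $y$ through $x$: each such curve gives a height-one prime of $\mathcal{O}_{X,x}$ with local equation $t_{y}$, and these equations account precisely for the part of $F_{x}^{\times}$ lying outside the constants and the principal units (this is where the hypothesis that $u,t$ generate the maximal ideal, i.e. regularity, enters). Once this decomposition is in place, your second step is essentially the right one and matches the paper: expand symbols bilinearly, discard $\{a,b\}$ with $a,b\in k(x)^{\times}$, and reduce symbols with principal-unit entries by the same $K_{2}$-identities used for Theorem~\ref{topkgpstructure} (cf. Lemma~\ref{kgpstructurecurves}). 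Note also that your concern about comparing images in the two components of $F_{u,t}\times F_{t,u}$ is unnecessary: $K_{2}^{top}(F_{x})$ is defined intrinsically as a topological quotient of $K_{2}(F_{x})$, so all of these manipulations take place inside $K_{2}(F_{x})$ itself.
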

\begin{proof}
Let $\mathcal{E}_{x}$ be the group generated by the elements 
\[\{ a \in \hat{\mathcal{O}}_{X,x}^{\times} : a \equiv 1 \text{ mod } \mathfrak{p}_{x}\} = \{1 + b u^{i}t^{j}: b \in k(x)^{\times}, i, j \geq 1\}.\]
Then we can decompose the multiplicative group as
\[F_{x}^{\times} = \mathcal{E}_{x} \times k(x)^{\times} \times \oplus_{y} < t_{y} >\]
where the direct sum is taken over the curves as in the statement of the lemma.
This is because the irreducible curves in a neighbourhood of $x$ define a prime ideal of height one in $\mathcal{O}_{X,x}$ generated by the equation $t_{y}$, and it is easy to see that these are exactly the part of $F_{x}^{\times}$ not contained in the group generated by by the constants and the principal units of $\hat{\mathcal{O}}_{X,x}$.  Once we have this decomposition, the lemma follows exactly as in the above case.
\end{proof}

\begin{lemma}\label{relativekgppoints}
Let $\alpha \in J_{x}$ with $x \in X$ satisfying $\dagger$. Then $\alpha$ is a product of symbols:
\[\text{in } F_{u,t}: \begin{cases} \{ 1+a_{i,j}u^{i}t^{j}, t\} & p \nmid i;\\
\{1+b_{i,j}u^{i}t{^j}, u\} & p \mid i, \text{ and if } p \mid j, \ b_{i,j} = 0; \end{cases}\]
\[\text{in } F_{t,u}: \begin{cases} \{ 1+ a_{i,j}t^{j}u{^i}, t\} & p \mid j \text{ and if } p \mid i, \ a_{i,j} = 0;\\
\{1+ b_{i,j}t^{j}u^{i}, u\} & p \nmid j;\end{cases}\]
for $i,j \in \mathbb{N}$. \end{lemma}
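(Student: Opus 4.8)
The plan is to reduce the statement to the local structure theorem \ref{topkgpstructure}, applied separately to the two two-dimensional local fields attached to $x$, and then to redistribute the resulting generators using the elementary $K$-group identities already exploited in the curve case. Since $x$ satisfies $\dagger$ there are exactly two curves $y$, $y'$ through $x$, with local equations $t$ and $u$ respectively and with $k_{y}(x)=k_{y'}(x)=k(x)$; hence $F_{x,y}\cong F_{u,t}$, $F_{x,y'}\cong F_{t,u}$, and
\[
J_{x}=K_{2}^{top}(\mathcal{O}_{x,y},\mathfrak{m}_{x,y})\times K_{2}^{top}(\mathcal{O}_{x,y'},\mathfrak{m}_{x,y'}),
\]
so it suffices to bring each factor into normal form.

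For the factor attached to $F_{u,t}=k(x)((u))((t))$, Theorem \ref{topkgpstructure}, together with the identification $K_{2}^{top}(\mathcal{O}_{x,y},\mathfrak{m}_{x,y})=\mathcal{E}^{(1)}_{x,y}\times\mathcal{E}^{(2)}_{x,y}$ recorded in \ref{independenceofmodel} and \ref{relativekgpcurves}, shows that every element of this factor is a convergent product of symbols $\{1+a_{i,j}u^{i}t^{j},t\}$ with $p\nmid i$ and $p\nmid j$, together with symbols $\{1+a_{i,j}u^{i}t^{j},u\}$ with $p\nmid j$ and $i$ arbitrary. Interchanging the roles of $u$ and $t$ gives the analogous description of the factor attached to $F_{t,u}=k(x)((t))((u))$: a convergent product of symbols $\{1+a_{i,j}t^{j}u^{i},t\}$ with $p\nmid i$ and $j$ arbitrary, together with symbols $\{1+a_{i,j}t^{j}u^{i},u\}$ with $p\nmid i$ and $p\nmid j$. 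Comparing with the list in the lemma, the only generators not yet admissible are, in $F_{u,t}$, the $u$-symbols with $p\nmid i$, and, in $F_{t,u}$, the $t$-symbols with $p\nmid j$.

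For those offending generators I would invoke the identity \ref{Kgpcalc2} and its mirror image under $u\leftrightarrow t$: when $p\nmid i$ one has $\{1+au^{i}t^{j},u\}\equiv\{1+a'u^{i}t^{j},t\}$ modulo $K_{2}^{top}(\mathcal{O}_{x,y},\mathfrak{m}_{x,y}^{j+1})$, so each such symbol is traded for an admissible $t$-symbol plus a correction lying strictly deeper in the $\mathfrak{m}_{x,y}$-adic filtration; by \ref{topkgpstructure} the correction is again a convergent product of generators of the two types above, of strictly larger order, to which the same replacement applies. Since $K_{2}^{top}$ is complete for this filtration and a series converges there as soon as its terms tend to zero, iterating the replacement — equivalently, a successive-approximation argument on the $\mathfrak{m}_{x,y}$-order — converges and rewrites $\alpha$ in the asserted form; the vanishing conditions $b_{i,j}=0$ for $p\mid j$ in $F_{u,t}$ and $a_{i,j}=0$ for $p\mid i$ in $F_{t,u}$ are exactly the divisibility restrictions carried over from \ref{topkgpstructure}. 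The same argument applied inside $F_{t,u}$ converts the $t$-symbols with $p\nmid j$ into $u$-symbols and leaves precisely the stated list.

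The main obstacle is making the iteration in the previous step converge: one must check that each round of conversions strictly raises the $\mathfrak{m}_{x,y}$-order of the part still not in admissible form, so that the infinite product of the correction steps converges in $K_{2}^{top}(F_{u,t})$, and likewise in $K_{2}^{top}(F_{t,u})$. This is routine but is the only point requiring real care; everything else is a direct application of \ref{topkgpstructure} and the appendix identities, exactly as in the proof of the curve analogue \ref{relativekgpcurves}.
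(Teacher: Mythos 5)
Your proposal is correct and takes essentially the same route as the paper, whose proof is a one-line reduction of $J_{x}$, under condition $\dagger$, to the symbols of types 4 and 5 of Theorem \ref{topkgpstructure} in the two fields $F_{u,t}$ and $F_{t,u}$. The extra step you supply --- trading the $u$-symbols with $p\nmid i$ (resp.\ the $t$-symbols with $p\nmid j$) for $t$-symbols (resp.\ $u$-symbols) via Lemma \ref{Kgpcalc2} together with a convergence argument --- is precisely the renormalisation the paper leaves implicit in calling the conclusion immediate, matching the normal form already used in Lemma \ref{relativekgpcurves}.
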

\begin{proof} This follows immediately from theorem \ref{topkgpstructure} and the fact that for $x \in X$ satisfying $\dagger$, the product $\prod_{y \ni x}K_{2}^{top}(\mathcal{O}_{x,y}, \mathfrak{m}_{x,y})$ becomes the product of elements of types $4$ and $5$ in theorem \ref{topkgpstructure} for the fields $F_{u,t}$ and $F_{t,u}$. \end{proof}

Let $z = c_{i,j}u^{-i}t^{-j} \in W_{m}(F_{x})/($Frob$ - 1)W_{m}(F_{x})$. Notice that exactly as in lemma \ref{structuremodfrob}, at least one of the pair $(i,j)$ must be greater than zero. Similarly to the case of a fixed curve, we will look at the values of the pairing for a pair $(i,j)$ and distinguishing the cases depending on whether $i$ or $j$ is divisible by $p$.\\
\begin{lemma}\label{firstcalcpoints} Suppose $p \nmid i$, $p \mid j$ and let $\alpha = (\{1+a_{i,j}u^{i}t^{j}, t\}, \{1+b_{i,j}t^{j}u^{i}, t\}) \in J_{x}$ and $z = c u^{k}t^{l} \in F_{x}/(\mathrm{Frob}-1)F_{x}$. Then
\[ (\alpha | z ]_{x} = \begin{cases} ic(a_{i,j} - b_{i,j}) & i + k = 0, \ j+k=0 \\
0 & i+k>0 \text{ and } j+l > 0. \end{cases}\]
Symmetrically, if $p \mid i$ and $p \nmid j$, let $\beta = (\{1+ a'_{i,j}u^{i}t^{j}, u\}, \{1+b'_{i,j}t^{j}u^{i}, u\})$, then 
\[ (\beta | z]_{x} = \begin{cases} j c (b'_{i,j} - a'_{i,j}) & i+k = 0, \ j+l = 0\\
0 & i+k>0 \text{ and } j+l>0. \end{cases}\]\end{lemma}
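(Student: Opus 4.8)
The plan is to reduce the statement, for length-one Witt vectors, to an explicit residue computation and then to extract a single Laurent coefficient. Under condition $\dagger$ we have $J_{x}=K_{2}^{top}(\mathcal{O}_{F_{u,t}},\mathfrak{m})\times K_{2}^{top}(\mathcal{O}_{F_{t,u}},\mathfrak{m})$, and $z\in F_{x}$ is sent diagonally into $F_{u,t}\times F_{t,u}$, so $(\alpha\,|\,z]_{x}$ is the sum of the two local Witt pairings. By Proposition \ref{wittproperties}(iii) the Witt symbol is a symbol in its first two arguments, and by the definition of the Witt pairing together with Lemma \ref{residueproperties}, for a length-one Witt vector the local pairing $(\{f_{1},f_{2}\}\,|\,g]$ is computed as $\mathrm{res}\big(g\,\tfrac{df_{1}}{f_{1}}\wedge\tfrac{df_{2}}{f_{2}}\big)$ over the corresponding two-dimensional local field. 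Thus everything comes down to computing two residues.

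For the element $\alpha$, write $w:=1+a_{i,j}u^{i}t^{j}$. Then $\tfrac{dw}{w}\wedge\tfrac{dt}{t}=\tfrac{i a_{i,j}u^{i-1}t^{j-1}}{w}\,du\wedge dt$, because wedging with $\tfrac{dt}{t}$ annihilates the $dt$-component of $\tfrac{dw}{w}$. Multiplying by $z=c u^{k}t^{l}$ and expanding $w^{-1}=\sum_{n\geq 0}(-a_{i,j})^{n}u^{ni}t^{nj}$, the form becomes $c\, i a_{i,j}\sum_{n\geq 0}(-a_{i,j})^{n}u^{(n+1)i+k-1}t^{(n+1)j+l-1}\,du\wedge dt$. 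Taking the residue in $F_{u,t}$ (the coefficient of $u^{-1}t^{-1}\,du\wedge dt$) we need $(n+1)i+k=0$ and $(n+1)j+l=0$ for some $n\geq 0$; in the range $i+k\geq 0$, $j+l\geq 0$ the only solution is $n=0$ with $i+k=j+l=0$, and then the residue equals $c\, i\, a_{i,j}$. The same computation in $F_{t,u}$ yields $c\, i\, b_{i,j}$ up to the sign coming from the orientation $dt\wedge du=-\,du\wedge dt$ used in $\mathrm{res}_{F_{t,u}}$, hence $-\,c\, i\, b_{i,j}$. Adding the two contributions gives $(\alpha\,|\,z]_{x}=ic(a_{i,j}-b_{i,j})$ when $i+k=j+l=0$; if $i+k>0$ and $j+l>0$ then no term in either expansion can have $u$-exponent or $t$-exponent equal to $-1$, so both residues vanish. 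The symmetric statement for $\beta$ follows by interchanging the roles of $u$ and $t$: one now wedges $\tfrac{dw}{w}$ with $\tfrac{du}{u}$, which kills the $du$-component and leaves $-\tfrac{j}{w}u^{i-1}t^{j-1}\,du\wedge dt$; the extra sign is now carried by $F_{u,t}$ rather than $F_{t,u}$, so the $F_{u,t}$-contribution is $-jca'_{i,j}$ and the $F_{t,u}$-contribution is $+jcb'_{i,j}$, giving $(\beta\,|\,z]_{x}=jc(b'_{i,j}-a'_{i,j})$ in the diagonal case and $0$ otherwise.

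The computation itself is routine; the one point requiring care is the opposite orientation of the residue maps on $F_{u,t}$ and $F_{t,u}$ — this is precisely what turns the naive sum of the two contributions into the difference $a_{i,j}-b_{i,j}$ (resp. $b'_{i,j}-a'_{i,j}$), and it is the only source of the relative sign. One should also keep in mind that the normal forms used here are exactly those furnished by Lemma \ref{relativekgppoints}, so the congruences $p\nmid i$, $p\mid j$ (resp. $p\mid i$, $p\nmid j$) guarantee that $\alpha$ (resp. $\beta$) really is a product of admissible symbols and that no further $K$-group relations interfere; and that, since only the regime $i+k,j+l\geq 0$ is claimed, one never needs the contributions from the tail ($n\geq 1$) of the geometric series nor from the $dt$- (resp. $du$-) components that were discarded. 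I expect the sign bookkeeping between the two local fields to be the only delicate point.
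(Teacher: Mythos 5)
Your proposal is correct and is essentially the paper's own argument: the paper disposes of this lemma by saying it is ``a simple calculation of residues, following as in lemma \ref{firstcalculation}'', and your explicit expansion of $g\,\frac{df_{1}}{f_{1}}\wedge\frac{df_{2}}{f_{2}}$ in each of $F_{u,t}$ and $F_{t,u}$ is precisely that calculation carried out in detail. Your sign bookkeeping via the opposite orientations $du\wedge dt$ versus $dt\wedge du$ of the two residue maps is the right (and only delicate) point, and it correctly produces the differences $a_{i,j}-b_{i,j}$ and $b'_{i,j}-a'_{i,j}$.
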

\begin{proof} This is a simple calculation of residues, following as in lemma \ref{firstcalculation}.\end{proof}

\begin{lemma} \label{secondcalcpoints}Suppose $ p \nmid i,j$. Let $\alpha = (\{1+a_{i,j}u^{i}t^{j},t\}, \{1+b_{i,j}t^{j}u^{i},u\}) \in J_{x}$ and $z = c u^{k}t^{l} \in F_{x}/(\mathrm{Frob} - 1)F_{x}$. Then
\[ (\alpha|z]_{x} = \begin{cases} c(ia_{i,j} + j b_{i,j}) & i+k = 0, \ j+l=0\\
0 & i+k>0 \text{ and } j+l>0.\end{cases}\]\end{lemma}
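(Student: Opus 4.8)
The plan is to mirror the proof of Lemma \ref{firstcalcpoints} almost verbatim, since the only difference is that now both $i$ and $j$ are prime to $p$, which means the relevant generators of $J_x$ in the two fields are the symbols $\{1+a_{i,j}u^it^j,t\}$ (type from theorem \ref{topkgpstructure}(v), living in $F_{u,t}$) and $\{1+b_{i,j}t^ju^i,u\}$ (again a type (v) symbol, living in $F_{t,u}$). First I would recall that the Witt pairing is symbolic and, by the local formula (proposition \ref{residueproperties}(ii) together with Parshin's residue description in \cite[3.3]{P4}), for a single two-dimensional local field $F\cong k(x)((u))((t))$ one has
\[
(\{1+a u^i t^j, t\}\,|\, c u^k t^l]_{F_{u,t}} = \mathrm{res}_{F_{u,t}}\!\left( c u^k t^l \,\frac{d(a u^i t^j)}{1+a u^i t^j}\wedge\frac{dt}{t}\right),
\]
and similarly in $F_{t,u}$ for the second symbol.

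The key computational step is to expand $(1+a u^i t^j)^{-1}=\sum_{n\ge 0}(-a u^i t^j)^n$ and extract the coefficient of $u^{-1}t^{-1}$ in the resulting $2$-form. In $F_{u,t}$ the form $\frac{d(a u^i t^j)}{1+au^it^j}\wedge\frac{dt}{t}$ equals $i a u^{i-1}t^j(1+\cdots)^{-1}du\wedge\frac{dt}{t}$ (the $dt\wedge dt$ piece vanishes), so pairing with $c u^k t^l$ and taking the residue picks out exactly the term where the total $u$-exponent is $-1$ and the total $t$-exponent is $-1$; the leading contribution is $ic\,a_{i,j}$ when $i+k=0$ and $j+l=0$, and everything is zero when $i+k>0$ and $j+l>0$ (by proposition \ref{residueproperties}(i), the form lies in $\Omega^{2,cts}_{\mathcal O/\mathbb F_q}$). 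Symmetrically, in $F_{t,u}$ the symbol $\{1+b u^i t^j, u\}$ contributes $j c\, b_{i,j}$ with a sign that combines with the orientation $du\wedge dt = -\,dt\wedge du$; since the point $x$ satisfies $\dagger$, $k_y(x)=k(x)$ so no trace terms intervene, and adding the two local contributions (the pairing $(\,\cdot\,|\,\cdot\,]_x$ on the product $J_x$ being the sum over the two branches by definition) gives $c(ia_{i,j}+jb_{i,j})$. I would then note the two off-diagonal cases $i+k=0,\ j+l>0$ and $i+k>0,\ j+l=0$ are not needed (just as in lemma \ref{firstcalcpoints}, only the stated cases are used downstream), though they also vanish by the same residue bookkeeping.

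The main obstacle is bookkeeping of signs and of the orientation convention $du\wedge dt$ versus $dt\wedge du$ between the two local fields $F_{u,t}$ and $F_{t,u}$, since a sign error here would flip the relative sign of $ia_{i,j}$ and $jb_{i,j}$; this is exactly why lemma \ref{firstcalcpoints} produced $i c(a_{i,j}-b_{i,j})$ (both symbols paired against $t$, same orientation up to the swap) whereas here, with one symbol paired against $t$ and the other against $u$, the orientations align and one gets a sum $c(ia_{i,j}+jb_{i,j})$. I would resolve this by fixing once and for all the isomorphisms $F_{u,t}\cong k(x)((u))((t))$ and $F_{t,u}\cong k(x)((t))((u))$ and computing $\mathrm{res}$ with respect to the ordered parameters in each, then comparing; after that, the lemma follows as a one-line residue calculation exactly parallel to lemmas \ref{firstcalculation} and \ref{firstcalcpoints}, so I would simply write ``This is a simple calculation of residues, following as in lemma \ref{firstcalcpoints}," as the authors do.
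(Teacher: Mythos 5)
Your proposal is correct and follows essentially the same route as the paper, whose proof of this lemma is precisely the residue computation "as in lemma \ref{firstcalcpoints}" (which in turn mirrors lemma \ref{firstcalculation}): expand $(1+a u^{i}t^{j})^{-1}$, extract the coefficient of $u^{-1}t^{-1}$ in each of $F_{u,t}$ and $F_{t,u}$ with their respective ordered parameters, and use lemma \ref{residueproperties}(i) for the vanishing case. Your sign bookkeeping is also right: pairing one symbol against $t$ and the other against $u$ makes the two orientations agree, yielding the sum $c(ia_{i,j}+jb_{i,j})$, in contrast to the difference $ic(a_{i,j}-b_{i,j})$ in lemma \ref{firstcalcpoints}.
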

\begin{proof} As for lemma \ref{firstcalcpoints}.\end{proof}

For $(i,j) \in \mathbb{N}^{2}$, define $J_{x, \geq i,j}$ to be the set of elements with both $a_{k,l}$ and $b_{k,l}$ equal to zero for all $k<i$, $l<j$, when expressed as a product of elements of the form given in lemma \ref{relativekgppoints}.

\begin{lemma}\label{dualityonepoints} Let $x \in X$ satisfy $\dagger$ and fix $(i,j) \in \mathbb{N}^{2}$ with $p \nmid i$, $p \mid j$. Then the map
\[ ( \ | \ ]_{x}: \frac{J_{x,\geq i,j}}{(\Delta(K_{2}^{top}(F_{x}))\cap J_{x, \geq i,j}).J_{x, \geq i+1, j}.J_{x,\geq i,j+1}} \times u^{-i}t^{-j}k(x) \to k(x)\]
is a non-degenerate pairing of $k(x)$-vector spaces. The induced homomorphism
\[ \frac{J_{x,\geq i,j}}{(\Delta(K_{2}^{top}(F_{x}))\cap J_{x, \geq i,j}).J_{x, \geq i+1, j}.J_{x,\geq i, j+1}} \to \mathrm{Hom}(k(x), k(x)) \cong k(x)\]
is an isomorphism. \end{lemma}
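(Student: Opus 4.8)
The plan is to reduce the statement to elementary linear algebra over $k(x)$, using the structure lemmas already in place. First I would invoke Lemma~\ref{relativekgppoints}: since $p\nmid i$ and $p\mid j$, the only standard symbols occurring at the corner $(i,j)$ are the $t$-symbols $\{1+a_{i,j}u^{i}t^{j},t\}$ in $F_{u,t}$ and $\{1+b_{i,j}t^{j}u^{i},t\}$ in $F_{t,u}$ (the $u$-symbols being excluded at this $(i,j)$ by the divisibility constraints), and the decomposition is unique. Hence the natural map
\[
J_{x,\geq i,j}\big/\bigl(J_{x,\geq i+1,j}\,J_{x,\geq i,j+1}\bigr)\ \cong\ k(x)^{2},\qquad \alpha\mapsto(a_{i,j},b_{i,j})
\]
is a $k(x)$-vector space isomorphism, where $(a_{i,j},b_{i,j})$ is the pair appearing in Lemma~\ref{firstcalcpoints}. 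By that same Lemma, pairing such an $\alpha$ with $z=cu^{-i}t^{-j}$ gives $(\alpha|z]_{x}=ic(a_{i,j}-b_{i,j})$, so the pairing of the present lemma factors through the $k(x)$-bilinear form $k(x)^{2}\times k(x)\to k(x)$, $\bigl((a,b),c\bigr)\mapsto ic(a-b)$.

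The key step is to identify the image of $\Delta(K_{2}^{top}(F_{x}))\cap J_{x,\geq i,j}$ under the isomorphism above; I claim it is exactly the diagonal $D=\{(a,a):a\in k(x)\}$. For $D\subseteq$ image: since $1+au^{i}t^{j}=1+at^{j}u^{i}$ in both $F_{u,t}$ and $F_{t,u}$, the diagonal image of $\{1+au^{i}t^{j},t\}\in K_{2}^{top}(F_{x})$ is the pair with $a_{i,j}=b_{i,j}=a$, and this element lies in $J_{x,\geq i,j}$. For the reverse inclusion I would use Lemma~\ref{kgpstructurepoints}: any diagonal element of $K_{2}^{top}(F_{x})$ lying in $J_{x}$ is a product of symbols $\{1+au^{i}t^{j},t_{y}\}$, and after rewriting into the canonical form of Lemma~\ref{relativekgppoints} modulo $J_{x,\geq i+1,j}J_{x,\geq i,j+1}$ (applying the $K$-theory identities of the appendix to convert $u$-type contributions, exactly as in Lemma~\ref{diagonalelements}), the only contribution at the corner $(i,j)$ comes from $\{1+au^{i}t^{j},t\}$, forcing $a_{i,j}=b_{i,j}$; uniqueness of the decomposition in Lemma~\ref{relativekgppoints} makes this characterisation complete, in the spirit of the Remark after Lemma~\ref{diagonalelements}.

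Granting this, the quotient
\[
\frac{J_{x,\geq i,j}}{(\Delta(K_{2}^{top}(F_{x}))\cap J_{x,\geq i,j})\,J_{x,\geq i+1,j}\,J_{x,\geq i,j+1}}\ \cong\ k(x)^{2}/D\ \cong\ k(x)
\]
via $\alpha\mapsto a_{i,j}-b_{i,j}$, and the pairing descends to $k(x)\times k(x)\to k(x)$, $(\bar\alpha,c)\mapsto ic\bar\alpha$. Since $p\nmid i$ the scalar $i$ is a unit, so this is a perfect pairing of one‑dimensional $k(x)$‑vector spaces: non‑degeneracy in the right argument is immediate (if $ic\bar\alpha=0$ for all $\bar\alpha$, take $\bar\alpha=1$ to get $c=0$), and the induced $k(x)$‑linear map $\bar\alpha\mapsto(c\mapsto ic\bar\alpha)$ is injective between one‑dimensional spaces, hence the claimed isomorphism $k(x)\cong\mathrm{Hom}(k(x),k(x))$.

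I expect the main obstacle to be the second paragraph — the precise identification of the diagonal elements. Unlike the curve case, where Lemma~\ref{diagonalelements} together with the reduction to classical Weil duality did this work, here one must verify directly that passing a diagonal principal‑unit symbol of $F_{x}$ through the $K$‑theory relations into the canonical form of Lemma~\ref{relativekgppoints} never produces a ``cross term'' with $a_{i,j}\neq b_{i,j}$ at the leading corner $(i,j)$. This is exactly where hypothesis $\dagger$ is used: it guarantees $k_{y}(x)=k(x)$ for the two transversal curves $u,t$ through $x$, so that $J_{x}$ really does have the clean two‑factor description of Lemma~\ref{relativekgppoints} and every group in sight is a vector space over the single field $k(x)$.
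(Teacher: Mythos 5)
Your proposal is correct and follows essentially the same route as the paper's proof: reduce modulo $J_{x,\geq i+1,j}J_{x,\geq i,j+1}$ to the pair $(a_{i,j},b_{i,j})$ via Lemma \ref{relativekgppoints}, evaluate the pairing as $ic(a_{i,j}-b_{i,j})$ via Lemma \ref{firstcalcpoints}, and identify the kernel on the left with the diagonal, using $p\nmid i$ for non-degeneracy. The only (harmless) divergence is that you establish the inclusion of the image of $\Delta(K_{2}^{top}(F_{x}))\cap J_{x,\geq i,j}$ into $D=\{(a,a)\}$ directly from Lemma \ref{kgpstructurepoints} and the appendix identities, whereas the paper leaves this implicit, it being guaranteed by the reciprocity law \ref{reciprocitylaw}, which forces diagonal elements to pair to zero with $u^{-i}t^{-j}k(x)\subset F_{x}$.
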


\begin{proof}
Let $\alpha_{\geq i,j} \in J_{x, \geq i,j}$. By lemma \ref{relativekgppoints}, $\alpha_{\geq i,j}$ is uniquely determined modulo $J_{x, \geq i+1. j}.J_{x, \geq i, j+1}$ by the pair $(a_{i,j}, b_{i,j})$, where $\alpha_{\geq i,j}$ is represented by $(\{1+a_{i,j}u^{i}t^{j},t\}, \{1+b_{i,j}t^{j}u^{i}, t\})$.\\
Let $z = c_{i,j}u^{-i}t^{-j}$, and suppose $(\alpha_{\geq i,j}| z]_{x} = 0$. Then by lemma \ref{firstcalcpoints}, $i c_{i,j}(a_{i,j} - b_{i,j}) = 0$. Since we are assuming $z \notin ($Frob$ - 1)F_{x}$ and $\alpha_{\geq i,j} \neq 0$, we must have $a_{i,j}=b_{i,j}$. Hence $\alpha_{\geq i, j} \in \Delta(K_{2}^{top}(F_{x}))\cap J_{x, \geq i,j}$, and the pairing is non-degenerate.\\
The isomorphism to the homomorphism group follows easily, as the left hand side is obviously isomorphic to $k(x)$. \end{proof}

\noindent The case $(i,j) \in \mathbb{N}^{2}$, $p\nmid j$, $p \mid i$ is identical to the above lemma.

\begin{lemma}\label{dualitytwopoints} Let $x \in X$ satisfy $\dagger$ and fix $(i,j) \in \mathbb{N}^{2}$ with $p \nmid i,j$. Then the map
\[ ( \ | \ ]_{x}: \frac{J_{x,\geq i,j}}{(\Delta(K_{2}^{top}(F_{x}))\cap J_{x,\geq i,j}).J_{x,\geq i+1, j}.J_{x, \geq i, j+1}} \times u^{-i}t^{-j}k(x) \to k(x)\]
is a non-degenerate pairing of $k(x)$-vector spaces. The induced homomorphism
\[\frac{J_{x,\geq i,j}}{(\Delta(K_{2}^{top}(F_{x}))\cap J_{x,\geq i,j}).J_{x,\geq i+1, j}.J_{x, \geq i, j+1}} \to \mathrm{Hom}(k(x), k(x)) \cong k(x)\]
is an isomorphism. \end{lemma}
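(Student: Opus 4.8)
The plan is to mirror the proof of Lemma~\ref{dualityonepoints} almost verbatim, with the only change being the use of Lemma~\ref{secondcalcpoints} in place of Lemma~\ref{firstcalcpoints}. First I would invoke Lemma~\ref{relativekgppoints} to record that any $\alpha_{\geq i,j} \in J_{x, \geq i,j}$ is, modulo $J_{x,\geq i+1,j}.J_{x,\geq i,j+1}$, determined uniquely by the pair $(a_{i,j}, b_{i,j}) \in k(x)^{2}$, where $\alpha_{\geq i,j}$ is represented by $(\{1+a_{i,j}u^{i}t^{j},t\}, \{1+b_{i,j}t^{j}u^{i},u\})$. Thus the left-hand group, before quotienting by the diagonal, is isomorphic to $k(x)^{2}$, and the subgroup $\Delta(K_{2}^{top}(F_{x}))\cap J_{x,\geq i,j}$ sits inside it as some subspace that I must identify.

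Next I would compute the pairing on generators: for $z = c_{i,j}u^{-i}t^{-j}$ (with at least one of $i,j$ positive, as noted after Lemma~\ref{relativekgppoints}, and both $p\nmid i$ and $p\nmid j$ here), Lemma~\ref{secondcalcpoints} gives $(\alpha_{\geq i,j}|z]_{x} = c_{i,j}(i a_{i,j} + j b_{i,j})$. So the pairing, as a map $k(x)^{2} \times k(x) \to k(x)$, sends $((a_{i,j},b_{i,j}), c_{i,j})$ to $c_{i,j}(i a_{i,j} + j b_{i,j})$. The radical on the left is exactly $\{(a_{i,j}, b_{i,j}) : i a_{i,j} + j b_{i,j} = 0\}$, a one-dimensional $k(x)$-subspace (nonzero since $p\nmid i,j$ and we are in a field of characteristic $p$, so $i,j$ are invertible). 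The key point is then to show this radical coincides with the image of $\Delta(K_{2}^{top}(F_{x}))\cap J_{x,\geq i,j}$ modulo the higher terms. Using Lemma~\ref{kgpstructurepoints}, the relevant diagonal generators are $\{1+au^{i}t^{j}, t_{y}\}$-type symbols; writing such a symbol diagonally in the two local fields $F_{u,t}$ and $F_{t,u}$ and reading off the $(a_{i,j},b_{i,j})$ coordinates via the standard $K$-group identities (of the type collected in the appendix, e.g. $\{1+au^{i}t^{j}, t\}$ versus its image under $\{1+au^{i}t^{j},u\}$), one finds the coordinates of a diagonal element satisfy precisely $i a_{i,j} + j b_{i,j} = 0$; conversely every such pair arises. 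This identifies the radical with the diagonal subspace, so the induced pairing on the quotient $J_{x,\geq i,j}/(\Delta(K_{2}^{top}(F_{x}))\cap J_{x,\geq i,j}).J_{x,\geq i+1,j}.J_{x,\geq i,j+1} \times u^{-i}t^{-j}k(x) \to k(x)$ is non-degenerate. Finally, since the quotient on the left is now one-dimensional over $k(x)$ (the two-dimensional $k(x)^{2}$ modulo a one-dimensional subspace), and $u^{-i}t^{-j}k(x)\cong k(x)$, non-degeneracy immediately gives that the induced homomorphism to $\mathrm{Hom}(k(x),k(x)) \cong k(x)$ is an isomorphism.

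The main obstacle I anticipate is the identification of the diagonal subspace: verifying that the image of $\Delta(K_{2}^{top}(F_{x}))\cap J_{x,\geq i,j}$ in the coordinate space $k(x)^{2}$ is exactly the line $\{i a_{i,j} + j b_{i,j} = 0\}$. This requires carefully tracking how a symbol $\{1+au^{i}t^{j}, t_{y}\}$ (or with $t_{y'}$) decomposes when embedded diagonally into both $F_{u,t} = k(x)((u))((t))$ and $F_{t,u} = k(x)((t))((u))$, and in each case rewriting it in the normal form of Lemma~\ref{relativekgppoints} using the $K$-theory identities in the appendix — the sign conventions and the factors $i$, $j$ coming from logarithmic differentiation are where errors are most likely. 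Everything else (the residue computation, the dimension count, and the passage to the Hom-group) is routine once Lemma~\ref{secondcalcpoints} is in hand and the $p\nmid i,j$ hypothesis is used to guarantee invertibility of $i$ and $j$ in $k(x)$.
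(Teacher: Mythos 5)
Your proposal follows essentially the same route as the paper: normal form via Lemma \ref{relativekgppoints}, the value $c_{i,j}(ia_{i,j}+jb_{i,j})$ from Lemma \ref{secondcalcpoints}, and identification of the line $\{ia_{i,j}+jb_{i,j}=0\}$ with the diagonal subspace via the identity of Lemma \ref{Kgpcalc2}, which is precisely the step the paper carries out (rewriting $\{1+b_{i,j}t^{j}u^{i},u\}$ as $\{1+a_{i,j}u^{i}t^{j},t\}$ when $jb_{i,j}=-ia_{i,j}$). The "obstacle" you flag is exactly the paper's computation, and your dimension count for the Hom-isomorphism matches its conclusion, so the proposal is correct.
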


\begin{proof}
As in the proof of lemma \ref{dualityonepoints}, $\alpha_{\geq i,j} \in J_{x, \geq i,j}$ is uniquely determined modulo $J_{x, \geq i+1, j}.J_{x,\geq i, j+1}$ by $(a_{i,j}, b_{i,j})$ and represented by $(\{1+a_{i,j}u^{i}t^{j}, t\}, \{1+ b_{i,j}t^{j}u^{i}, u\})$.  \\
Let $z = c_{i,j}u^{-i}t^{-j}$, so that by lemma \ref{secondcalcpoints} we have \[(\alpha_{\geq i,j}| z]_{x} = c_{i,j}(ia_{i,j} + jb_{i,j}).\]
Suppose $(\alpha_{\geq i,j}|z]_{x}=0$. As in the lemma above, we must then have $jb_{i,j}=-ia_{i,j}$, i.e.
\[\alpha_{\geq i,j} = (\{1+a_{i,j}u^{i}t^{j},t\}, \{1-(i^{-1}j)a_{i,j}t^{j}u^{i}, u\}).\]
We use the $K$-group identity from \ref{Kgpcalc2}
\[ \{1 - ivt^{j}u^{i}, u\} \equiv \{1 + jvu^{i}t^{j}, t\} \text{ mod } K_{2}^{top}(\mathcal{O}_{t,u}, \mathfrak{p}_{t,u}).\]
So $\alpha_{\geq i,j}$ can be represented by:
\[(\{1+ a_{i,j}u^{i}t^{j}, t\}, \{1+ a_{i,j}u^{i}t^{j}, t\}) \in \Delta(K_{2}^{top}(F_{x}))\cap J_{x, \geq i,j}.\]
Hence the pairing is non-degenerate and the proof ends exactly as in lemma \ref{dualityonepoints}. \end{proof}

\noindent We now proceed to the proof of theorem \ref{mainthmpoints}, in the case $x$ satisfies hypothesis $\dagger$.
\begin{proof} \emph{m=1}:
We first prove non-degeneracy in the second argument. Let $z = \sum_{i,j}c_{i,j}u^{i}t^{j}$ be a representative of $F_{x}/(\text{Frob} - 1)F_{x}$, and suppose $(\alpha|z]_{x}=0$ for all $\alpha \in J_{x}$.  Assume $z \neq 0$ and let $(i,j)$ be the least index with $c_{i,j} \neq 0$ ordered lexicographically.\\
Let $\alpha_{-i,-j} \in J_{x,-i,-j}$. Then
\[ 0 = (\alpha_{-i,-j}|z]_{x} = (\alpha_{-i,-j}|c_{i,j}u^{i}t^{j}]_{x}\]
by lemmas \ref{firstcalcpoints} and \ref{secondcalcpoints}, but also by these lemmas, this is a contradiction. Hence the pairing is non-degenerate in the right argument.\\
Now let \[\mu: F_{x}/(\text{Frob} -1)F_{x} \to \mathbb{Z}/p\mathbb{Z}\] be a homomorphism. We describe $\mu$ via a family of homomorphisms
\[\mu_{i,j}: k(x) \to \mathbb{Z}/p\mathbb{Z}\]
mapping $c_{i,j}$ to $\mu(c_{i,j}u^{i}t^{j})$ for each $(i,j)$ not both greater than zero.\\
We will construct an $\alpha \in J_{x}/J_{x}^{p}$ such that 
\[(\alpha|z]_{x}= \mu(z)\]
for all $z \in F_{x}/(\text{Frob}-1)F_{x}$ and such that $\alpha$ is unique up to $\Delta(K_{2}^{top}(F_{x}))\cap J_{x}$.\\
Similarly to the proof of theorem \ref{mainthmcurves}, for $\alpha \in J_{x}/J_{x}^{p}$ we inductively define
\[\alpha_{> i,j} = \alpha_{i,j}\alpha_{\geq i+1, j}\alpha_{\geq i, j+1}\] and $\alpha_{1,1} $ the element defined by $a_{1,1}$ and $b_{1,1}$ in our expansion of $\alpha$ as a product of elements of the form given in lemma \ref{relativekgppoints}.\\
By lemmas \ref{firstcalcpoints} and \ref{secondcalcpoints}, we have
\[(\alpha |c_{-i,-j}u^{-i}t^{-j}]_{x} = \sum_{1 \leq k \leq i , \ 1 \leq l \leq j}(\alpha_{k,l}|c_{-i,-j}u^{-i}t^{-j}]_{x}\]
and so $(\alpha | z ]_{x} = \mu(z)$ for all $z \in F_{x}/(\text{Frob} - 1)F_{x}$ if and only if
\[(\alpha_{i,j}| c_{-i,-j}u^{-i}t^{-j}]_{x} = \mu_{i,j}(c_{-i,-j}) - \sum_{1 \leq k \leq i , \ 1 \leq l \leq j}(\alpha_{k,l}| c_{-k,-l}u^{-k}t^{-l}]_{x}\]
for all pairs $(i,j)$ not both less than zero.\\
Now by lemmas \ref{dualityonepoints} and \ref{dualitytwopoints}, there does exist such an $\alpha_{i,j}$ for each pair $(i,j)$, uniquely defined up to $(\Delta(K_{2}^{top}(F_{x}))\cap J_{x,\geq i,j}).J_{x,\geq i+1, j}.J_{x, \geq i, j+1}$. So let $\alpha = \prod\alpha_{i,j}$, and we have the required element. Hence the proof is complete for $m = 1$.\\
The induction follows exactly in the proof of theorem \ref{mainthmcurves}. \end{proof}
\bigskip

We now study the global higher tame pairing for a fixed point on the surface $X$. We first define $\mathfrak{J}_{x}$ to be the ring generated by
\[{\prod_{y \ni x}}' \{k(x), t_{y}\} \times \prod_{y, y' \ni x, y' \neq y}\{t_{y},t_{y'}\}.\]
We will proceed very similarly to the case of a fixed curve, aiming to prove the theorem below.

\begin{thm}\label{tamepointthm}
Fix a point $x \in X$, and let $k(x)$, the residue field at $x$, be a finite field of size $q$ Then the global higher tame pairing on
\[ \mathfrak{J}_{x}/(\Delta(K_{2}^{top}(F_{x}))\cap \mathfrak{J}_{x})\mathfrak{J}_{x}^{q-1} \times F_{x}^{\times}/(F_{x}^{\times})^{q-1} \to \mathbb{F}_{q}^{\times}\]
is continuous and non-degenerate.
\end{thm}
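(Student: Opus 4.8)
The plan is to mirror the proof of Theorem \ref{tamecurvethm}: first establish the statement when $x$ satisfies condition $\dagger$, where $\mathfrak{J}_x$ decomposes according to the two transversal curves and the residue fields of all branches equal $k(x)$, and then reduce the general case to this one. Throughout, note that the product $\prod_{y\ni x}$ is finite, so unlike the curve case there is no ``point at infinity'' to keep track of, and continuity is automatic: by Lemma \ref{tamewlldefined} only finitely many local factors are nontrivial and each local tame symbol is continuous, so the preimage of any open subgroup of $\mathbb{F}_q^\times$ is open for the adelic topology on $\mathfrak{J}_x$.

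\textbf{Case $\dagger$.} First I would record the multiplicative structure lemma analogous to the one preceding Theorem \ref{tamecurvethm}: since $q-1$ is prime to $p$, the group of principal units of $\hat{\mathcal{O}}_{X,x}$ is $(q-1)$-divisible, and the height-one primes of $\mathcal{O}_{X,x}$ correspond to the curves $y\ni x$; hence for a fixed $(q-1)$-th root of unity $\zeta\in F_x$ the quotient $F_x^\times/(F_x^\times)^{q-1}$ is generated by $\zeta$ together with the parameters $t_y$, which in case $\dagger$ is just $\zeta,u,t$. Next, using Lemma \ref{kgpstructurepoints}, the symbols of type $3$ (those containing principal units) die in $\mathfrak{J}_x$, so $\Delta(K_2^{top}(F_x))\cap\mathfrak{J}_x$ is generated by the diagonal images of $\{u,t\}$ and of $\{\zeta,t_y\}$; quotienting by these and by $\mathfrak{J}_x^{q-1}$ leaves a group generated by one copy each of $\{u,t\}$, $\{\zeta,u\}$, $\{\zeta,t\}$, placed at a single local position. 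I would then invoke Parshin's local non-degeneracy for the higher tame symbol (the proposition at the end of Section 2.3) and the local argument recalled at the start of the proof of Theorem \ref{tamecurvethm}: in $K_2^{top}(F_{u,t})$ the symbols $\{u,t\},\{\zeta,u\},\{\zeta,t\}$ pair nontrivially with $\zeta,t,u$ respectively. A direct residue computation — together with a renormalization of parameters to absorb the $\{\pm1\}$-valued sign contributions as in the classical case, and the reciprocity law \ref{tamereciprocity} to pin down the diagonal ambiguity — shows that the resulting pairing matrix is invertible over $\mathbb{Z}/(q-1)\mathbb{Z}$, giving non-degeneracy and the induced isomorphism onto $\mathrm{Hom}(F_x^\times/(F_x^\times)^{q-1},\mathbb{Z}/(q-1)\mathbb{Z})$.

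\textbf{General case.} When $x$ lies on more than two curves, or on singular curves (so branches $z\in y(x)$ and residue fields $k_z(x)\supsetneq k(x)$ appear), $\hat{\mathcal{O}}_{X,x}$ is no longer regular and $\mathfrak{J}_x$ does not split as a product over transversal pairs. The residue-field extensions are handled exactly as for a fixed curve: for $b\in k_z(x)$, each Galois conjugate of $b$ differs from $b$ by a $(q-1)$-th power, hence so does $N_{k_z(x)/k(x)}(b)$, so modulo $\mathfrak{J}_x^{q-1}$ every such entry may be replaced by its norm, which lies in $k(x)$. For the ``many curves'' part the plan is: for each unordered pair $\{z,z'\}$ of branches through $x$ choose a normal-crossings local model containing that pair (a case-$\dagger$ situation), pull back the generators $\{t_z,t_{z'}\}$ and $\{\zeta,t_z\}$, compute the tame symbol there, and assemble from these a generating set of $\mathfrak{J}_x/(\Delta(K_2^{top}(F_x))\cap\mathfrak{J}_x)\mathfrak{J}_x^{q-1}$ indexed by $\zeta$ and the parameters $t_y$; the case-$\dagger$ computations then certify exactly which pairings vanish, and non-degeneracy follows by matching these generators with the generators $\zeta,t_y$ of $F_x^\times/(F_x^\times)^{q-1}$. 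Finally, for reducible $y$ the fields $F_{x,z}$ depend only on the branches, so the statement for $x$ follows by combining the per-branch results.

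The main obstacle is this last step: controlling the $K_2^{top}$-relations among the symbols $\{t_y,t_{y'}\}$ when $x$ lies on many curves inside the non-regular ring $\mathcal{O}_{X,x}$, exhibiting a genuine basis of $\mathfrak{J}_x/(\Delta(K_2^{top}(F_x))\cap\mathfrak{J}_x)\mathfrak{J}_x^{q-1}$ of the expected size, and checking that passing through the various case-$\dagger$ models is consistent. Everything else is either a routine residue calculation or a direct transcription of the argument for a fixed curve.
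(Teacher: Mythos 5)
Your proposal follows essentially the same route as the paper: prove case $\dagger$ first by reducing the quotient to the three generators $(\{u,t\},1)$, $(\{\zeta,u\},1)$, $(\{\zeta,t\},1)$ and matching them with $\zeta$, $t$, $u$, then treat a general point by replacing entries lying in residue-field extensions by their norms and pairing each generator $t_y$ (resp.\ $\zeta$) of $F_x^\times/(F_x^\times)^{q-1}$ with a symbol $\{\zeta,t_{y'}\}$ (resp.\ $\{t_y,t_{y'}\}$) placed at a single position. The only real difference is that the step you flag as the main obstacle is closed in the paper by a short direct argument rather than by choosing auxiliary normal-crossings models or exhibiting a basis: the two local fields attached to any pair of height-one primes of $\hat{\mathcal{O}}_{X,x}$ already form a case-$\dagger$ configuration, the pairing of $\{\zeta,t_{y'}\}$ against $t_y$ equals $\zeta^{(t_y,t_{y'})_x}$ with $(t_y,t_{y'})_x$ the intersection multiplicity, and changing the auxiliary curve $y'$ (or the pair used for $\zeta$) alters the chosen element only by diagonal elements and by norms, hence by $(q-1)$-th powers, so all choices are immaterial in the quotient and Kummer duality applies exactly as in the fixed-curve case.
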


As above for the Witt pairing, we will apply condition $\dagger$ at first. We will again use a combinatorial argument to give each generator of $F_{x}^{\times}/(F_{x}^{\times})^{q-1}$ exactly one generator of the quotient group $\mathfrak{J}_{x}/(\Delta(K_{2}^{top}(F_{x}))\cap \mathfrak{J}_{x})\mathfrak{J}_{x}^{q-1}$ with which it has a non-zero value when the higher tame pairing is applied.

From the description of $F_{x}^{\times}$ in lemma \ref{kgpstructurepoints}, we see that $F_{x}^{\times}/(F_{x}^{\times})^{q-1}$ is generated by a $(q-1)^{th}$ root of unity $\zeta \in k(x)$ and a local parameter $t_{y}$ for each $y$ passing through $x$ - with condition $\dagger$, we will have just two such local parameters $u$ and $t$. We study the elements in the diagonal embedding of $K_{2}^{top}(F_{x})$, and use this to study the generators of the quotient group.

\begin{lemma}
Let $\alpha \in \Delta(K_{2}^{top}(F_{x}))\cap\mathfrak{J}_{x}$, where $x \in X$ satisfies condition $\dagger$. Then $\alpha$ is a product of elements of the form: 
\begin{enumerate}
\item{$(\{\zeta,u\}, \{\zeta,u\})$;}
\item{$(\{\zeta,t\}, \{\zeta, t\})$;}
\item{$(\{u,t\}, \{u,t\})$.}
\end{enumerate} 
\end{lemma}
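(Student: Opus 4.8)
The plan is to mimic the structure of the analogous lemma for curves (the one describing $\Delta(K_2^{top}(F_y))\cap\mathfrak{J}_y$), using the explicit generating set for $K_2^{top}(F_x)$ from Lemma \ref{kgpstructurepoints} together with the decomposition $F_x^\times = \mathcal{E}_x \times k(x)^\times \times \oplus_y \langle t_y\rangle$ established in its proof. First I would take an arbitrary element of $K_2^{top}(F_x)$ and write it, via Lemma \ref{kgpstructurepoints}, as a product of symbols of types $\{t_y,t_{y'}\}$, $\{a,t_y\}$ with $a\in k(x)^\times$, and $\{1+au^it^j,t_y\}$. Under condition $\dagger$ there are exactly two curves through $x$, with parameters $u$ and $t$, so the only symbols of the first type are (up to sign conventions, using $\{u,u\}=\{-1,u\}$ etc.) $\{u,t\}$. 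The diagonal image of such an element in $\mathfrak{J}_x \subset K_2^{top}(F_{u,t})\times K_2^{top}(F_{t,u})$ is the pair consisting of the images in each of the two local fields.

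Next I would intersect with $\mathfrak{J}_x$, which by definition is generated by $\{k(x),t_y\}$ and $\{t_y,t_{y'}\}$ — crucially it contains no symbols with a principal-unit entry. So the symbols $\{1+au^it^j,t_y\}$ of the third type in Lemma \ref{kgpstructurepoints}, having a principal unit $1+au^it^j$ as an entry, die upon intersecting with $\mathfrak{J}_x$: more precisely, a diagonal element landing in $\mathfrak{J}_x$ cannot involve these. This leaves only the diagonal images of $\{a,t_y\}$ (with $a\in k(x)^\times$) and $\{u,t\}$. Writing $a = \zeta^m$ for a fixed $(q-1)$-th root of unity $\zeta$ (since $k(x)^\times$ is cyclic of order $q-1$ and $K_2^{top}$ of a finite field vanishes), the element $\{a,t_y\}$ becomes a power of $\{\zeta,t_y\}$, and with $t_y\in\{u,t\}$ we get exactly the three listed generators $(\{\zeta,u\},\{\zeta,u\})$, $(\{\zeta,t\},\{\zeta,t\})$, $(\{u,t\},\{u,t\})$ — the two entries agreeing because the embedding is diagonal and both $u$, $t$, $\zeta$ are the same elements viewed in $F_{u,t}$ and $F_{t,u}$.

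The main obstacle I anticipate is the careful bookkeeping of the diagonal embedding: $F_x$ embeds into the product $F_{u,t}\times F_{t,u}$ of the two completions, and one must check that an element $\{\zeta, t_y\}\in K_2^{top}(F_x)$ really does map to the pair $(\{\zeta,t_y\},\{\zeta,t_y\})$ under the two natural maps $F_x^\times \to F_{u,t}^\times$ and $F_x^\times \to F_{t,u}^\times$, and likewise that the principal-unit symbols are genuinely killed rather than merely rewritten — i.e. that a product of type-three symbols whose diagonal image happens to lie in $\mathfrak{J}_x$ must already be trivial in $\mathfrak{J}_x$. This is where one invokes that $\mathfrak{J}_x$ is by construction the subgroup generated by the three symbol types with no principal units, so its intersection with the diagonal image simply cannot pick up contributions from $\mathcal{E}_x$; the type-three generators contribute nothing. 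I would phrase this cleanly by first reducing a general diagonal element to a product over the generating symbols of Lemma \ref{kgpstructurepoints}, then discarding each factor not lying in $\mathfrak{J}_x$, exactly parallel to the curve case, and conclude.
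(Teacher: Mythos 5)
Your proposal is correct and follows essentially the same route as the paper: both reduce via Lemma \ref{kgpstructurepoints} (specialised to the two parameters $u,t$ under $\dagger$) to the generators $\{\zeta,u\}$, $\{\zeta,t\}$, $\{u,t\}$, discard the principal-unit symbols because $\mathfrak{J}_{x}$ contains no symbols with principal-unit entries, and note that the diagonal embedding yields pairs with identical entries in $F_{u,t}$ and $F_{t,u}$. The bookkeeping point you flag is glossed over in the paper in exactly the same way, so no substantive difference remains.
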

\begin{proof}
As $x$ satisfies $\dagger$, we know by lemma \ref{kgpstructurepoints} that $K_{2}^{top}(F_{x})$ is generated by the elements $\{\zeta,u\}$, $\{\zeta,t\}$ and $\{u,t\}$, and the principal units which we do not need to consider here. Embedding each of these elements diagonally into $\mathfrak{J}_{x}$, we get elements which are non-trivial in both local fields $F_{u,t}$ and $F_{t,u}$ and can still be written in this form.
\end{proof}

\begin{lemma}
Let $\alpha \in \mathfrak{J}_{x}/(\Delta(K_{2}^{top}(F_{x}))\cap\mathfrak{J}_{x})\mathfrak{J}_{x}^{q-1}$, where $x \in X$ satisfies condition $\dagger$. Then $\alpha$ can be written as a product of elements on the form:
\begin{enumerate}
\item{$(\{\zeta,u\},1)$;}
\item{$(\{\zeta,t\},1)$;}
\item{$(\{u,t\},1)$.}
\end{enumerate}
\end{lemma}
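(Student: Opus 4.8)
The plan is to prove the lemma by reducing each generator of $\mathfrak{J}_{x}$ modulo the diagonal subgroup identified in the preceding lemma, so that its second (i.e.\ $F_{t,u}$) component is killed. First I would record the generators of $\mathfrak{J}_{x}$ under hypothesis $\dagger$: since the only curves through $x$ have local equations $t$ and $u$, the definition of $\mathfrak{J}_{x}$ as the ring generated by ${\prod}'_{y \ni x}\{k(x),t_{y}\}\times\prod_{y,y'\ni x}\{t_{y},t_{y'}\}$ together with the structure theorem \ref{topkgpstructure} applied to each of the two local fields $F_{u,t}=k(x)((u))((t))$ and $F_{t,u}=k(x)((t))((u))$ shows that $\mathfrak{J}_{x}$ is generated by the six elements $(\{a,u\},1)$, $(1,\{a,u\})$, $(\{a,t\},1)$, $(1,\{a,t\})$, $(\{u,t\},1)$, $(1,\{u,t\})$ with $a\in k(x)^{\times}$. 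Because $k(x)^{\times}$ is cyclic of order $q-1$ with generator $\zeta$, bilinearity gives $\{a,u\}=\{\zeta,u\}^{m}$ and $\{a,t\}=\{\zeta,t\}^{m}$ for $a=\zeta^{m}$, so $\mathfrak{J}_{x}$ is generated by those same six elements with $a=\zeta$.

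Next I would invoke the preceding lemma, which identifies $\Delta(K_{2}^{top}(F_{x}))\cap\mathfrak{J}_{x}$ as the group generated by $(\{\zeta,u\},\{\zeta,u\})$, $(\{\zeta,t\},\{\zeta,t\})$ and $(\{u,t\},\{u,t\})$. Multiplying each of the three ``second component'' generators by the inverse of the matching diagonal generator then yields, modulo $\Delta(K_{2}^{top}(F_{x}))\cap\mathfrak{J}_{x}$,
\[(1,\{\zeta,u\})\equiv(\{\zeta,u\},1)^{-1},\qquad (1,\{\zeta,t\})\equiv(\{\zeta,t\},1)^{-1},\qquad (1,\{u,t\})\equiv(\{u,t\},1)^{-1}.\]
Hence each of the six generators of $\mathfrak{J}_{x}$ is, modulo $(\Delta(K_{2}^{top}(F_{x}))\cap\mathfrak{J}_{x})\mathfrak{J}_{x}^{q-1}$, a power of one of $(\{\zeta,u\},1)$, $(\{\zeta,t\},1)$, $(\{u,t\},1)$, and therefore so is an arbitrary $\alpha\in\mathfrak{J}_{x}$; this is exactly the assertion of the lemma.

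There is no deep obstacle here: the content is carried entirely by Theorem \ref{topkgpstructure}, Lemma \ref{kgpstructurepoints} and the preceding lemma, and the rest is elementary symbol manipulation. The two points that need a line of care are (i) that no principal-unit symbols survive, which is automatic since intersecting $K_{2}^{top}(F_{x})$ with $\mathfrak{J}_{x}$ discards every generator of Lemma \ref{kgpstructurepoints} of type $\{1+au^{i}t^{j},t_{y}\}$; and (ii) the orientation of the symbol $\{u,t\}$, since in the $F_{t,u}$-component the generator $\{t_{y},t_{y'}\}$ reads $\{t,u\}=\{u,t\}^{-1}$, so one simply passes to an inverse before cancelling against $(\{u,t\},\{u,t\})$. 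No reciprocity law is needed in this step (in contrast to the curve case, where the ``point at infinity'' had to be brought in); the reciprocity law \ref{tamereciprocity} will only enter afterwards, when this description is used to prove non-degeneracy of the higher tame pairing.
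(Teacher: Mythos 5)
Your proposal is correct and takes essentially the same route as the paper: the paper also kills the second component by multiplying $\alpha=(\beta,\gamma)$ by the diagonal element $(\gamma,\gamma)^{q-2}\in\Delta(K_{2}^{top}(F_{x}))\cap\mathfrak{J}_{x}$ and reducing modulo $\mathfrak{J}_{x}^{q-1}$, then appeals to the structure of the topological $K$-groups for elements supported only in the first component. Your generator-by-generator cancellation against $(\{\zeta,u\},\{\zeta,u\})$, $(\{\zeta,t\},\{\zeta,t\})$ and $(\{u,t\},\{u,t\})$ is the same idea carried out on the generators of $\mathfrak{J}_{x}$.
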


\begin{proof}
Any element with non-trivial entries only in the first column clearly satisfies the lemma because of the structure of the topological $K$-groups of a higher local field. So suppose $\alpha = (\beta, \gamma)$ for some elements $\beta \in K_{2}^{top}(F_{u,t})$ and $\gamma \in K_{2}^{top}(F_{t,u})$. Then multiplying by the element $(\gamma, \gamma)^{q-2} \in \Delta(K_{2}^{top}(F_{x}))\cap\mathfrak{J}_{x}$ gives us the element $(\beta\gamma, 1) \in \mathfrak{J}_{x}/(\Delta(K_{2}^{top}(F_{x}))\cap\mathfrak{J}_{x})\mathfrak{J}_{x}^{q-1}$, which is a product of elements of types 1, 2 and 3 as before. 
\end{proof}

We may now very simply prove \ref{tamepointthm} in the case $x$ satisfies $\dagger$. Recall we wish to pair each generator of $F_{x}^{\times}/(F_{x}^{\times})^{q-1}$ with a generator of $\mathfrak{J}_{x}/(\Delta(K_{2}^{top}(F_{x}))\cap\mathfrak{J}_{x})\mathfrak{J}_{x}^{q-1}$, so that the two elements have non-zero pairing, thus showing non-degeneracy. We pair the elements $\zeta$, $u$ and $t$ with the elements of types 3, 2 and 1 respectively as defined in the lemma above. Each pair yields one of the elements $\pm \zeta \in \mathbb{F}_{q}$. This completes the proof when $x$ satisfies $\dagger$.

\bigskip

We now prove the theorems without the condition $\dagger$. We first look at the case of the Witt pairing.

\medskip
\noindent\emph{Proof}

\medskip
\noindent We will proceed by considering a general element in $J_{x}$, and asking when it can be a ``degenerate element" - i.e. an element $\alpha$ which has $(\alpha|h]_{x}=0$ for every $h \in F_{x}$. We will use the local case, case $\dagger$ and consider the forms the elements can take, and see that the only degenerate elements which can occur must be diagonal elements.

 We briefly introduce our argument. First, we take a typical element of $J_{x}$:
\[\left( \{1+\beta_{1}\alpha_{1}^{i_{1}}\gamma_{1}^{j_{1}}, \alpha_{1}\},   \{1+\beta_{2}\alpha_{2}^{i_{2}}\gamma_{2}^{j_{2}},\alpha_{2}\}, \dots \right).\]

\noindent Then we have two options:
\begin{enumerate}
\item{There exists a pair of local parameters $\alpha_{k}$, $\gamma_{k}$ such that the entry in one of the two local fields defined by the parameters is non-trivial, and the entry in the other is trivial.}
\item{The entries in all pairs of local fields as described above are either both non-trivial or both trivial.}
\end{enumerate}

In the first case, the local case and first part of the argument below will show this type of element can never be degenerate. In the second case, condition $\dagger$ shows that each pair of local parameters must have the same entry for both the local fields defined, and an element with all entries of this form is itself diagonal.

\medskip
We now begin the rigorous argument by considering the general element
\[ \left( \{1+\beta_{1}\alpha_{1}^{i_{1}}\gamma_{1}^{j_{1}}, \alpha_{1}\},   \{1+\beta_{2}\alpha_{2}^{i_{2}}\gamma_{2}^{j_{2}},\alpha_{2}\}, \dots \right) \in J_{x},\]

\noindent where the $\beta_{k} \in k_{y_{k}}(x)$, and the $\alpha_{k}$, $\gamma_{k}$ are local parameters for the localisation of $F_{x}$ given by the prime $y_{k} \in \mathcal{O}_{X,x}$.

We examine when such an element can be degenerate in the left hand side of the Witt pairing on
\[J_{x}/\Delta(K_{2}^{top}(F_{x})) \times F_{x}/(\text{Frob}-1)F_{x}.\]

If just one of the entries $\{1+ \beta_{k}\alpha_{k}^{i_{k}}\gamma_{k}^{j_{k}}, \alpha_{k}\}$ is non-trivial, then we are in the local situation and can always find an element of $F_{x}$ with which our element has non-zero Witt pairing - here take 
\[h_{0}=\alpha_{k}^{-i_{k}}\gamma_{k}^{-j_{k}}.\]

 See lemma \ref{firstcalcpoints} for the calculation. There is a further difficulty here if the element is in $K_{2}^{top}(F_{x,y})^{p^{m}}$. Then we must pair it with an element in $W_{m}(F_{x})$, a part of the induction we will discuss more later. 
 
So to be degenerate we must have more than one non-trivial entry.

We now look at the case where exactly two of the entries are non-trivial. Suppose these entries are in localisations of $F_{x}$ with \emph{different} local parameters from each other, say corresponding to primes $y_{k}$ and $y_{l}$. 
So the two non-trivial entries are $\{1+ \beta_{k}\alpha_{k}^{i_{k}}\gamma_{k}^{j_{k}}, \alpha_{k}\}$ and $\{1+ \beta_{l}\alpha_{l}^{i_{l}}\gamma_{l}^{j_{l}}, \alpha_{l}\}$.

Then letting 
\[h_{0} = \alpha_{k}^{-i_{k}}\gamma_{k}^{-j_{k}}\]

\noindent we have $(\{1+ \beta_{k}\alpha_{k}^{i_{k}}\gamma_{k}^{j_{k}}, \alpha_{k}\}|h_{0}]_{\alpha_{k},\gamma_{k}}= j_{k}\beta_{k}$ by lemma \ref{firstcalcpoints}.

So if $j_{k}$ is not divisible by $p$, we have an element of $F_{x}$  which has a non-zero pairing with the first entry and zero with the second, and hence non-zero when summed. If $p^{m}$ is the maximal power of $p$ dividing $j_{k}$, replace $h_{0}$ by the Witt vector with $h_{0}$ in the $m^{th}$ position to get the same result, by property 7 of lemma \ref{wittproperties}. \
Hence if our element is a product of local elements with all the non-trivial entries in fields defined by \emph{different} curves $y_{i} \ni x$, then the element cannot be degenerate. 

\medskip
We now consider the case where there are two entries from fields defined by the \emph{same} pair of local parameters, starting with this pair being the only nontrivial entries.

 Then we can apply the calculations from lemmas \ref{dualityonepoints} and \ref{dualitytwopoints}, where condition $\dagger$ is satisfied, to see that it must have identical entries in the two non-trivial places. But this is exactly the image of the element of $K_{2}^{top}(F_{x})$ with these entries diagonally embedded in $J_{x}$, as they are trivial in all other localisations. So we have proven non-degeneracy in this case.

\medskip
Finally, we discuss the case where there are more than two non-trivial entries in the element. Suppose first that there is an entry with local parameters $\alpha_{k}$, $\gamma_{k}$ which aren't local parameters for any of the other non-trivial entries, i.e $\alpha_{k} \neq \alpha_{l}$ and $\gamma_{k} \neq \gamma_{l}$ for all $l$ with $\beta_{l}\neq 0$.

 Then arguing as in the case of two non-trivial entries above, we have an element  $\alpha_{k}^{-i_{k}}\gamma_{k}^{-j_{k}} \in F_{x}$ which has a non-zero pairing with $\{1+ \beta_{k}\alpha_{k}^{i_{k}}\gamma_{k}^{j_{k}}, \alpha_{k}\}$ and zero with each other element. So to be degenerate, an element must have at least two entries for each pair of local parameters.

But the only two fields which can be defined by these parameters $\alpha_{k}, \ \gamma_{k}$ are the localisations with respect to the prime ideal generated by one of them, then the maximal ideal generated by both - i.e. $F_{\alpha_{k},\gamma_{k}}$ and $F_{\gamma_{k},\alpha_{k}}$. So in fact any degenerate element must be a sum of the type discussed above. But such an element is the image, under the diagonalisation map, of the product of all its entries - each parameter can be regarded as trivial in the topological $K$-groups of the local fields where it is not a local parameter.

Hence the pairing is non-degenerate on the left hand side $J_{x}/\Delta(K_{2}^{top}(F_{x}))$.

\medskip
So we now must prove that the pairing is non-degenerate on the right-hand side. Following from the calculations \ref{dualityonepoints} and \ref{dualitytwopoints}, we see this is equivalent to proving that the elements of $W_{m}(F_{x})/(\text{Frob}-1)W_{m}(F_{x})$ are all of the required form for each integer $m$, i.e. every entry $f$ in the Witt vector is 
a sum of elements of the form
of the form $f=\beta\prod_{k}\alpha_{k}^{i_{k}}$ where $\beta \in k(x)$, the $\alpha_{k}$ are primes of $F_{x}$, and at least one of the $i_{k}$ is negative. This argument follows as before, in lemma \ref{structuremodfrob}: suppose all coefficients are greater than zero, then look at the convergent (with respect to the topology of $\mathcal{O}_{X,x}$)
sum $f'=(-f)+(-f)^{p}+(-f)^{p^{2}}+\dots $, hence $f=f'^{p}-f$ is trivial modulo (Frob$-1)$. So we can now complete the proof, as this shows the non-degeneracy on the right-hand side of the pairing.

\medskip
\noindent This completes the proof of theorem \ref{mainthmpoints}.

\medskip
\noindent \emph{Remark}

\medskip
One can use the work of Matsumi to understand the structure of $F_{x}/(\text{Frob}-1)F_{x}$, then induct as in the case of a fixed curve. For a complete two-dimensional local ring $R$ of positive characteristic, Matsumi's paper \cite{KM} finds a simple form for the rings $R/R^{p}$ and $F/F^{p}$, where $F$ is the fraction field of $R$. 

\medskip
\noindent We now complete the proof of theorem \ref{mainthmpoints}.
\begin{proof}
The above discussion uses the structure of the group $J_{x}$, the argument for the normal crossings case, and the structure of $F_{x}/(\mathrm{Frob}-1)F_{x}$ to show that
\[\frac{J_{x}}{(\Delta(K_{2}^{top}(F_{x}))J_{x}^{p}} \cong \text{Hom}\left(\frac{F_{x}}{(\text{Frob}-1)F_{x}}, \mathbb{Z}/p\mathbb{Z}\right).\]

We can then induct on the length of the Witt vectors as in the proof of \ref{mainthmcurves}. Suppose
\[\frac{J_{x}}{(\Delta(K_{2}^{top}(F_{x}))J_{x}^{p^{m}}} \cong
\text{Hom}\left(\frac{W_{m}(F_{x})}{(\text{Frob}-1)W_{m}(F_{x})}, \mathbb{Z}/p^{m}\mathbb{Z}\right)\]
for some integer $m$.
Let $\mu \in $ Hom$(W_{m+1}(F_{x})/(\text{Frob}-1)W_{m+1}(F_{x}), \mathbb{Z}/p^{m+1}\mathbb{Z})$. Then as before we define the restriction $\mu': W_{m}(F_{x})/(\text{Frob}-1)W_{m}(F_{x}) \to \mathbb{Z}/p^{m}\mathbb{Z}$ by
\[\mu'(f_{0},\dots, f_{m-1}) = V(\mu(f_{0},\dots, f_{m-1}, 0)).\]

Then $\mu'$ can be associated to $\alpha \in J_{x}/(\Delta(K_{2}^{top}(F_{x}))J_{x}^{p^{m}}$. Then as in the proof of \ref{mainthmcurves}, $\alpha$ will also associate to $\mu$ in the same way, uniquely up to $J_{x}^{p^{m}}$ as required.
\end{proof}

\medskip
We now remove the necessity for condition $\dagger$ for the case of the higher tame symbol and $\mathfrak{J}_{x}$. We proceed in a broadly similar manner to the argument for the Witt symbol, reducing back down to the case of exactly two local parameters and looking at the quotient by the diagonal elements.

Firstly, we note that $F_{x}/(F_{x})^{q-1}$ is generated by $k(x)$, and a local parameter $t_{y}$ for each curve $y$ passing through $x$. Let $k(x)$ itself be generated by the $(q-1)^{th}$ root of unity $\zeta$.

We wish to pair $\zeta$ and each $t_{y}$ with an element of $\mathfrak{J}_{x}$, unique up to the diagonal elements and a power of $(q-1)$, so that the elements we choose generate $\mathfrak{J}_{x}/(\Delta(K_{2}^{top}(F_{x}))\mathfrak{J}_{x}^{q-1}$. As before, this is enough to prove Kummer duality and the tame part of the reciprocity map.

We pair an element $t_{y}$ with an element with $\{\zeta, t_{y'}\}$ in the position corresponding to a two-dimensional local field with $t_{y}$ and $t_{y'}$ as local parameters - there are two such fields in the adeles at $x$, but case $\dagger$ above shows that this choice does not matter. We must show also that the choice of prime $t_{y'}$ does not matter.

The higher tame pairing will take the value
\[(\{\zeta,t_{y'}\},t_{y})_{x}= \zeta^{(t_{y},t_{y'})_x}\]
where $(t_{y},t_{y'})_{x}$ is the intersection multiplicity at $x$. Since the intersection multiplicity satisfies
\[(t_{y},t_{y'})_{x} = \text{dim}_{k(x)}\left(\mathcal{O}_{X,x}/(t_{y},t_{y'})\right),\]
we see that the value of the higher tame pairing again differs by norms as the choice of $y'$ varies, so as in the argument for a fixed curve, the choice of $y'$ in the quotient does not matter, as it will change only up to a power of $(q-1)$.

We pair the element $\zeta$ with an element with $\{t_{y},t_{y'}\}$ in the position corresponding to a two-dimensional local field with $t_{y}$ and $t_{y'}$ as local parameters and trivial everywhere else, where $t_{y}$ and $t_{y'}$ are distinct height one primes of $\mathcal{O}_{X,x}$.

To show our choice does not matter in the quotient, we argue as above. Firstly the choice of fields $F_{t_{y},t_{y'}}$ or $F_{t_{y'},t_{y}}$ does not matter by condition $\dagger$ in the preceding section. 

Secondly, we consider our choice of $y$ and $y'$. Up to a sign, the pairing will take the value $\zeta^{(t_{y},t_{y'})_{x}}$. If we change the primes to $t_{y_{1}}$ and $t_{y'_{1}}$, this is equivalent in the adelic quotient to multiplying by the element with $\{t_{y_{1}},t_{y'_{1}}\}$ in the place corresponding to the new primes, and $\{t_{y},t_{y'}\}^{q-2}$ in the place corresponding to the old primes. 

But by the same argument as before, we may replace all these elements by their norms, which are $(q-1)^{th}$-powers in the adeles. Hence the above value of the higher tame pairing is unchanged and the elements all become equivalent in the quotient.

 So we have paired each generator of $F_{x}/F_{x}^{q-1}$ with a generator of the tame part of the adeles at $x$ modulo $(q-1)^{th}$ powers, and hence can apply Kummer theory as usual.

\bigbreak

We now construct the reciprocity map for the ring $F_{x}$. As in the previous section, we begin with some basic Galois theory to show the map is well-defined.

Let $L/F_{x}$ be a finite extension with Galois group $G$, $\mathcal{O}_{L}$ the integral closure of $\hat{\mathcal{O}}_{X,x}$ in $L$ and $\mathfrak{p}_{L}$ its maximal ideal. As mentioned at the start of section two, every height one prime ideal $\mathfrak{q} \subset \mathfrak{p}_{L}$ determines a two-dimensional local field $L_{\mathfrak{p}_{L}, \mathfrak{q}}$. Spec$(\mathcal{O}_{L})$ is a normal two-dimensional scheme over the residue field $l$ (see \cite[8.2.39]{L}), a finite extension of $k(x)$, and we have a finite morphism
\[\phi: \text{Spec}(\mathcal{O}_{L}) \to \text{Spec}(\hat{\mathcal{O}}_{X,x}).\] 
For a height one prime ideal $\mathfrak{q}$ of $\mathcal{O}_{L}$, define the stabiliser
\[G_{\mathfrak{q}} = \{g \in G : g(\mathfrak{q}) = \mathfrak{q}\}.\]
If $\mathfrak{q}$, $\mathfrak{q}'$ are two such primes, and $\phi(\mathfrak{q}) = \phi(\mathfrak{q}')$ then $G_{\mathfrak{q}}$ is conjugate to $G_{\mathfrak{q}'}$ in $G$. \\

Now let $L/F_{x}$ be an abelian extension - then the homomorphism
\[\text{Gal}(L_{\mathfrak{p}_{L}, \mathfrak{q}}/F_{x,y}) \cong G_{\mathfrak{q}} \to G = \text{Gal}(L/F_{x})\]
is independent of the choice of $\mathfrak{q}$, where $\mathfrak{q}$ is any prime ideal such that $\phi(\mathfrak{q})$ is the prime ideal of $\hat{\mathcal{O}}_{X,x}$ associated to the curve $y$. Of course, this is just basic valuation theory - see \cite{Bour}, chapter VI.

We now define the unramified part of the reciprocity map.  The unramified closure of the ring $F$ is the ring generated by $F$ and $\bar{\mathbb{F}}_{q}$, and its Galois group is canonically isomorphic to $\hat{\mathbb{Z}}$, generated by the Frobenius automorphism of $\bar{\mathbb{F}}_{q}$, Frob.

\begin{defn}\label{unrammap}
Let $\delta: K_{2}^{top}(F_{x,y}) \to K_{1}^{top}(\bar{F}_{x,y})$ be the boundary homomorphism of $K$-theory. We define the map
\[Un_{x,y}: K_{2}^{top}(F_{x,y}) \to \hat{\mathbb{Z}}\]
by 
\[\{\alpha,\beta\} \mapsto \mathrm{Frob}^{v_{\bar{F}_{x,y}}(\delta(\{\alpha,\beta\})},\]
where $v_{\bar{F}_{x,y}}$ is the valuation map of the local field $\bar{F}_{x,y}$.
\end{defn}

We define $Un_{x}$ to be the product of the $Un_{x,y}$ over the local irreducible curves $y \ni x$. Note that this product is well-defined on the adelic group ${\prod}'_{y \ni x}K_{2}^{top}(F_{x,y})$, as for all but finitely many $y \ni x$, the component $\{\alpha_{x,y},\beta_{x,y}\}$ is in $K_{2}^{top}(\mathcal{O}_{x,y})$ and hence the value of $\delta(\{\alpha_{x,y},\beta_{x,y}\})$ is $1$. 

\begin{lemma}\label{unramreci}
The map $Un_{x}$ obeys the reciprocity law, i.e. for an element $\{\alpha,\beta\} \in K_{2}^{top}(F_{x})$, we have $Un_{x}(\{\alpha, \beta\}) = 1$.
\end{lemma}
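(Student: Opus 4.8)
\emph{Proof plan.} The plan is to reduce, by bilinearity and the structure of $K_2^{top}(F_x)$, to the generators furnished by Lemma \ref{kgpstructurepoints}, and then to identify the one non-trivial contribution with an antisymmetrised intersection number that vanishes by symmetry.

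First I would unwind the definition: $Un_x(\{\alpha,\beta\}) = \prod_{y\ni x}Un_{x,y}(\{\alpha,\beta\}) = \mathrm{Frob}^{\,d}$, where $d = \sum_{y\ni x}[k_y(x):k(x)]\,v_{\bar F_{x,y}}\big(\delta(\{\alpha,\beta\})\big)$, the residue-degree factor being the normalisation under which each local unramified map lands in $\mathrm{Gal}(F^{unram}/F)\cong\hat{\mathbb{Z}}$; for a diagonal symbol with $\alpha,\beta\in F_x^\times$ only finitely many $y$ contribute, so $d$ is a well-defined integer. As $\hat{\mathbb{Z}}$ is torsion-free, it suffices to prove $d=0$. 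Since $\delta$ and $v_{\bar F_{x,y}}$ are additive in the symbol, I may assume $\{\alpha,\beta\}$ is one of the three generating types from Lemma \ref{kgpstructurepoints}: $\{t_y,t_{y'}\}$ with $y\neq y'$, $\{a,t_y\}$ with $a\in k(x)^\times$, or $\{1+au^it^j,t_y\}$ with $i,j\geq 1$.

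The last two types are immediate: for any curve $y'\neq y$ through $x$ both entries are units of $\hat{\mathcal{O}}_{X,x}$ localised at $y'$, so $\delta(\cdot)=1$ there, while along $y$ one computes $\delta(\{a,t_y\})=\bar a\in k_y(x)^\times$ and $\delta(\{1+au^it^j,t_y\})=\overline{1+au^it^j}=1$ in $\bar F_{x,y}=k_y(x)((u_{x,y}))$, both of valuation zero; hence $d=0$. Everything then reduces to $\{t_y,t_{y'}\}$ with $y\neq y'$, which is the heart of the matter. Using $\delta(\{f,g\})=(-1)^{v(f)v(g)}\overline{f^{v(g)}g^{-v(f)}}$ with $v_y(t_y)=v_{y'}(t_{y'})=1$ and $v_y(t_{y'})=v_{y'}(t_y)=0$, I would show that $\delta(\{t_y,t_{y'}\})$ is a unit times $\overline{t_{y'}}^{\,-1}$ along $y$, a unit times $\overline{t_y}$ along $y'$, and trivial along every other curve through $x$, so that
\[
d = [k_{y'}(x):k(x)]\,v_{\bar F_{x,y'}}(\overline{t_y}) - [k_y(x):k(x)]\,v_{\bar F_{x,y}}(\overline{t_{y'}}).
\]
Then I would observe that each term equals $\dim_{k(x)}\big(\mathcal{O}_{X,x}/(t_y,t_{y'})\big)$: the ring $\mathcal{O}_{X,x}/(t_y,t_{y'})$ is the quotient of the discrete valuation ring $\widehat{\mathcal{O}_{y,x}}$ (residue field $k_y(x)$, uniformiser $u_{x,y}$) by $\overline{t_{y'}}$, so its $k(x)$-length is $[k_y(x):k(x)]\,v_{\bar F_{x,y}}(\overline{t_{y'}})$, and symmetrically with $y$ and $y'$ swapped — but the ideal $(t_y,t_{y'})$ is the same. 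Hence the two terms cancel and $d=0$, giving $Un_x(\{\alpha,\beta\})=1$ on generators and, by continuity and multiplicativity, on all of $K_2^{top}(F_x)$. Conceptually this is the Parshin-type reciprocity around the two-dimensional local ring $\hat{\mathcal{O}}_{X,x}$: $v_{\bar F_{x,y}}\circ\delta$ is the iterated boundary $K_2^{top}(F_{x,y})\to K_1^{top}(\bar F_{x,y})\to K_0(k_y(x))\cong\mathbb{Z}$, and summing over $y\ni x$ antisymmetrises the symmetric pairing $\sum_{y,y'}v_y(\alpha)v_{y'}(\beta)(y\cdot y')_x$. I expect the main obstacle to be exactly this last computation — matching the sign produced by the $\delta$-formula against the symmetry of the intersection multiplicity, and pinning down the residue-degree normalisation in $Un_{x,y}$ that turns $v_{\bar F_{x,y}}(\delta(\{t_y,t_{y'}\}))$ into the intersection number; the remaining steps are routine unwinding together with the structure theorem \ref{topkgpstructure}.
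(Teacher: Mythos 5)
Your proof is correct and follows essentially the same route as the paper: reduce via Lemma \ref{kgpstructurepoints} to the three types of generators, observe that $\{a,t_{y}\}$ and $\{1+au^{i}t^{j},t_{y}\}$ give boundary values of valuation zero, and see that for $\{t_{y},t_{y'}\}$ the two non-trivial contributions along $y$ and $y'$ cancel. The only difference is that where the paper simply asserts this cancellation is ``easy to see'', you justify it by identifying both (residue-degree-weighted) terms with the symmetric intersection multiplicity $\dim_{k(x)}\left(\mathcal{O}_{X,x}/(t_{y},t_{y'})\right)$ — a welcome extra precision, which also makes explicit the normalisation of $Un_{x,y}$ by $[k_{y}(x):k(x)]$ needed when the residue fields $k_{y}(x)$ differ.
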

 
\begin{proof}
Using lemma \ref{kgpstructurepoints}, we may calculate the image of $K_{2}^{top}(F_{x})$ under the map $Un_{x}$. We have:
\[\delta_{x,y}(\{t_{y},t_{y'}\})=t_{y};\]
\[\delta_{x,y'}(\{t_{y},t_{y'}\})=-t_{y'};\]
\[\delta_{x,y}(\{a,t_{y}\})=a;\]
\[\delta_{x,y}(1+au^{i}t^{j},t_{y}\} =1;\]
and all other values are trivial. Hence the image of $K_{2}^{top}(F_{x})$ is generated by the images of the elements
\[t_{y}-t_{y'}, \ \ \ \ \ \ a\]
 in $k(y)_{x}^{\times}$, where $y$ and $y'$ range through the curves passing through $x$ and $a \in k(x)^{\times}$.
 Now to find the image of $Un_{x}$, we sum the valuations of the image of $\delta$ over the curves $y \ni x$. It is easy to see that in both cases the sum of the generators over the two non-zero values is zero.
\end{proof}

So the product of all the symbols
\[{\prod}'_{y \ni x}K_{2}^{top}(F_{x,y}) \to \text{Gal}(F_{x}^{ab}/F_{x})\]
is well-defined. By lemmas \ref{wittwelldefined} and \ref{tamewlldefined}, we know the product converges.

Now we define 
\[\phi_{x} : {\prod}'_{y \ni x} K_{2}^{top}(F_{x,y}) \to \text{Gal}(L/F_{x})\] to be the sum of the $\phi_{x,y}(L)$.
\medskip
\begin{lemma}
Let $L/F_{x}$ be a finite abelian extension. Then for almost all $y \ni x$, we have $\phi_{x,y}(L) = 1$, and hence $\psi_{x}$ is a continuous homomorphism.
\end{lemma}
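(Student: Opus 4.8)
The plan is to follow the same pattern as the proof of Lemma~\ref{ctshomocurves}, transported from a fixed curve to a fixed point. Since by \cite{P4} the local reciprocity maps $\phi_{x,y}$ are obtained by gluing the unramified map $Un_{x,y}$, the tamely ramified part coming from the higher tame symbol, and the wildly ramified part coming from the Witt symbol, and since these three types of extension are linearly disjoint over $F_x$ except along the unramified $p$-extension, where the maps are already known to agree, a finite abelian $L/F_x$ is a compositum of finitely many cyclic extensions of the three standard types. The property ``$\phi_{x,y}(L)=1$ for all but finitely many $y\ni x$'' is preserved under passing to the compositum of finitely many such extensions, so it suffices to treat the three cases: $L=F_x(\gamma)$ with $\gamma^p-\gamma=\alpha\in F_x$; $L=F_x(\beta)$ with $\beta^l=\delta\in F_x$ for some $l\mid q-1$; and $L/F_x$ unramified.

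In the Artin--Schreier case the local component is given by the Witt residue pairing, $\phi_{x,y}(w_{x,y})(\gamma)=(w_{x,y}\mid\alpha]_{x,y}(\gamma)$ for $w_{x,y}\in K_2^{top}(F_{x,y})$, and Lemma~\ref{wittwelldefined} shows this is trivial for all but finitely many $y\ni x$. In the Kummer case the local component is the higher tame symbol, $\phi_{x,y}(w_{x,y})(\beta)=(w_{x,y},\delta)_{x,y}$, which is trivial for almost all $y\ni x$ by Lemma~\ref{tamewlldefined}. In the unramified case the component is $Un_{x,y}$; for all but finitely many $y\ni x$ the entry $\{\alpha_{x,y},\beta_{x,y}\}$ lies in $K_2^{top}(\mathcal{O}_{x,y})$, so $\delta(\{\alpha_{x,y},\beta_{x,y}\})=1$ and hence $Un_{x,y}=1$ there, as recorded in the remark after Definition~\ref{unrammap} and in Lemma~\ref{unramreci}. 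Taken together these give $\phi_{x,y}(L)=1$ for almost all $y\ni x$, so $\psi_x=\prod_{y\ni x}\phi_{x,y}(L)$ is a finite product and therefore a genuine homomorphism ${\prod}'_{y\ni x}K_2^{top}(F_{x,y})\to\text{Gal}(L/F_x)$.

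For continuity, note that the preimage under $\psi_x$ of the identity of $\text{Gal}(L/F_x)$ --- and hence, as $L$ varies over finite abelian extensions, the preimage of any open subgroup of $\text{Gal}(F_x^{ab}/F_x)$ --- contains, for each element $w$ mapping into it, a basic open set of the restricted-product topology of Section~\ref{global1}: at the finitely many $y$ where $\phi_{x,y}(L)$ is non-trivial one uses continuity of the local reciprocity maps to find a neighbourhood of $w_{x,y}$ on which $\phi_{x,y}(L)$ is constant, and at the remaining $y$ every element (in particular every integral one) is killed. Thus $\psi_x$ is continuous.

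The only real work is in the first paragraph: one must carefully justify the reduction to the three cyclic cases from the decomposition of $F_x^{ab}$ and the compatibility of the glued-together local maps, and check that combining finitely many ``almost all'' statements causes no loss. Once that reduction is in place, the three cases are immediate consequences of Lemmas~\ref{wittwelldefined}, \ref{tamewlldefined} and~\ref{unramreci}, and the continuity argument is identical to the curve case, so the remainder is bookkeeping.
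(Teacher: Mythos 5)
Your proposal is correct and follows essentially the same route as the paper: the same reduction to the Artin--Schreier, Kummer and unramified cases via the structure of $F_{x}^{ab}/F_{x}$, the same appeals to lemmas \ref{wittwelldefined} and \ref{tamewlldefined} and to the integrality/adelic observation behind $Un_{x}$, and the same continuity argument from the restricted-product topology.
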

\begin{proof}
By \cite{P4} section four, it is sufficient to prove the lemma in the three cases $L =F_{x}(\gamma)$, $L/F_{x}$ an Artin-Schreier extension with $\gamma^{p} - \gamma = \alpha$ for some $\alpha \in F_{x}$,  $L = F_{x}(\beta)$ is a Kummer extension where $\beta^{l} = \delta$ for some $l|q-1$ and $\delta \in F_{x}$, and an extension of only the base field $k(x)$.

This is sufficient as the abelian closure, $F_{x}^{ab}/F_{x}$ is generated by the maximal unramified extension, the maximal ramified and prime to $p$ extension, and the maximal $p$-extension. These three types of extension are disjoint, except for the unramified $p$-extension, where the maps are compatible.

For the first case, the local residue symbol is described by the relation
\[\phi_{x,y}(w_{x, y})(z) = (w_{x,y}|\alpha]_{x,y}(z)
\]
 for $w_{x,y} \in K_{2}^{top}(F_{x,y})$ and we know this is zero for almost all $y \in x$ from lemma \ref{wittwelldefined}.

For the Kummer extension, the local residue symbol is described by the relation
\[\phi_{x,y}(w_{x,y})(z) = (w_{x,y}, \delta)_{x,y}\]
and similarly we know this is trivial for almost all $y \in x$ by lemma \ref{tamewlldefined}.

For the extension of $k(x)$, using the calculations in lemma \ref{unramreci} we see that $\phi_{x,y}$ is non-trivial only in the case where the component is of the form $\{t_{y},t_{y'}\}$, which by our adelic restrictions can happen in only finitely many places.

The continuity of the reciprocity map follows, as the preimage of any open subgroup of Gal$(F_{x}^{ab}/F_{x})$ has only finitely many non-zero elements of $J_{x}$. But from the definition of the topology, this is exactly what is required in the direct sum and product topology.

\end{proof}

\noindent We now prove the main theorem of the section.

\begin{thm}\label{cftpoints}
Let $X$ be a regular projective surface over the finite field $\mathbb{F}_{q}$, and $x \in X$ a closed point. Then the continuous map

\[\psi_{x}: \mathcal{J}_{x}/\Delta(K_{2}^{top}(F_{x})) \to \text{Gal}(F_{x}^{ab}/F_{x})\]
is injective with dense image. It also satisfies:
\begin{enumerate}
\item{For any finite abelian extension, the following sequence is exact
\[\begin{CD}
\frac{\prod_{y'\ni x'}K_{2}^{top}(L_{y'})}{\Delta(K_{2}^{top}(L))\cap\prod_{y'\ni x'}K_{2}^{top}(L_{y'})}  @>N>> \mathcal{J}_{x}/\Delta(K_{2}^{top}(F_{x})) @>\psi_{x}>> \text{Gal}(L/F_{x})@>>> 0.
\end{CD}\]}

\item{For any finite separable extension $L/F_{x}$, the following diagrams commute:
\[\begin{CD}
\mathcal{J}_{L}/\Delta(K_{2}^{top}(L)) @>\phi_{L}>> \text{Gal}(L^{ab}/L)\\
@AAA @A V AA\\
\mathcal{J}_{x}/\Delta(K_{2}^{top}(F_{x})) @>\phi_{x}>> \text{Gal}(F_{x}^{ab}/F_{x})\\
\end{CD}\]
where  $V$ is the group transfer map, and

\[\begin{CD}
\mathcal{J}_{L}/\Delta(K_{2}^{top}(L)) @>\phi_{L}>> \text{Gal}(L^{ab}/L)\\
@V N VV @VVV\\
\mathcal{J}_{x}/\Delta(K_{2}^{top}(F_{x})) @>\phi_{x}>> \text{Gal}(F_{x}^{ab}/F_{x}).\\
\end{CD}\]
}
\end{enumerate}
\end{thm}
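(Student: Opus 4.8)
The plan is to follow the same template used to establish Theorem~\ref{cftcurves} in the previous section, replacing the curve $y$ by the point $x$ and the field $F_y$ by the ring $F_x$, and feeding in the two duality theorems~\ref{mainthmpoints} and~\ref{tamepointthm} together with the unramified map $Un_x$ of Definition~\ref{unrammap} and its reciprocity law, Lemma~\ref{unramreci}. First I would record that $\text{Gal}(F_x^{ab}/F_x)$ is the product of three pieces, pairwise disjoint apart from the unramified $p$-extension on which the relevant maps agree (as in \cite{P4}, section four, and as already exploited in the proof of the preceding lemma): the maximal unramified subextension $\text{Gal}(F_x^{unram}/F_x)\cong\hat{\mathbb{Z}}$, the maximal tamely ramified prime-to-$p$ part $G^{tr}$, and the maximal $p$-extension part $G^{wr}$. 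Via the boundary map $\delta_x$ and the $K$-group decompositions of section~\ref{global1}, $\mathcal{J}_x/(\Delta(K_2^{top}(F_x))\cap\mathcal{J}_x)$ splits compatibly into the unramified part together with $J_x$ and $\mathfrak{J}_x$, so it suffices to treat the three parts separately.

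For the unramified part, Lemma~\ref{unramreci} shows $Un_x$ kills $\Delta(K_2^{top}(F_x))$, and the explicit values of $\delta_{x,y}$ computed there (e.g.\ symbols such as $\{u_{x,y},t_y\}$ map to a uniformiser of $\bar F_{x,y}$) show $Un_x$ has image the dense subgroup $\mathbb{Z}\subset\hat{\mathbb{Z}}$; hence this part of $\psi_x$ is injective modulo the other two and has dense image. For the wildly ramified part, Artin--Schreier--Witt duality (\cite[4.3]{N}) combined with Theorem~\ref{mainthmpoints} gives compatible isomorphisms
\[ J_x/(\Delta(K_2^{top}(F_x))\cap J_x)J_x^{p^m}\;\xrightarrow{\ \sim\ }\;G^{wr}/(G^{wr})^{p^m},\]
and passing to the projective limit, using $\cap_m K_2^{top}(\mathcal{O}_{x,y},\mathfrak{m}_{x,y})^{p^m}=\{1\}$ (\cite[Section 2, Lemma 3]{P4}) and hence $\cap_m(\Delta(K_2^{top}(F_x))\cap J_x)J_x^{p^m}=\Delta(K_2^{top}(F_x))\cap J_x$, yields $J_x/(\Delta(K_2^{top}(F_x))\cap J_x)\cong G^{wr}$. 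For the tamely ramified part, Kummer theory together with Theorem~\ref{tamepointthm} gives $\mathfrak{J}_x/(\Delta(K_2^{top}(F_x))\cap\mathfrak{J}_x)\mathfrak{J}_x^{q-1}\cong G^{tr}$. Assembling the three pieces -- using that the adelic $K$-group at $x$ modulo diagonal elements is $J_x\times\mathfrak{J}_x$ and that the Galois group generated by the Witt, Kummer and unramified parts exhausts $\text{Gal}(F_x^{ab}/F_x)$ -- shows $\psi_x$ is injective with dense image.

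For the properties, exactness of the sequence in~(1) I would deduce exactly as for Theorem~\ref{cftcurves}(2): set up the commutative diagram whose lower row is the exact norm--reciprocity--restriction sequence from local class field theory \cite{P4} and whose upper row is the semi-global analogue at $x$; exactness of the upper row then follows because $\text{im}(N)$ is closed (\cite[section 6]{IHLF}) and the images of the first two vertical reciprocity maps are dense, with the reciprocity laws~\ref{reciprocitylaw},~\ref{tamereciprocity},~\ref{unramreci} ensuring the maps descend through the diagonal quotient. The commutative diagrams in~(2) follow from the corresponding local statements in \cite{P4} for the transfer map $V$ and the norm $N$, first without the diagonal factorisation and then with it by the same reciprocity laws; compatibility of $\psi_L$ and $\psi_x$ with the adelic inclusions is formal from the decomposition $L\otimes_{F_x}F_{x,y}=\oplus_{\mathfrak q}L_{\mathfrak p_L,\mathfrak q}$ recorded before Definition~\ref{unrammap}.

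The main obstacle in the point case -- that $\hat{\mathcal{O}}_{X,x}$ need not split as a product when $x$ lies on more than two analytic branches, so there is no reduction to a product of previously known situations -- has already been absorbed into the \emph{proofs} of Theorems~\ref{mainthmpoints} and~\ref{tamepointthm} (the reduction to condition $\dagger$ and the combinatorial degeneracy analysis). Granting those, the remaining genuinely delicate steps, which I would write out most carefully, are the passage to the projective limit for the wild part -- verifying compatibility of the $p^m$-torsion duality isomorphisms and the collapse $\cap_m(\Delta(K_2^{top}(F_x))\cap J_x)J_x^{p^m}=\Delta(K_2^{top}(F_x))\cap J_x$ -- and checking that the unramified, tame and wild parts are independent, so that injectivity and density add up rather than interfere. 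Everything else is a direct transcription of the curve case.
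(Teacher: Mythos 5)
Your proposal is correct and follows essentially the same route as the paper's own proof: Artin--Schreier--Witt duality with Theorem~\ref{mainthmpoints} for the wild part (including the projective limit and the collapse $\cap_{m}(\Delta(K_{2}^{top}(F_{x}))\cap J_{x})J_{x}^{p^{m}}=\Delta(K_{2}^{top}(F_{x}))\cap J_{x}$), Kummer theory with Theorem~\ref{tamepointthm} for the tame part, a boundary-map argument for the unramified part, and the same norm/density diagram for the exact sequence and the local compatibilities for the diagrams in~(2). The only cosmetic difference is that the paper makes the injectivity of the unramified piece explicit via the commutative diagram comparing $\delta$ with the reciprocity map of $k(x)$, which you should spell out rather than asserting injectivity ``modulo the other two parts.''
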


\begin{proof}
As in the case for a fixed curve, the commutative diagrams follow from the local case proved in \cite{P4} and the reciprocity laws in \ref{reciprocitylaw}.

We now show $\psi_{x}$ is injective with dense image, using the basic facts of Artin-Schreier-Witt and Kummer duality in a similar manner to the proof of \ref{cftcurves}.

Artin-Schreier-Witt duality and theorem \ref{mainthmpoints} induce the isomorphism
\[ J_{x}/(\Delta(K_{2}^{top}(F_{x}))\cap J_{x})J_{x}^{p^{m}} \to \text{Gal}(F_{x}^{ab,p}/F_{x})/(\text{Gal}(F_{x}^{ab,p}/F_{x}))^{p^{m}}
\]
and passing to the projective limit gives the decomposition
\[J_{y}/(\Delta(K_{2}^{top}(F_{x}))\cap J_{x}) \to \varprojlim J_{x}/(\Delta(K_{2}^{top}(F_{x}))\cap J_{x})J_{x}^{p^{m}} \cong \text{Gal}(F_{x}^{ab,p}/F_{x})
\]
and hence the wildly ramified part of $\phi_{x}$ has dense image.\\
To show $\phi_{x}$ is injective, we must show
\[\cap_{m}(\Delta(K_{2}^{top}(F_{x}))\cap J_{x})J_{x}^{p^{m}} = \Delta(K_{2}^{top}(F_{x}))\cap J_{x}.
\] 
Now, for each $y \ni x$ we have $\cap_{m}K_{2}^{top}(\mathcal{O}_{x, y}, \mathfrak{p}_{x, y})^{p^{m}} = \{ 1 \}$, and hence this is true in the adelic product also. So the wildly ramified part of the map is injective.

We now study the tamely ramified part of the reciprocity map. Kummer duality and theorem \ref{tamepointthm} induce the isomorphism
\[\mathfrak{J}_{x}/(\Delta(K_{2}^{top}(F_{x}))\cap\mathfrak{J}_{x})\mathfrak{J}_{x}^{q-1} \to \text{Gal}(F_{x}^{ab}/F_{x})/(\text{Gal}(F_{x}^{ab,p}/F_{x})\text{Gal}(F_{x}^{unram}/F_{x}))\]
showing that this part of the map is injective with dense image also.

We complete this part of the proof by checking that the part of the reciprocity map related to the algebraic closure of $k(x)$ is injective with dense image. Since the image is $\mathbb{Z} \subset \hat{\mathbb{Z}}$, the density of the image is clear.

To show this part of the map is injective, we use the commutative diagram:
\[\begin{CD}
\frac{\mathcal{J}_{x}}{\Delta(K_{2}^{top}(F_{x}))} @>\delta>> \frac{\oplus_{y \ni x}k(y)_{x}^{\times}}{\delta(\Delta(K_{2}^{top}(F_{x})))} @>>> 0\\
@V\phi_{x}VV @V\phi_{k(x)}VV @.\\
\text{Gal}(F_{x}^{ab}/F_{x}) @>>> \text{Gal}(k(x)^{ab}/k(x)) @>>> 0.\\
\end{CD}
\]
Both the rows are exact, and the kernel of the first map on the top row is $\prod_{y \ni x}K_{2}^{top}(\mathcal{O}_{x,y})$ which is the part of the group related via the reciprocity map to the kernel of the first map on the bottom row, by definition of the Galois groups. Hence this part of the reciprocity map is injective also, and this part of the proof is complete.

Finally, to prove exact sequence $1$, we consider the commutative diagram with exact lower row:
\[
\begin{CD}
\frac{{\prod}'_{y'\ni x}K_{2}^{top}(L_{y'})}{\Delta(K_{2}^{top}(L))\cap {\prod}'_{y'\ni x'}K_{2}^{top}(L_{y'})} @>N>> \frac{{\prod}'_{y \ni x}K_{2}^{top}(F_{x,y})} {\Delta(K_{2}^{top}(F_{x}))\cap {\prod}'_{y \ni x}K_{2}^{top}(F_{x,y})} @>\psi_{x}>> \text{Gal}(L/F_{x}) @>>> 0 \\
@V\phi_{L}VV  @V\psi_{y}VV  @||| @. \\
\text{Gal}(L^{ab}/L) @>>> \text{Gal}(F_{x}^{ab}/F_{x}) @>>> \text{Gal}(L/F_{x}) @>>> 0  \end{CD}
\]

where $N$ is the product of the local norm maps. The commutivity follows from property two of this theorem and Galois theory. Now as in the corresponding theorem in the previous section, the density of the images of the first two vertical maps and the fact that the image of $N$ is closed complete the proof that the  top sequence is exact.
\end{proof}

\appendix
\section{Calculations in Milnor $K$-groups}\subsection{}
This appendix will give details of various calculations in Milnor $K$-groups necessary throughout the text.
\begin{lemma}\label{Kgpcalc1}
With $\delta_{x}$ as in lemma \ref{mainthmcurves}, we have the identity:
\[\{1 + \delta_{x}^{p}\tc^{k}, \tc\} \equiv   \{1 + \delta_{x}{^p}\tc^{k}, \delta_{x}\}^{p} \text{ mod } J_{\geq k + 1}.\]
\end{lemma}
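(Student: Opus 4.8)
The plan is to work inside the Milnor $K$-group $K_2^{top}(F_{x,y})$ modulo the subgroup $J_{\geq k+1}$, which kills all symbols whose contribution lies in degree $>k$ in the $t_y$-adic expansion. The starting point is the Steinberg relation in the form used elsewhere in the paper: since $1+\delta_x^p t_y^k$ and $-\delta_x^p t_y^k$ sum to $1$, we have $\{1+\delta_x^p t_y^k, -\delta_x^p t_y^k\}=1$ in $K_2$. Expanding the second argument multiplicatively, $-\delta_x^p t_y^k = (-1)\cdot \delta_x^p \cdot t_y^k$, bilinearity gives
\[
\{1+\delta_x^p t_y^k, -1\} + \{1+\delta_x^p t_y^k, \delta_x^p\} + \{1+\delta_x^p t_y^k, t_y^k\} = 0.
\]
Now $\{1+\delta_x^p t_y^k, \delta_x^p\} = p\{1+\delta_x^p t_y^k, \delta_x\}$ and $\{1+\delta_x^p t_y^k, t_y^k\} = k\{1+\delta_x^p t_y^k, t_y\}$ by bilinearity in the second slot.

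The two remaining terms to dispose of are $\{1+\delta_x^p t_y^k, -1\}$ and the discrepancy between $k\{1+\delta_x^p t_y^k, t_y\}$ and $\{1+\delta_x^p t_y^k, t_y\}$. For the first: $-1$ is a root of unity, and in characteristic $p$ a standard computation (using $\{v,-v\}=1$ and the fact that $1+\delta_x^p t_y^k$ is a principal unit) shows $\{1+\delta_x^p t_y^k,-1\}$ lies in $J_{\geq k+1}$ — more precisely one writes $-1 = (1+\delta_x^p t_y^k)\cdot$(something) up to the needed precision, or simply notes the symbol is annihilated by $2$ and, $p$ being odd in the wildly ramified situation, is trivial; if $p=2$ then $-1=1$ and the term vanishes outright. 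For the second: I need $(k-1)\{1+\delta_x^p t_y^k,t_y\}\in J_{\geq k+1}$. This is where the hypothesis $p\nmid k$ from the ambient Lemma \ref{diagonalelements} is used, together with the observation that $\{1+\delta_x^p t_y^k, t_y\}$ already has its $\phi^{(1)}$-coefficient concentrated in degree exactly $k$ and equal to $\delta_x^p$, which is a $p$-th power, hence lies in $J_{\geq k}\cap J_y^p$ modulo $J_{\geq k+1}$; multiplying by the integer $k-1$ keeps us inside that coset, and since we are only claiming an identity \emph{modulo $J_{\geq k+1}$} and working toward the $p$-divisibility statement of Lemma \ref{diagonalelements}, this is exactly what is required. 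Rearranging, $k\{1+\delta_x^p t_y^k,t_y\} \equiv \{1+\delta_x^p t_y^k,t_y\} \equiv -p\{1+\delta_x^p t_y^k,\delta_x\}$, i.e. $\{1+\delta_x^p t_y^k, t_y\}\equiv \{1+\delta_x^p t_y^k,\delta_x\}^p$ modulo $J_{\geq k+1}$, writing the group law multiplicatively as in the paper.

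The main obstacle I anticipate is bookkeeping the precision: one must be careful that each manipulation (expanding $-\delta_x^p t_y^k$, absorbing the $-1$ term, replacing $k$ by $1$) genuinely changes the symbol only by elements of $J_{\geq k+1}$ and not merely by elements of $J_y^p$ or of some coarser subgroup. This requires invoking the uniqueness of the expansion from Lemma \ref{relativekgpcurves} and the explicit description of $J_{\geq j}$ to certify that the "error" symbols have vanishing $\phi^{(1)},\phi^{(2)}$ in degrees $\leq k$. The Steinberg relation and bilinearity are routine; the care lies entirely in tracking which subgroup each leftover symbol falls into, and in handling the $p=2$ versus $p$ odd dichotomy for the $\{-,-1\}$ term cleanly.
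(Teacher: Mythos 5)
Your route is the same as the paper's: both start from the Steinberg symbol $\{1+\delta_{x}^{p}t_{y}^{k}, -\delta_{x}^{p}t_{y}^{k}\}=1$, expand the second entry bilinearly, and discard $\{1+\delta_{x}^{p}t_{y}^{k},-1\}$ (which is indeed trivial in $K_{2}^{top}$: a principal unit is an $l$-th power of a principal unit for every $l$ prime to $p$, and $-1$ is a $p$-power of itself, so the symbol lies in $\cap_{l}lK_{2}$; your ``annihilated by $2$, $p$ odd'' remark needs exactly this extra observation to be a proof). The problem is the decisive last step. The expansion only gives the exact relation $k\{1+\delta_{x}^{p}t_{y}^{k},t_{y}\} = -p\{1+\delta_{x}^{p}t_{y}^{k},\delta_{x}\}$; to pass from $k$ times the symbol to the symbol itself you need $(k-1)\{1+\delta_{x}^{p}t_{y}^{k},t_{y}\}\in J_{y,\geq k+1}$ (the paper's variant, which instead adds the trivial symbol to the left-hand side, needs $\{(1+\delta_{x}^{p}t_{y}^{k})^{k+1},t_{y}\}\in J_{y,\geq k+1}$). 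Your justification of this is circular and proves the wrong thing: you assert that $\{1+\delta_{x}^{p}t_{y}^{k},t_{y}\}$ has its $\phi^{(1)}$-coordinate in degree $k$ ``equal to $\delta_{x}^{p}$'', but by Lemma \ref{relativekgpcurves}, property 2, the canonical coordinate $\phi^{(1)}_{k,x,y}$ contains no $u$-powers divisible by $p$, so $\delta_{x}^{p}$ cannot be that coordinate --- determining the canonical coordinates of precisely this symbol is the content of the lemma. Moreover, even granting your claim, you only place the error term in $J_{y}^{p}$ modulo $J_{y,\geq k+1}$, i.e.\ in the coarser subgroup $J_{y}^{p}J_{y,\geq k+1}$, whereas the statement to be proved is a congruence modulo $J_{y,\geq k+1}$ alone; you flag this very distinction as the crux and then do not resolve it. There is also a sign slip: your chain yields $\{1+\delta_{x}^{p}t_{y}^{k},\delta_{x}\}^{-p}$, not $\{1+\delta_{x}^{p}t_{y}^{k},\delta_{x}\}^{p}$ (harmless for the application in Lemma \ref{diagonalelements}, which only uses membership in $J_{y}^{p}J_{y,\geq k+1}$, but not the stated identity).

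What closes the gap is an argument of the type in Lemma \ref{Kgpcalc2}, run in characteristic $p$: for a coefficient series $\epsilon$ all of whose $u$-exponents are divisible by $p$ (such as $\delta_{x}^{p}$, or $(k\pm 1)\delta_{x}^{p}=((k\pm 1)\delta_{x})^{p}$ since $(k\pm 1)\in\mathbb{F}_{p}$), the relation $0=i\{1+vu^{i}t_{y}^{k},u\}+k\{1+vu^{i}t_{y}^{k},t_{y}\}$ with $p\mid i$ pushes the $u$-symbol into $J_{y,\geq pk}$ (its first entry becomes a $p$-th power, congruent to $1$ modulo $t_{y}^{pk}$), so $k\{1+vu^{i}t_{y}^{k},t_{y}\}\in J_{y,\geq k+1}$; since $p\nmid k$ and leading coordinates scale linearly under powers, the symbol itself lies in $J_{y,\geq k+1}$. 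This is what legitimises discarding the leftover integer multiple of $\{1+\delta_{x}^{p}t_{y}^{k},t_{y}\}$, and it is where the hypothesis $p\nmid k$ (standing in Lemma \ref{diagonalelements}) genuinely enters. The paper is terse at the same point --- it simply asserts the final congruence for $\{(1+\delta_{x}^{p}t_{y}^{k})^{k+1},t_{y}\}$ --- but your proposal as written does not supply the missing argument either, and the justification it offers in its place does not work.
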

\begin{proof}
Using the basic identity given in lemma 3.3.1, we have:
\[\{1+ \delta_{x}^{p}\tc^{k}, \tc\} = \{1+ \delta_{x}^{p}\tc^{k}, \tc\} + \{1 + \delta_{x}^{p}\tc^{k}, -\delta_{x}^{p}\tc^{k}\}\]
which in turn is equal to
\[\{1+ \delta_{x}^{p}\tc^{k}, \tc\} + \{1 + \delta_{x}^{p} \tc^{k}, \delta_{x}\}^{p} + \{1 + \delta_{x}^{p}\tc^{k}, \tc\}^{k}\]
\[ = \{(1+ \delta_{x}^{p}\tc^{k})(1 + \delta_{x}^{p}\tc^{k})^{k}, \tc\} + \{1 + \delta_{x}^{p}\tc^{k}, \delta_{x}\}^{p} \equiv \{1 + \delta_{x}^{p}\tc^{k}, \delta_{x}\}^{p} \text{ mod } J_{\geq k + 1}\]
as required. \end{proof}

\begin{lemma}\label{Kgpcalc2}
For $u$ and $t$ primes of a complete two-dimensional local ring $\hat{\mathcal{O}}_{X,x}$, $v \in k(x)$, and integers $i$ and $j$, we have the following identity:

\[
\{1 - ivt^{j}u^{i}, u\} \equiv \{1 + jvu^{i}t^{j}, t\} \text{ mod } K_{2}^{top}(\mathcal{O}_{t,u}, \mathfrak{p}_{t,u}^{j+1}).\]

\end{lemma}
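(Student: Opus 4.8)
The plan is to deduce the congruence from nothing more than the Steinberg relation and bilinearity of the symbol, discarding at each stage symbols that visibly lie in the relative group $K_{2}^{top}(\mathcal{O}_{t,u}, \mathfrak{p}_{t,u}^{\,j+1})$ — which here we read, as in the uses of the lemma in \ref{diagonalelements} and \ref{dualitytwopoints}, as containing every symbol $\{1 + a u^{i} t^{j}, \beta\}$ with $a \in \mathcal{O}_{t,u}$, $i + j \geq j+1$, and $\beta$ either a unit or one of the parameters $u,t$. We may assume $v \neq 0$, the case $v = 0$ being trivial; put $\theta = -v$, so that $1 + \theta u^{i} t^{j} = 1 - v u^{i} t^{j}$.

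First I would apply the Steinberg relation to $x = v u^{i} t^{j}$: since $\{1 - x, x\} = -\{x, 1-x\} = 0$, expanding the second entry as $v \cdot u^{i} \cdot t^{j}$ gives
\[ \{1 - v u^{i} t^{j},\, v\} + i\,\{1 - v u^{i} t^{j},\, u\} + j\,\{1 - v u^{i} t^{j},\, t\} = 0. \]
The first symbol is a principal unit (congruent to $1$ modulo $\mathfrak{m}_{t,u}^{\,i+j}$, as $i,j \geq 1$) paired with the constant unit $v$, hence it lies in the relative group because $i + j \geq j+1$; so modulo that group $i\{1 - v u^{i} t^{j}, u\} \equiv -j\{1 - v u^{i} t^{j}, t\}$.

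Next I would move the integer coefficients inside the symbols. From $(1 - v u^{i} t^{j})^{i} = 1 - i v\, u^{i} t^{j} + r$ with $r \in \mathfrak{m}_{t,u}^{\,2(i+j)}$ one gets $i\{1 - v u^{i} t^{j}, u\} = \{(1 - v u^{i} t^{j})^{i}, u\} \equiv \{1 - i v\, u^{i} t^{j}, u\}$ modulo the relative group; the remainder $r$ is handled by expanding $(1 - v u^{i} t^{j})^{p^{a}}$ with the Frobenius when $p^{a}$ is the exact power of $p$ dividing $i$, so the step is uniform in whether or not $p \mid i$ (when $p \mid i$ both sides of the lemma simply collapse into the relative group). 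Likewise $(1 - v u^{i} t^{j})^{-j} = 1 + j v\, u^{i} t^{j} + r'$ with $r' \in \mathfrak{m}_{t,u}^{\,2(i+j)}$ gives $-j\{1 - v u^{i} t^{j}, t\} \equiv \{1 + j v\, u^{i} t^{j}, t\}$. Chaining the three congruences yields $\{1 - i v\, u^{i} t^{j}, u\} \equiv \{1 + j v\, u^{i} t^{j}, t\}$ modulo $K_{2}^{top}(\mathcal{O}_{t,u}, \mathfrak{p}_{t,u}^{\,j+1})$, which is the assertion.

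The only genuinely delicate part is the bookkeeping: I must confirm that every symbol thrown away really does lie in $K_{2}^{top}(\mathcal{O}_{t,u}, \mathfrak{p}_{t,u}^{\,j+1})$. This comes down to (i) pinning down the convention identifying the ideal $\mathfrak{p}_{t,u}^{\,j+1}$ with the filtration level at which the same identity is invoked in \ref{diagonalelements} (modulo $K_{2}^{top}(\mathcal{O}_{x,y}, \mathfrak{m}_{x,y}^{l+1})$) and in \ref{dualitytwopoints} (modulo $K_{2}^{top}(\mathcal{O}_{t,u}, \mathfrak{p}_{t,u})$), and (ii) checking that a principal unit $\equiv 1$ modulo $\mathfrak{m}_{t,u}^{\,i+j}$, paired with a unit or with $u$ or $t$, lies in that relative group, using only $i \geq 1$. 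Once these conventions are fixed the rest is routine symbol calculus, essentially identical to the computation already carried out in \ref{Kgpcalc1}.
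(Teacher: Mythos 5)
Your proposal is correct and takes essentially the same route as the paper's own proof, which likewise starts from the Steinberg relation (there in the form $\{1+ivu^{i}t^{j}, -ivu^{i}t^{j}\}=1$), splits the second entry into its $u$-, $t$- and constant factors, absorbs the integer exponents by the binomial expansion, and discards the higher-order symbols into the relative group. The differences are cosmetic: you begin with $\{1-vu^{i}t^{j}, vu^{i}t^{j}\}$ and explicitly track the constant-entry symbol $\{1-vu^{i}t^{j}, v\}$, which the paper silently drops (and which in fact vanishes in $K_{2}^{top}$ because principal units are $(q-1)$-divisible), so your bookkeeping caveat about the exact meaning of $\mathfrak{p}_{t,u}^{j+1}$ is no heavier than the one implicit in the paper's own argument.
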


\begin{proof}

We have:
\[ 1 = \{1+ivu^{i}t^{j}, -ivu^{i}t^{j}\} = \{1+vu^{i}t^{j},u\}^{i}\{1+ vu^{i}t^{j},t\}^{j}.\]
Now expanding these powers we get

\[(1+vu^{i}t^{j})^{i} = 1 + ivu^{i}t^{j} + \binom{i}{2}v^{2}u^{2i}t^{2j} + \dots = (1+ivu^{i}t^{j})(1 + \binom{i}{2}v^{2}u^{2i}t^{2j} + \dots )\]

\noindent so that
\[\{(1+vu^{i}t^{j})^{i}, u\} \equiv \{1 + ivu^{i}t^{j}, u\} \text{ mod } K_{2}^{top}(\mathcal{O}_{t,u}, \mathfrak{p}_{t,u}^{j+1}).\]

\noindent Symmetrically we get 

\[\{(1+vu^{i}t^{j})^{j}, t\} \equiv \{1 + jvu^{i}t^{j}, t\} \text{ mod } K_{2}^{top}(\mathcal{O}_{t,u}, \mathfrak{p}_{t,u}^{j+1}).\]

\noindent So from our first equation, we have

\[ 1 \equiv \{1+ivu^{i}t^{j}, u\} \{1+jvu^{i}t^{j}, t\}.\]

\noindent Now multiplying both sides by $\{1+ivu^{i}t^{j},u\}^{-1}$ and performing a similar calculation to those above gives the result.

\end{proof}

\begin{lemma}\label{Kgpcalc3}
Let $f, g \in k(x)$.  We have:
\[\{1 + fu^{i}t^{l}, 1+gu^{j}\} \equiv \left\{1 + fu^{i}\frac{jgu^{j}}{1 + gu^{j}}t^{l}, u\right\} \text{ mod } K_{2}^{top}(\mathcal{O}_{x,y}, \mathfrak{m}_{x,y}^{l+1}).\]
\end{lemma}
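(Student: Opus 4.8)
The plan is to prove the congruence by a direct manipulation in $K_{2}$ that uses only bilinearity, skew-symmetry, the Steinberg relation $\{z,1-z\}=1$ and its consequence $\{a,-a\}=0$, arranged so that every discrepancy is visibly a symbol of a ``negligible'' type. Throughout, working modulo $K_{2}^{top}(\mathcal{O}_{x,y},\mathfrak{m}_{x,y}^{l+1})$ means (as in the use of this lemma in Lemma~\ref{diagonalelements}) discarding symbols whose leading term in the $t$-adic filtration has $t$-degree $>l$. The key elementary fact I would use repeatedly is that $\{1+A,1+B\}$ and $\{1+A,1-A\}$ lie in $K_{2}^{top}(\mathcal{O}_{x,y},\mathfrak{m}_{x,y}^{l+1})$ whenever $v_{t}(A)=v_{t}(B)=l$: this follows from the description of the associated graded of $K_{2}^{top}$ of a two-dimensional local field coming from Parshin's structure theorem \ref{topkgpstructure} (the degree-$l$ graded piece being spanned by the classes of the generators of types \emph{iv} and \emph{v}), together with $2l\geq l+1$ since $l\geq1$.

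One may assume $f,g\neq0$, the remaining cases being trivial, and I would split on the divisibility of $j$. If $p\mid j$, the right-hand side is $\{1,u\}=1$; on the left, perfectness of $k(x)$ gives $1+gu^{j}=(1+hu^{j/p})^{p}$, and iterating, $1+gu^{j}=(1+h'u^{m})^{p^{a}}$ with $p\nmid m$, so that $\{1+fu^{i}t^{l},1+gu^{j}\}=\{1+fu^{i}t^{l},1+h'u^{m}\}^{p^{a}}$. Applying the $p\nmid m$ case (below) and then $(1+ct^{l})^{p^{a}}=1+c^{p^{a}}t^{p^{a}l}$ shows this is a symbol of $t$-degree $p^{a}l>l$, hence negligible. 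Thus the substance is the case $p\nmid j$.

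For $p\nmid j$ set $P=1+fu^{i}t^{l}$ and $Q=1+gu^{j}$, and note $1-PQ=-\bigl(gu^{j}+fu^{i}t^{l}Q\bigr)=-gu^{j}E$ with $E:=1+fu^{i-j}g^{-1}Q\,t^{l}$, which is a unit of $\mathcal{O}_{F_{x,y}}$ congruent to $1$ modulo $t^{l}$. Feeding $(PQ,\,1-PQ)$ into the Steinberg relation, splitting $PQ$ multiplicatively in the first slot and $-gu^{j}E$ in the second, and then clearing terms using skew-symmetry, $\{a,-a\}=0$, the relations $\{Q,-gu^{j}\}=0$ and $\{P,-fu^{i}t^{l}\}=0$, and the negligibility of $\{P,E\}$, should isolate
\[\{1+fu^{i}t^{l},\,1+gu^{j}\}\equiv\Bigl\{\,1+fu^{i}t^{l}\,\tfrac{jgu^{j}}{1+gu^{j}},\ u\,\Bigr\}\pmod{K_{2}^{top}(\mathcal{O}_{x,y},\mathfrak{m}_{x,y}^{l+1})},\]
the coefficient $\tfrac{jgu^{j}}{1+gu^{j}}$ emerging precisely as the logarithmic derivative $u\cdot Q^{-1}\tfrac{dQ}{du}$; this is the same ``integration by parts for symbols'' that underlies Lemma~\ref{Kgpcalc2}. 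Equivalently, and more cheaply, one can pass both sides to the degree-$l$ graded quotient of $K_{2}^{top}(\mathcal{O}_{x,y})$, where the leading term of $\{1+\bar{c}\,t^{l},\eta\}$ is $\bar{c}\cdot\eta^{-1}d\eta$, and observe that both sides have leading term $fu^{i}\cdot\tfrac{jgu^{j-1}}{1+gu^{j}}\,du$.

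The hard part is the error-term bookkeeping in the Steinberg computation: one must verify that \emph{every} symbol produced other than the desired one either vanishes by $\{a,-a\}=0$ and skew-symmetry, or is a product of two principal units whose combined $t$-degree exceeds $l$; in particular the stray sign symbols $\{-1,-1\}$, $\{P,-1\}$, $\{Q,-1\}$ must cancel in pairs, which they do because the factor $Q$ reappears in $1-PQ$. A minor additional care is that some intermediate steps naturally take place in $K_{2}$ of the field $F_{x,y}$ (when $i<j$) rather than of its ring of integers; this is harmless since the final congruence is between elements of $K_{2}^{top}(\mathcal{O}_{x,y})$ and $E$ is in any case a unit of $\mathcal{O}_{F_{x,y}}$, but the write-up must not assert membership in the relative group before returning to $\mathcal{O}_{x,y}$.
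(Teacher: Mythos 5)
Your toolkit (Steinberg identities plus reading the answer as a logarithmic derivative) is the same one the paper uses, but the decisive step has a genuine gap. Carry your Steinberg expansion out explicitly: with $P=1+fu^{i}t^{l}$, $Q=1+gu^{j}$, $E=1+fg^{-1}u^{i-j}Qt^{l}$, the relation $\{PQ,1-PQ\}=1$ gives (written additively)
\[\{P,-g\}+j\{P,u\}+\{P,E\}+\{Q,1-Q\}+\{Q,E\}=0 ,\]
and after discarding $\{P,E\}$ (two principal units of $t$-level $l$) and the constant symbol, what survives is $\{E,Q\}\equiv j\{P,u\}$. Writing $S(c)=\{1+ct^{l},Q\}$ and $T(c)=\{1+ct^{l},u\}$, which are additive in $c$ modulo $K_{2}^{top}(\mathcal{O}_{x,y},\mathfrak{m}_{x,y}^{l+1})$, this is exactly the recursion $S(c)+S(cgu^{j})\equiv jT(cgu^{j})$. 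It does \emph{not} isolate the lemma: the leftover symbol $S(cgu^{j})=\{1+cgu^{j}t^{l},Q\}$ is of precisely the same shape as the symbol you are trying to compute and is not negligible for the $t$-adic filtration — it is only smaller in $u$-degree. The denominator $1/(1+gu^{j})$ in the stated answer is the geometric series that appears only after iterating this recursion infinitely often and using that $\{1+cg^{M}u^{Mj}t^{l},Q\}\to 1$ in $K_{2}^{top}$ as $M\to\infty$; that iteration-plus-convergence argument (which is what the paper's proof performs when it converts $t$-symbols to $u$-symbols via Lemma \ref{Kgpcalc2} and then says ``repeating the argument for this new element'') is exactly what your write-up omits when you assert that one application of Steinberg makes the coefficient ``emerge precisely as the logarithmic derivative''. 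No finite symbol manipulation can produce the rational coefficient $jgu^{j}/(1+gu^{j})$; it is intrinsically a limit in the topological $K$-group.

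Your ``cheaper'' alternative is not available as stated either: passing to the degree-$l$ graded quotient and comparing leading terms $\bar{c}\,\eta^{-1}d\eta$ presupposes that $c\,d\log\eta\mapsto\{1+ct^{l},\eta\}$ is a well-defined homomorphism from $\Omega^{1}$ of the residue field to that graded piece, and the Leibniz-type relation making it well defined for $\eta=1+gu^{j}$ is precisely the content of the present lemma; Theorem \ref{topkgpstructure} as quoted in the paper gives a topological basis, not this identification, so invoking it here is circular unless you import the $U_{l}/U_{l+1}$-versus-differential-forms computation from outside the paper. The peripheral points are fine or easily repaired: the reduction of the $p\mid j$ case works, the stray constant symbols such as $\{P,-g\}$ vanish because principal units are $(q-1)$-divisible in $K_{2}^{top}$, and the ``key elementary fact'' about products of two level-$l$ principal units is true — though it too needs the explicit filtration identity rather than just the basis theorem. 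The core of the proof, however, needs either the iterative limit argument above or an honest citation of the graded-piece structure.
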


\begin{proof}
We have:
\[\{1+fu^{i}t^{l},1+gu^{j}\} = \{(1+fu^{i}t^{l}(1+gu^{j}))(1+fu^{i}t^{l})^{-1},1+gu^{j}\}^{-1}\]
\[\ \ \ \ \ \ \ \ \ \ \ \ \ \ \ \ \ \ \ \ \ \ \ \ \ \ \times\{1+fu^{i}t^{l}(1+gu^{j}),1+gu^{j}\}\]
which is equal to
\[\{(1+fu^{i}t^{l}(1+gu^{j}))(1+fu^{i}t^{l})^{-1},1+gu^{j}\}^{-1}\{1+fu^{i}t^{l}(1+gu^{j}), -fu^{i}t^{l}\}^{-1}\]
by the definition of $K$-groups.
This last expression is equal to
\[\{(1+fu^{i}t^{l}(1+gu^{j}))(1+fu^{i}t^{l})^{-1},1+gu^{j}\}^{-1}\]
\[\ \ \ \ \ \ \ \ \ \ \ \ \ \ \ \ \ \ \ \ \ \ \ \ \ \ \ \ \times\{(1+fu^{i}t^{l}(1+gu^{j}))^{-i},u\}\{(1+fu^{i}t^{l}(1+gu^{j}))^{-l},t\},\]
where the first term is trivial modulo $K_{2}^{top}(\mathcal{O}_{x,y}, \mathfrak{m}_{x,y}^{l+1})$.

So we are left with 
\[\{(1+fu^{i}t^{l}(1+gu^{j}))^{-i},u\}\{(1+fu^{i}t^{l}(1+gu^{j}))^{-l},t\}.\]
Calculating as in lemma \ref{Kgpcalc2} above, we have
\[\{1+fu^{i}t^{l}(1+gu^{j}),t\}^{-l} \equiv \{1-fu^{i}t^{l}(1+gu^{j}),u\}^{i}\{1-fu^{i}t^{l}(1+gu^{j}),1+gu^{j}\}\] so the above expression becomes

\[\{1-fu^{i}t^{l}(1+gu^{j}),1+gu^{j}\}.
\]
So reversing our argument, we see the original expression is also equivalent to
\[\{1-\frac{fu^{i}t^{l}}{1+gu^{j}},1+gu^{j}\}.\]

Repeating the argument for this new element and applying the lemma above gives us the identity.
\end{proof}

\bibliographystyle{plain}
\bibliography{References}

\end{document}